\DeclarePairedDelimiter\ceil{\lceil}{\rceil}
 \newtheorem{assump}{Assumption}[section]
 \newtheorem{theorem}{Theorem}[section]
\newtheorem{lemma}{Lemma}[section]
\newtheorem{proposition}{Proposition}[section]
\newtheorem{remark}{Remark}[section]
\newcommand{\p}{\partial}
\newcommand{\po}{\mathbb{P}_{\mathbbm{o}}} 
\newcommand{\pn}{\mathbb{P}_{\neq}}
\newcommand{\R}{\mathbb{R}}
\newcommand{\lb}{\langle}
\newcommand{\rb}{\rangle}
\newcommand{\TT}{\mathbb{T}}
\newcommand{\G}{\mathcal{G}_k}
\numberwithin{equation}{section}
\newcommand{\rr}{\mathbb{R}}
\newcommand{\lan}{\langle}
\newcommand{\ran}{\rangle}
\newcommand{\be}{\begin{eqnarray*}}
\newcommand{\bel}{\begin{eqnarray}}
\newcommand{\lf}{\left}
\newcommand{\rg}{\right}
\newcommand{\ba}{\begin{aligned}}
\newcommand{\ea}{\end{aligned}}
\newcommand{\de}{\Delta}
\newcommand{\na}{\nabla}
\newcommand{\ep}{\epsilon}
\newcommand{\pa}{\partial}
\newcommand{\nq}{{\neq}}
\newcommand{\n}{\nonumber}
\newcommand{\wh}{\widehat}
\newcommand{\wt}{\widetilde}
\newcommand{\Torus}{\mathbb{T}}
\newcommand{\om}{\Omega}
\newcommand{\W}{\mathcal{W}}
\newcommand{\mf}[1]{\mathfrak{#1}}
\newcommand{\myb}[1]{} 
\newcommand{\myr}[1]{{#1}} 
\newcommand{\oms}{\Omega^*}
\begin{document}
\title{ Transition Threshold for Strictly Monotone Shear Flows in Sobolev Spaces}

\author{Rajendra Beekie}
\address{Duke University}
\email{rajendra.beekie@duke.edu}
\author{Siming He}
\address{University of South Carolina}
\email{siming@mailbox.sc.edu}

\begin{abstract}
 We study the stability of spectrally stable, strictly monotone, and smooth shear flows  in the 2D Navier-Stokes equations on $\TT \times \R$ with small viscosity $\nu$. We establish nonlinear stability in $H^s$ for $s \geq 2$ with a threshold of size $\epsilon \nu^{1/3}$ for time smaller than $c_*\nu^{-1}$ with $\epsilon, c_* \ll 1$. Additionally, we demonstrate nonlinear inviscid damping and enhanced dissipation. 
\end{abstract}




\maketitle
\setcounter{tocdepth}{1}
\pagestyle{plain}




\section{Introduction}
Consider the 2D incompressible Navier-Stokes equations 
\begin{align}
\label{nse:velo}
    \begin{cases}&\p_t v + v \cdot \nabla v - \nu \Delta v + \nabla p = 0,\\ &\quad \nabla \cdot  v = 0,\\
    &v(t = 0) = v_{\rm in},
  \end{cases}  
\end{align}
on the domain $ (t, x,y) \in [0, \infty) \times \TT \times \R $ where $\nu > 0$ is the viscosity (or inverse Reynolds number), $p$ the pressure, and $v_{ \rm in}$ the initial data. An important class of solutions of \eqref{nse:velo} are shear flows of the form $v(t,x ,y) = (b(t,y) , 0)$ where $b(t,y)$ denotes a solution of the equation
\begin{equation}
\label{heat}
    \p_t b(t,y) - \nu \p_y^2 b(t,y)=0, \quad b(0, y) = b(y).
\end{equation}
We are interested in the stability of monotone shears, namely $\p_y b(y) > 0$, which is preserved for $b(t, y)$. See Assumption \ref{assum:b} and the following remarks for a more detailed discussion of the assumptions we have on $b(y)$. To study the stability of these shears, we consider solutions of \eqref{nse:velo} of the form
\begin{align}
    v(t,x ,y) = (b(t, y), 0) + u(t, x, y),
\end{align}
where the perturbation $u(t,x,y) = (u^1(t,x ,y) , u^2(t, x ,y))$ satisfies 
\begin{align}
    \label{velocity:shear}
\begin{cases}&\p_t u + b(t,y) \p_x u  + (\p_y b(t,y)u^2, 0) - \nu \Delta u + \nabla p = 0,\\
    &\quad\nabla \cdot u = 0,\\
    &u(0, x, y) = u_{\rm in} (x,y),
    \end{cases}
\end{align}
and $u_{\rm in}(x,y)$ is small in a sense to be made precise in Theorem \ref{main}. It is convenient to work with the perturbation vorticity $\omega := -\p_y u^1 + \p_x u^2$ which satisfies the equation
\begin{align}
\label{shear}\begin{cases}
    &\p_t \omega + b(t,y) \p_x \omega  - b''(t,y)\p_x \psi - \nu \Delta \omega  +  \nabla^{\perp} \psi \cdot \nabla \omega = 0, \\
    &\quad\Delta\psi  = \omega,\\
    &\omega(0, x, y) = \omega_{\rm in}(x,y),
\end{cases}
\end{align}
where $\nabla^{\perp} = (-\p_y, \p_x)$. To solve for the stream function $\psi$, we impose that $\psi$ has zero mean over $\TT \times \R$.  We can recover $u$ from $\omega$ via the Biot-Savart law $u = \nabla^{\perp} \Delta^{-1}\omega$.

\subsection{Statement and Discussion of Main Result}
The main focus of this work is to precisely study the stability properties of  \eqref{shear} as $\nu \to 0$. Specifically, we are interested in the following stability threshold problem: Given a function space $X$, determine the minimal $\beta(X)$ such that

 \begin{align}
    \label{stab:threshold}
    \norm{\omega_{\rm in}}_X \ll \nu^{\beta} &\longrightarrow \text{ asymptotic stability}, \\
    \norm{\omega_{\rm in}}_X \gtrsim \nu^{\beta} &\longrightarrow \text{ instability}. 
\end{align}

 The motivation for the formulation of \eqref{stab:threshold} comes from the problem of the ``subcritical transition" \cite{BGM19review} 
 where in experiments, instabilities are observed at a lower Reynolds number than the linear theory predicts. In \cite{Kelvin87}, Kelvin suggested that the fluid may be increasingly sensitive to disturbances at higher Reynolds numbers, which is a qualitative version of \eqref{stab:threshold}. In the special case of Couette flow $b(t,y) = y$, there has been a significant amount of work done on the stability threshold problem. In \cite{Romanov73}, it was shown that Couette flow is nonlinearly stable for all Reynolds numbers (with the allowable size of the perturbation depending on the Reynolds number). In \cite{BMV16}, stability independent of Reynolds number was proven for $X$ being Gevrey-$s$ with $s \in [1,2)$. At finite regularity, for $X = H^N$ with $N > 1$, the threshold $\beta = 1/2$ was established for Couette and shears close to Couette. The current state-of-the-art for Couette is summarized in the following table: 
\begin{itemize}
    \item \cite{MZ20}: $\norm{\omega_{\rm in}}_{H_x^{log}L_y^2} \leq \epsilon \nu^{1/2} \longrightarrow \text{ asymptotic stability;}$
    \item \cite{MasmoudiZhao2022}: $\norm{\omega_{\rm in}}_{H^N} \leq \epsilon \nu^{1/3},\, N\geq40 \longrightarrow \text{ asymptotic stability;}$
    \item \cite{LiMasmoudiZhao24}:  $\norm{\omega_{\rm in}}_{H_x^{1}L_y^2} \geq M \nu^{1/2 -\delta} \longrightarrow \text{ instability, $M$ independent of 
} \nu \text{ and any } \delta > 0   ;$ 
    \item \cite{WZ23}: $\norm{\omega_{\rm in}}_{H^N} \leq \epsilon \nu^{1/3},\, N \geq 2 \longrightarrow \text{ asymptotic stability;}$
    \item \cite{LiMasmoudiZhao22}: $\norm{\omega_{\rm in}}_{\mathcal{G}^s } \leq \epsilon \nu^{\beta} \longrightarrow \text{ asymptotic stability}$ with $s \in \left[1, \frac{2- 3\beta}{1 - 3 \beta} \right],\  \beta \in [0, 1/3]$; 
    \item \cite{DM23}:  $\norm{\omega_{in}}_{\mathcal{G}^s} \leq \epsilon \longrightarrow \text{ instability}$ with $s \in (2, \infty). $ 
\end{itemize}
For general monotonic shear flows, there are significantly fewer results. The key issue is the presence of $\p_y^2b(t,y) \p_x\psi$ in \eqref{shear} which can lead to unstable eigenvalues \cite{lin03} and at a technical level breaks the transport-diffusion structure that simplifies the analysis of  Couette. We highlight the recent work \cite{LZ23}, which proved asymptotic stability in $H_x^{log} L_y^2$ for perturbations of spectrally stable monotonic shear flows (see also \cite{IJ23} and \cite{MZ24} for proofs of nonlinear inviscid damping for spectrally stable monotonic shear flows). Our result relies on the linear analysis done in \cite{J23} (see also \cite{CWZ23} for the case of a channel with non-slip boundary conditions). Following \cite{J23}, we state the main assumptions on $b(y)$.    
\begin{assump}
\label{assum:b}
    We assume that the shear flow $b(y)$ satisfies the following conditions:
    \begin{itemize}
        \item We normalize $b$ so that $b(0,0) = 0$. 
        \item For some $\sigma_0 \in (0,1)$ we have
        \begin{align}
             \p_y b(y) \in [\sigma_0, 1/\sigma_0] \text{ for } y \in \R, \quad \mathrm{support }\ \p_y^2 b \subseteq [-1/\sigma_0, 1/ \sigma_0 ], \quad \sup_{\xi \in \R} e^{\sigma_0 \lb \xi \rb^{1/2}} |\widehat{b''}(\xi)| \leq 1/\sigma_0 ;
        \end{align}
       \item The linearized operator $L_k: L^2(\R) \to L_{loc}^2(\R)$ with $k \in \mathbb{Z} \setminus \{ 0\}$ defined for $g \in L^2(\R)$ as 
       \begin{equation}
       \label{rayleigh}
           L_k g(y) := b(y) g(y) - b''(y)\varphi, \quad (\p_y^2 - k^2) \varphi = g, \quad y \in \R
       \end{equation}
       has purely continuous spectrum $\R$. 
    \end{itemize}
\end{assump}
\begin{remark}
    For all $t \geq 0$ we have
    \begin{align}\label{assump:b_t}
        \p_yb(t,y)\in [ \sigma_0, 1/\sigma_0] \text{ for } y \in \R, \quad \sup_{\xi \in \R} e^{\sigma_0 \lb \xi \rb^{1/2}} |\widehat{b''}(t,\xi)| \leq 1/\sigma_0 
    \end{align}
    as immediate consequences of $\p_yb(t,y)$ satisfying the heat equation. 
\end{remark}
\begin{remark}
     The infinite regularity assumption is not necessary for Theorem \ref{main} to hold as we will only be interested in finite regularity. Similarly, the assumption that  $b''$ is compactly supported
     can be weakened to assuming that $b''$ decays sufficiently fast as $|y| \to \infty$. The assumption that $L_k$ has only continuous spectrum is essential. Finally, the assumption that $b'(t,y)$ has upper and lower bounds is extensively used throughout the linear and nonlinear analysis. 
\end{remark}
Before stating our main theorem, we introduce the projection operators 
\begin{align} \label{avg_rem}
\mathbb{P}_{\mathbbm{o}} f(t,y)=\frac{1}{2\pi}\int_{\Torus} f(t,x,y)dx,\quad\mathbb{P}_\nq f(t,x,y)=f(t,x,y)-\po f(t,y).
\end{align}
\begin{theorem}
\label{main}
{ Let $s \geq 2$, and suppose that the shear profile $b(t,y)$ solves the equation
\begin{equation}
    \p_t b(t,y) - \nu \p_y^2 b(t,y)=0, \quad b(0, y) = b(y),
\end{equation}
where the shear $b(y)$ satisfies Assumption \ref{assum:b}. Then there exist $ \epsilon_0,\nu_0, c_* ,\delta\in (0,1)$, which depend only on the initial shear profile $b(y)$, such that the following statements hold.}
Assume that $\epsilon \in (0, \epsilon_0)$,  $\nu \in (0, \nu_0)$, and that the initial perturbation satisfies
\begin{align}
    \norm{\omega_{\rm in}}_{H^s(\TT \times \R)} +   \norm{\po u_{\rm in}^1}_{L^2(\R)} \lesssim_{}  \epsilon \nu^{1/3}.
\end{align}
Then  the  solution $(u, \omega)$ of \eqref{velocity:shear} and \eqref{shear} exhibits the following three properties for $t \in [0 ,c_*\nu^{-1}]$: 
\begin{enumerate} 
    \item Stability 
    \begin{align}
\label{stability}
\norm{\omega (t , x + tb(t,y), y  )}_{H^s(\TT \times \R)} 
+  \norm{\po u^1 }_{L^2(\R)}  
\lesssim_{} \epsilon\nu^{1/3}; 
\end{align}

\item  Enhanced Dissipation
\begin{align}
    \label{Enhanced:dissip}
    \norm{{e^{\delta \nu^{1/3}|\pa_x|^{2/3}t }}\pn \omega(t , x + tb(t,y), y   )}_{H^s(\TT \times \R)} \lesssim_{} \epsilon \nu^{1/3};
\end{align}
\item Inviscid Damping
\begin{align}
\label{inviscid:damping}
    \norm{ \pn u(t, x + tb(t,y), y)}_{L_t^2[0, c_* \nu^{-1}]H^s(\TT \times \R)} \lesssim_{} \epsilon \nu^{1/3}.
\end{align}
\end{enumerate}
\end{theorem}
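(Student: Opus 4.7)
The plan is to follow the now-standard strategy for transition threshold problems in the $\nu^{1/3}$ regime, adapted to the non-Couette setting via the linear framework of \cite{J23}. First I would move to the frame moving with the shear by setting
\begin{equation}
    f(t, X, y) := \omega(t, X + tb(t,y), y), \qquad \phi(t,X,y) := \psi(t, X + tb(t,y), y),
\end{equation}
and similarly for the velocity. In these variables the transport $b(t,y)\p_x$ is eliminated, the Laplacian becomes the time-dependent elliptic operator $\Delta_L := \p_X^2 + (\p_y - tb'(t,y)\p_X)^2$ which is responsible for enhanced dissipation on $\pn$ modes, the Rayleigh-type term $-b''(t,y)\p_X \phi$ persists, and the nonlinearity takes the form of a divergence-free transport whose velocity is given by a twisted Biot--Savart law $\nabla^\perp_L \Delta_L^{-1} f$. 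The zero mode $\po u^1$ decouples at the linear level and satisfies a viscous equation forced by the quadratic interaction of $\pn$ modes.

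Next I would construct a weighted energy functional of the form
\begin{equation}
    E(t) := \|A(t, D) \pn f\|_{H^s}^2 + \|\po u^1\|_{L^2}^2,
\end{equation}
where the Fourier multiplier $A(t,D) = m_{\mathrm{ED}}(t, D_X)\, m_{\mathrm{ID}}(t, D_X, D_y)$ incorporates: (i) the enhanced dissipation growth $e^{\delta \nu^{1/3}|D_X|^{2/3} t}$ that pays for the desired decay in \eqref{Enhanced:dissip}; (ii) an inviscid damping weight whose time derivative generates a coercive term controlling $\|\pn u\|_{H^s}$ needed for \eqref{inviscid:damping}; and (iii) a ghost component tailored to absorb the non-local term $-b''\p_X\phi$ using the resolvent bounds established by \cite{J23} for $L_k$. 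The multiplier $m_{\mathrm{ID}}$ is the critical novelty relative to Couette, and its construction would mirror the one suggested by \cite{J23}: spectrally, $\p_t - ikb + ikb''(-\p_y^2+k^2)^{-1} - \nu\Delta_L$ admits semigroup bounds that can be exponentiated into a weight by integrating the resolvent along the real axis.

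Then I would run a bootstrap on $E$. Differentiating in time, the good (positive) terms produce $\nu\|\nabla_L A\pn f\|_{H^s}^2$, an enhanced-dissipation Cauchy--Kovalevskaya term $\delta\nu^{1/3}\||D_X|^{1/3} A\pn f\|_{H^s}^2$, and an inviscid-damping coercivity $\|A \pn u\|_{H^s}^2$ from $\dot m_{\mathrm{ID}}$. The Rayleigh term is absorbed by the ghost weight. The nonlinearity splits into $\pn$--$\pn \to \po$, $\pn$--$\po \to \pn$, and $\pn$--$\pn \to \pn$ interactions; each is estimated by a standard commutator/product estimate at $H^s$, $s\ge 2$, with one factor paid by the enhanced-dissipation/inviscid-damping gain and the other by the bootstrap hypothesis $E(t)\lesssim \epsilon^2\nu^{2/3}$. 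The $\nu^{1/3}$ threshold arises because the enhanced dissipation provides a gain of $\nu^{1/3}$ on one factor, and a $\nu^{-1/3}$ loss comes from inverting $\Delta_L$ on low frequencies over the timescale $t\lesssim \nu^{-1}$. The zero mode is controlled separately by a heat energy estimate driven by the $\pn$ flux.

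The main obstacle, and the place where the proof meaningfully departs from \cite{WZ23}, is the treatment of the non-local Rayleigh term $-b''\p_X\phi$: it cannot be handled by a pure transport-diffusion analysis and must be absorbed through a carefully designed time-dependent ghost multiplier compatible with the twisted Biot--Savart operator $\Delta_L^{-1}$. Proving that this multiplier is bounded uniformly in $t \in [0, c_*\nu^{-1}]$, that it commutes sufficiently well with $H^s$ derivatives, and that its time derivative still controls $\p_X\phi$ in the norms needed to close the nonlinear estimate, is the heart of the argument and the step I expect to require the most care.
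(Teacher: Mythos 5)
There is a genuine gap at the step you identify as ``the heart of the argument.'' You propose to absorb the non-local Rayleigh term $-b''\p_X\phi$ by a ghost Fourier multiplier $m_{\mathrm{ID}}(t,D_X,D_y)$ whose time derivative cancels it, constructed by exponentiating the resolvent bounds from \cite{J23}. This cannot work as stated: since $b''(y)$ is a non-constant coefficient, the operator $ik\, b''(y)\,(-\p_y^2+k^2)^{-1}$ is a genuine convolution in the $\eta$ Fourier variable, not a diagonal multiplier, so no diagonal weight $A(t,k,\eta)$ can produce a Cauchy--Kovalevskaya term that cancels it. This is precisely why the paper states that when $b''$ is not small ``the direct energy approach fails,'' and why spectral stability of $L_k$ alone (without smallness) is the hypothesis. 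The paper's resolution, imported from \cite{IJ23}, is a decomposition $\Omega=\Omega^*+F$, where $F$ is a \emph{linear profile} with zero initial data solving the forced equation \eqref{Flin} that carries the full $b_0''\p_z\Delta_0^{-1}$ term, and $\Omega^*$ is an \emph{auxiliary profile} solving \eqref{profileaux} in which the Rayleigh term has been removed entirely and only small commutator errors (controlled by $\nu t\le c_*$) remain. The profile $F$ is then estimated not by energy methods but through an explicit Duhamel representation formula (Lemma \ref{lem:rep}, Proposition \ref{shift}) built on the limiting-absorption/Green's-function analysis of \cite{J23}, yielding bounds (Proposition \ref{linCK}) for $F$ purely in terms of the inviscid-damping Cauchy--Kovalevskaya term of $\Omega^*$. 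Only $\Omega^*$ is fed into the multiplier-weighted bootstrap energy, and for $\Omega^*$ a Couette-style ghost multiplier in the spirit of \cite{WZ23} suffices.

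Beyond this central gap, your sketch omits several non-trivial ingredients the paper needs to close: the splitting of the bootstrap at $t=\nu^{-1/6}$ with a modified zero-mode inviscid-damping weight $\W_I^\circ$ to get good commutators in the initial layer; the local/far-field decomposition of the Green's function (Lemma \ref{lem:G:bdds}) and the Neumann-series perturbation argument (Lemma \ref{ellip:perturb}) needed because the elliptic operators $\Delta_0,\Delta_t$ are variable-coefficient and $b'$ does not decay at infinity; and the separate $L^2$ energy estimate for $\po U^1$ obtained from the velocity equation rather than from the vorticity bootstrap. The top-level skeleton you describe (moving frame, $A$-weighted energy with ED/ID/echo factors, $\nu^{1/3}$ threshold accounting) matches the paper, but without the linear--auxiliary decomposition the argument would not close.
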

\begin{remark}
    A more detailed description of the perturbations is given in Propositions \ref{pro:short} and \ref{pro:long}. 
\end{remark}
\begin{remark}
    The principal reason why Theorem \ref{main} is not global in time is we only assume spectral stability for $b(t,y)$ at $t = 0$. For $t \gg \nu^{-1}$,  $b(t,y)$ may no longer be spectrally stable. This is also the reason why \cite{LZ23} needs to assume that $b(t,y)$ is spectrally stable for $t\geq 0$ in order to obtain a global result. 
\end{remark}
\begin{remark}
    The regularity in Theorem \ref{main} matches the regularity obtained in \cite{WZ23} for Couette.  
\end{remark}


\subsection{Outline and Main Challenges }
Recent literature identifies enhanced dissipation and inviscid damping as the two main mechanisms guaranteeing the stability of the shear flow. 
These two phenomena can be easily pinpointed for a simplified linear model to \eqref{shear}, which is the passive scalar equation subject to the Couette flow $(y,0)$ and small viscosity $\nu\leq 1$: 
\begin{align}\label{PS}
    \pa_t f+y\pa_x f=\nu \de f,\quad f(t=0)=f_{\rm in},\quad (x,y)\in \Torus\times \rr.
\end{align}
The above equation has been extensively studied since the work of Kelvin \cite{Kelvin87} and Orr \cite{Orr07}. If the ambient shear flow $b(t,y)=y$, then the linear part of the vorticity perturbation equation \eqref{shear} reduces to that of \eqref{PS}. By taking the change of coordinate $z=x-ty,\, v=y$ and set $F(t,z(x,y),v(y))=f(x,y)$, one ends up with the following equation
\begin{align}
\pa_t F(t,z,v)=\nu(\pa_z^2+(\pa_v-t\pa_z)^2)F(t,z,v).
\end{align}
Here, the Laplacian $\de$ in the $(x,y)$-coordinate is transformed into a degenerate elliptic operator in the $(z,v)$-frame. By taking the Fourier transform $(z,v)\rightarrow (k,\eta)$, one ends up with 
\begin{align}\label{PS_zv}
    \pa_t \wh F(t,k,\eta)=-\nu(|k|^2+|\eta-kt|^2) \wh F(t,k,\eta).
\end{align}
As long as the wave number $k\neq 0$, a direct integration in time yields that there exist universal constants $C\geq 1,\ \delta\in(0,1)$ such that the following estimate holds
\begin{align}\label{PS_keta_bd}
   | \wh F(t,k,\eta)|\leq C|\wh F_{\rm in}(k,\eta)|\exp\{-\delta\nu^{1/3}t\},\quad k\neq 0,\quad \forall t\geq 0.
\end{align}
Hence, the solution decays with the rate $\mathcal{O}(\nu^{1/3})$, which is much larger than the classical heat dissipation rate, $\mathcal{O}(\nu)$ as long as $\nu$ is small. This accelerated dissipation phenomenon is called the \emph{enhanced dissipation induced by shear flow advection}. Thanks to the heterogeneous nature of this phenomenon, we use the decomposition from \eqref{avg_rem}. 
It is worth highlighting that the enhanced dissipation phenomena and the inviscid damping phenomena only take effect on the ``remainder'' part of the solution $\mathbb{P}_\nq f.$ 
There is extensive literature devoted to understanding the enhanced dissipation for passive scalar flows, and we refer interested readers to the classical work \cite{BCZ15} and recent extensions \cite{AlbrittonBeekieNovack21,Wei18,ElgindiCotiZelatiDelgadino18, FengIyer19}.

Thanks to the Biot-Savart law $u=\na^\perp \de^{-1}\omega$, the $L^2$ norm of the velocity perturbation $\|u\|_{L_{x,y}^2}$ is equivalent to the $\dot H_{x,y}^{-1}$-norm of the solution $\omega$. For the passive scalar equation \eqref{PS_zv}, the $\dot H^{-1}_{x,y}$-norm evolution of the solution has the following characterization 
\begin{align}
\int_0^\infty \|\pn f\|_{\dot H_{x,y}^{-1}}^2 dt=\int_0^\infty\int  \pn f(-\de)^{-1}\pn f  dxdy dt=\sum_{k\neq 0}\int_0^\infty \int_\rr \frac{|\wh F(t,k,\eta)|^2}{|k|^2+|\eta-kt|^2}d\eta dt.
\end{align}
Now, thanks to the bound \eqref{PS_keta_bd}, we end up with a viscosity $\nu$ independent estimate:
\begin{align}
    \int_0^\infty \|\pn f\|_{\dot H_{x,y}^{-1}}^2 dt\lesssim \sum_{k\neq 0}\frac{1}{|k|^2}\int_{\rr}\int_0^\infty \frac{|\wh F_{\rm in}(k,\eta)|^2}{1+|t-\eta/k|^2} dtd\eta\leq  \||\pa_z|^{-1}\pn F_{\rm in}\|_{L^2}^2.
\end{align}
Since this $L^2_tH_{x,y}^{-1}$-estimate persists as $\nu\rightarrow 0$, it is called \emph{inviscid damping}. By invoking higher regularity, one can derive sharp pointwise in time decay
\begin{align}
    \|\pn f\|_{\dot H_{x,y}^{-1}}^2\lesssim \sum_{k\neq 0}\frac{1}{|k|^2}\int_{\rr} \frac{(1+|\eta|^2)|\wh F_{\rm in}(k,\eta)|^2}{(1+|\eta|^2)(1+|t-\eta/k|^2)} d\eta\lesssim \frac{ \||\pa_z|^{-1}\pa_v \pn F_{\rm in}\|_{L^2}^2}{1+t^2}.
\end{align}
Even though the inviscid damping decay is polynomial in time, it takes effect on a time scale that is much shorter than the enhanced dissipation time scale $\mathcal O (\nu^{-1/3})$. Hence, inviscid damping is crucial for stabilizing the motion of the fluid in the initial time layer. There is extensive literature on the inviscid damping of the Couette flow and other shear flows, and we refer the readers to the classical works \cite{BM13,Zillinger2014,Zillinger16} and extensions, \cite{WeiZhangZhao15,WeiZhangZhao20,J23,MZ24,IJ23,ChenLiWeiZhang18, DengZillinger21}.

The goal of our work is to extend these simple observations on the passive scalar equation \eqref{PS} to the general setting of \eqref{shear}. The main challenges that one faces are as follows:
\begin{itemize}
    \item The effect from the nonlocal term $b''\pa_x\psi$ in \eqref{shear}. A key difficulty in extending the aforementioned estimates to the general strictly monotone shear flows arises from the linear nonlocal term $b''\pa_x\psi$. If the second derivative of the shear profile is small, i.e., $\|b''\|\ll 1$, one can still carry out classical energy methods to recover the enhanced dissipation and inviscid damping estimates above. However, once $b''$ becomes large, the linear nonlocal effect is no longer negligible, and the direct energy approach fails.

    \item Elliptic estimates. To identify and employ the inviscid damping mechanism, detailed elliptic estimates of the stream function (or related quantities) are essential. We note that in the strictly monotone shear flow setting, the derivative of the shear profile $b'$ is only bounded and does not decay near spacial infinity. This will cause extra difficulties compared to the finite domain situations, e.g., channel \cite{IJ23}.  
    \item Commutator structures of the inviscid damping Fourier multipliers for the initial time layer. As we highlighted before, the inviscid damping effect plays a key role in stabilizing the dynamics of the fluid for $t\ll \nu^{-1/3}$. In the classical literature \cite{Zillinger2014,BVW18}, a norm induced by an inviscid damping multiplier ($\W_I$ in \eqref{W_I}) is utilized to derive the required estimate for the nonlinear system. However, our approach requires us to derive a nice commutator estimate of the multiplier for time $t\leq \nu^{-1/6}$. 
\end{itemize}

To address these challenges, we utilize the following ideas.
\begin{itemize}

\item The plan to address the nonlocal term is to invoke the delicate linear estimate derived in the paper \cite{J23}. To apply them in a nonlinear setting, we follow the idea in the paper \cite{IJ23} to decompose the solution $\omega$ into the linear part and the auxiliary part. The linear part of the solution encodes the linear nonlocal effects in \eqref{shear}, and we can use the linear estimates in \cite{J23} to resolve the problems coming from the linear nonlocal term. For the auxiliary part, we rely on the Couette flow analysis in \cite{WZ23}. 
\item To derive the elliptic estimates, we utilize a `local' and `far-field' decomposition of the Green's functions associated with our elliptic operators, leveraging the fact that $b''(t,y)$ converges to zero near spatial infinity. In the compact local region, we follow the approach in \cite{IJ23}, while in the far-field region, we treat the elliptic operator as a perturbation of a constant coefficient elliptic operator.

\item To address the last difficulty, we observe that the $\pa_\eta \W_I$ has a nice structure, and one can handle nonzero mode interactions relatively easily. However, the interaction between the nonzero and zero mode is challenging to deal with. The problem mainly stems from the fact that one typically defines $\W_I(t,k=0,\eta)\equiv 1$. It turns out that for a short time $t\leq \nu^{-1/6}$, one can slightly adjust the zero-mode component of the multiplier \eqref{WIcirc} and guarantee a nice commutator estimate (Lemma \ref{lem:Acm_sh}).  

\end{itemize}

\subsection{Acknowledgment}
The authors would like to thank Dallas Albritton, Tarek Elgindi, Hao Jia, Hui Li, and Weiren Zhao for fruitful discussions and insightful suggestions. RB's research is partially supported by NSF Grant DMS-2202974. SH's research is partially supported by NSF  Grant DMS-2006660, 2304392, and 2406293.  

\section{Notation and Coordinate System}

\subsection{Coordinate System}
We consider the following change of variable on the time interval $[0,c_*\nu^{-1}]$: 
\begin{align}\label{chg_of_v}
z(t,x,y):=&x-t\ b(t,y),\quad 
v(t, y):=b(t,y).
\end{align}
We denote 
\begin{align}
    \Omega(t,z(t,x,y),v(t, y))&:=\omega(t,x,y), \\
    U(t, z(t,x,y),v(t, y))&:=  u(t, x ,y),\\
    \Psi(t, z(t,x,y),v(t, y))&:=  \psi(t, x ,y), \\
    B(t, v(t,y)) &:=\pa_y b(t,y),\\
    B'(t, v(t,y)) &:= \pa_{yy}b(t,y).
\end{align} 
Note that 
\begin{align}
   B'(t,v) =   B(t,v) \p_v B(t,v). \label{B_B'}
\end{align} 
\myb{HS: $B=v',\ B'=v''$, Can use $\po, \pn$.}
We will also use the notation
\begin{equation}
\label{B0}
    B_0(v) := B(0,v) = \p_yb(b^{-1}(v)), \quad B_0'(v) := B'(0, v ) = \p_y^2b(b^{-1}(v)).
\end{equation}
In what follows, we will drop the $(t,v)$ dependence when writing $B, B_0$ when there is no risk of confusion.   
In the new coordinate system, we have 
\begin{align}
   \label{derivatives} \begin{cases}
   &\nabla_L := (\p_z, \p_v - t\p_z), \quad \Delta_L := \p_z^2 + (\p_v - t\p_z)^2, \\
   &\pa_z F(t,z(t,x,y),v(t,y))=\pa_x f(t,x,y),\\
    & \pa_v F(t,z(t,x,y),v(t,y))=\lf(\frac{1}{v_y}\pa_y+t\pa_x\rg) f(t,x,y),\\
     &\nabla f(t,x ,y) =  \lf( \p_z F (t,z(t,x,y) ,v(t,y)), B(\p_v - t\p_z )F(t,z (t,x,y),v(t,y)) \rg), \\
    & \de f= \Delta_t F =  (\pa_z^2+ ({B}(\pa_v-t\pa_z))^2)F(t,z(t,x,y),v(t,y)).
    \end{cases}
\end{align}
In the new coordinate system, \eqref{shear} can be written as 
\begin{align}
\label{profile}
    \p_t \Omega  - B'\p_z \Psi - \nu \tilde{\Delta}_t \Omega  + B \nabla_L^{\perp}\Psi \cdot \nabla \Omega  &=  0,\quad 
     \Delta_t \Psi  = \Omega, 
\end{align}
where 
\begin{equation}
\label{lap:til}
    \tilde{\Delta}_t := B^2 (\p_v -t\p_z)^2 + \p_z^2.
\end{equation}
We isolate the zero-mode of $\nabla_L^{\perp}\Psi$ in the nonlinearity and write \eqref{profile} as 
\begin{align}
\label{profileprime}
    \p_t \Omega  - B'\p_z \Delta_t^{-1} \pn \Omega - \nu \tilde{\Delta}_t \Omega  -\po U^1 \p_z \Omega  +   B \nabla_L^{\perp} \Delta_t^{-1}\pn\Omega \cdot \nabla_L \Omega  =  0, \quad
       -B\p_v\po U^1 = \po \Omega .\quad
\end{align}
Following \cite{IJ23}, we introduce the operator 
\begin{equation}
\label{lap0}
    \Delta_0  := B_0^2 (\p_v -t\p_z)^2 + B_0'(\p_v -t\p_z) + \p_z^2 
\end{equation}
 and decompose the solution $\Omega$ as $\Omega  = \Omega^* + F$ where $F$ (the ``linear profile) will satisfy a nonhomogeneous linear equation with 0 initial data and $\Omega^*$ (the ``auxiliary profile") will satisfy an equation that can be handled like the Couette case in \cite{WZ23}. More explicitly, $F$ satisfies the equation
 \myb{HS: $*$ aux, lin linear, $\Phi$ frozen time. }
\begin{align}
\label{Flin}
    &\p_t F - B_0'\p_z \Delta_0^{-1}\pn F - \nu \Delta_{0} F = B_0' \p_z \Delta_0^{-1} \pn \Omega^*, \quad F(0, z,v) = 0, 
\end{align}
 and   $\Omega^*$  satisfies the equation
 \begin{align}
 \label{profileaux}
    \begin{cases}&\p_t \Omega^* - \nu \tilde{\Delta}_t \Omega^*   - \po U^1\p_z (F + \Omega^*) + B \nabla_L^{\perp} \Delta_t^{-1}\pn(F + \Omega^*) \cdot \nabla_L ( F + \Omega^*)  \\
    &\quad= (B' - B_0')  \p_z \Delta_t^{-1}\pn (F + \Omega^*) +  
    B_0'(\Delta_t^{-1} - \Delta_0^{-1}) \p_z \pn (F + \Omega^*)
    + \nu (\tilde{\Delta}_t  - \Delta_{0}) F ,  \\
    & -B\p_v\po U^1 = \po \Omega^*,\\
     &\Omega^*(0, z, v) = \Omega(0, z, v)
     .  
     \end{cases}
\end{align}
 
The structure of \eqref{profileaux} is complicated, so we briefly comment on the plan of attack. The first crucial observation is that the term $-B' \p_z \Delta_0^{-1}\pn\Omega^*$ is no longer in the auxiliary profile equation \eqref{profileaux}. Since $B'$ is not assumed to be small, its presence would have ruined any chances of doing direct energy estimates. Instead, we use the evolution of the  ``linear profile" $F$ to balance this potentially harmful nonlocal term, which relies on the linear theory developed in \cite{J23}. The right-hand side of \eqref{profileaux} can be made small by taking $\nu t$ sufficiently small, which allows us to estimate it directly. Using the bootstrap assumptions on the ``auxiliary profile" $\Omega^*$ in combination with the linear theory for \eqref{Flin}, we can get bounds for the linear profile $F$,  which implies control on the profile of the full vorticity perturbation $\Omega = F + \Omega^*$. Finally, to control the velocity $\po U^1$, we directly use the velocity perturbation equation to prove $L^2$ control on $\po U^1$ (see Section \ref{u1:0:mode}).

\subsection{Notations }
We employ the following notations in the paper.

\begin{enumerate}
    \item  We define the notations
     \begin{align}
        \lb x \rb^s &:= (1 + x^2)^{s/2},\\
        \lb x, y \rb^s &:= (1 + x^2)^{s/2} + (1 + y^2)^{s/2} .\label{mul_bracket}
    \end{align}
    Throughout the text, the parameter $s$ is used to denote the regularity level. 
    \item For positive constants $A, B>0$, the notation $A\lesssim B$ ($A\gtrsim B$, respectively) means that there exists a positive constant $C>0$ such that $A\leq CB$ ($A\geq B/C$, respectively). We write $A \approx B$ if $A \lesssim B$ and $B \lesssim A$.  If the constant $C$ depends on certain parameters (e.g., $s$, $\sigma_0$), we will highlight them in subscript (e.g., $\lesssim_{s,\sigma_0}$). 
   
    \item The definition of the $\nu$-independent constant $C$ changes from line to line. The constants $C_0$ and $C_1$ are reserved for bootstrap constants. 
     We choose to recycle the symbols $T_1,\ T_2,\cdots$. In the proof of individual lemmas, they are referring to different terms in the decomposition. However, after the conclusion of the proof, these ``local'' symbols will be redefined elsewhere.  
    \item The $\sigma_0$ is the control parameter for $b(t,y)$ (Assumption \ref{assum:b}) and $\theta_0$ is the control parameter for $B$ (Lemma \ref{lem:B:regularity}). 
    The parameter $\delta$ denotes the enhanced dissipation coefficient of the nonlinear dynamics, and $\delta_{lin}$ denotes the linear enhanced dissipation coefficient.  The notation $T_*$ denotes the end of the bootstrap time interval, and $c_*\nu^{-1}$ is the time horizon of this paper. 
    \item We use standard Fourier transform conventions for $(z,v)\in\Torus\times \rr=[-\pi,\pi]\times \rr$, i.e.,
    \begin{align}
\wh{f}(k,\eta)=\frac{1}{2\pi}\iint e^{-ikz-i\eta y}f(z,v)dzdv.
    \end{align}
 We also use the $x$-Fourier transform:  \begin{align}
\wh{f}(k,v)=\wh{f}_k(v)=\frac{1}{2\pi}\int e^{-ikz}f(z,v)dz.
    \end{align}
The Fourier variables $k,\ell$ correspond to the $z$-variable, and the Fourier variables $\eta,\xi$ correspond to the $v$-variable. The symbols $(\cdots)^{\wedge}$ and $(\cdots)^\vee$ are reserved for the Fourier transform and inverse Fourier transform, respectively. The Sobolev space that we use can be represented as 
    \begin{align}
        \|f\|_{H^s(\Torus\times \rr)}^2:=\sum_{k\in \mathbb Z}\int \lan k,\eta\ran^{2s}|\wh{f}(k,\eta)|^2d\eta.
    \end{align} 
    \item  For $f(z,v): \TT \times \R \to \mathbb{C}$
\begin{align}
\int f(z,v) \, dV = \int_{\TT \times \R} f(z,v) \, dzdv.
\end{align} 
\item There are multiple different versions of Laplace-like operators. We collect the references to them here: $\Delta_L$ \eqref{derivatives}, $\Delta_0$ \eqref{lap0}, $\Delta_t$ \eqref{derivatives}, $ \Tilde{\Delta}_t$ \eqref{lap:til}, and $\de_{B_0}$ \eqref{db}. 
  We will slightly abuse notation by suppressing the $k$ dependence of the Laplacians when acting on $z$ Fourier transformed functions, e.g. $\Delta_L f_k(v) = (-|k|^2+(\pa_v-ikt)^2)f_k(v).$
\end{enumerate}


\section{Bootstrap and Key Linear Estimates}
In order to introduce the functional setup to analyze the system \eqref{Flin}, \eqref{profileaux}, we apply the spacial Fourier transform $f(t,z,v)\xrightarrow{\mathcal{F}} \wh f(t,k,\eta)$.  Next, we introduce the following multipliers:
\begin{subequations}
    \label{multipliers}
\begin{align}
&\text{a) Enhanced Dissipation Multiplier:}\n\\
 &\quad\W_{\nu}(t,k,\eta) :=\pi-\arctan\left(\frac{\nu^{1/3}|k|^{2/3}}{\myr K}\left(t-\frac{\eta}{k}\right)\right)\mathbbm{1}_{0<|k|\leq \nu^{-1/2}};\label{W_nu}\\
&\text{b) Inviscid Damping Multiplier:}\n\\
&\quad\mathcal W_I(t,k,\eta):=\pi-\arctan\left(\frac{1}{\myr{K}}\lf(t-\frac{\eta}{k}\rg)\right)\myr{\mathbbm{1}_{k\neq0}};\label{W_I}\\
&\text{c) Echo Multiplier:}\n\\
&\quad\mathcal W_E(t,k,\eta):= \pi  +\frac{1}{\pi^2}\sum_{\ell\in \mathbb{Z}\backslash\{0\}}\frac{1}{|\ell|^2}\lf(\frac{\ell}{|\ell|}\arctan\lf(\frac{1}{\myr{K}}\lf(\frac{\eta-\ell t}{1+|k-\ell|+|\ell|}\rg)\rg)\rg)\mathbbm{1}_{\myr{0\leq}|k|<\nu^{-1/2}}.\label{We}
\end{align}
Here, the parameter $K\geq 1$ is a constant that depends on the shear profile $B(t,v)$ ($\|B'\|_{L_t^\infty H_v^s}+\|B\|_{L_{t,v}^\infty}+\|B^{-1}\|_{L_{t,v}^{\infty}}$) and will be chosen later in the proof. The $\W_\nu$ is the multiplier that captures the enhanced dissipation, the $\W_I$ multiplier captures the inviscid damping, and the $\W_E$ multiplier controls the echo cascade. We note that the multipliers $\mathcal{W}_\nu \ \eqref{W_nu},\, \mathcal{W}_I$ \eqref{W_I}, and $\mathcal{W}_E$ \eqref{We} are bounded and take values in $[\frac{\pi}{2}, \frac{3\pi}{2}]$, and hence the norms they induce are equivalent to the $L^2$-based Sobolev norms. These multipliers have their origin in \cite{BVW18}, \cite{WZ23}. We remark that our multipliers are slightly different from those of \cite{WZ23} in the sense that the $\W_\nu$ has frequency cutoff $0<|k|\leq \nu^{-1/2}$, and $\W_I$ has support on all the nonzero modes. The echo multiplier $\W_E$ is identical to the one in \cite{WZ23}.  In order to obtain a nice commutator structure for the short time $t\in[0,\nu^{-1/6}]$, we further define, \begin{align}\label{WIcirc}
\W_I^\circ(t,0,\eta):=\pi+\arctan \lf(\frac{1}{K}\frac{\eta-t/2}{1/2}\rg),\quad t\leq \nu^{-1/6}.
\end{align}
\end{subequations}
We define
\begin{align}
\label{M_N_Omega}
\mathfrak M(t, k, \eta):= 
\begin{cases}
    \mathbbm{1}_{k = 0}\W_I^\circ(t,\eta) + \mathbbm{1}_{k \neq 0} \mathcal W_I(t, k , \eta), & \forall t \in [0, \nu^{-1/6}]; \\
    \W_\nu \mathcal W_I \mathcal W_E(t, k ,\eta), &\forall t \in [\nu^{-1/6}, \infty).
\end{cases}
\end{align}
We slightly abuse the notation in the sense that at $t=\nu^{-1/6}$, the two definitions of $\mf M$ do not match. However, it will be clear from the context which multiplier we are considering.   
To capture the $|k|$-dependent enhanced dissipation, we introduce the Fourier multiplier
\begin{align}\label{zeta}
    \zeta_k(t):=\mathbbm{1}_{k\neq 0}\exp\lf\{\delta \nu^{1/3}(|k|^{2/3}+1)t \rg\}+\mathbbm{1}_{k=0};\quad (\zeta(t,|\pa_z|)f(t,z,v))^\wedge=\zeta_k(t)\wh f(t,k,\eta).
\end{align}
The $\zeta_k$ weight has the following estimate:
\begin{align}
    \zeta_k(t)
    \leq&\exp\lf\{\delta \nu^{1/3}(|k-\ell|^{2/3}+|\ell|^{2/3}+1)t\rg\}
    \leq  \begin{cases}\zeta_\ell(t)\zeta_{k-\ell}(t)\exp\{-\delta\nu^{1/3}t\},\,&  (k-\ell)\ell\neq 0;\\
    \zeta_\ell(t)\zeta_{k-\ell}(t),\,&  (k-\ell)\ell= 0 .
    \end{cases}\label{zeta_product}
\end{align}
Further, define that for $ s\geq 2$,
\begin{align}\label{A_N_Omega}
A(t,k,\eta) := \zeta_k(t) \mf M (t, k , \eta) \lb k, \eta \rb^s,\quad \wt A(t,k,\eta):= \mf M (t, k , \eta) \lb k, \eta \rb^s.
\end{align}
Here, the constant $0<\delta<1$ will be chosen along the proof and will be independent of the $\nu$ and the solution $\oms,F$. We observe that these multiplier functions \eqref{multipliers} are monotonically decreasing in time.  Hence, when we take time derivatives of the norm induced by these Fourier multipliers, we obtain multiple negative terms, which are referred to as the Cauchy-Kovalevskaya terms ($CK$-terms). These damping terms are defined as follows:
\begin{align}
CK_\iota[H]:=\lf\|A\sqrt{\frac{-\pa_t\W_\iota}{\W_\iota}}H\rg\|_{L^2}^2,\quad \iota\in\{\nu, I,E\}.    \label{CK_terms}
\end{align}
Here $H$ is an arbitrary $H^s$-function. 
The properties of these multipliers are collected in the appendix. Next, we are ready to lay out the bootstrap argument.

\noindent
{\bf Short time: $t\leq \nu^{-1/6}$.} We assume that  $s\geq 2$ and $[0,T_*]\subset[0,\nu^{-1/6} ] $ is the largest time interval on which the following \textbf{Hypotheses} hold: \myb{HS: It seems to me that the $C_0$ in \eqref{Hyp_reg_sh} can be replaced by $4$. Is that true? If we can replace it with a simpler expression, then we don't need to change the $\eqref{Low_reg_1}_{r.h.s}$ to $C(C_0)\ep^2\nu^{2/3}...$} 
\begin{align} \begin{cases}
&\displaystyle{
\| A \oms(t)\|_{L^2}^2+\int_{0}^{t}\left\|\sqrt{\frac{-\pa_\tau \W_I}{ \W_I}}A  \oms\right\|_{L^2}^2+\nu \|A\sqrt{-\de_L} \Omega^*\|_{L^2}^2d\tau \leq  \myr{8}\ep^2\nu^{2/3};}\\ 
&
\|\po U^{1}(t)\|_{L_v^2}^2\leq 8\ep^2\nu^{2/3},\quad \forall t\in[0,T_*].\end{cases}\label{Hypotheses_sh}
\end{align}
The goal is to show that the estimates can be improved as long as the $\ep,\ \nu$ are chosen small enough.
\begin{proposition}[Short time]\label{pro:short}
    There exist thresholds $\epsilon_0, \nu_0 > 0$, depending only on the parameters specified in {\bf Assumption \ref{assum:b}} for the shear profile $B$, such that for $\epsilon \in (0, \epsilon_0), \nu \in (0, \nu_0)$
    the following conclusions hold:
\begin{align}
\begin{cases}&\displaystyle{\| A \oms(t)\|_{L^2}^2+\int_{0}^{t}\left\|\sqrt{\frac{-\pa_\tau \W_I}{ \W_I}}A  \oms\right\|_{L^2}^2+\nu \|A\sqrt{-\de_L} \Omega^*\|_{L^2}^2d\tau \leq   \myr{4}\ep^2\nu^{2/3};}\\ 
&\|\po U^{1}(t)\|_{L_v^2}^2\leq 4\ep^2\nu^{2/3},\quad \forall t\in[0,T_*].
\end{cases}
\end{align}

\end{proposition}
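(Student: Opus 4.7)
The plan is to close the short-time bootstrap by running three coupled estimates in parallel: a weighted $L^2$ energy estimate on the auxiliary profile $\Omega^*$, a Duhamel-type estimate on the linear profile $F$ based on the linear theory of \cite{J23}, and a direct $L^2$ estimate on the zero-mode velocity $\po U^1$.

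First, I would test \eqref{profileaux} against $A^2\Omega^*$ in $L^2$. The time derivative of $\|A\Omega^*\|_{L^2}^2$ produces the Cauchy--Kovalevskaya gain from $-\pa_t \mathfrak M/\mathfrak M$ (namely $CK_I[\Omega^*]$ coming from $\W_I$ for $k\neq 0$ and the analogous contribution from $\W_I^\circ$ at $k=0$), together with the viscous dissipation $\nu\|A\sqrt{-\Delta_L}\Omega^*\|_{L^2}^2$. These positive terms must absorb every contribution from the right-hand side of \eqref{profileaux}. The ``frozen-time'' errors $(B'-B_0')\pa_z \Delta_t^{-1}\pn(F+\Omega^*)$, $B_0'(\Delta_t^{-1}-\Delta_0^{-1})\pa_z \pn(F+\Omega^*)$, and $\nu(\tilde\Delta_t-\Delta_0)F$ are mild in this regime: because $B$ solves the heat equation \eqref{heat}, $\|B(t)-B_0\|+\|B'(t)-B_0'\|\lesssim \nu t$, and the resolvent difference $\Delta_t^{-1}-\Delta_0^{-1}$ is controlled by the same factor via the far-field/local elliptic decomposition outlined in Section 1. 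For $t\leq \nu^{-1/6}$ this gives an overall prefactor of at most $\nu^{5/6}$, far better than required.

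The two genuine nonlinearities $\po U^1\,\pa_z(F+\Omega^*)$ and $B\nabla_L^\perp \Delta_t^{-1}\pn(F+\Omega^*)\cdot\nabla_L(F+\Omega^*)$ are the substantive part. The nonzero--nonzero interactions are handled by a paraproduct decomposition in the spirit of \cite{WZ23}: the weight factorization \eqref{zeta_product}, the damping factor $\sqrt{-\pa_t\W_I/\W_I}$ (which supplies the missing power of $|t-\eta/k|$ needed to compensate for $\Delta_t^{-1}$), and standard commutator bounds for $\lb k,\eta\rb^s$ against products; these combine with the bootstrap \eqref{Hypotheses_sh} to close the corresponding contributions. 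The pieces containing $F$ inside the nonlinearities are controlled by a Duhamel bound on \eqref{Flin}: since its source $B_0'\pa_z\Delta_0^{-1}\pn\Omega^*$ is controlled by the bootstrap hypothesis, the linear theory of \cite{J23} delivers $\|AF(t)\|_{L^2}^2 + \int_0^t CK_I[F]+\nu\|A\sqrt{-\Delta_L}F\|_{L^2}^2\,d\tau\lesssim \epsilon^2\nu^{2/3}$ on $[0,\nu^{-1/6}]$. The velocity estimate is separate: differentiating $\|\po U^1\|_{L_v^2}^2$ using the zero-mode restriction of \eqref{velocity:shear} produces a forcing quadratic in $\pn u$, and the $t$-integral is absorbed by the inviscid-damping $CK$ weight on $\pn\Omega^*$ combined with the $F$-bound, yielding the improvement from $8\epsilon^2\nu^{2/3}$ to $4\epsilon^2\nu^{2/3}$.

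The principal obstacle in this scheme is the commutator estimate for $\mathfrak M$ against the mixed zero/nonzero transport term $\po U^1\,\pa_z\Omega^*$. Standard inviscid-damping multipliers are constant at $k=0$ and thus have a discontinuity there that destroys any usable commutator structure. The adjustment \eqref{WIcirc} is engineered precisely to remove this obstruction on the short interval $[0,\nu^{-1/6}]$: it makes the symbol of $\mathfrak M$ effectively continuous across $k=0$ up to a favorably-signed remainder, producing the commutator bound that closes the nonlinear estimate. All other terms reduce to fairly standard paraproduct book-keeping, given Assumption \ref{assum:b} and the multiplier properties of Section 3.
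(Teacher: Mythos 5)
Your proposal follows the paper's overall strategy: an $A$-weighted energy estimate on $\Omega^*$ from \eqref{profileaux} producing the $CK$-terms and dissipation, bounds on $F$ from the \cite{J23} representation theory via Proposition~\ref{linCK}, separate $L^2$ control of $\po U^1$, and smallness of the frozen-time errors from $\nu t\le\nu^{5/6}$ on $[0,\nu^{-1/6}]$. The one place where I would worry is your understanding of where the $\W_I^\circ$ modification in \eqref{WIcirc} is load-bearing, because you have attributed it to the wrong term.

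You claim $\W_I^\circ$ is engineered for the commutator against $\po U^1\,\pa_z\Omega^*$. But after the symmetrization used in Section~\ref{sec:energy}, that term produces a commutator of the form $\mathfrak M(k,\eta)\langle k,\eta\rangle^s-\mathfrak M(k,\xi)\langle k,\xi\rangle^s$ with \emph{the same} nonzero $k$ on both sides, since $\po U^1$ only carries $z$-frequency $\ell=0$ and $\pa_z$ kills the zero mode of $\Omega^*$. There is no zero-frequency jump to repair in that term; the plain commutator bound \eqref{com_est} closes it. The modification $\W_I^\circ$ is actually needed in the auxiliary nonlinearity $\mathfrak{NL}_{\mathfrak a}=\int A(B\na_L^\perp\de_t^{-1}\pn\Omega\cdot\na_L\Omega^*)\,A\Omega^*\,dV$: there the velocity factor forces $\ell\neq0$, but the two vorticity factors may be zero-mode, so the commutator pairs $\mathfrak M(k,\eta)$ against $\mathfrak M(k-\ell,\eta-\xi)$ with exactly one of $k$, $k-\ell$ possibly vanishing. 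This is handled in the regions $R_3$–$R_6$ of Lemma~\ref{lem:Acm_sh} and feeds into Lemma~\ref{lem:E_4_proof}, and without $\W_I^\circ$ the mean-value interpolation across $k=0$ fails there. If you carry out your plan expecting the obstruction in $\po U^1\,\pa_z\Omega^*$, you will find nothing there and then get stuck on $\mathfrak{NL}_{\mathfrak a}$. Apart from this misattribution, the remaining pieces (Duhamel control of $F$, the $\langle t\rangle^{-3/2}$ integrable forcing for $\po U^1$, the closing of the bootstrap constants) are in line with the paper.
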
 A similar setup applies to the long time.

{\bf Long time: $t> \nu^{-1/6}$.}
We assume that  $s\geq 2$ and $[\nu^{-1/6},T_*]\subset[\nu^{-1/6},c_*\nu^{-1}] $ is the largest time interval on which the following \textbf{Hypotheses} hold:

\begin{subequations}\label{Hypotheses}
\begin{align}
\|  \zeta(t,|\pa_z|)\pn  \Omega^*(t)\|_{L^2}^2
+\nu\int_{\nu^{-1/6}}^t\|  \zeta(\tau,|\pa_z|)\sqrt{-\de_L}\pn\Omega^*\|_{L^2_{}}^2d\tau\leq& 16 \ep^2 \nu^{2/3},\label{Low_reg_1}\\
\label{Hyp_ed} 
\| A \oms(t)\|_{L^2}^2+\int_{\nu^{-1/6}}^{t}\left\|\sqrt{\frac{-\pa_\tau \mf M}{ \mf M}}A  \oms\right\|_{L^2}^2+\nu \|A\sqrt{-\de_L} \Omega^*\|_{L^2}^2d\tau \leq  &2C_1\ep^2\nu^{2/3};\\ 
 \label{Hyp_U1_0}
\|\po U^{1}(t)\|_{L_v^2}^2\leq 8\ep^2\nu^{2/3},\quad \forall t\in&[\nu^{-1/6},T_*].
\end{align} 
\end{subequations}
{\bf Determining the parameters: } First of all, we will choose the $K$ in the multipliers $\W_I, \W_\nu, \W_E$ along the proof. The $K$ will depend on the norm of $B$ ($\|B^{-1}\|_{L_{t,v}^\infty},\ \|B\|_{L_{t,v}^\infty}$ and $\|B'\|_{L_t^\infty H_v^s}$). Next, we determine the bootstrap constant $C_1$ along with the proof. The parameter will depend on the norm of the background shear flow $B$. 
Finally, we will choose the $\ep$ and then $\nu$ \myb{(HS: Do we need this:``and then $\nu$''? Can we choose the $\ep$ only?)} to be small enough depending on the constants chosen before.  


\begin{proposition}[Long time]\label{pro:long}
     There exist constants $\epsilon_0, \nu_0, c_* > 0$, { which depend only on the parameters specified in {\bf Assumption \ref{assum:b}}}, such that for $\epsilon \in (0, \epsilon_0),  \nu \in (0, \nu_0)$,
    the following improvements 
(\eqref{Con_Lw_rg_1}, \eqref{Con_ed}, \eqref{Con_U1_0}) to the hypotheses (\eqref{Low_reg_1}, \eqref{Hyp_ed}, \eqref{Hyp_U1_0})  hold for $T_* \leq c_* \nu^{-1}$:
\begin{subequations}\label{Con}
\begin{align}
\|  \zeta(t,|\pa_z|) \pn \Omega^*(t)\|_{L^2}^2
+\nu\int_{\nu^{-1/6}}^t\|  \zeta(\tau,|\pa_z|) \pn \sqrt{-\de_L}\Omega^*\|_{L^2_{}}^2d\tau\leq& 8 \ep^2 \nu^{2/3},\label{Con_Lw_rg_1} \\
\label{Con_ed} 
\| A \oms (t)\|_{L^2}^2+\int_{\nu^{-1/6}}^{t}\left\|\sqrt{\frac{-\pa_\tau \mf M}{ \mf M}}A  \oms \right\|_{L^2}^2+\nu \|A\sqrt{-\de_L} \Omega^* \|_{L^2}^2d\tau \leq& C_1\ep^2 \nu^{2/3};\\ 
\label{Con_U1_0}
\|\po U^{1}(t)\|_{L_v^2}^2\leq 4\ep^2\nu^{2/3}, \quad \forall t\in&[\nu^{-1/6},T_*].
\end{align}  
\end{subequations}
\end{proposition}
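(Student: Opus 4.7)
\textbf{Proof plan for Proposition \ref{pro:long}.} The plan is a standard continuity (bootstrap) argument on $[\nu^{-1/6},T_*]$ with initial data at time $\nu^{-1/6}$ inherited from Proposition \ref{pro:short}. Because $\mathfrak{M}$ and $\zeta_k$ are continuous in $t$, it suffices to show that \eqref{Hypotheses} imply the strictly better \eqref{Con}, provided $\epsilon,\nu,c_*$ are small enough and $C_1$ is taken large enough depending only on $B$. The three estimates in \eqref{Con} will be proven in the order \eqref{Con_U1_0}, \eqref{Con_Lw_rg_1}, \eqref{Con_ed}: the velocity estimate feeds back into the Sobolev estimates through the transport-type nonlinearity $\po U^1 \p_z(F+\Omega^*)$, and the low-regularity estimate \eqref{Con_Lw_rg_1} supplies an unweighted control that can absorb the worst nonlinear interactions near $k=0$ that the high regularity $A$-estimate \eqref{Con_ed} cannot handle.

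First I would prove \eqref{Con_U1_0}. Taking the $x$-average of the first component of \eqref{velocity:shear} yields a one-dimensional heat equation for $\po U^1$ whose only forcing comes from the quadratic term $\po(u\cdot \nabla u)^1$. After $L^2$ testing, this reduces (via divergence-free and Biot--Savart) to terms that are products of $\pn$ factors; using \eqref{Hyp_ed} and elliptic estimates for $\pn \Psi$ one gets $\tfrac{d}{dt}\|\po U^1\|_{L_v^2}^2 \lesssim \nu^{-1/3}(\ep\nu^{1/3})^3$ after integrating, which over $[\nu^{-1/6}, c_*\nu^{-1}]$ contributes an amount $\lesssim c_*\ep^3\nu^{2/3}$. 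Choosing $c_*$ then $\epsilon$ small enough halves the constant. Next, for \eqref{Con_Lw_rg_1}, I would test \eqref{profileaux} with $\zeta^2 \pn\Omega^*$. The enhanced-dissipation factor $\zeta_k$ produces a Cauchy--Kovalevskaya damping term $\nu^{1/3}\|\zeta|k|^{1/3}\pn\Omega^*\|_{L^2}^2$ that absorbs the linear nonlocal remainders after the $\Delta_t^{-1}-\Delta_0^{-1}$ and $B'-B_0'$ splits (both of which carry a $\nu t$ or a spatially-localized factor) and also absorbs the transport nonlinearity using \eqref{zeta_product} together with the hypothesis \eqref{Hyp_ed} on $A\Omega^*$. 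The forcing $\nu(\tilde\Delta_t-\Delta_0)F$ is bounded by $\nu\|A\Omega^*\|_{L^2}$-like quantities through the linear theory bound on $F$.

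The main work is \eqref{Con_ed}. Differentiating $\tfrac12\|A\Omega^*\|_{L^2}^2$ in time using \eqref{profileaux} produces the dissipation $\nu\|A\sqrt{-\Delta_L}\Omega^*\|_{L^2}^2$, the good Cauchy--Kovalevskaya term $CK_I[\Omega^*]+CK_\nu[\Omega^*]+CK_E[\Omega^*]$, plus commutator terms $[A, \tilde\Delta_t-\Delta_L]$ and nonlinear interactions. I would bound the RHS of \eqref{profileaux} as follows: (i) the nonlocal coefficient mismatch $(B'-B_0')\p_z\Delta_t^{-1}\pn(\cdot)$ gains an $\mathcal{O}(\nu t)$ factor from the heat equation \eqref{heat} for $b$, so its contribution over $[0,c_*\nu^{-1}]$ is $\lesssim c_*\ep^2\nu^{2/3}$; (ii) the elliptic difference $(\Delta_t^{-1}-\Delta_0^{-1})\p_z\pn(\cdot)$ is estimated via the local/far-field Green's-function decomposition outlined in the introduction, using that $b''(t,y)$ remains localized and decays in regularity through the regularity of $B$; and (iii) the linear profile $F$ is controlled by the linear theory of \cite{J23} applied to \eqref{Flin} with source $B_0'\p_z\Delta_0^{-1}\pn\Omega^*$, which precisely yields $\|AF\|_{L^2}$ estimates of the same order as $\|A\Omega^*\|_{L^2}$ times the linear damping norms. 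The nonlinear transport $B\nabla_L^\perp\Delta_t^{-1}\pn(F+\Omega^*)\cdot\nabla_L(F+\Omega^*)$ is handled by para-product splits of $A$: the inviscid damping multiplier $\W_I$ converts $\Delta_t^{-1}\p_z$ gains into factors of $\sqrt{-\p_t\W_I/\W_I}$, which are absorbed by $CK_I$; the echo multiplier $\W_E$ absorbs the non-resonant interactions following the pattern of \cite{WZ23}; and the $\po U^1\p_z(\cdot)$ term is absorbed using the freshly-proven \eqref{Con_U1_0}.

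\textbf{Main obstacle.} The hardest step is the simultaneous closure: on the short-time window $[\nu^{-1/6},$ something$]$ the multiplier switches from $\mathfrak{M}=\mathbbm{1}_{k=0}\W_I^\circ+\mathbbm{1}_{k\neq0}\W_I$ to $\W_\nu\W_I\W_E$, so one must verify that the choice of the bootstrap constant $C_1$ accommodates the mismatch at $t=\nu^{-1/6}$, and that the commutator structure near the zero mode that justified the choice of $\W_I^\circ$ in \eqref{WIcirc} on the short window gets replaced, for $t\geq\nu^{-1/6}$, by the enhanced dissipation $\W_\nu$ which forces $k\neq 0$ anyway. The other genuine obstacle is the nonlinear reaction term producing an echo cascade: the interaction where one factor has frequency $\eta\sim \ell t$ with $\ell\neq k$ is precisely the one that motivates $\W_E$, and controlling its contribution requires carefully extracting $\sqrt{-\p_t\W_E/\W_E}$ from the symbol, which in turn depends on the explicit form \eqref{We}. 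Once that extraction is carried out, Cauchy--Schwarz against $CK_E[\Omega^*]+CK_E[F]$ plus a $\ep$-smallness factor closes the bootstrap.
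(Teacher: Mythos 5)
Your proposal captures the overall architecture of the paper's argument: an energy identity for $\|A\Omega^*\|_{L^2}^2$ with the right-hand side of \eqref{profileaux} split into diffusion, nonlocal, linear-profile and nonlinear contributions (the terms $D$, $\mathfrak{NL}_{\mathfrak{0a}}$, $\mathfrak{NL}_{\mathfrak{a}}$, $\mathfrak{NL}_{\mathfrak{0l}}$, $\mathfrak{NL}_{\mathfrak{l}}$, $\mathfrak{L}_1$--$\mathfrak{L}_3$ of \eqref{Energy_Ev}), each controlled by a dedicated lemma; the linear profile $F$ fed back through Proposition \ref{linCK} built on the linear theory of \cite{J23}; the elliptic difference controlled via Lemma \ref{ellip:perturb}; and closure by choosing first $K$, then $c_*$, then $\epsilon,\nu$. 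So the route is essentially the paper's. But two structural claims in your write-up are off and would cause trouble if you tried to execute them literally.

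First, your stated motivation for the low-regularity estimate \eqref{Con_Lw_rg_1} --- absorbing ``nonlinear interactions near $k=0$ that the $A$-estimate cannot handle'' --- is not the actual reason. The low-regularity estimate is needed to close the \emph{diffusion} term $D$. Because $B$ is a nonconstant coefficient, the commutator $[A,B^2]$ in $D=\nu\int A\Omega^*\,A\tilde\Delta_t\Omega^*\,dV$ produces (after Kato--Ponce) the term
\[
C\nu\,\bigl(\|B\|_{L^\infty}+\|B\pa_vB\|_{H^s}\bigr)^{2s}\,\bigl\|e^{\delta\nu^{1/3}(|\pa_z|^{2/3}+1)t}\sqrt{-\Delta_L}\,\Omega^*_\nq\bigr\|_{L^2}^2,
\]
which carries no $A$-weight and therefore cannot be absorbed by the $A$-weighted dissipation $\nu\|A\sqrt{-\Delta_L}\Omega^*\|_{L^2}^2$. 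Lemma \ref{lem:Con_Low_reg_1} supplies exactly this unweighted enhanced-dissipation control by testing against the simpler multiplier $\widetilde{\mathfrak{M}}=(e^{\delta\nu^{1/3}(|\pa_z|^{2/3}+1)t}\pn+\po)\W_\nu$, for which the dangerous ghost-multiplier commutator $T_{22}$ vanishes identically. Without recognizing this, you would not know which multiplier to use at low regularity, nor why that estimate closes when the $A$-estimate alone cannot. Second, the three bounds in \eqref{Con} are not proved sequentially with each ``freshly proven'' bound fed into the next; they are all improved simultaneously from the common hypotheses \eqref{Hypotheses}. Lemma \ref{lem:NLlo} and the $\mathfrak{NL}_{\mathfrak{0a}}$ estimate use the \emph{hypothesis} \eqref{Hyp_U1_0} on $\po U^1$, while Lemma \ref{lem:u1:0mode} uses the hypotheses on $\|\pn\Omega\|_{H^2}$; this is the standard simultaneous bootstrap, and a sequential version would require different (and more delicate) constant tracking. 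A smaller point: $\W_\nu$ does not ``force $k\neq 0$'' --- by \eqref{W_nu}, $\W_\nu(t,0,\eta)\equiv\pi$, so the $k=0$ component of $\mathfrak{M}$ on the long-time window is $\pi^2\,\W_E(t,0,\eta)$; the role of $\W_I^\circ$ on $[0,\nu^{-1/6}]$ is precisely to provide a nontrivial zero-mode decay structure before $\W_E$ takes over.
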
 
\myb{HS: The extra $\exp\{\delta \nu^{1/3}(|k|^{2/3}+1)t\}$ is used in \eqref{F_1_est}.}


With the bootstrap assumptions \eqref{Hypotheses_sh} and \eqref{Hypotheses}, we can deduce the following bounds on the linear profile $F$.
\begin{proposition}
\label{linCK}
Let $F$ satisfy \eqref{Flin} and assume that  
\begin{align}
    \norm{A \sqrt{ - \frac{\p_t \mathcal{W}_I}{\mathcal{W}_I} } \Omega^*}_{L^2[0,T]L_{}^2} < \infty.
\end{align}
Then there exists $\nu_0 > 0$ such for $\nu \in (0, \nu_0)$, the following bound holds:
\begin{align}
\label{F:CK}
 & \norm{ \lb t \rb^{-1}A  |\p_z| (-\Delta_L)^{1/2} F }_{L_t^{\infty}[0, T]L_{}^2  }    +  \norm{A|\p_z|F}_{L^{\infty}[0,T]L_{}^2} \notag\\
 &+ \int_0^T\sum_{\iota \in \{ \nu , I, E \}} CK_{\iota}[|\p_z|F] 
 \, dt + \nu \norm{ BA|\p_z|(-\Delta_L)^{1/2} F }_{L^2[0,T]L_{}^2} 
 \lesssim
    \norm{A \sqrt{ - \frac{\p_t \mathcal{W}_I}{\mathcal{W}_I} } \Omega^*}_{L^2[0,T]L_{}^2}.
\end{align}

\end{proposition}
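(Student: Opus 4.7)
The plan is a weighted $A$-energy estimate on $|\p_z| F$, closed by the observation that on each nonzero $z$-mode the symbol of $\p_z\Delta_0^{-1}$ is pointwise comparable, up to constants depending on $\sigma_0$ and $K$, to $|k|^{-1}(-\p_t\W_I/\W_I)(t,k,\eta)$. Since the source in \eqref{Flin} has a $\pn$ projection and the initial data vanishes, $F_{k=0}\equiv 0$, and we may restrict attention to $k\neq 0$, where $\Delta_0$ has principal Fourier symbol $-k^2 - B_0^2(\eta-kt)^2 = -k^2(1+B_0^2(t-\eta/k)^2)$. This identification is what allows the right-hand side of \eqref{Flin} to be measured using the $CK_I$ weight on $\Omega^*$. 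A second linear input I would borrow from \cite{J23} is that, after conjugation by $\W_I$, the nonlocal Rayleigh-type operator $-B_0'\p_z\Delta_0^{-1}\pn$ becomes antisymmetric in the $A$-weighted inner product, modulo errors dominated by $CK_I$ on both factors.

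Next, I would test \eqref{Flin} against $A^2|\p_z|^2 F$ and integrate over $\Torus\times\R$. The time derivative of $\tfrac12\|A|\p_z|F\|_{L^2}^2$ produces, via $-\p_tA/A = -\p_t\W_\nu/\W_\nu - \p_t\W_I/\W_I - \p_t\W_E/\W_E - \p_t\zeta_k/\zeta_k$, the positive Cauchy--Kovalevskaya terms $\sum_{\iota\in\{\nu,I,E\}}CK_\iota[|\p_z|F]$ together with the enhanced-dissipation gain coming from $\zeta_k$. The viscous contribution is $\nu\|A|\p_z|(-\Delta_0)^{1/2}F\|_{L^2}^2$; since $B_0\in[\sigma_0,1/\sigma_0]$ by Assumption \ref{assum:b} and the first-order piece $B_0'(\p_v-t\p_z)$ of $\Delta_0$ is strictly lower order than the principal part, this bounds from below $\tfrac12\nu\|BA|\p_z|(-\Delta_L)^{1/2}F\|_{L^2}^2$ modulo commutator errors absorbable by $CK$ and by a small fraction of the viscous gain. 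The nonlocal linear term contributes at most a small multiple of $CK_I[|\p_z|F]$ by the second input above, and the source is estimated by Cauchy--Schwarz combined with the symbol identification of the previous paragraph:
\begin{align}
\Bigl|\int A^2|\p_z|^2 F\cdot B_0'\p_z\Delta_0^{-1}\pn\Omega^*\,dV\Bigr|\lesssim \Bigl\|A\sqrt{-\tfrac{\p_t\W_I}{\W_I}}|\p_z|F\Bigr\|_{L^2}\Bigl\|A\sqrt{-\tfrac{\p_t\W_I}{\W_I}}\Omega^*\Bigr\|_{L^2},
\end{align}
and the first factor is absorbed into $CK_I[|\p_z|F]$ on the left via Young's inequality. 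Integrating in time and using $F(0)=0$ yields simultaneously the bounds for $\|A|\p_z|F\|_{L^\infty_tL^2}$, $\int_0^T\sum_\iota CK_\iota[|\p_z|F]\,dt$, and $\nu\|BA|\p_z|(-\Delta_L)^{1/2}F\|_{L^2_tL^2}$.

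The time-weighted gradient bound $\|\lb t\rb^{-1}A|\p_z|(-\Delta_L)^{1/2}F\|_{L^\infty_t L^2}$ follows from the elementary symbol estimate $(-\Delta_L)^{1/2}|_{k\neq 0}\lesssim \lb t\rb\lb k,\eta\rb$, after which the extra $\lb k,\eta\rb$ is absorbed into the $\lb k,\eta\rb^s$ factor of $A$ using $s\geq 2$. The principal obstacle is the nonlocal linear term $-B_0'\p_z\Delta_0^{-1}\pn F$: it carries no smallness, is not damped by viscosity, and a naive energy estimate loses a derivative on $F$. The resolution is the symbol-level symmetrization carried out in \cite{J23} for exactly this operator, which is the reason that the multiplier $\W_I$ (rather than a plain Sobolev norm) appears in the definition of $A$. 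A secondary technical point is the lack of self-adjointness of $\Delta_0$ caused by the first-order term $B_0'(\p_v-t\p_z)$; the resulting commutator errors are of strictly lower order than the viscous principal part and are absorbed by half of the viscous dissipation together with the $CK$ terms.
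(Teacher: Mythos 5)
Your plan hinges on a claim I do not believe is true and that is certainly not in \cite{J23}: that ``after conjugation by $\W_I$, the nonlocal Rayleigh-type operator $-B_0'\p_z\Delta_0^{-1}\pn$ becomes antisymmetric in the $A$-weighted inner product, modulo errors dominated by $CK_I$ on both factors,'' and hence that the $F$-self-interaction contributes ``at most a small multiple of $CK_I[|\p_z|F]$.'' What \cite{J23} actually proves is a representation formula for the linearized semigroup (its Proposition 1.2) together with pointwise frequency bounds on the resulting kernel (its Theorem 1.3), obtained by resolvent and limiting-absorption analysis of the stationary Orr--Sommerfeld operator; there is no weighted symmetrization of the Rayleigh operator anywhere in that argument. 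Your own symbol heuristic only delivers $\bigl|\langle A^2|\p_z|^2 F,\ B_0'\p_z\Delta_0^{-1}\pn F\rangle\bigr|\lesssim \|B_0'\|_{L^\infty}\,CK_I[|\p_z|F]$, and this cannot be absorbed into $\sum_\iota CK_\iota[|\p_z|F]$ on the left unless $B_0'$ is small. The paper does not assume $B_0'$ is small --- on the contrary, the Outline section states explicitly that once $b''$ is large the direct energy approach fails, and that the decomposition $\Omega=\Omega^*+F$ is introduced precisely so that the dangerous nonlocal term is siphoned into the $F$-equation, where it is handled by the spectral theory of \cite{J23} rather than by an energy estimate. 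If a weighted symmetrization of $-B_0'\p_z\Delta_0^{-1}$ were available, it would render the spectral-stability hypothesis in Assumption \ref{assum:b} superfluous.

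The paper's actual route avoids this gap. It first proves a priori $L^2_t$ bounds on $F$ (Proposition \ref{prop:linear:bounds:k}, in particular \eqref{linpro3.5}) directly from the Duhamel representation \eqref{rep:for:duh}, whose ingredients are Lemma \ref{lem:rep} (a modification of \cite{J23} Proposition 1.2) and Proposition \ref{shift} (a modification of \cite{J23} Theorem 1.3). Only then is the $A$-weighted energy identity \eqref{lin:energy} run on $|\p_z|F$, and the full source $\int A|\p_z|F\cdot A\p_z[B_0'\Delta_0^{-1}\pn(F+\Omega^*)]\,dV$ is bounded by Cauchy--Schwarz as in \eqref{F:rhs}, with the $F$-dependent half of the product estimated using \eqref{linpro3.5} as an external $L^2_t$ input rather than being fed back and absorbed on the left. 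The remaining parts of your sketch --- the treatment of the viscous dissipation via the $B_0\in[\sigma_0,1/\sigma_0]$ ellipticity, the symbol comparison $\p_z\Delta_0^{-1}\approx |k|^{-1}\sqrt{-\p_t\W_I/\W_I}$ used to handle the $\Omega^*$-source, and the $\lb t\rb^{-1}$ weighted estimate for $(-\Delta_L)^{1/2}F$ via $s\geq 2$ --- are compatible with the paper's computation, but they cannot close the argument without the representation-formula input that replaces your unjustified symmetrization claim.
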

We defer the proof of Proposition \ref{linCK} to Section \ref{sec:linear}.

\section{Preliminaries}

\subsection{Basic Properties of $B(t,v)$ and Commutators}
Recall that $B(t,v) = \p_yb(t,y)$ and that $b(t,y)$ satisfies \eqref{heat}. We want to express the assumptions we made on $b(t,y)$ in Assumption \ref{assum:b} and express them in terms of the new coordinate system. These properties will follow from the following formula for $B(t,v)$:
\begin{align}
\label{B:rep}
B(t,v) = \int_\R  \frac{1}{4 \pi \nu t} \exp(\frac{|b^{-1}(t,w)|^2}{4\nu t} )  \p_y b(  b^{-1}(t, v) - b^{-1}(t,w) ) (1 / \p_y b(t,w)) \, dw.
\end{align}

As a consequence of \eqref{B:rep}, Assumption \ref{assum:b}, and properties of Gevrey spaces (see \cite{IJ20, J20})  we have the following:
\begin{lemma}
\label{lem:B:regularity}
There exists $\theta_0 \in (0,1)$ such for all $t \geq 0$, 
\begin{subequations}
    \begin{align}
   \label{B:smooth}
 \norm{B(t)}_{L_v^{\infty}}  +  \sup_{\xi \in \R} e^{\theta_0 \lb \xi \rb^{1/2}} |\widehat{\p_vB}(t,\xi)| &\leq  1/\theta_0, \\
  \label{B:bdd}
    B(t,v) &\geq \theta_0.
\end{align}
\end{subequations}

\end{lemma}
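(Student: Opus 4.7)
The plan is to prove the three estimates in two stages: the pointwise bounds \eqref{B:bdd} and the $L^\infty$ piece of \eqref{B:smooth} via the parabolic maximum principle, and the Gevrey-$1/2$ bound on $\wh{\p_v B}$ via a change-of-coordinates argument that combines the heat-equation structure for $b''$ with the Gevrey composition/quotient lemmas of \cite{IJ20, J20}.

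For the pointwise bounds, I would first note that $\p_y b(t,y)$ satisfies the linear heat equation $\p_t f = \nu \p_y^2 f$ with initial datum $b'(y) \in [\sigma_0, 1/\sigma_0]$ by Assumption~\ref{assum:b}. The parabolic maximum principle then gives $\p_y b(t,y) \in [\sigma_0, 1/\sigma_0]$ for all $t \geq 0$ and $y \in \R$. Since $B(t,v) = \p_y b(t, b^{-1}(t,v))$, the same two-sided bound is inherited by $B$, and choosing $\theta_0 \leq \sigma_0$ (with further smallness imposed below) yields both $\|B(t)\|_{L^\infty_v} \leq 1/\theta_0$ and $B(t,v) \geq \theta_0$.

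For the Gevrey-$1/2$ bound on $\p_vB$, I would observe that $b''(t,y)$ also solves the heat equation, so $\widehat{b''}(t,\xi) = e^{-\nu t \xi^2}\widehat{b''}(0,\xi)$, and the Gevrey-$1/2$ hypothesis on $b''(0,\cdot)$ persists uniformly in $t \geq 0$, as already recorded in \eqref{assump:b_t}. A chain-rule computation gives
\begin{align*}
\p_v B(t,v) = \frac{b''(t, b^{-1}(t,v))}{B(t,v)},
\end{align*}
so the task reduces to showing that composing $b''(t,\cdot)$ with the diffeomorphism $b^{-1}(t,\cdot)$ and dividing by the bounded-below function $B$ preserves Gevrey-$1/2$ regularity with at most a controlled loss of radius. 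Both properties are available from the standard composition and quotient lemmas of \cite{IJ20, J20}, provided one has Gevrey-$1/2$ control on $\p_v b^{-1}(t,v) = 1/B(t,v)$ with derivative in $[\sigma_0,1/\sigma_0]$.

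The main obstacle is the mild circularity in the previous step: applying the composition lemma to $b^{-1}(t,\cdot)$ requires Gevrey-$1/2$ control on $1/B$, which is precisely what we are trying to establish for $B$. The representation formula \eqref{B:rep} is the device that breaks this circle: it writes $B$ directly as a heat-kernel integral involving $b^{-1}(t,\cdot)$ applied to the spatial variable of integration, so that the Fourier transform of $B$ (and hence of $\p_v B$) can be estimated directly using the Gaussian decay of the kernel together with the Gevrey-$1/2$ regularity of the initial profile $b_{\rm in}$, without invoking a quotient lemma for $B$ itself. Finally, the parameter $\theta_0 \in (0,1)$ must be chosen as a sufficiently small universal fraction of $\sigma_0$ so that all of the radius losses incurred by composition and quotient are absorbed uniformly in $t \geq 0$.
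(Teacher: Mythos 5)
The maximum principle step for the pointwise bounds is correct, the chain rule reduction $\p_v B = b''(t,b^{-1}(t,\cdot))/B(t,\cdot)$ is the right starting point, and the observation that the heat flow preserves (indeed improves) the Gevrey-$1/2$ Fourier decay of $b''$ uniformly in $t$ is also correct. The circularity you identify in applying the composition lemma to $b^{-1}(t,\cdot)$ is a genuine feature of the problem.

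Your proposed resolution via \eqref{B:rep} does not work, however. Even after removing the $1/B(t,w)$ Jacobian factor by changing variables back to $y'$, the representation still expresses $B(t,\cdot)$ as the composition of the heat-smoothed initial profile $\p_y b(t,\cdot)$ with the inverse map $b^{-1}(t,\cdot)$; the Gaussian decay acts on the $y'$-Fourier side and tells you nothing about $\widehat{\p_v B}$ until you control the regularity of $b^{-1}(t,\cdot)$ in the $v$ variable. So the circle has not been broken, and a ``quotient lemma for $B$ itself'' has not been avoided. The missing ingredient is an inverse function theorem in Gevrey classes --- one of the ``properties of Gevrey spaces'' the paper is pointing to in \cite{IJ20, J20}, distinct from the composition and quotient lemmas you invoke. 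Given $\p_y b(t,\cdot)\in[\sigma_0,1/\sigma_0]$ and $\widehat{\p_y^2 b}(t,\cdot)$ with Gevrey-$1/2$ decay, that lemma delivers Gevrey-$1/2$ control of $\p_v^2 b^{-1}(t,\cdot)$ with constants depending only on $\sigma_0$, by a direct induction on derivative order that does not presuppose any a priori regularity of $b^{-1}$. Once this is available, your composition and quotient steps close exactly as you describe, and your final remark about shrinking $\theta_0$ below $\sigma_0$ to absorb the radius losses is correct.
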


A key technical difficulty is dealing with commuting various Fourier multipliers past multiplication by very smooth, but not necessarily decaying, coefficients. In the following, we record various lemmas that we will frequently use throughout the paper.

\begin{lemma}
    \label{lem:C:bdd}
    Let $f \in H^s(\TT \times \R)$, $s \geq 2$,  and  $\mathfrak{C}$ satisfies \eqref{B:smooth}. Then we have that  
\begin{align}
        \norm{  \lb \p_z, \p_v \rb^s (\mathfrak{C} f)  }_{L_{}^2} \lesssim_{\mathfrak{C}}  \norm{\lb \p_z, \p_v \rb^sf}_{L_{}^2}. 
    \end{align}
Moreover, if $\mathfrak{C}$ additionally satisfies \eqref{B:bdd} 
then we have 
\begin{align}
\label{C:equiv}
   \norm{  \lb \p_z, \p_v \rb^s (\mathfrak{C} f)  }_{L_{}^2} \approx_{_{\mathfrak{C}}} \norm{\lb \p_z, \p_v \rb^sf}_{L_{}^2}. 
    \end{align}
\end{lemma}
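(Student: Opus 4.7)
The plan is to reduce the two-dimensional estimate to a one-dimensional Moser estimate in the $v$-variable, exploiting the fact that $\mathfrak{C}=\mathfrak{C}(v)$ is independent of $z$. First I would use the elementary equivalence $\lb k,\eta\rb^s\approx\lb k\rb^s+\lb\eta\rb^s$ to decompose
\[
\norm{\lb \p_z, \p_v\rb^s(\mathfrak{C} f)}_{L^2}\approx \norm{\lb \p_z\rb^s(\mathfrak{C} f)}_{L^2}+\norm{\lb \p_v\rb^s(\mathfrak{C} f)}_{L^2}.
\]
The $z$-piece is immediate: $\mathfrak{C}$ commutes with $\lb \p_z\rb^s$, and \eqref{B:smooth} gives $\norm{\lb \p_z\rb^s(\mathfrak{C} f)}_{L^2}\leq \theta_0^{-1}\norm{\lb \p_z\rb^s f}_{L^2}$. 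The main work is concentrated in the $v$-piece.

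For the $v$-piece, taking the $z$-Fourier transform and applying Plancherel reduces matters to a uniform-in-$k$ one-dimensional estimate, namely $\norm{\lb \p_v\rb^s(\mathfrak{C}\widehat{f}_k)}_{L^2_v}\lesssim_{\mathfrak{C}}\norm{\lb \p_v\rb^s \widehat{f}_k}_{L^2_v}$. The key technical input is that all $L^\infty$-norms of derivatives of $\mathfrak{C}$ are controlled by the Gevrey hypothesis. Indeed, for $j\geq 1$ one has $\widehat{\p_v^j\mathfrak{C}}(\xi)=(i\xi)^{j-1}\widehat{\p_v\mathfrak{C}}(\xi)$, so Hausdorff–Young yields
\[
\norm{\p_v^j \mathfrak{C}}_{L^\infty_v}\lesssim \int_\R |\xi|^{j-1}|\widehat{\p_v\mathfrak{C}}(\xi)|\,d\xi\leq \theta_0^{-1}\int_\R |\xi|^{j-1} e^{-\theta_0\lb \xi\rb^{1/2}}\,d\xi\lesssim_{j,\theta_0}1,
\]
with the case $j=0$ handled directly by $\norm{\mathfrak{C}}_{L^\infty}\leq\theta_0^{-1}$. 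Then the standard Leibniz/interpolation Moser inequality
\[
\norm{\lb \p_v\rb^s(\mathfrak{C} g)}_{L^2_v}\lesssim\sum_{j=0}^{\lceil s\rceil}\norm{\p_v^j \mathfrak{C}}_{L^\infty}\norm{\lb\p_v\rb^{s-j}g}_{L^2_v}
\]
closes the forward estimate after summing in $k$.

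For the reverse inequality in \eqref{C:equiv}, I would write $f=\mathfrak{C}^{-1}(\mathfrak{C} f)$ and apply the forward estimate with $\mathfrak{C}^{-1}$ in place of $\mathfrak{C}$. To justify this, one needs $\mathfrak{C}^{-1}$ to satisfy hypotheses of the same form: the lower bound \eqref{B:bdd} gives $\norm{\mathfrak{C}^{-1}}_{L^\infty}\leq\theta_0^{-1}$, and by Faà di Bruno, $\p_v^j(\mathfrak{C}^{-1})$ is a polynomial in $\mathfrak{C}^{-1},\p_v\mathfrak{C},\ldots,\p_v^j\mathfrak{C}$, each of which is uniformly bounded by the argument above. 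I do not expect a substantive obstacle here: the whole proof is a standard Moser product estimate, and the Gevrey hypothesis on $\p_v\mathfrak{C}$ supplies far more regularity than the $s\geq 2$ product rule actually requires.
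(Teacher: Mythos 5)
Your proof is correct and follows essentially the same strategy as the paper: reduce to a one-dimensional estimate in $v$ via the $z$-Fourier transform (using that $\mathfrak{C}$ is $z$-independent), apply a fractional Leibniz/Moser product estimate, and for the equivalence write $f=\mathfrak{C}^{-1}(\mathfrak{C}f)$. The only cosmetic difference is that you use an inhomogeneous Moser-type sum (after controlling $\|\p_v^j\mathfrak{C}\|_{L^\infty}$ from the Gevrey hypothesis via Hausdorff--Young), while the paper invokes the homogeneous Kato--Ponce rule from \cite{Li19kato} after splitting $\lb\p_v\rb^s\lesssim 1+|\p_v|^s$; both land in the same place.
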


\begin{lemma}
\label{lem:com:half}
Let $f \in H^s(\TT \times \R)$ and $\mathfrak{C}$ satisfy $ \p_v \mathfrak{C} \in  W^{\ceil{s} + 3,1}(\R)$. Then we have
\begin{subequations}
    \begin{align}
\label{halfhelm}
    \norm{ \lb k, \p_v \rb^s   [\mathfrak{C}, (-\Delta_L)^{-1/2}  ]  \pn f }_{L_{}^2} \lesssim   
\norm{\p_v \mathfrak{C}}_{W^{\ceil{s} + 3, 1  }} \norm{  \lb k, \p_v \rb^s (-\Delta_L)^{-1/2}   \pn f }_{L_{}^2}, \\
\label{helm}
\norm{ \lb k, \p_v \rb^s   [\mathfrak{C}, (-\Delta_L)^{-1}  ]  \pn f }_{L_{}^2} \lesssim   
\norm{\p_v \mathfrak{C}}_{W^{\ceil{s} + 3, 1  }}\norm{  \lb k, \p_v \rb^s (-\Delta_L)^{-1}   \pn f }_{L_{}^2}.
\end{align}
\end{subequations}

\end{lemma}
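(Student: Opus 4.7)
The plan is to exploit the algebraic commutator identity
\begin{align}
[\mathfrak{C}, (-\Delta_L)^{-1}] = (-\Delta_L)^{-1}\,[\Delta_L, \mathfrak{C}]\,(-\Delta_L)^{-1},
\end{align}
together with the fact that, since $\mathfrak{C}$ depends only on $v$, a direct calculation gives
\begin{align}
[\Delta_L, \mathfrak{C}] = 2(\p_v\mathfrak{C})(\p_v - t\p_z) + (\p_v^2\mathfrak{C}),
\end{align}
so that every appearance of $\mathfrak{C}$ itself is replaced by $\p_v\mathfrak{C}$ or $\p_v^2\mathfrak{C}$, both of which lie in $W^{\ceil{s}+3,1}(\R)$ by hypothesis and therefore have integrable, rapidly decaying Fourier transforms in $\eta$.

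For \eqref{helm} I would first write
\begin{align}
\lb k, \p_v\rb^s [\mathfrak{C}, (-\Delta_L)^{-1}]\pn f = \lb k, \p_v\rb^s(-\Delta_L)^{-1}\bigl[2(\p_v\mathfrak{C})(\p_v - t\p_z) + \p_v^2\mathfrak{C}\bigr](-\Delta_L)^{-1}\pn f,
\end{align}
then distribute the weight $\lb k, \p_v\rb^s$ by a standard paraproduct/Moser argument on the $v$-convolution
\begin{align}
\bigl((\p_v\mathfrak{C})\cdot g\bigr)^\wedge(k,\eta) = \int \widehat{\p_v\mathfrak{C}}(\eta-\xi)\,\hat{g}(k,\xi)\,d\xi,
\end{align}
splitting into a low-high piece, where $\lb k, \eta\rb^s \lesssim \lb k, \xi\rb^s$, and a high-low piece, where one pays a factor of $\lb \eta-\xi\rb^s$ against the Fourier transform of $\p_v\mathfrak{C}$ (this is the step that forces the $\ceil{s}+3$ regularity on $\p_v\mathfrak{C}$, so that $\lb \eta-\xi\rb^s \widehat{\p_v\mathfrak{C}}(\eta-\xi)$ remains $L^1_{\eta-\xi}$). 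On the innermost factor, I use the elementary symbol bound $|(\p_v - t\p_z)(-\Delta_L)^{-1}|_k(\eta) = |\eta - kt|/(k^2+(\eta-kt)^2) \leq (k^2+(\eta-kt)^2)^{-1/2}$ so that $(\p_v - t\p_z)(-\Delta_L)^{-1}\pn$ and $(-\Delta_L)^{-1}\pn$ are both absorbed into $(-\Delta_L)^{-1}\pn f$ (on nonzero modes $|k|^{-1}$ gives additional room). This yields \eqref{helm}.

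For \eqref{halfhelm} I would use the subordination identity
\begin{align}
(-\Delta_L)^{-1/2} = \frac{1}{\pi}\int_0^\infty \lambda^{-1/2}\,(\lambda I - \Delta_L)^{-1}\,d\lambda,
\end{align}
apply the same resolvent commutator identity to $(\lambda I - \Delta_L)^{-1}$ uniformly in $\lambda$, and rerun the paraproduct argument above on each resolvent factor. Integrating the resulting pointwise bound against $\lambda^{-1/2}\,d\lambda$ and using
\begin{align}
\int_0^\infty \frac{\lambda^{-1/2}}{(\lambda + k^2 + (\eta - kt)^2)^{2}}\,d\lambda \approx \frac{1}{(k^2+(\eta-kt)^2)^{3/2}}
\end{align}
redistributes the powers so that exactly one factor of $(-\Delta_L)^{-1/2}$ lands on $\pn f$ on the right-hand side, giving \eqref{halfhelm}.

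The main obstacle will be the paraproduct step: carefully tracking which piece absorbs the Sobolev weight $\lb k, \p_v\rb^s$ without breaking the cancellation between the two resolvents, and ensuring that the nonlocal shift $t\p_z$ does not interact poorly with the $v$-convolution (it doesn't, because the convolution is only in $\eta$ while $t$ only shifts the $\eta$-center of the symbol). The $\ceil{s}+3$ regularity demand on $\p_v\mathfrak{C}$ arises precisely from needing both $\lb \eta-\xi\rb^s \widehat{\p_v\mathfrak{C}}(\eta-\xi)$ and a couple of extra derivatives (to handle the $\p_v^2\mathfrak{C}$ term and the inner resolvent symbol) to remain $L^1_{\eta-\xi}$.
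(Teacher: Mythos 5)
Your route---the resolvent identity $[\mathfrak{C},(-\Delta_L)^{-1}]=-(-\Delta_L)^{-1}[\Delta_L,\mathfrak{C}](-\Delta_L)^{-1}$ (note the sign), the explicit $[\Delta_L,\mathfrak{C}]=2(\p_v\mathfrak{C})(\p_v-t\p_z)+\p_v^2\mathfrak{C}$, and subordination for the half power---is genuinely different from the paper's proof, which writes $(-\Delta_L)^{-1/2}$ on each nonzero $z$-mode as convolution against a modified Bessel kernel $K_0(k(v-w))$, represents the commutator as an integral operator with kernel $(\mathfrak{C}(v)-\mathfrak{C}(w))K_0(k(v-w))$, bounds the two-variable Fourier transform of that kernel by integration by parts, and closes with Young's convolution inequality. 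The Bessel-kernel representation treats \eqref{halfhelm} and \eqref{helm} on exactly the same footing, while your route is more algebraic but pushes the difficulty into the subordination integral.

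There is, however, a genuine gap in your argument for \eqref{helm}. After the resolvent identity you must control $\lb k,\p_v\rb^s(-\Delta_L)^{-1}(\p_v\mathfrak{C})(\p_v-t\p_z)(-\Delta_L)^{-1}\pn f$ in $L^2$. The symbol bound you invoke, $|\eta-kt|/(k^2+(\eta-kt)^2)\leq(k^2+(\eta-kt)^2)^{-1/2}$, applied to the inner factor $(\p_v-t\p_z)(-\Delta_L)^{-1}\pn f$ produces $\norm{\lb k,\p_v\rb^s(-\Delta_L)^{-1/2}\pn f}_{L^2}$ on the right. This is the symbol of $(-\Delta_L)^{-1/2}$, which on nonzero modes is a \emph{larger} quantity in $L^2$ than the $(-\Delta_L)^{-1}\pn f$ demanded by \eqref{helm}; discarding the outer resolvent as a bounded multiplier only overshoots further. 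The missing half power must come from the outer $(-\Delta_L)^{-1}$, but the coefficient $\p_v\mathfrak{C}$ sits between the two resolvents and blocks the symbol calculus. The fix is to push the twisted derivative across the coefficient by the product rule, $(\p_v\mathfrak{C})(\p_v-t\p_z)g=(\p_v-t\p_z)[(\p_v\mathfrak{C})g]-(\p_v^2\mathfrak{C})g$; then both outer operators $(-\Delta_L)^{-1}(\p_v-t\p_z)$ and $(-\Delta_L)^{-1}$ are bounded Fourier multipliers on $\pn L^2$ commuting with $\lb k,\p_v\rb^s$, and your paraproduct estimate acts directly on $\p_v\mathfrak{C}$ or $\p_v^2\mathfrak{C}$ multiplying $g=(-\Delta_L)^{-1}\pn f$, giving the correct right-hand side. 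The same commutation, done uniformly in $\lambda$ with a split of the subordination integral into $\lambda\lesssim k^2+(\eta-kt)^2$ and its complement, is also what makes your argument for \eqref{halfhelm} close: the symbol-level identity $\int_0^\infty\lambda^{-1/2}(\lambda+m)^{-2}\,d\lambda\approx m^{-3/2}$ cannot be applied as written with $\p_v\mathfrak{C}$ wedged in the middle.
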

\begin{lemma}
    \label{lem:com:ghost}
    Let $f \in L^2(\TT \times \R)$, $\mathfrak{C}$ satisfy $\mathfrak{C} \in W^{1,\infty}(\R)$, and assume that 
    $m(t,k, \eta) \in \{ \mathcal{W}_{\nu}, \mathcal{W}_I, \mathcal{W}^\circ_I, \mathcal{W}_E \} $. Then we have
    \begin{align}
    \label{com:ghost}
        \norm{ [m, \mathfrak{C}] f }_{L_{}^2} \lesssim \frac{1}{K}\norm{m f}_{L_{}^2},
    \end{align}
    where $K$ is from the definition of the multipliers \eqref{multipliers}. 
\end{lemma}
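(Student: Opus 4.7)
The plan is to reduce the two-dimensional commutator estimate to a one-dimensional Fourier multiplier commutator, mode by mode in $z$. Because $\mathfrak{C}=\mathfrak{C}(v)$, multiplication by $\mathfrak{C}$ commutes with the $z$-Fourier transform, and $m=m(t,k,\eta)$ already acts diagonally in $k$, so it suffices to show that for each fixed $k$, $\|[m(t,k,\cdot),\mathfrak{C}]g\|_{L^2(\R)}\lesssim K^{-1}\|g\|_{L^2(\R)}$ uniformly in $k$. Summing in $k$ via Plancherel in $z$ and exploiting the uniform lower bound $m\geq\pi/2$ (valid for each of the four multipliers, including $\W_I^\circ=\pi+\arctan(\cdots)$) then replaces $\|f\|_{L^2}$ by $(2/\pi)\|mf\|_{L^2}$ and produces the stated estimate.

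The key analytic input is the pointwise Lipschitz bound $\sup_{t,k,\eta}|\partial_\eta m(t,k,\eta)|\lesssim K^{-1}$ for every multiplier in the list. For $\W_I$ this is direct: $|\partial_\eta\W_I|=(|k|K)^{-1}(1+K^{-2}(t-\eta/k)^2)^{-1}\leq (|k|K)^{-1}\leq K^{-1}$ since $|k|\geq 1$. For $\W_\nu$, the prefactor is $\nu^{1/3}|k|^{-1/3}K^{-1}$, bounded by $K^{-1}$ on the support $1\leq |k|\leq \nu^{-1/2}$ with $\nu\leq 1$. The shifted zero-mode case $\W_I^\circ$ is immediate, and for $\W_E$ termwise differentiation in $\ell$ followed by the convergent series bound $\sum_{\ell\neq 0}|\ell|^{-2}(1+|\ell|)^{-1}\lesssim 1$ gives $|\partial_\eta\W_E|\lesssim K^{-1}$. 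With this in hand, expanding the commutator in the $v$-Fourier variable yields
\begin{align}
\bigl([m,\mathfrak{C}]f\bigr)^{\wedge}(k,\eta)=\int_{\R}[m(t,k,\eta)-m(t,k,\eta')]\,\wh{\mathfrak{C}}(\eta-\eta')\,\wh f(k,\eta')\,d\eta'.
\end{align}
Combining the Lipschitz estimate with the identity $(\eta-\eta')\wh{\mathfrak{C}}(\eta-\eta')=-i\,\wh{\partial_v\mathfrak{C}}(\eta-\eta')$ dominates the integrand by $K^{-1}|\wh{\partial_v\mathfrak{C}}(\eta-\eta')|\,|\wh f(k,\eta')|$; Young's convolution inequality in $\eta$ together with Plancherel in $v$ then gives $\|[m,\mathfrak{C}]f\|_{L^2_{z,v}}\lesssim K^{-1}\|\wh{\partial_v\mathfrak{C}}\|_{L^1_\eta}\|f\|_{L^2_{z,v}}$.

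The main technical obstacle is interpretational: the bare hypothesis $\mathfrak{C}\in W^{1,\infty}(\R)$ does not by itself guarantee $\wh{\partial_v\mathfrak{C}}\in L^1(\R)$, so the implicit constant must be controlled by a Fourier-side norm of $\partial_v\mathfrak{C}$. In every application in the paper, $\mathfrak{C}$ is built from $B(t,v)$, $B_0(v)$, or composites such as $B^2$, and the Gevrey-$\tfrac12$ bound of Lemma \ref{lem:B:regularity} yields exponential decay of $\wh{\partial_v\mathfrak{C}}$ at infinity governed solely by $\theta_0$, so $\|\wh{\partial_v\mathfrak{C}}\|_{L^1}$ is finite and uniform. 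A self-contained alternative is to compute the convolution kernels of $\partial_\eta m(t,k,\cdot)$ explicitly: the inverse Fourier transform of $(1+K^{-2}(t-\eta/k)^2)^{-1}$ is a localized complex exponential of spatial width $1/(K|k|)$ (respectively $1/K$ for $\W_I^\circ$), so the kernel of $[m,\mathfrak{C}]$ inherits an $L^1_v$ norm bounded by $K^{-1}\|\partial_v\mathfrak{C}\|_{L^\infty}$ and Schur's test concludes; the only care needed for $\W_E$ is to treat the sum over $\ell$ term by term, which converges absolutely thanks to the $|\ell|^{-2}$ prefactor.
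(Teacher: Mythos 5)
Your proposal is, taken as a whole, correct, but the argument you lead with does not close under the lemma's stated hypothesis, as you yourself observe. The Fourier-side Lipschitz-plus-Young route yields
\begin{align}
\norm{[m,\mathfrak{C}]f}_{L^2}\lesssim K^{-1}\norm{\widehat{\partial_v\mathfrak{C}}}_{L^1}\norm{f}_{L^2},
\end{align}
which needs $\widehat{\partial_v\mathfrak{C}}\in L^1(\R)$; that is strictly more than $\mathfrak{C}\in W^{1,\infty}(\R)$. The physical-space fallback in your final paragraph is the correct fix and is essentially the paper's proof: the commutator is an integral operator with kernel $(\mathfrak{C}(v)-\mathfrak{C}(w))\,T_k(v-w)$, where $T_k$ is (up to a constant and a phase $e^{-ikt(v-w)}$) the inverse Fourier transform of the non-constant part of $m(t,k,\cdot)$; for the $\arctan$-type symbols this is a $\tfrac{1}{v-w}$ singularity multiplied by an exponential of scale $\sim K$; the singularity is cancelled by the difference quotient $(\mathfrak{C}(v)-\mathfrak{C}(w))/(v-w)$, which is bounded by $\norm{\mathfrak{C}}_{W^{1,\infty}}$; the surviving exponential factor has $L_v^1$ norm $\lesssim 1/K$; and Young's inequality (or Schur's test) concludes. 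The paper writes this out only for $\W_E$ -- the hardest case because of the $\ell$-sum, handled term by term exactly as you suggest -- and notes the others are similar. Your Lipschitz bounds $\sup_{t,\eta}|\partial_\eta m(t,k,\eta)|\lesssim K^{-1}$ for all four multipliers are correct, but they feed a Fourier-side convolution that cannot be completed under $W^{1,\infty}$; the load-bearing step of your write-up is the Schur-test paragraph, not the one you lead with.
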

We also have the following lemma that will be useful for controlling the difference between $B(t,v)$ and $B_0(v)$. 
\begin{lemma}
\label{lem:B:diff}
Assume that $f\in H^s(\TT \times \R)$. Then for any $\gamma \in (0,1)$, there exists a $c_{\gamma}$  such that  if $\nu t  \leq c_{\gamma}$ then
\begin{align}
\label{B:difference}
    \norm{\lb \p_z , \p_v \rb^s [ (\p_v^{\alpha}B(t, \cdot) - \p_v^{\alpha}B(0, \cdot))  f] }_{L_{}^{2}} \leq \gamma \norm{ \lb \p_z , \p_v \rb^s f}_{L_{}^2}, \quad \alpha \in \{0,1 \},
\end{align}
\end{lemma}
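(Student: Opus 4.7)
The strategy is to exploit the fact that $B(t,v)$ itself satisfies a closed nonlinear parabolic PDE in the $(t,v)$ frame, which lets us represent $\p_v^{\alpha}B(t,\cdot) - \p_v^{\alpha}B(0,\cdot)$ as $\nu$ times the time integral of a quantity that is uniformly smooth and bounded. Smallness in $\nu t$ then follows immediately, and multiplication by this small function on $H^s(\TT\times\R)$ is controlled by a standard product estimate.

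First, starting from $\p_t b = \nu \p_y^2 b$ and the defining identity $B(t,b(t,y)) = \p_y b(t,y)$, I would differentiate in $t$ at fixed $y$. Using $\p_y v = B$, a direct chain-rule computation yields $\p_y^2 b = B\,\p_v B$ and $\p_y^3 b = B^2\p_v^2 B + B(\p_v B)^2$; substituting these into $\p_t B + \p_v B \cdot \p_t b = \p_t \p_y b = \nu \p_y^3 b$ and cancelling the $\nu B(\p_v B)^2$ contributions gives the closed equation
\begin{equation*}
    \p_t B = \nu B^2 \p_v^2 B.
\end{equation*}
Integrating in time yields, for $\alpha \in \{0,1\}$,
\begin{equation*}
    \p_v^{\alpha} B(t,v) - \p_v^{\alpha} B(0,v) = \nu \int_0^t \p_v^\alpha(B^2 \p_v^2 B)(\tau,v)\, d\tau.
\end{equation*}

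Next, I would promote the Gevrey-type estimates of Lemma \ref{lem:B:regularity} into uniform-in-$\tau$ bounds on $\|B(\tau,\cdot)\|_{W^{m,\infty}_v}$ for every $m \in \mathbb{N}$. Since $|\widehat{\p_v B}(\tau,\xi)| \leq \theta_0^{-1} e^{-\theta_0 \lb \xi \rb^{1/2}}$ by \eqref{B:smooth}, Fourier inversion immediately gives $\|\p_v^m B(\tau,\cdot)\|_{L^\infty_v} \lesssim_{m,\theta_0} 1$ for every $m \geq 1$, while $\|B\|_{L^\infty_v} \leq \theta_0^{-1}$ is contained in the same line of \eqref{B:smooth}. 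The Leibniz rule therefore gives $\|\p_v^\alpha(B^2 \p_v^2 B)(\tau,\cdot)\|_{W^{\ceil{s},\infty}_v} \leq C_s$ uniformly in $\tau$, and plugging into the integral representation above yields
\begin{equation*}
    \|\p_v^{\alpha} B(t,\cdot) - \p_v^{\alpha} B(0,\cdot)\|_{W^{\ceil{s},\infty}_v} \leq C_s\, \nu t.
\end{equation*}

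Finally, since $g(v) := \p_v^\alpha B(t,v) - \p_v^\alpha B(0,v)$ depends only on $v$, the standard product estimate (Leibniz in $v$ together with the exact commutation $[\p_z^k, g] = 0$) yields
\begin{equation*}
    \|\lb \p_z,\p_v \rb^s (gf)\|_{L^2(\TT \times \R)} \lesssim_s \|g\|_{W^{\ceil{s},\infty}_v}\, \|\lb \p_z,\p_v \rb^s f\|_{L^2(\TT \times \R)}.
\end{equation*}
Choosing $c_\gamma$ small enough that the product $C_s \cdot (\text{product-estimate constant}) \cdot c_\gamma \leq \gamma$ closes the lemma. The only mildly nontrivial point in the whole argument is the derivation of the closed equation $\p_t B = \nu B^2 \p_v^2 B$; once that identity is in hand the remaining work is purely Sobolev bookkeeping.
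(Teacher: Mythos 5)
Your proposal is correct and follows essentially the same route as the paper's proof: the key identity $\p_t B = \nu B^2 \p_v^2 B$ (which the paper states without the chain-rule derivation you supply) is integrated in time to expose the factor $\nu t$, and the $H^s$ product estimate (Lemma \ref{lem:C:bdd}) together with the regularity bounds of Lemma \ref{lem:B:regularity} then closes the argument.
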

The proofs of Lemmas \ref{lem:C:bdd} - \ref{lem:B:diff} can be found in Appendix \ref{app:com}.

\subsection{Elliptic Lemmas}
Assume that $\varphi', X': \TT \times \R \to \R$ are $C^2$ functions satisfying
\begin{equation}
\label{flat:lap}
(\p_y^2 + \p_x^2) \varphi' = X', \quad \varphi' \to 0,\quad |y| \to \infty.  
\end{equation}
Taking the Fourier transform in $x$, we can write \eqref{flat:lap} as 
\begin{align}
\label{eq:helmholtz}
    (\p_y^2 - k^2) \varphi_k'(y) = X_k'(y) 
\end{align}
where 
\begin{align}
    \varphi_k'(y) &= \frac{1}{2\pi }\int_\TT \varphi'(x,y) e^{-ikx}\, dx,  \\
    X_k'(y) &= \frac{1}{2\pi }\int_\TT X'(x,y) e^{-ikx} \, dx.
\end{align}
For $k \in \mathbb{Z} \setminus \{ 0\}$ we can solve for $\varphi_k'$ via the relationship
\begin{align}
    \varphi_k'(y) = -\int_\R \frac{e^{-|k||y-z|}}{|k|} X_k'(z)\, dz.
\end{align}
A key technical difficulty not present in the Couette case is that in the new variables from \eqref{chg_of_v}, the operator $\Delta_0$ is no longer constant coefficient and does not have a convenient representation as a Fourier multiplier. The next lemma shows that the coefficients do not change the ellipticity properties from the linear case too drastically.
\begin{lemma}
\label{lem:ellip}
Assume that $\mathfrak{C}(t,v)$ satisfies \eqref{B:smooth} and that $X_k \in  C([0,T], H^2(\R))$. Let $\varphi_k \in  C([0,T], H^2( \R)) $ denote the solution to the equation 
\begin{align}
   \de_0\varphi_k = B_0^2 (\p_v - ikt)^2 \varphi_k  + B_0'(\p_v - ikt  )  \varphi_k -k^2 \varphi_k  =  X_k(t ,v ). 
\end{align}
Then for $k \in \mathbb{Z} \setminus \{ 0\}$, $M \in \{1 , (k^2 + (\xi - kt)^2 )^{-1/2} \}$ and $s \geq 0$, we have that
\begin{align}\label{ell_est}
   \lf\|\lb k, \xi \rb^s  M(t, k, \xi )\pn (\Delta_L (\mf{C}  \varphi_k) )^{\wedge}( t, \xi )\rg\|_{L_\xi^2(\R)} \lesssim \lf\|\lb k, \xi \rb^s  M(t, k, \xi)\pn  \widehat{X_k}( t, \xi) \rg\|_{L_\xi^2(\R)}  .
\end{align}

\end{lemma}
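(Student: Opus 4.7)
My plan is to establish the lemma in two steps: first prove the corresponding weighted elliptic regularity bound for $\de_0$ acting on $\varphi_k$ itself, and then insert the smooth multiplier $\mf{C}$ via the commutator results of Section~4. The key inputs are the uniform lower bound $B_0\geq \theta_0$ from Lemma \ref{lem:B:regularity}, the rapid decay of $\widehat{\p_v B_0}$ inherited from Assumption \ref{assum:b}, the spectral stability of $L_k$ in \eqref{rayleigh}, and Lemmas \ref{lem:C:bdd} and \ref{lem:com:half}.

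Step 1: I aim to show, for $k\neq 0$,
\begin{equation*}
\|\lb k,\xi\rb^s M(t,k,\xi)(k^2+(\xi-kt)^2)\hat\varphi_k(\xi)\|_{L^2_\xi}\lesssim \|\lb k,\xi\rb^s M(t,k,\xi)\hat X_k(\xi)\|_{L^2_\xi},
\end{equation*}
using the local/far-field decomposition sketched in the introduction. Pick a cutoff $\chi\in C_c^\infty(\R)$ with $\chi\equiv 1$ on $\mathrm{supp}(B_0')$. On $\mathrm{supp}(1-\chi)$, $B_0'\equiv 0$ and $B_0$ equals the constant $c:=B_0(\pm\infty)\in[\theta_0,1/\theta_0]$, so $\de_0$ reduces to $c^2(\p_v-ikt)^2-k^2$; in the $v$-Fourier variable $\xi$ one has $(k^2+(\xi-kt)^2)\hat\varphi_k=-\tfrac{k^2+(\xi-kt)^2}{c^2(\xi-kt)^2+k^2}\hat X_k$ with ratio bounded by $\theta_0^{-2}$, so the far-field estimate is immediate. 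On $\mathrm{supp}(\chi)$ (a compact $v$-interval) I import the approach of \cite{IJ23}: pairing $\de_0\varphi_k=X_k$ with $-\overline{\varphi_k}$ and integrating by parts in $v$ yields the coercive quadratic form $\int B_0^2|(\p_v-ikt)\varphi_k|^2+k^2|\varphi_k|^2\,dv$ plus a sign-indefinite $B_0'$ cross term, and coercivity is recovered by invoking the spectral stability of $L_k$. The cutoff commutators $[\de_0,\chi]$ are first-order and compactly supported, hence lower order. Higher regularity and the $M$-weighted version follow by commuting $\lb k,\xi\rb^s$ and $M$ through the equation; the resulting commutators with $B_0^2,B_0'$ are absorbed via Lemmas \ref{lem:C:bdd} and \ref{lem:com:half}, using that $M$ depends only on $(k,\xi-kt)$ and thus commutes with $\na_L$ derivatives.

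Step 2: To re-insert $\mf{C}$, I expand
\begin{equation*}
\de_L(\mf{C}\varphi_k)=\mf{C}\,\de_L\varphi_k+2(\p_v\mf{C})(\p_v-ikt)\varphi_k+(\p_v^2\mf{C})\varphi_k.
\end{equation*}
The first term is bounded using Step 1 together with Lemma \ref{lem:C:bdd}, and the remaining two are strictly lower order in $\na_L$-counting and handled identically. For the $M=(k^2+(\xi-kt)^2)^{-1/2}$ case, I instead bound $\|\lb k,\xi\rb^s(-\de_L)^{1/2}(\mf{C}\varphi_k)\|_{L^2}$ by writing $(-\de_L)^{1/2}(\mf{C}\varphi_k)=\mf{C}(-\de_L)^{1/2}\varphi_k+[(-\de_L)^{1/2},\mf{C}]\varphi_k$, controlling the principal piece by Lemma \ref{lem:C:bdd} and Step 1 at one derivative lower, and controlling the commutator by a dual version of Lemma \ref{lem:com:half}.

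The main obstacle is closing Step 1 in the local region without any smallness on $B_0^2-1$ or $B_0'$: absorbing the sign-indefinite $B_0'$ cross term into the coercive principal part is only possible through the spectral-stability hypothesis on $L_k$ in Assumption \ref{assum:b}, which rules out nontrivial kernels and permits a Fredholm-type inversion argument in the style of \cite{J23,IJ23}. A secondary technicality is commuting the weight $M$ past the spatial multipliers $B_0$, $B_0'$, $\mf{C}$ and the cutoff $\chi$ while preserving the $\lb k,\xi\rb^s M$ bookkeeping; this is exactly the role played by Lemmas \ref{lem:C:bdd} and \ref{lem:com:half}.
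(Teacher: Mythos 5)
Your far-field/local decomposition and the use of Lemmas \ref{lem:C:bdd} and \ref{lem:com:half} to commute weights past smooth coefficients are in the spirit of what the paper does. However, there is a genuine misconception at the heart of Step 1, and it concerns the role of the spectral stability hypothesis.

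You claim that the sign-indefinite $B_0'$ cross term in the energy pairing can only be absorbed by invoking the spectral stability of $L_k$, and that a Fredholm-type inversion is required. This is incorrect: $\Delta_0$ has nothing to do with the Rayleigh operator $L_k$ in \eqref{rayleigh}. The spectral stability of $L_k$ (which involves the nonlocal term $b''\varphi$ with $(\p_y^2-k^2)\varphi=g$) enters only in the linear analysis of Section \ref{sec:linear} (Proposition \ref{prop:linear:bounds:k}, imported from \cite{J23}), not in Lemma \ref{lem:ellip}. The operator $\Delta_0$ is invertible for purely algebraic reasons: since $B_0'=B_0\,\p_v B_0$, one has $B_0^2\p_v^2+B_0'\p_v=(B_0\p_v)^2$, so $\Delta_0=e^{-iktv}\big[(B_0\p_v)^2-k^2\big]e^{iktv}$. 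Under the change of variables $v=b(y)$ this is exactly $\p_y^2-k^2$, whose Green's function $e^{-|k||y-y'|}/|k|$ is explicit, exponentially decaying, and manifestly positive. There is no Fredholm alternative, no kernel to rule out, and no role for the stability hypothesis here. In fact, the paper's proof is built on precisely this observation: it writes $\varphi_k$ via the explicit kernel $\mathcal{G}_k(b(y),b(y'))=e^{-|k||y-y'|}/|k|$, decomposes $\mathcal{G}_k=\chi\,\mathcal{G}_{1,k}(v-w)+\mathcal{G}_{2,k}(v,w)$ (Lemma \ref{lem:G:bdds}), obtains pointwise Fourier bounds on both pieces, and concludes by Young's convolution inequality and Lemma \ref{lem:com:half}.

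Your far-field argument also has a gap: on $\mathrm{supp}(1-\chi)$ you cannot simply pass to the $v$-Fourier variable and read off the symbol, because $\varphi_k$ is not supported in the far-field region; the constant-coefficient reduction applies to the coefficients, not to the solution, so an explicit commutator with the cutoff must still be controlled. The paper's Green's-function decomposition handles this globally: $\mathcal{G}_{1,k}$ captures the constant-coefficient behavior (localized near the support of $b''$ by $\chi(w)$) and $\mathcal{G}_{2,k}$ absorbs all cutoff and coefficient-variation corrections with rapid Fourier decay in $\xi+\eta$, eliminating the need for an auxiliary coercivity argument. Step 2 of your proposal (Leibniz expansion of $\de_L(\mf{C}\varphi_k)$ and the commutator $[(-\de_L)^{1/2},\mf{C}]$) is sound and close to what the paper does after the Fourier-side estimate is in hand, but as written it rests on a Step 1 that invokes the wrong mechanism.
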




\begin{proof}
We will closely follow the proof of Lemma 4.5 in \cite{IJ23}. 
The proof when $M(t , k, \xi) = (k^2 + (\xi - kt)^2)^{-1/2}$ is harder so we will only provide the details in that case. 
Define
\begin{align}
    \varphi'(t,x ,y) := \varphi (t, x - tb( y), b(y)), \quad X'(t ,x ,y) := X (t, x - tb(y), b(y)),
\end{align}
which implies
\begin{align}
   e^{-itk b(y)} \varphi_k(t, b(y)) = 
    - \int_{\R} \frac{e^{-|k||y- y'|}}{|k|} e^{-itkb(y') } X_k(t, b(y')) dy'.
\end{align}
Letting $(v,w) = (b(y), b(y'))$, and defining 
\begin{align}
\label{green:helmholtz}
    \mathcal{G}_k(b(y), b(y')) := \frac{e^{-|k||y- y'|}}{|k|},
\end{align}
 we arrive at 
\begin{align}
\label{ell1}
    \varphi_k(t,v) = -\int_{\R} X_k(t,w) e^{ik t (v-w)} \mathcal{G}_k(v,w) (1 / B(w)) dw.
    \end{align}
Multiplying \eqref{ell1} by $\mf{C}$ and using the decomposition \eqref{Gdecomp} for $\G(v,w)$ from Lemma \ref{lem:G:bdds} gives
\begin{align}
\label{ell2}
     \mf{C}(v)\varphi_k(t,v)
     =& -\int_{\R} X_k(t,w) e^{ik t (v-w)}  \mathcal{G}_{1,k }(v -w)\mf{C}(w)\chi(w) (1 / B(w)) dw  \\
     &-\int_{\R} X_k(t,w) e^{ik t (v-w)} (\mf{C}(v) - \mf{C}(w) ) \mathcal{G}_{1,k}(v -w)\chi(w) (1 / B(w)) dw
     \notag \\
     &-\int_{\R} X_k(t,w) e^{ik t (v-w)}\mathcal{G}_{2,k}(v,w) \mathfrak{C}(v)    (1 / B(w)) dw .  \notag
\end{align}
Taking the Fourier transform with respect to $v$ yields
\begin{align}
\label{ell3}
   \widehat{\mathfrak{C}\varphi}(t, k ,\xi ) &=    -\widehat{\mathcal{G}_1}( \xi - kt  )  \widehat{\mathcal{C}X}(t, k ,\xi)  - \int_\R \widehat{X}(t, k, \eta) H(\xi - kt, kt - \eta ) \,  d \eta
\end{align}
where $\mathcal{C}(w) := \mathfrak{C}(w) \chi(w) (1 / B(w)) $ and 
\begin{align}
    H(\alpha, \beta) &:= \int_{\R^2} (\mf{C}(v) - \mf{C}(w) ) \mathcal{G}_1(v -w)\chi(w) (1 / B(w)) e^{-i \alpha v - i \beta w}  dv dw \\
    &+ \int_{\R^2} \mathcal{G}_2(v,w) \mathfrak{C}(v)    (1 / B(w)) e^{-i \alpha v - i \beta w}  dv dw. \notag
\end{align}
By Lemma \ref{lem:G:bdds} and Lemma \ref{lem:com:half}, we have 
\begin{align}
    |H(\alpha, \beta)| \lesssim \frac{1}{(k^2 + \alpha^2) \lb  \alpha + \beta \rb^{\ceil{s}  +2  } }.
\end{align}
Recalling that we are assuming $M(k, \xi, t) = (k^2 + (\xi - kt)^2)^{-1/2}$, \eqref{ell3} implies 
\begin{align}
\label{ell4}
    |\lb k, \xi \rb^s  (k^2 + |\xi - kt|^2)^{1/2}   \widehat{\mathfrak{C}\varphi}(t, k ,\xi )| \lesssim  \frac{\lb k, \xi \rb^s   |\widehat{\mathcal{C}X}(t, k ,\xi)|}{(k^2 + |\xi -kt|^2)^{1/2}} + \int_{\R} \frac{|\widehat{X}(t ,k, \eta)| \lb k, \xi \rb^s   }{   (k^2 + |\xi -kt|^2)^{1/2}  \lb \xi - \eta \rb^{ \ceil{s} +2 }   }  d \eta.
\end{align}
Young's convolution inequality yields
\begin{align}
  \norm{ \lb k, \xi \rb^s  (k^2 + |\xi - kt|^2)^{1/2} \widehat{\mathfrak{C}\varphi}(t, k ,\xi )}_{L_\xi^2} &\lesssim \norm{  \lb k, \xi \rb^s  (k^2 + |\xi - kt|^2)^{-1/2}  \widehat{\mathcal{C}X}(t, k ,\xi )   }_{L_\xi^2}  \\
  &+\norm{  \lb k, \xi \rb^s  (k^2 + |\xi - kt|^2)^{-1/2} \widehat{X} (t, k ,\xi )   }_{L_\xi^2}. \notag   
\end{align}
To finish the proof, we  commute $ \lb k, \xi \rb^s $ and $(k^2 + |\xi - kt|^2)^{-1/2}$ past $\mathcal{C}$ and use
 Lemma \ref{lem:com:half} and the Fractional Leibniz rule
 \begin{align}
     \norm{|\p_v|^s( f g  )}_{L_v^2} \lesssim \norm{f}_{L_v^{\infty}} \norm{|\p_v|^sg}_{L_v^2} + \norm{|\p_v|^sf}_{L_v^{2}} \norm{g}_{L_v^{\infty}},
 \end{align}
from \cite{Li19kato}. This completes the proof.
\end{proof}
Recall from \eqref{profileaux} that we invert $\Delta_0$ and $\Delta_t$. The next lemma shows that, for time sufficiently small relative to $\nu^{-1}$, we can replace $\Delta_0$ by $\Delta_t$ in Lemma \ref{lem:ellip}.
\begin{lemma}
    \label{ellip:perturb}
Assume that the coefficient $\mathfrak{C}(t,v)$ satisfies the regularity assumption \eqref{B:smooth} and that
 $X_k \in  C([0,T], H^2(\R))$. 
Given any $\gamma_* \in (0,1)$, there exists $c_*\in (0,1)$,  such that for $t \nu \leq c_*$, $k \in \mathbb{Z} \setminus \{ 0 \}$ 
and for  $M \in \{1 , (k^2 + (\xi - kt)^2 )^{-1/2} \}$ we have the following
\begin{align}
\label{Delta_diff}
\hspace{-0.7 cm}
    \lf\|\lb k, \xi \rb^s  M(t, k, \xi ) (\Delta_L (\mf{C} (\Delta_t^{-1} - \Delta_0^{-1}) X_k )^{\wedge}(t, \xi )\rg\|_{L_\xi^2(\R)}\leq \gamma_* \lf\|\lb k, \xi \rb^s  M(t, k, \xi) \widehat{X_k}(t, \xi) \rg\|_{L_\xi^2(\R)}.
\end{align}
Moreover, 
\begin{align}
\label{ell_est_t}
    \lf\|\lb k, \xi \rb^s  M(t, k, \xi ) (\Delta_L (\mf{C} \Delta_t^{-1}  X_k )^{\wedge}(t , \xi)\rg\|_{L_\xi^2(\R)}\lesssim   \lf\|\lb k, \xi \rb^s  M(t, k, \xi) \widehat{X_k}(t, \xi) \rg\|_{L_\xi^2(\R)}.
\end{align}
\end{lemma}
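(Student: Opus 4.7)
I will exhibit $\Delta_t$ as a small perturbation of $\Delta_0$ and bootstrap Lemma \ref{lem:ellip}. Unwinding the definitions gives the key identity
\[
\Delta_t - \Delta_0 = (B^2 - B_0^2)(\p_v - t\p_z)^2 + (B' - B_0')(\p_v - t\p_z),
\]
and Lemma \ref{lem:B:diff} ensures that the multipliers $B^\alpha - B_0^\alpha$ ($\alpha \in \{0,1\}$) act as contractions on the relevant Sobolev spaces with arbitrarily small constant $\gamma$, provided $\nu t \leq c_\gamma$. This smallness is the source of the constant $c_*$ in the statement. The plan is to establish \eqref{ell_est_t} first, then derive \eqref{Delta_diff} as a corollary via the resolvent identity.

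\textbf{Step 1: Prove \eqref{ell_est_t}.} Set $\varphi_t := \Delta_t^{-1} X_k$, so that $\Delta_0 \varphi_t = X_k - (\Delta_t - \Delta_0)\varphi_t$. Applying \eqref{ell_est} to $\varphi_t$ gives
\[
\bigl\|\lb k,\xi\rb^s M\, \widehat{\Delta_L(\mathfrak{C}\varphi_t)}\bigr\|_{L_\xi^2} \lesssim \bigl\|\lb k,\xi\rb^s M\, \widehat{X_k}\bigr\|_{L_\xi^2} + \bigl\|\lb k,\xi\rb^s M\, \widehat{(\Delta_t - \Delta_0)\varphi_t}\bigr\|_{L_\xi^2}.
\]
For the perturbation term I use the mode-wise identity $(\p_v - t\p_z)^2 \varphi_t = \Delta_L \varphi_t + k^2 \varphi_t$ to rewrite $(B^2 - B_0^2)(\p_v - t\p_z)^2 \varphi_t$ as $(B^2 - B_0^2)\Delta_L\varphi_t + k^2(B^2 - B_0^2)\varphi_t$. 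Combining Lemma \ref{lem:B:diff} (applied with $\alpha=0,1$) with the commutator estimates from Lemmas \ref{lem:C:bdd}--\ref{lem:com:half} (which let me move $\mathfrak{C}$ in and out of $M$ and of $\Delta_L$, noting that $M = (-\Delta_L)^{-1/2}$ on the $k$-th mode) bounds the perturbation term by $C\gamma \bigl\|\lb k,\xi\rb^s M\, \widehat{\Delta_L(\mathfrak{C}\varphi_t)}\bigr\|_{L_\xi^2}$. Choosing $\gamma$ (hence $c_*$) small enough allows absorption into the left-hand side, yielding \eqref{ell_est_t}.

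\textbf{Step 2: Prove \eqref{Delta_diff}.} Set $\varphi_0 := \Delta_0^{-1} X_k$ and use the resolvent identity
\[
(\Delta_t^{-1} - \Delta_0^{-1}) X_k = -\Delta_t^{-1} \bigl[(\Delta_t - \Delta_0)\varphi_0\bigr].
\]
Applying the now-established \eqref{ell_est_t} with $-(\Delta_t - \Delta_0)\varphi_0$ in place of $X_k$,
\[
\bigl\|\lb k,\xi\rb^s M\, \widehat{\Delta_L(\mathfrak{C}(\Delta_t^{-1}-\Delta_0^{-1})X_k)}\bigr\|_{L_\xi^2} \lesssim \bigl\|\lb k,\xi\rb^s M\, \widehat{(\Delta_t - \Delta_0)\varphi_0}\bigr\|_{L_\xi^2}.
\]
The right-hand side is treated exactly as in Step 1: first use the mode-wise identity to replace $(\p_v - t\p_z)^2\varphi_0$ by $\Delta_L\varphi_0 + k^2\varphi_0$, then apply Lemma \ref{lem:B:diff} to produce a factor of $\gamma_*$, and finally invoke Lemma \ref{lem:ellip} with $\mathfrak{C} \equiv 1$ to control the remaining $\Delta_L\varphi_0$ (and lower-order) terms by $\|\lb k,\xi\rb^s M\widehat{X_k}\|_{L_\xi^2}$. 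This gives \eqref{Delta_diff}.

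\textbf{Main obstacle.} The chief technical difficulty is bookkeeping with the weight $M = (k^2 + (\xi - kt)^2)^{-1/2}$, which nearly cancels one power of $(\p_v - t\p_z)$: when the perturbation $\Delta_t - \Delta_0$ supplies two such derivatives, I must carefully trade them using $(\p_v - t\p_z)^2 = \Delta_L + k^2$ on the $k$-th mode, and then repeatedly commute $\mathfrak{C}$, $M$, and $\Delta_L$ past one another via Lemmas \ref{lem:C:bdd}--\ref{lem:com:half} and the fractional Leibniz rule (as in the proof of Lemma \ref{lem:ellip}), without losing the smallness $\gamma$ from Lemma \ref{lem:B:diff}. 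The resonant region $\xi \approx kt$, where $M$ is singular, is where the commutator bookkeeping is most delicate.
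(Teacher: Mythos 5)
Your proof is correct and uses the same core idea as the paper --- viewing $\Delta_t$ as a small perturbation of $\Delta_0$, with the smallness supplied by Lemma \ref{lem:B:diff} under the constraint $\nu t \leq c_*$, and trading $(\p_v - ikt)^2$ for $\Delta_L + k^2$ on the $k$-th mode before commuting $\mathfrak{C}$, $M$, and the Sobolev weight via Lemmas \ref{lem:C:bdd}--\ref{lem:com:half} --- but you package it differently. The paper proves \eqref{Delta_diff} first: it writes $\Delta_t = (I - R)\Delta_0$ with $R = (B_0^2-B^2)(\p_v-ikt)^2\Delta_0^{-1} - (B_0'-B')(\p_v-ikt)\Delta_0^{-1}$, shows $R$ is a contraction on the weighted space (the same estimate as your Step 1), expands $\Delta_t^{-1} - \Delta_0^{-1} = \Delta_0^{-1}\sum_{n\geq 1}R^n$ as a Neumann series, and sums the geometric series; \eqref{ell_est_t} then follows by the triangle inequality. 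You instead prove \eqref{ell_est_t} first by an absorption argument on $\varphi_t = \Delta_t^{-1}X_k$ and then deduce \eqref{Delta_diff} from the resolvent identity. The resolvent-identity route to \eqref{Delta_diff} is a nice shortcut, but your Step 1 absorption requires knowing a priori that $\|\lb k,\xi\rb^s M\,\widehat{\Delta_L(\mathfrak{C}\varphi_t)}\|_{L^2_\xi}$ is finite before it can be moved to the left-hand side; this qualitative input is not free for $\varphi_t = \Delta_t^{-1}X_k$ with only $X_k \in H^2$, and would need an approximation or a priori estimate to justify. The paper's Neumann series sidesteps this entirely, since each term $\Delta_0^{-1}R^n X_k$ is controlled directly in terms of $X_k$ by iterating the contraction bound, with the finiteness of the sum built in. Either way the analytic content is identical; the paper's version is marginally more self-contained.
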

\begin{proof}
The estimate \eqref{ell_est_t} is an immediate consequence of \eqref{Delta_diff}, \eqref{ell_est}, and the triangle inequality, so we focus on the proof of \eqref{Delta_diff}. We begin by writing 
    \begin{align}
    \label{t:decomp}
        \Delta_{t} &=   \Delta_0 +  (B^2-B_0^2)(\p_v - ikt)^2  + (B' -B_0')(\p_v - ikt) \\
        &= (I -R)\Delta_0, \notag \\
        R &:=  (B_0^2-B^2)(\p_v - ikt)^2\Delta_0^{-1}  - (B'_0- B')(\p_v - ikt)\Delta_0^{-1}. \notag
    \end{align}
We focus on the term $(B_0^2 - B^2)(\p_v - itk)^2 \Delta_0^{-1}$ as the other term is simpler. We write 
\begin{align}
     \left( M (B_0^2-B^2)(\p_v - ikt)^2\Delta_0^{-1} X_k \right) 
    &=  (B_0^2-B^2) \left( M (\p_v - ikt)^2\Delta_0^{-1} X_k \right)\\
    &+   \left( [ M  , B_0^2-B^2 ] (\p_v - ikt)^2\Delta_0^{-1} X_k \right).
\end{align}
Then, given $\gamma_\diamond \in (0,1)$, Lemma \ref{lem:com:half}, Lemma \ref{lem:ellip}, \eqref{B:difference}, and \eqref{B:rep} imply there exists a $c_\diamond \in (0,1)$ such that for $t\nu \leq c_\diamond$,  $k \in \mathbb{Z} \setminus \{ 0 \}$, and $X_k \in H^s(\R )$  
\begin{align}
\label{Rcontract}
   \norm{ \lb  k, \p_v\rb^s  M R  X_k }_{L^2(\R)}\leq \gamma_\diamond\norm{\lb  k, \p_v \rb^s M X_k}_{L^2( \R)}.
\end{align}

As a consequence of \eqref{Rcontract},
the operator $ I - R$ is invertible on $H^s(\R)$. We can express the inverse with a Neumann series as 
\begin{align}
\label{inverse:neumann}
    (I - R)^{-1} =   I +  \sum_{n\geq 1} R^n.
\end{align}
Using \eqref{t:decomp} and \eqref{inverse:neumann} we can write
\begin{align}
\label{expansion}
    \Delta_t^{-1} = \Delta_0^{-1} + \Delta_0^{-1} \sum_{n\geq 1} R^n.
\end{align}
Iterating \eqref{Rcontract} yields
\begin{equation}
\label{induction}
    \norm{\lb k, \p_v \rb^s M R^n X_k }_{L^2(\R)} \leq \gamma_\diamond^{n } \norm{\lb  k, \p_v \rb^s M  X_k }_{L^2( \R )}. 
\end{equation}
Finally, we combine Lemma \ref{lem:ellip}, the estimate  \eqref{induction}, and the triangle inequality to obtain that 
\begin{align}
\label{ellip:sum}
    &\lf\|\lb k, \xi \rb^s  M(k, \xi,t ) (\Delta_L (\mf{C} (\Delta_t^{-1} - \Delta_0^{-1}) X_k )^{\wedge}(\xi, t)\rg\|_{L_\xi^2(\R)}\\
    &\leq \sum_{n\geq 1} \lf\|\lb k, \xi \rb^s  M(k, \xi,t ) (\Delta_L (\mf{C} \Delta_0^{-1} R^n X_k )^{\wedge}(\xi, t)\rg\|_{L_\xi^2(\R)} \leq C\gamma_\diamond \norm{\lb  k, \p_v \rb^s M  X_k }_{L^2( \R )}. \notag
\end{align}
We now choose a $c_* \in (0, c_\diamond)$ so that   $\gamma_\diamond \in (0, \gamma_*)$ is small enough to absorb the implicit constant in \eqref{ellip:sum}  which completes the proof. 
\end{proof}

\subsection{Trilinear Lemma}
 The following trilinear estimates are widely applied in the analysis.
 \begin{lemma}\label{lem:Tri_est}
    Considering three functions $\mathcal F, \mathcal G,\mathcal H\in H^s$, we have the following estimate
    \begin{align}
    &\int_{t\in \mathcal{I}}\sum_{k,\ell}\iint (\pn e^{\delta \nu^{1/3}(|k|^{2/3}+1)t}+\po )(\lan\xi,\ell\ran ^s+\lan k-\ell,\eta-\xi\ran^s) |\ell ||\wh{(B\de_t^{-1}\mathcal F_\nq)}(\ell,\xi) |\n\\
    &\hspace{3cm}\times|(\eta-\xi-(k-\ell)t)  \mathcal G(\eta-\xi,k-\ell)| \  |A\mathcal H(k,\eta)| d\eta d\xi dt\n\\
    & \lesssim \lf(\lf\|A\sqrt{\frac{-\pa_t\W_I}{\W_I}}\mathcal F_\nq\rg\|_{L^2(\mathcal{I};L^2)}^2 + \lf\|A\sqrt{\frac{-\pa_t\W_E}{\W_E}}\mathcal H\rg\|_{L^2(\mathcal{I};L^2)}^2\rg)\n\\
    &\hspace{2cm}\times\sup_{t\in \mathcal{I}}\lf(t e^{-\delta\nu^{1/3}t}\| e^{\delta\nu^{1/3}(|\pa_z|^{2/3}+1)t}\pn\mathcal G\|_{L^\infty(\mathcal{I};\myr{H^{2}})}+\|\mathbb{P}_{\mathbbm{o}}\mathcal G\|_{L^\infty(\mathcal{I};\myr{H^{2}})}\rg)\n\\
    &\quad+\int_{t\in \mathcal{I}}\lan t\ran^{-1}\lf\|A\sqrt{\frac{-\pa_t\W_I}{\W_I}}\mathcal F_\nq\rg\|_{L^2}\|A\sqrt{-\de_L}\mathcal G\|_{L^2}\|A\mathcal H\|_{L^2}dt. \label{Tri_est}
\end{align}
Here, $\mathcal{I}$ is an interval, e.g., $[0,T_*]$ or $[\nu^{-1/6},T_*]$. 
\end{lemma}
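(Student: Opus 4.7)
The plan is to treat \eqref{Tri_est} as the standard trilinear estimate for the transport nonlinearity $B\nabla_L^{\perp}\Psi\cdot\nabla_L\Omega$, after two identifications: $|\ell|\widehat{B\Delta_t^{-1}\mathcal F_\nq}(\ell,\xi)$ is essentially the Fourier symbol of the velocity $B\nabla_L^{\perp}\Delta_t^{-1}\mathcal F_\nq$ (one derivative on the stream function), while $(\eta-\xi-(k-\ell)t)\mathcal G(\eta-\xi,k-\ell)$ is the Fourier symbol of $\nabla_L\mathcal G$ evaluated at the mode $(k-\ell,\eta-\xi)$. I would first split $\mathcal G=\po\mathcal G+\pn\mathcal G$ and then apply a paraproduct-type decomposition to the weight $\lan\xi,\ell\rangle^s+\lan k-\ell,\eta-\xi\rangle^s$, producing a reaction regime $\lan\xi,\ell\rangle\lesssim\lan k-\ell,\eta-\xi\rangle$ (the frequency weight lands on $\mathcal G$) and a transport regime $\lan k-\ell,\eta-\xi\rangle\lesssim\lan\xi,\ell\rangle$ (the weight lands on $\mathcal H$).

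For the zero-mode piece $\po\mathcal G$, the constraint $k-\ell=0$ collapses the convolution to $\ell=k$. I would place $\po\mathcal G$ in $L^\infty H^2$ (giving the factor $\|\po\mathcal G\|_{L^\infty H^2}$ in \eqref{Tri_est}) and exploit the pointwise identity $|\ell|/\lan\ell,\xi-\ell t\rangle\sim K\sqrt{-\pa_t\W_I/\W_I}(t,\ell,\xi)$, obtained by direct differentiation of \eqref{W_I}, to extract $CK_I$ on $A\mathcal F$. The elliptic estimate \eqref{ell_est_t} of Lemma \ref{ellip:perturb} lets one commute the smooth coefficient $B$ through $\Delta_t^{-1}$ without loss, while Lemmas \ref{lem:C:bdd} and \ref{lem:com:half} absorb the commutators of the weight $A$ with $B$ and with $\Delta_t^{-1}$.

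For the nonzero-mode piece $\pn\mathcal G$, I would use \eqref{zeta_product} to redistribute $\zeta_k\le\zeta_\ell\zeta_{k-\ell}e^{-\delta\nu^{1/3}t}$ whenever $\ell(k-\ell)\ne 0$; the residual $e^{-\delta\nu^{1/3}t}$ absorbs the factor of $t$ in the coefficient of $\|e^{\delta\nu^{1/3}(|\pa_z|^{2/3}+1)t}\pn\mathcal G\|_{L^\infty H^2}$. In the reaction regime, $\lan k,\eta\rangle^s$ lands on $\mathcal G$ and is absorbed by this $H^2$ norm, with $CK_I$ again drawn from $\sqrt{-\pa_t\W_I/\W_I}$ on $A\mathcal F$. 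In the transport regime the weight lands on $A\mathcal H$, and the structure of $\W_E$ is essential: by construction \eqref{We}, $-\pa_t\W_E/\W_E$ at $(k,\eta)$ is bounded below by the series of terms of size $|\ell|^{-2}(1+K^{-2}|\eta-\ell t|^2/(1+|k-\ell|+|\ell|)^2)^{-1}$, which is precisely calibrated to dominate the transport resonance $\xi\sim\ell t$ in the convolution and yield $CK_E$ on $A\mathcal H$ (cf.\ \cite{BVW18,WZ23}).

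The final line of \eqref{Tri_est}, carrying $\lan t\rangle^{-1}$ and a full $\sqrt{-\Delta_L}$ on $\mathcal G$, collects the residual transport sub-regimes where no $CK_E$ gain is available -- in particular $|k|\ge\nu^{-1/2}$ (where $\pa_t\W_E=0$) and resonance-misses of the $\ell$-summation. There one invokes inviscid damping on $|\ell|\widehat{B\Delta_t^{-1}\mathcal F_\nq}$ to extract $\lan t\rangle^{-1}$ along with one $CK_I$ weight on $A\mathcal F$, with the leftover derivative absorbed by $\sqrt{-\Delta_L}\mathcal G$. The main obstacle is the precise matching of the $\ell$-summation in the definition of $\W_E$ against the convolution index $\ell$, together with the careful bookkeeping required to absorb every commutator of $A$ with the smooth-but-undecaying coefficient $B$ and with the variable-coefficient operator $\Delta_t^{-1}$; this is exactly where Lemmas \ref{lem:C:bdd}, \ref{lem:com:half}, and \ref{ellip:perturb} are applied repeatedly.
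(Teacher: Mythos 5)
Your plan correctly identifies all the ingredients (the CK\texorpdfstring{$_I$}{I} and CK\texorpdfstring{$_E$}{E} weights, the inviscid-damping factor $\langle t\rangle^{-1}$, the $H^2$ norm on $\mathcal G$, and the $\zeta$-redistribution from \eqref{zeta_product}), but it misassigns them to the wrong regimes and omits the one structural step that makes the estimate close.

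The paper does not need your paraproduct decomposition at all: the two Sobolev weights $\langle\xi,\ell\rangle^s$ and $\langle k-\ell,\eta-\xi\rangle^s$ are \emph{already} added in the hypothesis, so the proof simply splits the integrand into the term carrying $\langle\xi,\ell\rangle^s$ ($T_1$, weight on $\mathcal F$) and the term carrying $\langle k-\ell,\eta-\xi\rangle^s$ ($T_2$, weight on $\mathcal G$). The crucial move in $T_1$ is a Cauchy--Schwarz in $(\ell,\xi)$ that produces $T_1\lesssim F_1^{1/2}F_2^{1/2}$, where $F_1$ carries $(\ell^2+(\xi-\ell t)^2)\langle\ell,\xi\rangle^{2s}$ on $\mathcal F$ and only $\langle k-\ell,\eta-\xi\rangle^2$ on $\mathcal G$, while $F_2$ pairs $|A\mathcal H|^2$ against the kernel $\big((\ell^2+(\xi-t\ell)^2)(1+(k-\ell)^2+(\eta-\xi)^2)\big)^{-1}$, whose $(\ell,\xi)$-integral is exactly $-\partial_t\mathcal W_E$ by the Poisson-kernel computation and Lemma \ref{lem:W_E}. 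Thus CK$_I$ on $\mathcal F$ and CK$_E$ on $\mathcal H$ are extracted \emph{simultaneously, from the same $T_1$ regime}, and they land together with the $H^2$ factor on $\mathcal G$ in the first term of \eqref{Tri_est}. Your plan assigns CK$_I$ to one regime and CK$_E$ to another, and you never mention the Cauchy--Schwarz pairing that is the actual mechanism; without it you cannot produce the $\big(\|A\sqrt{-\partial_t\mathcal W_I/\mathcal W_I}\mathcal F_\nq\|^2+\|A\sqrt{-\partial_t\mathcal W_E/\mathcal W_E}\mathcal H\|^2\big)$ factor tied to a single $H^2$ norm of $\mathcal G$.

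Your ``reaction'' regime is internally inconsistent: you let the $A$-weight $\langle k,\eta\rangle^s$ and the hypothesis weight $\langle k-\ell,\eta-\xi\rangle^s$ both land on $\mathcal G$ and then claim they are absorbed by an $H^2$ norm, which is false for $s>2$. In fact, in the paper's proof the $A$-weight never leaves $\mathcal H$ (it sits in $F_2$), and $\mathcal G$ is in $H^2$ in $T_1$ precisely because the high weight $\langle\ell,\xi\rangle^s$ is on $\mathcal F$ there. When the weight does fall on $\mathcal G$ (the $T_2$ piece, weight $\langle k-\ell,\eta-\xi\rangle^s$), $\mathcal G$ carries the full $A\sqrt{-\Delta_L}$ norm in the last line of \eqref{Tri_est}, not $H^2$; the $\langle t\rangle^{-1}$ factor there is extracted by placing $ik\widehat{B\Delta_t^{-1}\mathcal F_\nq}$ in $\ell_k^1 L_\eta^1$ and using $\langle\eta\rangle(1+|t-\eta/k|)\gtrsim\langle t\rangle$. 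This is the entire treatment of the second weight, not a ``residual sub-regime'' of the transport part as you describe it.
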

\begin{proof}
    The proof of the lemma is postponed to Appendix \ref{sec:multiplier}. 
\end{proof}

\section{Energy Structures of the Auxiliary Equation}
\label{sec:energy}

Now, we compute the time evolution of the energy 
\begin{align}
\frac{1}{2}\frac{d}{dt}\|A \oms\|_{L_{}^2}^2 
=&-\lf\|A\sqrt{\frac{-\pa_t \mf M}{\mf M}}\oms\rg\|_{L_{}^2}^2+\nu\int A\oms\ A\tilde\de_t \oms \, dV \notag\\
& -\int  A\oms A ( B \p_v \po U^1 \p_z \Omega^*) \, dV  -\int A\oms \ A\lf(B \na_L^\perp \Delta_t^{-1} \pn \Omega \cdot \na_L\oms \rg)\, dV  \notag \\
&-\int A\oms A ( B \p_v \po U^1 \p_z F)\, dV  -\int A\oms \ A\lf(B \na_L^\perp \Delta_t^{-1} \pn \Omega \cdot \na_L F \rg)\, dV \notag \\
&-\int A\oms \ A(B_0'\pa_z(\Delta_0^{-1}  - \Delta_t^{-1})\Omega)) dV-\int A\oms\ A((B_0'-B')\pa_z\Delta_t^{-1} \Omega)\, dV \notag \\
&-\nu\int A\oms\ A(\Delta_0 -\tilde{\Delta}_t)F\,  dV \notag \\
=:&-\lf\|A\sqrt{\frac{-\pa_t \mf M}{\mf M}}\oms\rg\|_{L_{}^2}^2 +D- \mathfrak{NL}_{\mathfrak{0a}} - \mathfrak{NL}_{\mathfrak{a}}- \mathfrak{NL}_{\mathfrak{0l}} - \mathfrak{NL}_{\mathfrak{l}}-\mathfrak{L}_{1}-\mathfrak{L}_{2}-\mathfrak{L}_{3}.
\label{Energy_Ev}
\end{align}
Here, `$\mathfrak l$' stands for `linear,'  `$\mathfrak{0l}$' stands for linear interaction with zero mode,  `$\mathfrak a$' stands for `auxiliary,'  and `$\mathfrak{0a} $' stands for auxiliary interaction with zero mode. 
In the remaining parts of the section, we estimate the terms that appear in \eqref{Energy_Ev}.

\subsection{The Estimate of $D $}
\label{sec:D}
We define $D$ as 
\begin{align}
D:= \nu \int A \Omega^* A \tilde{\Delta}_t \Omega^* \, dV.
\end{align}
To estimate the diffusion term $D$, we need to derive two sets of bounds. The first type of bound characterizes the behavior on the top regularity level, i.e., $H^s$ (Lemma \ref{lem:Diffusion}). Thanks to the varying coefficient $B$, the high regularity bound cannot close on its own. The main obstacle arises from the last term in \eqref{D_est}. So, we derive a separate enhanced dissipation estimate of the solutions on the low regularity level (Lemma \ref{lem:Con_Low_reg_1}) to overcome this obstacle. 
\begin{lemma}\label{lem:Diffusion}
{\bf a) Short time:} For $T_{*} \in (0, \nu^{-1/6}]$, under the bootstrap assumption of Proposition \ref{pro:short}, the following estimate holds as long as the parameters $\nu^{-1}, \, K$ are chosen large enough depending on the norms of the ambient shear profile derivative $B$ (as specified in Assumption \ref{assum:b} and \eqref{assump:b_t}), 
        \begin{align}\n
          D \leq&-\frac{1}{2}\nu \|A\sqrt{-\de_L}\oms\|_{L^2}^2+C\nu\|B\pa_vB\|_{H^s}^2\|A\oms_\nq\|_{L^2}^2+C\nu(\|B\|_{L^\infty}^2+\| B\pa_vB \|_{H^s}^2)\|A\oms_{\mathbbm{o}}\|_{L^2}^2\\
    &+C\nu(\|B\|_{L^\infty}+\|B\pa_v B\|_{H^{ s}})^{2s}\|e^{\delta\nu^{1/3}(|\pa_z|^{2/3}+1)t}\sqrt{-\de_L}\oms_\nq\|_{L^2}^2.\label{D_est}
        \end{align}
        Hence, 
\begin{align}
\label{D_est_int_sh}
    \int_{0}^{T_*} D \,dt\leq -\frac{\nu}{2} \int_{0}^{T_*}\|A\sqrt{-\de_L}\oms\|_{L_{}^2}^2dt +C(\|B\|_{L_{t,v}^\infty}+\| B'\|_{L^\infty_t H_v^s})^{2s}  \ep^2\nu^{2/3}.
\end{align}

\noindent
{\bf b) Long time:} For $T_*\in [\nu^{-1/6}, c_*\nu^{-1}]$, assume the bootstrap hypotheses of Proposition \ref{pro:long}. Assuming that the $\nu^{-1}, \, K$ are chosen large enough depending on the norms of the shear profile derivative $B$ (as specified in Assumption \ref{assum:b} and \eqref{assump:b_t}), the dissipation $D$-term in \eqref{Energy_Ev}  can be estimated as in \eqref{D_est} (with the time domain $[0,T_*]$ replaced by $[\nu^{-1/6}, T_*]$). Moreover,
\begin{align}\label{D_est_int_l}
    \int_{0}^{T_*} D\,  dt\leq& -\frac{\nu}{2} \int_{0}^{T_*}\|A\sqrt{-\de_L}\oms\|_{L_{}^2}^2dt +Cc_* (\|B\|_{L_{t,v}^\infty}^2+\|B'\|_{L^\infty_t H_v^s}^2 )C_1\ep^2\nu^{2/3}\\
    &+C(\|B\|_{L_{t,v}^\infty}+\| B'\|_{L_t^\infty H_v^s})^{2s}  \ep^2\nu^{2/3}.
\end{align}
 Hence, as a result of the bootstrap assumption \eqref{Hypotheses_sh} (\eqref{Hypotheses}, respectively), the following estimate holds for $\mathcal{I} = [0, T_*], [\nu^{-1/6}, T_*]$:
    \begin{align}
        \int_{ \mathcal{I}} D \, dt\leq -\frac{\nu}{2}\int_{ \mathcal{I}}\|A\sqrt{-\de_L}\oms\|_{L^2}^2 dt+C\ep^2\nu^{2/3}.
    \end{align}
\end{lemma}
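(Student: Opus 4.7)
The plan is to split $\tilde\Delta_t = B^2(\p_v - t\p_z)^2 + \p_z^2$ and handle each piece separately. The $\p_z^2$ piece is clean: since $A$ is a Fourier multiplier and commutes with $\p_z$, integration by parts gives exactly $-\nu\|A\p_z\Omega^*\|_{L^2}^2$. For the $B^2D_t^2$ piece, with $D_t := \p_v - t\p_z$, first pull $B^2$ through $A$ by writing $A(B^2 D_t^2\Omega^*) = B^2 D_t^2 A\Omega^* + [A,B^2]D_t^2\Omega^*$, using that $A$ is a multiplier in $(k,\eta)$ and commutes with $D_t$. Integration by parts in $v$ on the first piece produces
\begin{align*}
-\nu\|BD_t A\Omega^*\|_{L^2}^2 - 2\nu\int B(\p_v B)\, A\Omega^*\cdot D_t A\Omega^* \, dV,
\end{align*}
and Cauchy--Schwarz absorbs the cross term into the first while leaving an error $C\nu\|\p_v B\|_{L^\infty}^2\|A\Omega^*\|_{L^2}^2$. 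Lemma \ref{lem:B:regularity} gives $B \geq \theta_0$, converting $-\tfrac34\|BD_t A\Omega^*\|^2$ into a multiple of $-\|D_t A\Omega^*\|^2$, which combines with the $\p_z$ contribution to yield the leading $-\tfrac12\nu\|A\sqrt{-\de_L}\Omega^*\|_{L^2}^2$ once $K$ and $\nu^{-1}$ are chosen large enough relative to $\|B\|_{L^\infty}$, $\|\p_v B\|_{L^\infty}$.

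The main technical work lies in bounding $\nu\int A\Omega^* \cdot [A,B^2]D_t^2\Omega^*\,dV$. Since $\zeta_k$ depends only on $k$, one may split
\begin{align*}
[A,B^2] = \zeta\,\mathfrak M\,[\langle k,\eta\rangle^s, B^2] + \zeta\,[\mathfrak M, B^2]\,\langle k,\eta\rangle^s.
\end{align*}
The second commutator carries a gain of $1/K$ by Lemma \ref{lem:com:ghost}; combined with the pointwise inequality $|D_t^2|\leq|{-}\de_L|$, this contribution is absorbed into the main dissipation for $K$ large. The first is a Kato--Ponce commutator with cost $\|B\p_v B\|_{H^s}$ and a gain of one derivative, leaving the symbol $\langle k,\eta\rangle^{s-1}(\eta-kt)^2$. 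I would split the Fourier region into $\{|\eta-kt|\leq \langle k,\eta\rangle\}$, where this symbol is bounded by $\langle k,\eta\rangle^s(-\de_L)^{1/2}$ and the contribution is absorbed into the dissipation, and its complement, where repeated interpolation against the $L^2$ energy produces the low-regularity remainder $\|e^{\delta\nu^{1/3}(|\p_z|^{2/3}+1)t}\sqrt{-\de_L}\Omega^*_\nq\|_{L^2}^2$, with the $2s$-power of $\|B\|_{L^\infty}+\|B\p_v B\|_{H^s}$ coming from the number of Young-type exchanges needed to distribute the regularity. Separating $k=0$ from $k\neq 0$ isolates the distinct zero-mode error $C\nu(\|B\|_{L^\infty}^2 + \|B\p_v B\|_{H^s}^2)\|A\Omega^*_\mathbbm{o}\|_{L^2}^2$, since on the zero mode $D_t = \p_v$ and there is no shearing.

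To pass from the pointwise bound \eqref{D_est} to the integrated statements \eqref{D_est_int_sh} and \eqref{D_est_int_l}, I integrate over the appropriate interval and apply the bootstrap hypotheses \eqref{Hypotheses_sh} or \eqref{Hypotheses} to control $\|A\Omega^*\|_{L^2}^2$. For short time $T_*\leq \nu^{-1/6}$ the factor $\nu T_*\leq \nu^{5/6}$ makes the $\|A\Omega^*\|^2$ error terms negligible up to $C\epsilon^2\nu^{2/3}$, while for long time $T_*\leq c_*\nu^{-1}$ the prefactor $\nu T_*\leq c_*$ produces the explicit $c_* C_1\epsilon^2\nu^{2/3}$ contribution in \eqref{D_est_int_l}; the low-regularity residue $\|e^{\delta\nu^{1/3}(|\p_z|^{2/3}+1)t}\sqrt{-\de_L}\Omega^*_\nq\|_{L^2}^2$ is then closed via the separate low-regularity enhanced-dissipation estimate (Lemma \ref{lem:Con_Low_reg_1}).

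The \emph{main obstacle} is the Kato--Ponce commutator step: the symbol $\langle k,\eta\rangle^{s-1}(\eta-kt)^2$ exceeds $\langle k,\eta\rangle^s(-\de_L)^{1/2}$ on high-shear frequencies by a factor of order $(1+t)$, which would be ruinous on the time scale $t\sim \nu^{-1}$. Overcoming this requires trading top-order regularity for the lower-regularity ED norm (carrying only the $\zeta$ weight and no $\langle k,\eta\rangle^s$), at the price of the large power $2s$ of the $B$-norms. This is why the last term of \eqref{D_est} looks qualitatively different from the others, and why its closure depends crucially on a separate low-regularity enhanced-dissipation estimate rather than on the same $A$-weighted bootstrap as the rest of the argument.
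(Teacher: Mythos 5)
Your overall strategy tracks the paper's closely: split $\tilde\Delta_t$ into its $\partial_z^2$ and $B^2 (\partial_v - t\partial_z)^2$ parts, split the coefficient commutator as $\zeta\mathfrak M[\langle k,\eta\rangle^s, B^2] + \zeta[\mathfrak M, B^2]\langle k,\eta\rangle^s$, gain $1/K$ on the second piece via Lemma~\ref{lem:com:ghost}, use Kato--Ponce and interpolation on the first, treat the zero mode separately, and close the last error term through the separate low-regularity bound in Lemma~\ref{lem:Con_Low_reg_1}. The time-integrated conclusions and the $2s$-power of the $B$-norms are also consistent with the paper.

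There is, however, a genuine gap in how you arrange the derivatives in the commutator piece. You keep both factors of $(\partial_v - t\partial_z)$ on one side, so you must control $\nu\int A\Omega^*\,[A,B^2](\partial_v-t\partial_z)^2\Omega^*\,dV$, whose Kato--Ponce output carries the symbol $\langle k,\eta\rangle^{s-1}(\eta-kt)^2\zeta_k$. On your complement region $\{|\eta - kt| > \langle k,\eta\rangle\}$ this cannot be interpolated against the available energies: both the dissipation weight $\langle k,\eta\rangle^s|\eta-kt|\zeta_k$ and the low-regularity weight $|\eta-kt|\zeta_k$ carry exactly one power of $|\eta-kt|$, so any interpolant $(\langle k,\eta\rangle^s|\eta-kt|)^\alpha|\eta-kt|^\beta$ with $\alpha+\beta=1$ carries one as well — never the two you need. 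Since $|\eta-kt|$ is genuinely unbounded relative to $\langle k,\eta\rangle$ on the time scale $t\sim c_*\nu^{-1}$ (e.g. $k\neq 0$, $\eta = O(1)$), this region is not a boundary artifact. The fix, as in the paper's proof, is to move one derivative across the pairing \emph{before} commuting: write $B^2(\partial_v-t\partial_z)^2\Omega^* = (\partial_v-t\partial_z)\bigl(B^2(\partial_v-t\partial_z)\Omega^*\bigr) - \partial_v(B^2)(\partial_v-t\partial_z)\Omega^*$, integrate by parts the outer $(\partial_v-t\partial_z)$ onto $A\Omega^*$ (using that $A$ commutes with $\partial_v-t\partial_z$), and only then pull $B^2$ through $A$. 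The resulting commutator $-\nu\int A(\partial_v-t\partial_z)\Omega^*_\nq\,[A,B^2](\partial_v-t\partial_z)\Omega^*_\nq\,dV$ has exactly one shear derivative on each side; Kato--Ponce then lands you in $\|(\partial_v-t\partial_z)e^{\delta\nu^{1/3}(|\partial_z|^{2/3}+1)t}\Omega^*_\nq\|_{H^{s-1}}$, which interpolates cleanly between $\|A(\partial_v-t\partial_z)\Omega^*_\nq\|_{L^2}$ and $\|e^{\delta\nu^{1/3}(|\partial_z|^{2/3}+1)t}(\partial_v-t\partial_z)\Omega^*_\nq\|_{L^2}$ with exponents $(s-1)/s$ and $1/s$, and a single Young step then gives the $2s$ power.
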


\begin{proof}
We decompose the proof into two parts, i.e., the nonzero mode estimates and the zero mode estimate. For the nonzero mode, we apply the fact that $\pa_v-t\pa_z$ commutes with the multiplier $A$ and expand it as follows,
\begin{align}
D_\nq=&\nu\int A\oms_\nq \ A\tilde\de_t \oms_\nq dV=-\nu\|A\pa_z\oms_\nq\|_{L^2}^2+\nu\int A\oms_\nq \ A\lf((B^2(\pa_v-t\pa_z)^2 \oms_\nq\rg) dzdv\\
=&-\nu\|A\pa_z\oms_\nq \|_{L^2}^2+\nu\iint A \oms_\nq\   (\pa_v-t \pa_z)  A  ({B^2}\ (\pa_v-t \pa_z) \oms_\nq )dzdv\\
&-\nu\iint A \oms_\nq\     A(  \pa_v(B^2)\ (\pa_v-t \pa_z) \oms_\nq) dzdv\\
=&-\nu\|A\pa_z\oms_\nq \|_{L^2}^2-\nu\| {B}  A  (\pa_v-t \pa_z) \oms_\nq \|_{L^2}^2 -\nu\iint A \oms_\nq\     A(  \pa_v(B^2)\ (\pa_v-t \pa_z) \oms_\nq) dzdv\\
&-\nu\iint A   (\pa_v-t \pa_z)\oms_\nq\ [A, B^2]\lf(  (\pa_v-t \pa_z)   \oms_\nq\rg) dz dv\\
=:&-\nu \|A\pa_z\oms_\nq \|_{L^2}^2-\nu\| {B}  A  (\pa_v-t \pa_z) \oms_\nq \|_{L^2}^2+T_1+T_2.\label{D_T}
\end{align}
Now we estimate the $T_1$-term in \eqref{D_T} with the product estimate \eqref{A_product_rule_Hs},
\begin{align}
   T_{1}     \leq &     C\nu\|A\oms_\nq\|_{L^2} \|B\pa_vB\|_{H^s}\|A(\pa_v-t\pa_z) \oms_\nq\|_{L^2}\\
   \leq &-\frac{1}{4} \nu\|A(\pa_v-t\pa_z)\oms_\nq\|_{L^2}^2+C\nu\|{B\pa_vB}\|_{H^s}^2\|A\oms_\nq\|_{L^2}^2.
\end{align}
If the $\nu$ is chosen small enough, the second term can be absorbed by the enhanced dissipation.

To estimate $T_2$, we further decompose it as follows
\begin{align}
    T_2=&\nu\iint A (\pa_v-t \pa_z)\oms_\nq \mf M[\lan \pa_z,\pa_v\ran^s,B^2]( (\pa_v-t \pa_z)  e^{\delta\nu^{1/3}(|\pa_z|^{2/3}+1)t} \oms_\nq ) dz dv\\
    &+\nu\iint A (\pa_v-t \pa_z)\oms_\nq [\mf M,B^2]( \lan \pa_z,\pa_v\ran^s(\pa_v-t \pa_z)  e^{\delta\nu^{1/3}(|\pa_z|^{2/3}+1)t} \oms_\nq ) dz dv\\
    =:&T_{21}+T_{22}.
\end{align}
To treat the first term, we invoked the following Kato-Ponce inequality (see \cite{Li19kato}): 
\begin{align}\label{KP}
    \|\lan\pa_v\ran^s (fg) - f\lan\pa_v\ran^sg \|_{L^2(\rr)}\lesssim \|\lan\pa_v\ran^{s-1}\pa_v f \|_{L^4(\rr)}\|g\|_{L^4(\rr)}+\|\pa_v f \|_{L^\infty(\rr)}\|\lan \pa_v\ran^{s-1}g\|_{L^2(\rr)}.
\end{align}
Hence, for the $T_{21}$-term, we estimate it as follows
\begin{align*}
    |T_{21}|\leq& C\nu\|A (\pa_v-t \pa_z)\oms_\nq\|_{L^2} \|[\lan \pa_z,\pa_v\ran^s,B^2]( (\pa_v-t \pa_z) e^{\delta\nu^{1/3}(|\pa_z|^{2/3}+1)t}  \oms_\nq ) \|_{L^2}\\
    \leq& C(\|B\|_{L^\infty}+\|B'\|_{H^{ s}})\nu\|A (\pa_v-t \pa_z)\oms_\nq\|_{L^2}\|e^{\delta(\nu^{1/3}|\pa_z|^{2/3}+1)t}(\pa_v-t \pa_z) \oms_\nq \|_{H^{s-1}}\\
    \leq& \frac{1}{4}\nu\|A(\pa_v-t \pa_z)\oms_\nq\|_{L^2}^2+C\nu(\|B\|_{L^\infty}+\|B\pa_v B\|_{H^{ s}})^{2s}\|e^{\delta\nu^{1/3}(|\pa_z|^{2/3}+1)t}\sqrt{-\de_L}\oms_\nq\|_{L^2}^2.
\end{align*}
To estimate the $T_{22}$-term, we observe that:
\begin{align*}
a)& \quad t\leq \nu^{-1/6}:    [\mf M, B^2]\oms_\nq=[\W_I, B^2]\oms_\nq;\\
b)& \quad t\geq \nu^{-1/6}:    [\mf M, B^2]\oms_\nq=[\W_I\W_\nu\W_E, B^2]\oms_\nq\\
&\hspace{4.05cm}=\lf(\W_I\W_\nu[\W_E, B^2] +\W_I[\W_\nu, B^2]\W_E+[\W_I, B^2]\W_\nu\W_E\rg)\oms_\nq.
\end{align*}
Thanks to the commutator estimate \eqref{com:ghost}, we have that 
\begin{align}
|T_{22}|\leq \frac{C(\|B\|_{L^\infty}^2+\|B\pa_v B\|_{H^s}^2)}{K}\|A(\pa_v-t\pa_z) \oms_\nq\|_{L^2}^2.
\end{align}
As long as $K$ is large compared to the norms of $B$, the term can be absorbed by the diffusion. This concludes the treatment for $D_\nq.$

 For the diffusion in the zeroth mode, we estimate it as in the previous argument:\begin{align}
 {D}_{\mathbbm{o}} = & \nu\int A\oms_{\mathbbm{o}}  A(B^2\pa_v^2\oms_{\mathbbm{o}})dv\\
=&\nu\iint A \oms_{\mathbbm{o}}\   \pa_v  A  ({B^2}\ \pa_v \oms_{\mathbbm{o}}) dzdv -\nu\iint A \oms_{\mathbbm{o}}\     A(  \pa_v(B^2)\ \pa_v \oms_{\mathbbm{o}}) dzdv\\
=&-\nu\| {B}  A  \pa_v \oms_{\mathbbm{o}} \|_{L^2}^2 -\nu\iint A \oms_{\mathbbm{o}}\     A(  \pa_v(B^2)\ \pa_v \oms_{\mathbbm{o}}) dzdv -\nu\iint A   \pa_v\oms_{\mathbbm{o}}\ [A, B^2]\lf(   \pa_v   \oms_{\mathbbm{o}}\rg) dz dv\\
=&-\nu\| {B}  A  \pa_v \oms_{\mathbbm{o}} \|_{L^2}^2+T_3+T_4.
\end{align}
For the $T_3$-term, we can estimate it as $T_1$. For the $T_4$-term, we distinguish the $t\leq \nu^{-1/6}$ and $t\geq \nu^{-1/6}$ cases. For $t\geq \nu^{-1/6}$, we invoke the Kato-Ponce inequality to obtain that 
\begin{align*}
    T_{4}\mathbbm{1}_{t\geq \nu^{-1/6}}\leq& 
    \nu \|A\pa_v \oms_{\mathbbm{o}}\|_{L^2}(\|B\|_{L^\infty}+\|B\pa_vB\|_{H^{s-1}})\|\lan  \pa_v\ran^{s-1}\pa_v \oms_{\mathbbm{o}}\|_{L^2}\\
    \leq& -\frac{\nu}{4}\|BA \pa_v \oms_{\mathbbm{o}}\|_{L^2}^2+C\nu \|\lan  \pa_v\ran^s \oms_{\mathbbm{o}}\|_{L^2}^2(\|B\|_{L^\infty}^2+\|B\pa_vB\|_{H^{s}}^2).
\end{align*}
For $t\leq \nu^{-1/6}$, we invoke the Kato-Ponce inequality together with the commutator estimate \eqref{com:ghost} to obtain that 
\begin{align*}
    T_{4}\mathbbm{1}_{t\leq \nu^{-1/6}}\leq& 
    \lf(C\nu\|B^{-1}\|_{L^\infty}^2\frac{\|B\|_{L^\infty}^2+\|B\pa_vB\|_{H^{s}}^2}{K}+\frac{\nu}{8}\rg)\|BA \pa_v \oms_{\mathbbm{o}}\|_{L^2}^2\\
    &+C\nu \|\lan  \pa_v\ran^s \oms_{\mathbbm{o}}\|_{L^2}^2(\|B\|_{L^\infty}^2+\|B\pa_vB\|_{H^{s}}^2).
\end{align*}
If $K$ is chosen large enough, the first term can be absorbed by the diffusion and this is consistent with \eqref{D_est}. Hence the proof is concluded. \myb{(Check!)}
\end{proof}
As a variant of the proof above, we are able to prove the following low energy estimate, which is only used for the proof of the conclusion \eqref{Con_Lw_rg_1} in Proposition \ref{pro:long}. The motivation to introduce the low energy estimate is to control the third term on the right-hand side of the estimate \eqref{D_est}.

\begin{lemma} \label{lem:Con_Low_reg_1} Consider $T_*\in (\nu^{-1/6}, c_*\nu^{-1}]$. Under the bootstrap assumption of Proposition \ref{pro:long}, the following estimate holds if $\ep$ is small enough compared to the bootstrap constant $C_1$ and $\|B^{-1}\|_{L^\infty}, \|B'\|_{H^s}$,
\begin{align}\n
\|e^{\delta\nu^{1/3}(|\p_z|^{2/3}+1)t}&\oms_\nq\|_{L_t^\infty[\nu^{-1/6}, T_*] L_{}^2}^2+\nu\int_{\nu^{-1/6}}^{T_*}\|e^{\delta\nu^{1/3}(|\pa_z|^{2/3}+1)\tau}B(\pa_v-\tau\pa_z)\Omega_\nq^*\|_{L_{}^2}^2d\tau\\
\leq  &  {\|\zeta(\nu^{-1/6},|\pa_z|)\Omega(\nu^{-1/6})\|_{L^2}^2}+ \frac{1}{2}\ep^2\nu^{2/3}.
\label{L_2:non_0_md}
\end{align}
Hence, thanks to Proposition \ref{pro:short}, the conclusion \eqref{Con_Lw_rg_1} holds. 
\end{lemma}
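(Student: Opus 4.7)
The strategy is to run a direct $L^2$ energy estimate for the $\pn$-projection of the auxiliary equation \eqref{profileaux}, weighted by the enhanced-dissipation factor $\zeta(t,|\p_z|)$ but \emph{without} the Sobolev multiplier $A$. The point of the low-regularity computation is that the constants on the right-hand side can be kept independent of the possibly large bootstrap constant $C_1$, yielding a clean $\ep^2\nu^{2/3}$ bound. Differentiating $\|\zeta\pn\Omega^*\|_{L^2}^2$ in time and substituting from \eqref{profileaux} produces: a growth term $2\delta\nu^{1/3}\|\zeta(|\p_z|^{2/3}+1)^{1/2}\pn\Omega^*\|_{L^2}^2$ from $\p_t\zeta$; the diffusion $2\nu\int\zeta^2\pn\Omega^*\tilde\Delta_t\pn\Omega^*\,dV$; and a remainder $\mathcal R$ collecting nonlinear and linear-error contributions. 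The $-\po U^1\p_z\pn\Omega^*$ term drops out by antisymmetry, and because $\zeta$ depends only on $(t,k)$ it commutes with $B(t,v)$, so integration by parts on the diffusion (as in Lemma \ref{lem:Diffusion} but without the $A$-commutator work) delivers $-\nu\|\zeta\p_z\pn\Omega^*\|_{L^2}^2-\tfrac{7}{8}\nu\|\zeta B(\p_v-t\p_z)\pn\Omega^*\|_{L^2}^2 + C\nu\|\p_v B\|_{L^\infty}^2\|\zeta\pn\Omega^*\|_{L^2}^2$.

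\textbf{The main obstacle} is absorbing the $\zeta$-growth by the diffusion. This is not pointwise in Fourier: for $1\leq |k|\lesssim \nu^{-1/3}$ near the critical time $t\approx \eta/k$, the dissipation rate $\nu(k^2+(\eta-kt)^2)$ drops below the growth rate $\delta\nu^{1/3}|k|^{2/3}$. My plan is to replace $\|\zeta\pn\Omega^*\|_{L^2}^2$ by an equivalent hypocoercivity-modified energy of the form
\[
\mathcal E(t):=\|\zeta\pn\Omega^*\|_{L^2}^2 + \gamma\,\mathrm{Re}\sum_{k\neq 0}\int \frac{\mathrm{sgn}(k)(\eta-kt)}{\nu^{-1/3}|k|^{4/3}+(\eta-kt)^2+k^2}\,\zeta^2|\widehat{\pn\Omega^*}(k,\eta)|^2\,d\eta,
\]
for a small absolute constant $\gamma$, which is equivalent to $\|\zeta\pn\Omega^*\|_{L^2}^2$ and whose time derivative contains an additional negative term $\gtrsim \nu^{1/3}\|\zeta(|\p_z|^{2/3}+1)^{1/2}\pn\Omega^*\|_{L^2}^2$ produced by differentiating the rational weight along the transport $\p_t - kt\p_{\eta}$. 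This absorbs the exponential growth at the cost of a small fraction of the dissipation. The modes with $|k|\gtrsim \nu^{-1/3}$ are handled directly by $\nu\|\zeta\p_z\pn\Omega^*\|^2\gtrsim \delta\nu^{1/3}|k|^{2/3}\|\zeta\pn\Omega^*\|^2$.

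\textbf{Control of $\mathcal R$ and closing.} The remainder splits into the quadratic nonlinearity $\pn[B\nabla_L^{\perp}\Delta_t^{-1}\pn(F+\Omega^*)\cdot\nabla_L(F+\Omega^*)]$ paired with $\zeta^2\pn\Omega^*$, the linear errors $(B'-B_0')\p_z\Delta_t^{-1}\pn(F+\Omega^*)$ and $B_0'(\Delta_t^{-1}-\Delta_0^{-1})\p_z\pn(F+\Omega^*)$, and the residual diffusion $\nu(\tilde\Delta_t-\Delta_0)F$. Each is bounded by a Cauchy--Schwarz pairing: the high-regularity factor is controlled by the bootstrap \eqref{Hyp_ed} and the linear-profile bound of Proposition \ref{linCK}, while the remaining factor is absorbed into $\|\zeta\pn\Omega^*\|_{L^2}$ and $\|\po U^1\|_{L^2}$ from \eqref{Hyp_U1_0}, with elliptic Lemmas \ref{lem:ellip}--\ref{ellip:perturb} handling $\Delta_t^{-1}$. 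Lemma \ref{lem:B:diff} and Lemma \ref{ellip:perturb}, combined with $\nu t \leq c_*\ll 1$, make the linear-error contributions contract by any prescribed $\gamma_*$. Choosing $\ep$ small compared to $C_1$ and the norms of $B$, and then $c_*$ small, all of $\mathcal R$ is absorbed into $\tfrac12 \ep^2\nu^{2/3}$. Integrating the resulting differential inequality from $\nu^{-1/6}$ to $T_*$ yields \eqref{L_2:non_0_md}, and combining with Proposition \ref{pro:short} (which gives $\|\zeta(\nu^{-1/6})\Omega(\nu^{-1/6})\|_{L^2}^2\leq 4\ep^2\nu^{2/3}$) delivers the conclusion \eqref{Con_Lw_rg_1}.
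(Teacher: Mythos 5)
Your identification of the principal obstruction is exactly right: because the $\zeta$-weight generates a pointwise growth term $2\delta\nu^{1/3}(|k|^{2/3}+1)$ while the diffusion rate $\nu(k^2+(\eta-kt)^2)$ degenerates near the critical time $t\approx\eta/k$, one cannot absorb the growth by the diffusion alone. You also correctly note the cancellation $\int\po U^1\,\zeta\pn\Omega^*\,\p_z\zeta\pn\Omega^*\,dV=0$, which is the reason the low-regularity estimate is worth doing. However, your proposed remedy has a scaling error and deviates from the paper's (simpler) construction.

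Your hypocoercivity weight
\begin{align}
w(t,k,\eta):=\frac{\mathrm{sgn}(k)(\eta-kt)}{\nu^{-1/3}|k|^{4/3}+(\eta-kt)^2+k^2}
\end{align}
does not produce a sufficiently strong damping term. Near $\eta\approx kt$ and for $1\leq|k|\leq\nu^{-1/2}$, the denominator is dominated by $\nu^{-1/3}|k|^{4/3}$, so that
\begin{align}
\partial_t w(t,k,\eta)\Big|_{\eta=kt}=-\frac{|k|}{\nu^{-1/3}|k|^{4/3}+k^2}\approx -\nu^{1/3}|k|^{-1/3},
\end{align}
which is smaller than the required rate $\delta\nu^{1/3}|k|^{2/3}$ by a full factor of $|k|$. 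To match the growth one would need an extra factor of $|k|$ in the numerator (e.g.\ $\mathrm{sgn}(k)|k|(\eta-kt)$), and then one must separately check that the modified energy stays equivalent to $\|\zeta\pn\Omega^*\|_{L^2}^2$. As written, the cross-term energy cannot close the estimate.

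What the paper does instead is conceptually the same mechanism but more cleanly packaged: it builds the low-regularity energy with the multiplier
\begin{align}
\wt{\mf M}f:=\lf(e^{\delta\nu^{1/3}(|\p_z|^{2/3}+1)t}\pn+\po\rg)\W_\nu\, f,
\end{align}
i.e.\ it reuses the enhanced-dissipation ghost multiplier $\W_\nu$ from \eqref{W_nu} (but drops $\W_I$, $\W_E$, and the Sobolev weight $\lan k,\eta\ran^s$). Differentiating $\|\wt{\mf M}\Omega^*\|_{L^2}^2$ produces the CK term $-2\|\sqrt{-\p_t\W_\nu/\W_\nu}\,\wt{\mf M}\Omega^*\|_{L^2}^2$, and the pointwise identity \eqref{M_property_ED},
\begin{align}
\frac{2}{9K\pi^2}\nu^{1/3}|k|^{2/3}\leq -\frac{\p_t\W_\nu}{\W_\nu}(t,k,\eta)+\nu(k^2+(\eta-kt)^2),
\end{align}
shows that the CK term plus the diffusion dominate the $\zeta$-growth \emph{pointwise in Fourier}, once $\delta$ is taken small relative to $1/K$. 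Note that $\W_\nu=\pi-\arctan(\frac{\nu^{1/3}|k|^{2/3}}{K}(t-\eta/k))$ is precisely the integrated version of a correctly normalized cross-term weight: $-\p_t\W_\nu=\frac{\nu^{1/3}|k|^{2/3}/K}{1+(\nu^{1/3}|k|^{2/3}(t-\eta/k)/K)^2}$, which has the $\nu^{1/3}|k|^{2/3}$ rate near the critical time that your weight lacks. The rest of the paper's proof (bounding the remainder via the $H^s$-bootstrap, Proposition \ref{linCK}, and the elliptic lemmas, and noting the key simplification that the difficult commutator $T_{22}$ from Lemma \ref{lem:Diffusion} is absent because $\wt{\mf M}$ commutes with $B^2$ in the relevant sense) is essentially what you outline in your final paragraph. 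If you replace your cross-term energy with the $\W_\nu$-weighted energy, the argument closes.
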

 \begin{proof}

To simplify the notation, we define a new multiplier only in this proof.
\begin{align}
    \wt {\mf M}f=(e^{\delta\nu^{1/3}(|\pa_z|^{2/3}+1)t}\pn+\po)\W_\nu f.
\end{align}
The time derivative of the $L^2_{}$-norm of $\oms$ reads as follows (following the argument of \eqref{Energy_Ev}),
\begin{align}\label{ddtL2:oms_nq}
\frac{1}{2}     \frac{d}{dt}\| \wt {\mf M}\oms(t)\|_{L_{}^2}^2 
=&\nu\int  \wt {\mf M}B^2(\pa_v-t\pa_z)^2\Omega^* \wt {\mf M} \Omega^*dzdv-\nu \|\pa_z \wt{\mf M}\oms\|_{L^2}-2\lf\|\sqrt{\frac{-\pa_t \W_\nu}{\W_\nu}} \wt {\mf M}\oms\rg\|_{L^2}^2\\
&+2\delta\nu^{1/3}\lf\|(|\pa_z|^{2/3}+1) \wt {\mf M} \oms_{\neq}\rg\|_{L^2}^2+ \mathrm{Remainder}(t),
\end{align}
where the $\mathrm{Remainder}(t)$ satisfies the following bound: 
\begin{align}
    \int_{\nu^{-1/6}}^{T_*} \mathrm{Remainder}(t) dt   \lesssim   \epsilon\nu \int_{\nu^{-1/6}}^{T_*} \|A\sqrt{-\de_L}\oms\|_{L^2}^2  dt  +   \epsilon \int_{\nu^{-1/6}}^{T_*} CK[\Omega^*] dt +  \epsilon^3 \nu^{2/3}.
\end{align}
This follows from the estimates in the following subsections in Section \ref{sec:energy}. We will only focus on the estimate of the first term. The treatment of the remainder terms is similar to the latter estimates in the section, and hence, we omit it for the sake of brevity. We estimate the first term (with $k\neq 0$) in \eqref{ddtL2:oms_nq} as follows,
\begin{align*}
   T:= &\nu \int \wt {\mf M}  B^2(\pa_v-t\pa_z)^2\Omega_\nq^*  \wt {\mf M}\Omega_\nq^*dzdv\\
    =&\nu   \int [\wt {\mf M},  B^2(\pa_v-t\pa_z)^2]\Omega_\nq^*  \wt {\mf M}\Omega_\nq^*dzdv-\nu  \int  B^2\lf((\pa_v-t\pa_z)  \wt {\mf M}\Omega_\nq^*\rg) \lf((\pa_v-t\pa_z) \wt {\mf M}\Omega_\nq^*\rg) dzdv\\
    &-\nu \int 2B\pa_v B(\pa_v-t\pa_z)\wt {\mf M} \Omega_\nq^* \wt {\mf M}\Omega_\nq^*dzdv.
    \end{align*}
Now we repeat the argument as in the proof of Lemma \ref{lem:Diffusion} with the observation that the key difficult term $T_{22}\equiv 0$. As a result, if we choose the $K$ in \eqref{W_nu} large enough depending on the norms of the shear profile derivative $B$  (as specified in Assumption \ref{assum:b} and \eqref{assump:b_t}), then there exists a constant $C=C(B,B^{-1}, B')>1$ such that the following estimate holds,
\begin{align*}
T\leq &-\frac{3\nu}{4} \|B\wt {\mf M} (\pa_v-t\pa_z)\Omega_\nq^* \|_{L^2}^2 +C\nu(1+\|\pa_v B\|_{L^\infty}^2)\|\wt {\mf M}\Omega_\nq^*\|_{L^2}^2. 
\end{align*}
As a result, we end up with the following relation
\begin{align}
    \frac{d}{dt}&\|\wt {\mf M} \oms(t)\|_{L_{}^2}^2\\
    \leq &- {\nu} \|\wt {\mf M}  \pa_z \oms\|_{L^2}^2-\frac{\nu}{2}\|B\wt {\mf M} (\pa_v-t\pa_z)\oms\|_{L^2}^2-\lf(\mathfrak{c}-8\delta\rg)\nu^{1/3}\|(|\pa_z|^{1/3}+1)\wt {\mf M}\oms_\nq\|_{L^2}^2\\
    &+(C(1+\|\pa_v B\|_{L^\infty})\nu^{1/3})^2\nu^{1/3}\|\wt {\mf M} \oms_\nq\|_{L^2}^2 +C\ep\lf(\lf\|A\sqrt{\frac{-\pa_t M}{M}}\oms_\nq\rg\|_{L^2}^2+\nu\|A\sqrt{-\de_L}\oms_\nq\|_{L^2}^2\rg).
\end{align}
By invoking the relation \eqref{M_property_ED} and the bootstrap assumption, we have that 
\begin{align}
&\|e^{\delta\nu^{1/3}(|\pa_z|^{2/3}+1)t}\oms_\nq(t) \|_{L_{}^2}^2+\nu\|e^{\delta\nu^{1/3}(|\pa_z|^{2/3}+1)t}B(\pa_v-t\pa_z)\oms_\nq\|_{L_t^2L_{}^2}^2\\
&\leq \|\wt {\mf M}\oms(t) \|_{L_{}^2}^2+\nu \|B\wt {\mf M}(\pa_v-t\pa_z)\oms\|_{L_t^2L_{}^2}^2
\leq  \|\zeta(\nu^{-1/6},|\pa_z|)\Omega(\nu^{-1/6})\|_{L_{x,y}^2}^2+  C(C_1)\ep^3\nu^{2/3}.
\end{align}
As long as $\ep$ is chosen small enough compared to the bootstrap constant, we have the result. 

\end{proof}

\subsection{The Estimate of $\mathfrak{NL}_{\mathfrak{0l}}$}

We define $\mathfrak{NL}_{\mathfrak{0l}}$ as 
\begin{align}
\label{NLlo}
   \mathfrak{NL}_{\mathfrak{0l}} :=  \int  A ( B \p_v \po U^1 \p_z F) \,  A\oms \, d V.
\end{align}
We have for following lemma.
\begin{lemma}
    \label{lem:NLlo}
    {\bf a) Short time:} For $T_{*} \in (0, \nu^{-1/6}]$, under the bootstrap assumption of Proposition \ref{pro:short}, the following estimate holds: 
        \begin{align}
        \label{NLlo:est:short}
          \int_0^{T_*}|\mathfrak{NL}_{\mathfrak{0l}}| \, dt \lesssim   \nu^{-1/6} \norm{A\po U^1   }_{L_t^{\infty}L_v^2} \norm{A \p_z F}_{L_t^{\infty}L_{}^2}\norm{A\Omega^*}_{L_t^{\infty}L_{}^2}.
        \end{align}

        \noindent
         {\bf b) Long time:}  For $T_*\in [\nu^{-1/6}, c_*\nu^{-1}]$, 
        under the bootstrap assumption of Proposition \ref{pro:long}, the following estimate holds:
\begin{align}
\label{NLlo:est:long}
\int_{\nu^{-1/6}}^{T_*}|\mathfrak{NL}_{\mathfrak{0l}}| \, dt \lesssim   \nu^{-1/6} \norm{A\po U^1   }_{L_t^{\infty}L_v^2} \norm{A \p_z F }_{L_t^2L_{}^2}\norm{A\pn\Omega^*}_{L_t^{2}L_{}^2}.
        \end{align}
Hence, as a result of the bootstrap assumption \eqref{Hypotheses_sh} (\eqref{Hypotheses}, respectively), the following estimate holds for $\mathcal{I} = [0, T_*], [\nu^{-1/6}, T_*]$:
\begin{align}
    \int_{\mathcal{I}} |\mathfrak{NL}_{\mathfrak{0l}}| \, dt \lesssim \epsilon^3 \nu^{2/3}.
\end{align}
        
\end{lemma}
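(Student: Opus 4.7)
The plan is to distribute the multiplier $A$ across the product $B\p_v \po U^1 \cdot \p_z F$ via a Sobolev product estimate to obtain a pointwise-in-time bound of the form
\begin{align*}
|\mathfrak{NL}_{\mathfrak{0l}}|(t) \lesssim \|A\po U^1\|_{L^2_v}\,\|A\p_z F\|_{L^2}\,\|A\pn \oms\|_{L^2},
\end{align*}
and then integrate in $t$ with appropriately chosen H\"older exponents. Two structural observations are central: first, the factor $B\p_v \po U^1$ is purely a function of $v$, so the product inherits the $z$-Fourier support of $\p_z F$, which is $\{k\neq 0\}$; hence under the $L^2$-pairing with $A\oms$ only the piece $A\pn \oms$ contributes. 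Second, $B$ is smooth and bounded below by $\theta_0 > 0$ by Lemma \ref{lem:B:regularity}.

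To distribute $A = \zeta_k\,\mf M\,\langle k,\eta\rangle^s$ across the product I would combine three tools already developed in the preceding sections: a fractional Leibniz (Kato--Ponce) inequality to split $\langle k,\eta\rangle^s$ between the two factors, Lemma \ref{lem:C:bdd} to absorb the coefficient $B$ into the $H^s$ norm, and Lemma \ref{lem:com:ghost} to commute the bounded multiplier $\mf M$ past multiplication by $B$ at the controllable cost $K^{-1}$. Whichever factor does not carry the $s$ derivatives is placed in $L^\infty_{z,v}$ and bounded via the two-dimensional Sobolev embedding $H^s \hookrightarrow L^\infty$, which is available since $s \geq 2$. Combined with Cauchy--Schwarz in $(z,v)$ against $A\pn\oms$, this yields the pointwise-in-time bound displayed above.

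Integration in time is then routine. For \eqref{NLlo:est:short} I would bound $\int_0^{T_*}$ by $T_* \le \nu^{-1/6}$ times the $L^\infty_t$ supremum of each factor. For \eqref{NLlo:est:long} I would keep $\|A\po U^1\|$ in $L^\infty_t$ and apply Cauchy--Schwarz in $t$ to pair the remaining two factors, producing the claimed $L^2_tL^2$ norms. The final bound $\ep^3 \nu^{2/3}$ then follows from the bootstrap hypotheses \eqref{Hypotheses_sh}, \eqref{Hypotheses} on $\oms$, the linear estimate of Proposition \ref{linCK} applied to $F$ (which yields $\|A|\p_z| F\|_{L^\infty_t L^2}\lesssim \ep\nu^{1/3}$), and an $H^s$ upgrade for $\po U^1$ derived from the Biot--Savart relation $-B\p_v \po U^1 = \po \oms$.

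The step I expect to be the main obstacle is precisely this Sobolev upgrade of $\po U^1$: the bootstrap provides only the plain $L^2_v$ bound $\|\po U^1\|_{L^2_v}\lesssim \ep\nu^{1/3}$, whereas the right-hand side of the lemma requires the $H^s$-type quantity $\|A \po U^1\|_{L^2_v}$. To resolve this, I would differentiate the identity $-B\p_v\po U^1 = \po \oms$, use $B \ge \theta_0$ to invert, and transfer the $\|A \pn\oms\|_{L^2}$ bootstrap into Sobolev control on $\p_v \po U^1$, finally reconstructing the full $H^s$ norm of $\po U^1$ by combining with the $L^2_v$ bootstrap and the decay of the perturbation at spatial infinity.
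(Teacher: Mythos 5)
Your argument matches the paper's proof: the product estimate \eqref{A_product_rule_Hs} (which packages precisely the Kato--Ponce inequality together with Lemma \ref{lem:C:bdd} and Lemma \ref{lem:com:ghost} that you assemble by hand), the structural observation that $\po U^1\,\p_z F$ is supported on nonzero $z$-modes so that only $A\pn\Omega^*$ contributes in the pairing (which is exactly what allows the $L^2_t$ placement in the long-time estimate), and Cauchy--Schwarz in time. You also correctly flag a step the paper leaves implicit, namely the $H^s$-upgrade of $\po U^1$ from the purely $L^2_v$ bootstrap bound via the relation $-B\p_v\po U^1 = \po\Omega^*$; the only slip there is that this identity involves the \emph{zero} mode $\po\Omega^*$ of the auxiliary profile, not $\pn\Omega^*$, but since the bootstrap \eqref{Hyp_ed} controls the full $\|A\Omega^*\|_{L^2}$ this makes no material difference.
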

\begin{proof}
To bound \eqref{NLlo} for the short time estimates we use the Lemma \ref{A_product_rule_Hs}, Proposition \ref{linCK}, and the short time boostrap assumptions in Proposition \ref{pro:short} yields
\begin{align}
\label{NLlo:est:short1}
    &\int_{0}^{T_*} \int |  A ( B \p_v \po U^1 \p_z F) , A\oms  |\,dVdt \lesssim \int_0^{T_*} \norm{A(B \po U^1)   }_{L_v^2} \norm{A \p_z F}_{L_{}^2} \norm{A\Omega^*}_{L_{}^2}\, dt\\
    &\lesssim \nu^{-1/6} \norm{A\po U^1   }_{L_t^{\infty}L_v^2} \norm{A \p_z F}_{L_t^{\infty}L_{}^2}\norm{A\Omega^*}_{L_t^{\infty}L_{}^2} \notag
\end{align}
For long time estimate, the crucial observation is that since $\po U^1$ does not depend on $z$, we can replace $A\Omega^*$ by $A \pn \Omega^*$.  We then use the Lemma \ref{A_product_rule_Hs}, Proposition \ref{linCK}, 
 and the long time bootstrap assumptions in Proposition \ref{pro:long} to obtain 
\begin{align}
\label{NLlo:est:long1}
&\int_{\nu^{-1/6}}^{T_*}\int |  A ( B \p_v \po U^1 \p_z F)  A\pn \oms \ |\, dV dt  \lesssim \int_{\nu^{-1/6}}^{T_*} \norm{A(B \po U^1)   }_{L_v^2} \norm{A \p_z F}_{L_{}^2} \norm{A\pn\Omega^*}_{L_{}^2}\, dt \\
&\lesssim 
\nu^{-1/6}\norm{A \po U^1   }_{L_t^{\infty}L_v^2} \norm{A \p_z F }_{L_t^2L_{}^2} \norm{A\pn\Omega^*}_{L_t^2L_{}^2}. \notag
\end{align}
\end{proof}

\subsection{The Estimate of $\mathfrak{NL}_{\mathfrak{a}}$}\label{sec:NL_a}
We define $\mathfrak{NL}_{\mathfrak{a}}$ as 
\begin{align}
    \mathfrak{NL}_{\mathfrak{a}} := \int A(B \na_L^\perp\de_t^{-1}\Omega_\nq\cdot \na_L \Omega^* ) \  A\Omega^* \, dV
\end{align}
We have the following lemma.
\begin{lemma}
    \label{lem:NLa}
    \noindent {\bf a) Short time:} For $T_{*} \in (0, \nu^{-1/6}]$, under the bootstrap assumption of Proposition \ref{pro:short}, the following estimate holds as long as $\nu$ is chosen small enough: 
        \begin{align}
        \label{NLa:est:short}
           |\mathfrak{NL}_{\mathfrak{a}}|  \lesssim \ep \nu^{1/3}(\|A|\pa_z|^{1/3}\oms_\nq\|_{L^2}^2+\|A|\pa_z|^{1/3}F_\nq\|_{L^2}^2)+ (1+t)\|A\Omega\|_{L^2}\|A\Omega^*\|_{L^2}^2.
        \end{align}
        
        \noindent 
        {\bf b) Long time: }For $T_*\in [\nu^{-1/6}, c_*\nu^{-1}]$
        under the bootstrap assumption of Proposition \ref{pro:long}, the following estimate holds as long as $\nu$ is chosen small enough:
\begin{align}\n
 |\mathfrak{NL}_{\mathfrak{a}}|  \lesssim&  \ep \nu^{1/3}(\|A|\pa_z|^{1/3}\oms_\nq\|_{L^2}^2+\|A|\pa_z|^{1/3}F_\nq\|_{L^2}^2)+\ep\nu \|A\sqrt{-\de_L}\oms\|_{L^2}^2\\
 &+\ep \sum_{\iota\in \{I,\nu, E\}}CK_\iota [\oms]+{\ep} \sum_{\iota\in \{I,\nu\}}CK_\iota [F] .
\label{NLa:est:long}
        \end{align}
 Hence, as a result of the bootstrap assumption \eqref{Hypotheses_sh} (\eqref{Hypotheses}, respectively), the following estimate holds for $\mathcal{I} = [0, T_*], [\nu^{-1/6}, T_*]$:
    \begin{align}
        \int_{\mathcal{I}} |\mathfrak{NL}_{\mathfrak{a}}| \, dt\lesssim \ep^3\nu^{2/3}.
    \end{align}
\end{lemma}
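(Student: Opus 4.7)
The estimate will follow from the trilinear inequality of Lemma \ref{lem:Tri_est}, combined with the bootstrap hypotheses and the linear bound of Proposition \ref{linCK}. I first expand $\nabla_L^\perp = (-(\pa_v - t\pa_z), \pa_z)$ and split
\begin{align*}
\mathfrak{NL}_\mathfrak{a} &= -\int A\bigl(B\,(\pa_v - t\pa_z)\Delta_t^{-1}\Omega_\nq \cdot \pa_z\Omega^*\bigr)\, A\Omega^*\,dV \\
&\quad + \int A\bigl(B\,\pa_z\Delta_t^{-1}\Omega_\nq\cdot(\pa_v - t\pa_z)\Omega^*\bigr)\, A\Omega^*\,dV.
\end{align*}
The second integral is directly in the form treated by Lemma \ref{lem:Tri_est}: on the Fourier side the factor $|\ell|$ of that lemma corresponds to $\pa_z\Delta_t^{-1}\Omega_\nq$ and the factor $\eta-\xi-(k-\ell)t$ to $(\pa_v-t\pa_z)\Omega^*$. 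For the first integral the roles of $\pa_z$ and $\pa_v-t\pa_z$ are flipped; I recover the required structure by integrating by parts in $z$ and using the divergence-free identity $\nabla_L\cdot(\nabla_L^\perp\Delta_t^{-1}\Omega_\nq)=0$ together with $[\pa_z,\pa_v-t\pa_z]=0$, so that up to commutators with $B$ (controlled by Lemma \ref{lem:C:bdd}) the first integral also fits the trilinear template.

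I then apply Lemma \ref{lem:Tri_est} with $\mathcal F = \Omega$ and $\mathcal G = \mathcal H = \Omega^*$. Because $\Omega = F + \Omega^*$, the $\mathcal F_\nq$ appearing in the output splits into $\Omega_\nq^*$ and $F_\nq$. The $F_\nq$ piece is absorbed using Proposition \ref{linCK}, which bounds $\sum_\iota CK_\iota[|\pa_z|F]$ by $\|A\sqrt{-\pa_t\W_I/\W_I}\Omega^*\|_{L^2L^2}^2$; after this substitution every CK term on the right involves only $\Omega^*$ and can eventually be hidden in the left-hand side of \eqref{Energy_Ev}.

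For the short-time regime $t\in[0,\nu^{-1/6}]$ the multiplier $\mathfrak M$ reduces to $\W_I$ (or $\W_I^\circ$ on the zero mode), so only the inviscid-damping CK term enters. Using the hypothesis \eqref{Hypotheses_sh} to get $\|A\Omega^*\|_{L^\infty_tL^2}\lesssim \ep\nu^{1/3}$ and the bound $te^{-\delta\nu^{1/3}t}\leq t\leq \nu^{-1/6}$, I obtain \eqref{NLa:est:short}: the first part arises from pairing the CK factor with the $\ep\nu^{1/3}$ pointwise bound on $\Omega^*$ (which after Proposition \ref{linCK} also produces the $\|A|\pa_z|^{1/3}F_\nq\|^2$ contribution), and the $(1+t)\|A\Omega\|_{L^2}\|A\Omega^*\|_{L^2}^2$ piece comes from the cross term of Lemma \ref{lem:Tri_est}. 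For the long-time regime $t\in[\nu^{-1/6},c_*\nu^{-1}]$ all three of $\W_I,\W_\nu,\W_E$ are active; the sup factor $te^{-\delta\nu^{1/3}t}$ is uniformly bounded, so combining the bootstrap $\|A\Omega^*\|_{L^\infty_tL^2}\lesssim\ep\nu^{1/3}$ with Proposition \ref{linCK} gives \eqref{NLa:est:long}, with the three $\sum_\iota CK_\iota$ terms coming from the three multipliers. The time-integrated $\ep^3\nu^{2/3}$ bound then follows by integrating \eqref{NLa:est:short}/\eqref{NLa:est:long} over $\mathcal{I}$ and invoking the bootstrap hypothesis together with Proposition \ref{linCK}.

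\textbf{Main obstacles.} The central difficulty is twofold. First, the velocity factor $B\nabla_L^\perp\Delta_t^{-1}\Omega_\nq$ is nonlocal and involves the variable-coefficient inverse of $\Delta_t$; the necessary bounds come from the elliptic estimates of Lemmas \ref{lem:ellip}--\ref{ellip:perturb}, and it is here that the restriction $\nu t\leq c_*$ enters. Second, $B$ is smooth but does not decay, so commuting $A$ past products containing $B$ produces commutators that must be controlled via Lemmas \ref{lem:C:bdd}--\ref{lem:com:ghost}; the parameter $K$ in the definition \eqref{multipliers} of $\W_I,\W_\nu,\W_E$ is chosen large precisely so that these commutators are small enough to be absorbed.
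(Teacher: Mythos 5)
There is a genuine gap in your treatment of the first integral, and it is not a small one: it is precisely the piece that occupies most of the paper's proof.

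After extracting the commutator, the first integral becomes
\begin{align}
-\int \bigl[A,\, B(\pa_v - t\pa_z)\Delta_t^{-1}\Omega_\nq\bigr]\pa_z\Omega^*\; A\Omega^*\,dV,
\end{align}
and on the Fourier side the velocity factor carries $i(\xi - t\ell)\,\wh{\Delta_t^{-1}\Omega_\nq}(\ell,\xi)$, i.e. the $(\pa_v-t\pa_z)$ derivative, with $|k-\ell|$ landing on $\Omega^*$. Lemma \ref{lem:Tri_est} is built for the opposite structure: it requires the factor $|\ell|\,\wh{B\Delta_t^{-1}\mathcal F_\nq}$ on the velocity and $|\eta-\xi-(k-\ell)t|$ on $\mathcal G$. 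The reason this matters is quantitative: the trilinear lemma's output is controlled by $\|A\sqrt{-\pa_t\W_I/\W_I}\,\mathcal F_\nq\|_{L^2L^2}$, whose symbol on the velocity is $\approx |\ell|/\sqrt{\ell^2+(\xi-t\ell)^2}$; by contrast your first integral produces $|\xi-t\ell|/(\ell^2+(\xi-t\ell)^2)^{1/2}$, which is $O(1)$ and does not supply the time integrability that makes the trilinear argument close. Integrating by parts in $z$ and invoking $\nabla_L\cdot\nabla_L^\perp\Delta_t^{-1}\Omega_\nq=0$ does something different from what you claim: it produces the cancellation of the two ``transport'' pieces, leaving the residual $\mathfrak{E}_3 = \int(\pa_vB)(\pa_z\Delta_t^{-1}\Omega_\nq)(A\Omega^*)^2\,dV$, but it does not turn the $\mathfrak{E}_1$-type commutator into the $\mathfrak{E}_2$-type one. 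Those two commutators have genuinely different derivative structures and must be treated separately.

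In the paper, the $\mathfrak{E}_1$ commutator is handled by a dedicated argument (Lemma \ref{lem:E_1_proof} for long time, Lemma \ref{lem:E_4_proof} for short time), not by Lemma \ref{lem:Tri_est}. The key ingredients are a frequency decomposition of $\mathbb Z^2$ into a region $D_1$ where a fractional derivative can be transferred to $k$ at the cost of $\nu^{-1/6}$, and a region $D_2$ (both $|k|,|k-\ell|>\nu^{-1/2}$, $|\ell|$ small) where the multiplier $\mf M$ reduces to $\W_I\langle k,\eta\rangle^s$ and one must exploit an explicit mean-value interpolation \eqref{L_z_interpolant} for the $\W_I$ commutator. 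The $\zeta_k$ weight also generates a separate commutator, handled by case analysis on $|k-\ell|\gtrless 4|\ell|$. On the short-time interval the same difficulty re-appears with $\W_I^\circ$ on the zero mode, and the needed commutator bound is Lemma \ref{lem:Acm_sh}; this is where the $(1+t)\|A\Omega\|_{L^2}\|A\Omega^*\|_{L^2}^2$ factor in \eqref{NLa:est:short} actually comes from, not from ``the cross term of Lemma \ref{lem:Tri_est}.'' Your outline correctly identifies the decomposition and correctly routes $\mathfrak{E}_2$ through Lemma \ref{lem:Tri_est} and $F$ through Proposition \ref{linCK}, but the claim that ``the first integral also fits the trilinear template'' skips the hardest step of the proof.
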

To prove the lemma, we decompose the term as follows
\begin{align}   \mathfrak{NL}_{\mathfrak{a}} 
=&   \int \bigg(A\lf(B(\pa_v-t\pa_z)\de_t^{-1}\Omega_\nq \pa_z \Omega^* \rg)- B(\pa_v-t\pa_z)\de_t^{-1}\Omega_\nq \pa_z (A \Omega^*) \bigg) \  A\Omega^* dV\\
&\quad-\int \pa_z (B(\pa_v-t\pa_z)\de_t^{-1}\Omega_\nq) (A \Omega^*)^2 dV\\
&\quad-  \int \bigg(A(\pa_z\de_t^{-1}\Omega_\nq B(\pa_v-t\pa_z) \Omega^*) -(\pa_z\de_t^{-1}\Omega_\nq B(\pa_v-t\pa_z) A\Omega^*)\bigg)\  A\Omega^* dV \\
&\quad+  \int (\pa_v-t\pa_z) (B(\pa_z\de_t^{-1}\Omega_\nq ))(A\Omega^*)^2 dV\\
&\hspace{-1.2cm}=\sum_{k,\ell}\iint (A(k,\eta)-A(k-\ell,\eta-\xi))\lf(i(\xi-t\ell) \wh{(\de_t^{-1}\Omega_\nq)}(\ell,\xi) i(k-\ell) \wh{B\Omega^*}(\eta-\xi,k-\ell)\rg) \\
&\hspace{1.2cm} \times\overline{A\wh\Omega^*(k,\eta)} d\eta d\xi\\
&\hspace{-0.8cm} - \sum_{k,\ell}\iint (A(k,\eta)-A(k-\ell,\eta-\xi))\lf(i\ell \wh{(B\de_t^{-1}\Omega_\nq)}(\ell,\xi) i(\eta-\xi-(k-\ell)t) \wh\Omega^*(\eta-\xi,k-\ell)\rg) \\
&\hspace{1.2cm} \times\overline{A\wh\Omega^*(k,\eta)} d\eta d\xi\\
&\hspace{-0.8cm}+  \int  (\pa_v B)(\pa_z\de_t^{-1}\Omega_\nq)(A\Omega^*)^2 dV\\
&\hspace{-1.2cm}=:\mathfrak  E_{1}+\mathfrak  E_{2}+\mathfrak{E}_{3}.\label{E}
\end{align}
We organize the proof of Lemma \ref{lem:NLa} as a series of lemmas, each targeted at different components of \eqref{E}.
\begin{lemma}[$\mathfrak E_{1}$-estimate]\label{lem:E_1_proof} Assume the bootstrap hypothesis \eqref{Hypotheses}. For any $t\in[\nu^{-1/6},T_*]$, the following estimate holds
\begin{align}\label{E_1_est}
|\mathfrak{E}_{1}|\lesssim \myr{ \ep\nu^{1/3}\|A|\pa_z|^{1/3}\Omega^*_\nq\|_{L^2}^2}+\ep CK_I[\Omega^*_\nq]+\ep CK_I[F_\nq].
\end{align}
\end{lemma}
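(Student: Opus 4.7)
The plan is to apply the trilinear estimate in Lemma \ref{lem:Tri_est} after splitting the perturbation as $\Omega_\nq = \Omega_\nq^* + F_\nq$, which decomposes $\mathfrak{E}_1 = \mathfrak{E}_1^* + \mathfrak{E}_1^F$. The two resulting pieces will produce the $\epsilon\, CK_I[\Omega^*_\nq]$ and $\epsilon\, CK_I[F_\nq]$ contributions on the right-hand side of \eqref{E_1_est}, respectively, while the residual $\epsilon\nu^{1/3}\|A|\partial_z|^{1/3}\Omega^*_\nq\|_{L^2}^2$ comes from the lower-order term appearing in \eqref{Tri_est}.

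To match the form of Lemma \ref{lem:Tri_est}, I would set $\mathcal{H} = \Omega^*$, let $\mathcal{F}$ be $\Omega^*_\nq$ or $F_\nq$ depending on the piece, and assign $\mathcal{G} = \Omega^*$, whose $L^\infty_t H^2$-norm is controlled by the bootstrap hypothesis \eqref{Hyp_ed} and yields the $\epsilon$ prefactor. The structural difficulty is that in $\mathfrak{E}_1$ the derivative $(\partial_v - t\partial_z)$ (the Fourier factor $\xi - t\ell$) is placed on $\Delta_t^{-1}\Omega_\nq^\natural$ with $\natural \in \{*, F\}$, while $\partial_z$ (factor $k-\ell$) acts on $B\Omega^*$, which is the opposite of the derivative placement in \eqref{Tri_est}. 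I would resolve this by invoking the elliptic estimate Lemma \ref{ellip:perturb} with the Helmholtz-type multiplier $M = (k^2+(\xi-kt)^2)^{-1/2}$, which lets one trade $|\xi - t\ell|\widehat{\Delta_t^{-1}\Omega_\nq^\natural}(\ell,\xi)$ for a $(-\Delta_L)^{-1/2}$-gain on $\widehat{\Omega_\nq^\natural}(\ell,\xi)$; the $B$ coefficient is then repositioned via Lemma \ref{lem:com:half} so that it can be bundled with the elliptic inverse as required by the trilinear integrand.

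The commutator structure $A(k,\eta) - A(k-\ell,\eta-\xi)$ is the source of the $CK_I$ weights: exploiting the identity $\partial_\eta \mathcal{W}_I(t,k,\eta) = -k^{-1}\partial_t \mathcal{W}_I(t,k,\eta)$ together with a mean-value expansion (and the analogous bounds for $\mathcal{W}_\nu,\mathcal{W}_E$), one converts the difference into a factor comparable to $\sqrt{-\partial_t \mathcal{W}_I/\mathcal{W}_I}$ evaluated at the appropriate frequency, which is exactly the shape of the $CK_I$ damping on the right-hand side. After Cauchy--Schwarz, the bootstrap bound $\|A\Omega^*\|_{L^\infty_t H^2} \lesssim \epsilon\nu^{1/3}$ produces the $\epsilon$ prefactor, and Proposition \ref{linCK} treats the $F_\nq$ piece symmetrically.

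The main obstacle I anticipate is the derivative swap itself: arranging $(\partial_v - t\partial_z)$ off of $\Delta_t^{-1}$ while preserving the $B$-coefficient structure (so that Lemma \ref{ellip:perturb} and the multiplier commutator Lemma \ref{lem:com:ghost} remain applicable) is the delicate technical step, and care is needed to make sure no residual terms are created that cannot be absorbed by the three allowed quantities on the right-hand side of \eqref{E_1_est}. In particular, the $\epsilon\nu^{1/3}\|A|\partial_z|^{1/3}\Omega^*_\nq\|_{L^2}^2$ term is precisely the residue that $CK_I$ alone cannot absorb and must instead be paid for by the enhanced-dissipation weight carried in $\mathcal{W}_\nu$; the $|\partial_z|^{1/3}$ scaling reflects the natural enhanced-dissipation exponent in \eqref{W_nu}.
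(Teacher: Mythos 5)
The proposal has a genuine gap: the trilinear estimate Lemma \ref{lem:Tri_est} cannot be applied to $\mathfrak{E}_1$, and the proposed ``derivative swap'' does not repair the mismatch. The integrand in \eqref{Tri_est} places $|\ell|$ (a $\partial_z$-derivative) on the Biot--Savart factor $\widehat{B\Delta_t^{-1}\mathcal{F}_\nq}(\ell,\xi)$ and $(\eta-\xi-(k-\ell)t)$ (a $(\partial_v-t\partial_z)$-derivative, which grows linearly in $t$) on $\mathcal{G}$; in $\mathfrak{E}_1$ the placement is reversed, with $(\xi-t\ell)$ on $\Delta_t^{-1}\Omega_\nq$ and the bounded factor $(k-\ell)$ on $B\Omega^*$. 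Invoking Lemma \ref{ellip:perturb} with $M=(k^2+(\xi-kt)^2)^{-1/2}$ controls the Biot--Savart factor comparably in either form, but it does not transfer the outer derivative from $B\Omega^*$ to the low-frequency factor, so the integrand never actually assumes the shape required by \eqref{Tri_est}. Even setting this aside, the conclusion of \eqref{Tri_est} necessarily produces the term $\bigl\|A\sqrt{-\partial_t\mathcal{W}_E/\mathcal{W}_E}\,\mathcal{H}\bigr\|_{L^2}^2 = CK_E[\Omega^*]$, which does not appear on the right-hand side of \eqref{E_1_est}; the stated lemma claims a strictly tighter bound involving only $CK_I$ and the low-order $\epsilon\nu^{1/3}\|A|\partial_z|^{1/3}\Omega^*_\nq\|_{L^2}^2$ residual, so your route would at best prove something weaker.

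The paper instead proves Lemma \ref{lem:E_1_proof} by a direct frequency-space commutator analysis tailored to $\mathfrak{E}_1$'s particular derivative structure. The $k,\ell$-sum is split into the regions $D_1$ and $D_2$ of \eqref{D_1_region}--\eqref{D_2_region} and a separate $\zeta_k$-commutator term $T_3$. The crucial structural fact, which your plan does not exploit, is that on $D_2$ (both output frequencies above $\nu^{-1/2}$) the multiplier $\mathfrak{M}$ collapses to $\mathcal{W}_I$ alone because $\mathcal{W}_\nu$ and $\mathcal{W}_E$ are constant there; this allows a mean-value estimate \eqref{com_D_2} for the $\mathcal{W}_I$-difference that never creates a $\mathcal{W}_E$ factor, while on $D_1$ the one-third derivative split $|k-\ell|\lesssim|\ell|+\nu^{-1/6}|k|^{1/3}|k-\ell|^{1/3}$ and the inviscid-damping gain from the $(\partial_v-t\partial_z)$ factor on the Biot--Savart piece produce the $CK_I$ and $\epsilon\nu^{1/3}$ terms. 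The observation that the favourable placement of $(\partial_v-t\partial_z)$ on the stream function (rather than on the high-frequency factor) is what makes $\mathfrak{E}_1$ avoidable without the echo multiplier is exactly what the trilinear lemma, designed for the $\mathfrak{E}_2$ structure, cannot see.
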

\begin{proof}

{\bf Step \# 1: Setup.} 
The idea of the proof is similar to the one that appeared in \cite{WZ23}. However, significant adjustments are required because we have a different multiplier $\mf M$ in the high-frequency regime ($|k|> \nu^{-1/2}$), and a varying coefficient $B$ appears in the Biot-Savart law. Thanks to the new structure of our multiplier, we decompose it as follows:
\begin{align}
    A(t,k,\eta)= \zeta_k(t) \underbrace{\mf M(t,k,\eta)\lan k,\eta\ran^s}_{=:\wt A(t,k,\eta)}.
\end{align}
We decompose the $k,\ell$-sum into the following two regimes:
\begin{align}\label{D_1_region}
    D_1:=&\{(k,\ell)\in \mathbb Z^2||\ell|\geq |k-\ell|/2\quad \text{or}\quad\min\{|k|,|k-\ell|\}\leq \nu^{-1/2}\},\\ \label{D_2_region}
    D_2:=&\{(k,\ell)\in \mathbb Z^2||\ell|< |k-\ell|/2  \quad \text{and}\quad\min\{|k|,|k-\ell|\}> \nu^{-1/2}\}.
\end{align}

We can further decompose the $D_1$ regime into two cases, i.e.,
\begin{align}
D_{1a}:=&\lf\{k,\ell\big||\ell|<|k-\ell|/2 \quad \text{and} \quad \min\{|k|,|k-\ell|\}\leq \nu^{-1/2}\rg\};\\
D_{1 b} := &\lf\{k,\ell\big||\ell|\geq |k-\ell|/2\rg\}.
\end{align}

The commutator term $\mf E_1$ can be decomposed as follows:
\begin{align}
    \mf E_1=& \sum_{(k,\ell)\in D_1}\iint \zeta_k(\wt A(k,\eta)-\wt A(k-\ell,\eta-\xi))\lf(B(\pa_v-t\pa_z) (\de_t^{-1}\Omega_\nq)\rg)^\wedge (\ell,\xi)\\
    &\hspace{4cm}\times i(k-\ell) \wh{\Omega^*}(\eta-\xi,k-\ell) \  \overline{A\wh\Omega^*(k,\eta)} d\eta d\xi \\
    &+\sum_{(k,\ell)\in D_2}   \iint \zeta_k(\wt A(k,\eta)-\wt A(k-\ell,\eta-\xi))\lf(B(\pa_v-t\pa_z) (\de_t^{-1}\Omega_\nq)\rg)^\wedge (\ell,\xi)\\
    &\hspace{4cm}\times i(k-\ell) \wh{\Omega^*}(\eta-\xi,k-\ell) \  \overline{A\wh\Omega^*(k,\eta)} d\eta d\xi \\
    &+ \sum_{(k,\ell)\in \mathbb{Z}^2}  \iint (\zeta_k(t)-\zeta_{k-\ell}(t))    \lf(B(\pa_v-t\pa_z)(\de_t^{-1}\Omega_\nq)\rg)^\wedge(\ell,\xi) \\
&\hspace{4cm}\times i(k-\ell) \wt A(k-\ell,\eta-\xi) \wh{\Omega^*}(\eta-\xi,k-\ell) \   \overline{A\wh\Omega^*(k,\eta)} d\eta d\xi\\
=:&T_1+T_2+T_3.\label{E_1}
\end{align}

\noindent 
{\bf Step \# 2: The \myr{$T_{1}$} estimate.}
In Case $D_{1a}$, we have that $|k-\ell|\leq |k|+|\ell|\leq |k|+\frac{1}{2}|k-\ell|$, and hence $|k-\ell|\leq 2|k|$. As a result, 
\begin{align}
|k-\ell|\leq (2|k|)^{1/3}|k-\ell|^{1/3}\min\{2|k|, |k-\ell|\}^{1/3}\leq2\nu^{-1/6}|k|^{1/3}|k-\ell|^{1/3}. 
\end{align}
Hence, if $|k-\ell|$ is in high frequency, one can move one-third of a derivative to $k$ and pay $\nu^{-1/6}$.  In Case $D_{1b}$, we obtain $|k-\ell|\leq 2|\ell|$. To summarize, we have the following
\begin{align}
|k-\ell|\leq 2(|\ell|+\nu^{-1/6}|k|^{1/3}|k-\ell|^{1/3}),\quad \forall (k,\ell)\in D_1. 
\end{align}

Now we apply this relation to estimate the $D_1$-contribution as follows
\begin{align}
    T_1&\lesssim \sum_{(k,\ell)\in D_1} \iint \zeta_k(\wt A(k-\ell,\eta-\xi)+\wt A(\ell,\xi))\lf(\lf|i(\xi-t\ell) \wh{(\de_t^{-1}\Omega_\nq)}(\ell,\xi)\rg|\myr{|k-\ell|} \lf|\wh{B\Omega^*}(\eta-\xi,k-\ell)\rg|\rg) \\
&\hspace{2.2cm} \times  |\overline{  A\wh\Omega^*(k,\eta)}| d\eta d\xi\\
  &\lesssim \sum_{(k,\ell)\in D_1} \iint \zeta_k\wt A(\ell,\xi)\lf(\lf|i(\xi-t\ell) \wh{(\de_t^{-1}\Omega_\nq)}(\ell,\xi)\rg| |k-\ell|  \lf|\wh{B\Omega^*}(\eta-\xi,k-\ell)\rg|\rg) \  |\overline{A\wh\Omega^*(k,\eta)}| d\eta d\xi\\
  &\quad + \sum_{(k,\ell)\in D_1} \iint \zeta_k\wt A(k-\ell,\eta-\xi)(\myr{|\ell|+\nu^{-1/6}|k|^{1/3}|k-\ell|^{1/3}})\lf|i(\xi-t\ell) \wh{(\de_t^{-1}\Omega_\nq)}(\ell,\xi)\rg|\\ 
 &\hspace{2.6cm} \times \lf|\wh{B\Omega^*}(\eta-\xi,k-\ell)\rg|  |\overline{ A\wh\Omega^*(k,\eta)}| d\eta d\xi \\
&=:T_{11}+T_{12}.
\end{align}
For the $T_{11}$-term, we recall the definition $(-\de_L f(z,v))^\wedge:=(|k|^2+|\eta-kt|^2)\wh f(k,\eta)$ to rewrite it as follows:
\begin{align}
 T_{11} 
 \lesssim&\sum_{(k,\ell)\in D_1} \iint \zeta_k\wt A(\ell,\xi)\lf|\lf( \frac{i\xi- it\ell}{|\ell|^2+|\xi-\ell t|^2}\lf({\de_L\de_t^{-1}\Omega_\nq}\rg)^\wedge(\ell,\xi)\rg|  |k-\ell|\lf|\wh{B\Omega^*}(\eta-\xi,k-\ell)\rg|\rg) \\
 &\hspace{1.6cm}\times|\overline{ A\wh\Omega^*(k,\eta)}| d\eta d\xi. 
\end{align}
Now we estimate this term using the fact that $\mathbbm{1}_{\ell\neq 0}\max\{|k|,|k-\ell|\}\geq 1$, the elliptic estimate \eqref{ell_est},  the Kato-Ponce inequality \eqref{KP}, and the bootstrap assumption \eqref{Hyp_ed} to obtain that
\begin{align}
T_{11}\lesssim&  \lf\| A \frac{|\pa_v-t\pa_z|}{\de_L}\Omega_\nq\rg\|_{L^2}\lf(\|\lan \pa_z,\pa_v\ran^s(B\Omega^*_\nq)\|_{L^2}
\| A\Omega^* \|_{L^2}+\|\lan \pa_z,\pa_v\ran^s(B\Omega^*)\|_{L^2} \| A\Omega^*_\nq \|_{L^2}\rg)\\
\lesssim&\ep \nu^{1/3} \lf\| A\sqrt{\frac{-\pa_t\W_I}{\W_I}} \Omega_\nq \rg\|_{L^2} \| A\Omega^*_\nq \|_{L^2} 
\lesssim  \ep CK_I[\oms_\nq] +\ep CK_I[F_\nq]+\ep\nu^{2/3}\|A\Omega^*_\nq \|_{L^2}^2.
\end{align}
Here in the last line, we have recalled the definition $\Omega_\nq=\Omega_\nq^*+F_\nq$. This is consistent with the conclusion \eqref{E_1_est}. 

Next we estimate the $T_{12}$-term with a similar argument,
\begin{align} 
    &|T_{12}|\lesssim\sum_{(k,\ell)\in D_1} \iint \zeta_k\wt A(k-\ell,\eta-\xi) \myr{|\ell|}\lf|(i\xi-i t\ell )\wh{(\de_t^{-1}\Omega_\nq)}(\ell,\xi)\rg|  \lf|\wh{B\Omega^*}(\eta-\xi,k-\ell)\rg| \\
 &\hspace{3.2cm}\times  |\overline{ A\wh\Omega^*(k,\eta)}| d\eta d\xi\\
   &
 \hspace{1.2cm}+ \nu^{-1/6}\sum_{(k,\ell)\in D_1} \iint \zeta_k\wt A(k-\ell,\eta-\xi) \lf|(i\xi-i t\ell )\wh{(\de_t^{-1}\Omega_\nq)}(\ell,\xi)\rg| |k-\ell|^{1/3}\lf |\wh{B\Omega^*}(\eta-\xi,k-\ell)\rg| \\
 &\hspace{3.2cm}\times   |\overline{ A|k|^{1/3}\wh\Omega^*(k,\eta)}| d\eta d\xi\\
   &\lesssim \sum_{(k,\ell)\in D_1} \iint  \zeta_k\wt A(k-\ell,\eta-\xi)\lf( \myr{|\ell|}\lf|\lf(\frac{\pa_v- t\pa_z}{\de_L}(\de_L\de_t^{-1}  \Omega_\nq)\rg)^\wedge(\ell,\xi)\rg|  \lf|\wh{B\Omega^*}(k-\ell,\eta-\xi)\rg|\rg) \\
 &\hspace{3.2cm}\times   |\overline{ A\wh\Omega^*(k,\eta)}| d\eta d\xi\\
&\hspace{1cm}+\nu^{-1/6}\sum_{(k,\ell)\in D_1}\lf\|\zeta_\ell\frac{\eta- t\ell }{\ell^2+|\eta-t\ell|^2}\wh{(\de_L\de_t^{-1}\Omega_\nq)}(\ell,\cdot)\rg\|_{L_\eta^1}\lf  \| A(k-\ell,\cdot) |k-\ell|^{1/3}\wh{B\Omega^*}(k-\ell,\cdot)\rg\|_{L_\eta^2}\\
 &\hspace{3.2cm}\times   \| A|k|^{1/3}\wh\Omega^*(k,\eta)\|_{L_\eta^2} \\
&=: T_{121}+T_{122}.
\end{align}
We note that for these two terms, $k$ and $k-\ell$ cannot be zero at the same instance because $\ell\neq 0$. 
For the $T_{121}$ term, we estimate it with Lemma \ref{lem:ellip} as follows
\begin{align}
\lf|T_{121}\rg|\lesssim& \lf\| A\frac{|\pa_z||\pa_z,\pa_v-t\pa_z|}{- \de_L} \Omega_\nq\rg\|_{L_{}^2}(\| A(B\Omega^*)\|_{L^2}\| A\Omega^*_\nq\|_{L^2}+\| A(B\Omega^*_\nq)\|_{L^2}\| A\Omega^*\|_{L^2})\\
\lesssim& \lf\| A\sqrt{\frac{-\pa_t\mathcal W_I}{\mathcal W_I}}  \Omega_\nq\rg\|_{L_{}^2}\| A\Omega^*\|_{L^2}\| A\Omega^*_\nq\|_{L^2} 
\lesssim\ep (CK_I[\oms_\nq]+CK_I[F_\nq])+\ep \nu^{2/3}\| A\Omega^*_\nq\|_{L^2}^2.
\end{align}
In the last line, we used the equation for $\pa_t\mathcal W_I$. This is consistent with the conclusion. 

For the $T_{122}$-term, we invoke the elliptic estimate \eqref{ell_est} and the time constraint $t\geq \nu^{-1/6}$ to obtain
\begin{align}
\lf|T_{122}\rg|\lesssim&\nu^{-1/6}\lf\|\frac{ A(k,\eta)\Omega_\nq(k,\eta)}{|k|(1+|t-\eta /k |)\lan \eta\ran}\rg\|_{\myr{\ell_k^1 L_\eta^2}}\| A|\pa_z|^{1/3}\Omega^*_\nq\|_{L^2}^2\lesssim  \frac{\nu^{-1/6}}{\lan t\ran}\lf\|  A\Omega_\nq\rg\|_{L^2}\| A|\pa_z|^{1/3}\Omega^*_\nq\|_{L^2}^2 \\
\lesssim&\nu^{-1/6+1/6}\lf\| A\Omega_\nq\rg\|_{L_{}^2}\| A|\pa_z|^{1/3}\Omega^*_\nq\|_{L^2} ^2
\lesssim \ep \nu^{1/3}\| A|\pa_z|^{1/3}\Omega^*_\nq\|_{L^2}^2.
\end{align}
Here we have used the inviscid damping estimate $(1+|t-\eta/k|)\lan\eta\ran\gtrsim \lan t\ran$ for $k\neq0$ and the bootstrap assumption \eqref{Hypotheses}. At this point, we have confirmed that all the components of $T_1$ satisfy the estimate \eqref{E_1_est}. This concludes {\bf Step \# 2}. 

\noindent 
{\bf Step \# 3: The $T_2$ estimate.}
{In the region where $(k,\ell)\in D_2$, our analysis deviates from the work \cite{WZ23}. We observe the following 
\begin{align}
    \ell\neq 0,\quad k(k-\ell)>0,\quad\wt A(k,\eta)=\W_I(k,\eta)\lan k,\eta\ran^s,\quad \wt A(k-\ell,\eta-\xi)=\W_I(k-\ell,\eta-\xi)\lan k-\ell,\eta-\xi\ran^s.
\end{align}
Hence,
\begin{align}
&| \wt A(k,\eta)-\wt A(k-\ell,\eta-\xi)| \\
&\leq|\lan k,\eta\ran^s-\lan k-\ell,\eta-\xi\ran^s|\W_I(k,\eta) +|\W_I(k,\eta)-\W_I(k-\ell,\eta-\xi)|\lan k-\ell,\eta-\xi\ran^s.\label{wtA_com}
\end{align}
For the first term, we can estimate it as follows 
\begin{align}
    |\lan k,\eta\ran^s-\lan k-\ell,\eta-\xi\ran^s|\W_I(k,\eta)&\lesssim (\lan k-\ell,\eta-\xi\ran^{s-1}+\lan \ell,\xi\ran^{s-1})(|\ell|+|\xi|).
\end{align}
For the second term in the expression \eqref{wtA_com}, we recall the explicit form of the multiplier \eqref{W_I} and introduce an interpolation 
\begin{align}\label{L_z_interpolant}
\mf L=\pi+\arctan\lf(\frac{1}{K}\frac{\theta(\eta-kt)+(1-\theta)(\eta-\xi-(k-\ell)t}{\theta k+(1-\theta) (k-\ell)}\rg),\quad \theta\in[0,1].
\end{align}

We note that $\mf L(1)=\wt A(k,\eta),\, \mf L(0)=\wt A(k-\ell,\eta-\xi)$. Moreover, since $k(k-\ell)>0$, the expression $|\theta k +(1-\theta)(k-\ell)|\geq 1$. Hence the mean value theorem yields that
\begin{align}
&|\W_I(k,\eta)-\W_I(k-\ell,\eta-\xi)|\leq \sup_{\theta\in[0,1]} \lf|\frac{d}{d\theta}\mf L(\theta)\rg|\\
&\leq\sup_{\theta\in[0,1]} \frac{|\theta k+(1-\theta)(k-\ell)||\xi-\ell t|+|\theta(\eta-kt)+(1-\theta)(\eta-\xi-(k-\ell)t)| |\ell|}{K(\theta k+(1-\theta)(k-\ell))^2+K^{-1}(\theta(\eta-kt)+(1-\theta)(\eta-\xi-(k-\ell)t)^2}\\
&\lesssim   \sup_{\theta\in[0,1]} \frac{|\xi-\ell t|}{K^{1/2}|\theta k+(1-\theta)(k-\ell)|} + \sup_{\theta\in[0,1]} \frac{ K^{1/2}|\ell|}{K^{1/2}|\theta k+(1-\theta)(k-\ell)|} \\
&\lesssim \frac{|\xi-\ell t|+|\ell|}{\min\{|k|,|k-\ell|\}}.\label{com_WIz_bd}
\end{align}
As a consequence, we have that 
\begin{align}\label{com_D_2}
    | \wt A(k,\eta)-\wt A(k-\ell,\eta-\xi)| \lesssim (\lan k-\ell,\eta-\xi\ran^{s-1}+\lan \ell,\xi\ran^{s-1})(|\ell|+|\xi|)+\lan k-\ell,\eta-\xi\ran^{s}\frac{|\xi-\ell t|+|\ell|}{\min\{|k|,|k-\ell|\}}.
\end{align}Now we decompose the $T_{D_2}$-term as follows
\begin{align}
&T_{2}\leq\sum_{\substack{(k,\ell)\in D_2}} \bigg|\iint \zeta_k(\wt A(k,\eta)-\wt A(k-\ell,\eta-\xi)) \\
&\hspace{3cm}\times\lf( ((\pa_v-t\pa_z)(B\de_t^{-1}\Omega_\nq))^\wedge(\ell,\xi) i(k-\ell) \wh{\Omega^*}(\eta-\xi,k-\ell)\rg)\overline{ A\wh\Omega^*(k,\eta)} d\eta d\xi\bigg|\\
&+\sum_{\substack{(k,\ell)\in D_2}} \bigg|\iint \zeta_k(\wt A(k,\eta)-\wt A(k-\ell,\eta-\xi)) \\
&\hspace{1.2cm}\times\lf( {((\pa_v B)\de_t^{-1}\Omega_\nq)^\wedge}(\ell,\xi) i(k-\ell) \wh{\Omega^*}(\eta-\xi,k-\ell)\rg)\overline{ A\wh\Omega^*(k,\eta)} d\eta d\xi\bigg|=:T_{21}+T_{22}.
\end{align}
By invoking the relation \eqref{com_D_2}, we have that 
  \begin{align}  &T_{21}\lesssim \sum_{(k,\ell)\in D_2} \iint \zeta_k(\lan k-\ell,\eta-\xi\ran^{s-1}+\lan \ell,\xi\ran^{s-1})(|\ell|+|\xi|)\\
&\hspace{2cm}\times\lf(\frac{|\xi-t\ell|}{\ell^2+|\xi-\ell t|^2} \lf|(\de_L (B\de_t^{-1}\Omega_\nq))^\wedge(\ell,\xi)\rg||k-\ell| \lf|\wh{\Omega^*}(k-\ell,\eta-\xi)\rg|\rg) \  |\overline{ A\wh\Omega^*(k,\eta)}| d\eta d\xi\\
&+\sum_{(k,\ell)\in D_2} \iint \zeta_k \frac{|\xi-\ell t|+|\ell|}{\min\{|k|,|k-\ell|\}}|k-\ell|\\
&\hspace{1.2cm}\times\lf(\frac{|\xi-t\ell|}{\ell^2+|\xi-\ell t|^2} \lf|(\de_L( B\de_t^{-1}\Omega_\nq))^\wedge(\ell,\xi)\rg| \lan k-\ell,\eta-\xi\ran^s \lf|\wh{\Omega^*}(k-\ell,\eta-\xi)\rg|\rg) \  |\overline{ A\wh\Omega^*(k,\eta)}| d\eta d\xi.\end{align} 
Next, we invoke the $\zeta$-property \eqref{zeta_product} to estimate all the $\zeta_k$ factors and apply the fact that in $D_2$, $|k-\ell|\approx |k|$ to estimate the last term. As a result,
\begin{align} &T_{21}\lesssim \\
&\sum_{(k,\ell)\in D_2} \iint
\frac{ \zeta_\ell |\xi-\ell t|\lan\ell,\xi\ran}{\ell^2+|\xi-\ell t|^2} \lf|(\de_L( B\de_t^{-1}\Omega_\nq))^\wedge(\ell,\xi)\rg|\zeta_{k-\ell}\lan k-\ell,\eta-\xi\ran^{s}\lf|\wh{\Omega^*}(k-\ell,\eta-\xi)\rg| \  |\overline{ A\wh\Omega^*(k,\eta)}| d\eta d\xi\\
& +\sum_{(k,\ell)\in D_2} \iint \frac{\zeta_\ell\lan \ell,\xi\ran^{s}|\xi-t\ell|}{\ell^2+|\xi-\ell t|^2} \lf|(\de_L (B\de_t^{-1}\Omega_\nq))^\wedge(\ell,\xi)\rg|\zeta_{k-\ell}|k-\ell| \lf|\wh{\Omega^*}(k-\ell,\eta-\xi)\rg| \  |\overline{ A\wh\Omega^*(k,\eta)}| d\eta d\xi\\
& +\sum_{(k,\ell)\in D_2} \iint \zeta_\ell  \lf|(\de_L (B\de_t^{-1}\Omega_\nq))^\wedge(\ell,\xi)\rg| \zeta_{k-\ell}\lan k-\ell,\eta-\xi\ran^s \lf|\wh{\Omega^*}(k-\ell,\eta-\xi)\rg| \  |\overline{ A\wh\Omega^*(k,\eta)}| d\eta d\xi.
\end{align}
Finally, applying the Young's convolution inequality $\|f*g\|_{\ell_k^2L_\eta^2}\leq C\|f\|_{\ell_k^{p_1}L_\eta^{p_2}}\|g\|_{\ell_k^{q_1}L_\eta^{q_2}},\, p_i^{-1}+q_i^{-1}={3/2}, \, i=1,2$ (which is a consequence of the classical Young's inequality and Minkowski inequality), the H\"older inequality and the elliptic estimate \eqref{ell_est_t} yields that,
\begin{align}
  &T_{21} \lesssim \lf\|\zeta_k |k|^{-1}\lan k,\eta\ran (\de_L (B\de_t^{-1}\Omega_\nq))^\wedge(k,\eta)\rg\|_{\ell_k^1 L_\eta^1}\|A\oms_\nq\|_{L^2}^2\\
   &\quad\quad+\lf\|\zeta_k |k|^{-1}\lan k,\eta\ran^s (\de_L (B\de_t^{-1}\Omega_\nq))^\wedge(k,\eta)\rg\|_{\ell_k^1 L_\eta^2}\|\zeta_k|k|\wh{\oms_\nq}\|_{\ell_k^2L_\eta^1}\|A\oms_\nq\|_{L^2}\\
   &\quad\quad +\lf\|\zeta_k (\de_L (B\de_t^{-1}\Omega_\nq))^\wedge(k,\eta)\rg\|_{\ell_k^1 L_\eta^1}\|A\oms_\nq\|_{L^2}^2\\ 
   &\lesssim \|\zeta(t,|\pa_z|)\de_L (B\de_t^{-1}\Omega_\nq)\|_{H^2}\|A\oms_\nq\|_{L^2}^2+\|\zeta(t,|\pa_z|)\de_L (B\de_t^{-1}\Omega_\nq)\|_{H^2}\|\zeta(t,|\pa_z|) \oms_\nq\|_{H^2}\|A\oms_\nq\|_{L^2}
   \\
   &\quad+\|\zeta(t,|\pa_z|)\de_L (B\de_t^{-1}\Omega_\nq)\|_{H^2}\|A\oms_\nq\|_{L^2}^2\\
   &\lesssim \|A\Omega_\nq\|_{L^2}\|  A\Omega^*_\nq\|_{L^2}^2. 
\end{align}
Thanks to the bootstrap assumption and the estimate \eqref{F:CK}, we have that this is consistent with \eqref{E_1_est}. The estimate of the $T_{22}$ term is similar. One can replace the $B$ by $\pa_v B$ and drop the $|\xi-\ell t|$ factor in the argument above to derive a similar bound. Hence, we omit the details for the sake of brevity. We also highlight that this argument is applicable if one replaces the $|k-\ell||\wh{\Omega^*_\nq}(k-\ell,\eta-\xi)|$ by $|k-\ell||\wh {F_\nq}(k-\ell,\eta-\xi)|$. This concludes the step.
}

\myb{HS: There is a bug in the proof! I forget about the commutator of the $e^{\delta\nu^{1/3}|k|^{2/3}t}!$}

\noindent 
{\bf Step \# 4: The $T_3$ estimate.} We recall that $\ell\neq 0$. To estimate the $T_3$ term, we distinguish between two cases, i.e.,
\begin{align}
    \begin{cases}
        \text{Case a) }|k-\ell|\leq 4|\ell|;\\
        \text{Case b) }|k-\ell|> 4|\ell|.
    \end{cases}
\end{align}
In Case a), we provide the estimate
\begin{align}
    |\zeta_k(t)-\zeta_{k-\ell}(t)|\leq \zeta_k(t)+\zeta_{k-\ell}(t)\leq \zeta_{\ell}(t)\zeta_{k-\ell}(t)+\zeta_{k-\ell}(t).
\end{align}
We also observe the following commutator relation in Case b):
\begin{align}
    &|k-\ell|>4|\ell|\geq 4\quad\Rightarrow\quad k(k-\ell)>0\quad \Rightarrow \\
    & |\zeta_k(t)-\zeta_{k-\ell}(t)|\leq  \exp\lf\{\delta \nu^{1/3}(1+\max\{|k|,|k-\ell|\}^{2/3})t\rg\}\frac{2\delta\nu^{1/3}t  }{3|k|^{1/3}}|\ell|\\
    &\hspace{1.4cm}\lesssim (\delta\nu^{1/3}t)\exp\lf\{-\delta\nu^{1/3}t\rg\}\zeta_{k-\ell}(t)\zeta_\ell(t)\frac{|\ell|}{|k-\ell|^{1/3}}\label{zeta_com_sh}\\
    &\hspace{1.4cm}\lesssim \zeta_{k-\ell}(t)\zeta_\ell(t)\frac{|\ell|}{|k-\ell|^{1/3}}\exp\lf\{-\frac{\delta}{2}\nu^{1/3}t\rg\}.
\end{align}
Now we estimate the term as follows
\begin{align}
 T_3\leq &\sum_{k,\ell}\mathbbm{1}_{|k-\ell|\leq 4|\ell|}\iint  \zeta_\ell\Bigg|i\ell\lf(\frac{(\pa_v-t\pa_z)}{\de_L}(\de_L(B\de_t^{-1}\Omega_\nq))\rg)^\wedge-i\ell\lf(\pa_vB\de_t^{-1}\Omega_\nq)\rg)^\wedge\Bigg|(\ell,\xi) \\
&\hspace{4cm}\times| A(k-\ell,\eta-\xi) \wh{\Omega^*}(\eta-\xi,k-\ell)| \   |{A\wh\Omega^*(k,\eta)}| d\eta d\xi\\
&+\sum_{k,\ell}\mathbbm{1}_{|k-\ell|> 4|\ell|}\iint  \zeta_\ell\Bigg|i\ell\lf(\frac{(\pa_v-t\pa_z)}{\de_L}(\de_L(B\de_t^{-1}\Omega_\nq))\rg)^\wedge-i\ell\lf(\pa_vB\de_t^{-1}\Omega_\nq)\rg)^\wedge\Bigg|(\ell,\xi) \\
&\hspace{4cm}\times|k-\ell|^{1/3}| A(k-\ell,\eta-\xi) \wh{\Omega_\nq^*}(\eta-\xi,k-\ell)| \   |k|^{1/3}|{A\wh\Omega_\nq^*(k,\eta)}| d\eta d\xi\\
\lesssim& \ep\nu^{1/3}\|A|\pa_z|^{1/3}\oms_\nq\|_{L^2}^2. 
\end{align}
This is consistent with the estimate \eqref{E_1_est} and hence concludes the proof.
\end{proof}

\begin{lemma}[$\mathfrak E_{2}$-estimate]\label{lem:E_2_proof} 
For any $t\in [\nu^{-1/6},T_*] $, the following estimate holds
\begin{align}\label{E_2_est}
|\mathfrak{E}_{2}|\lesssim& \ep\nu^{1/3}\|A\Omega^*_\nq\|_{L^2}^2+\ep\lf\|A\sqrt{\frac{-\pa_t \mf M}{\mf M}}\Omega^*\rg\|_{L^2}^2\\
&+\ep\lf\|A\sqrt{\frac{-\pa_t \W_I}{\W_I}}F\rg\|_{L^2}^2+\ep \nu\|A\sqrt{-\de_L}\Omega^*\|_{L^2}^2 .
\end{align}
\end{lemma}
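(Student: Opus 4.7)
The argument mirrors that of Lemma \ref{lem:E_1_proof}; the essential distinction is that the derivative now falls on $\Omega^*$ through the factor $i(\eta-\xi-(k-\ell)t)$ rather than on the stream function. My plan is to reduce $\mathfrak{E}_2$ to a single application of the trilinear estimate, Lemma \ref{lem:Tri_est}, taken with $\mathcal F=\Omega=\Omega^*+F$, $\mathcal G=\Omega^*$, and $\mathcal H=\Omega^*$, after performing a symbol-level commutator decomposition of $A(k,\eta)-A(k-\ell,\eta-\xi)$.

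First I write $A=\zeta_k\widetilde A$ and split
\begin{align}
A(k,\eta)-A(k-\ell,\eta-\xi)=\zeta_k\bigl[\widetilde A(k,\eta)-\widetilde A(k-\ell,\eta-\xi)\bigr]+\bigl(\zeta_k-\zeta_{k-\ell}\bigr)\widetilde A(k-\ell,\eta-\xi).
\end{align}
For the $\widetilde A$-piece I decompose the $(k,\ell)$-sum into the regions $D_1$ and $D_2$ of \eqref{D_1_region}--\eqref{D_2_region}. In $D_1$ I use the trivial bound $\widetilde A(k,\eta)\lesssim\widetilde A(\ell,\xi)+\widetilde A(k-\ell,\eta-\xi)$; in $D_2$ I invoke the refined multiplier commutator inequality \eqref{com_D_2}. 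Combined with the zeta-product relation \eqref{zeta_product}, this expresses the integrand in exactly the form accommodated by the hypothesis of Lemma \ref{lem:Tri_est}, with $|\ell|$ and $\langle\xi,\ell\rangle^s+\langle k-\ell,\eta-\xi\rangle^s$ isolated on the stream function side and $(\eta-\xi-(k-\ell)t)$ isolated on $\Omega^*$.

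Applying Lemma \ref{lem:Tri_est} and splitting $\Omega=\Omega^*+F$ on the right-hand side yields CK-contributions of the form $\|A\sqrt{-\pa_t\mathcal W_I/\mathcal W_I}(\Omega^*_\nq+F_\nq)\|_{L^2}^2+\|A\sqrt{-\pa_t\mathcal W_E/\mathcal W_E}\Omega^*\|_{L^2}^2$, multiplied by the supremum $\sup_t(te^{-\delta\nu^{1/3}t}\|e^{\delta\nu^{1/3}(|\pa_z|^{2/3}+1)t}\pn\Omega^*\|_{H^2}+\|\po\Omega^*\|_{H^2})$, which is $O(\epsilon)$ by the bootstrap hypotheses \eqref{Hypotheses}; after noting that $\pa_t\mathcal W_I/\mathcal W_I$ is a piece of $\pa_t\mathfrak M/\mathfrak M$, this produces the first three terms in the claimed bound. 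The residual contribution in \eqref{Tri_est}, namely $\int\langle t\rangle^{-1}\|A\sqrt{-\pa_t\mathcal W_I/\mathcal W_I}\Omega_\nq\|\,\|A\sqrt{-\Delta_L}\Omega^*\|\,\|A\Omega^*\|\,dt$, is controlled by Young's inequality paired with the bootstrap bound on $\|A\Omega^*\|$, yielding the $\epsilon\nu\|A\sqrt{-\Delta_L}\Omega^*\|^2$ term. For the $\zeta$-commutator piece I separate $|k-\ell|\leq 4|\ell|$ (where $|\zeta_k-\zeta_{k-\ell}|\leq\zeta_\ell\zeta_{k-\ell}+\zeta_{k-\ell}$) from $|k-\ell|>4|\ell|$ (where \eqref{zeta_com_sh} applies); the gain $e^{-\delta\nu^{1/3}t/2}$ combined with the $(\eta-\xi-(k-\ell)t)$ factor on $\Omega^*$, controlled via $\sqrt{-\Delta_L}$, produces the low-frequency $\epsilon\nu^{1/3}\|A\Omega^*_\nq\|^2$ contribution.

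The technical point requiring care is the $D_2$-region bound: the multiplier commutator \eqref{com_D_2} contains a factor $(|\xi-\ell t|+|\ell|)/\min\{|k|,|k-\ell|\}$, and the $|\xi-\ell t|$ term must be absorbed by the Biot--Savart factor $B\Delta_t^{-1}\Omega_\nq$ through the elliptic estimate \eqref{ell_est_t}, exactly as in the treatment of $T_2$ in the proof of Lemma \ref{lem:E_1_proof}. Once this conversion is carried out, the remaining estimates reduce to Cauchy--Schwarz and Young's inequality, and the invocation of the trilinear lemma yields the stated bound.
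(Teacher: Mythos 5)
Your overall skeleton — split $A(k,\eta)-A(k-\ell,\eta-\xi)$ into a $\widetilde A$-commutator piece and a $\zeta$-commutator piece, send the former to the trilinear lemma, and treat the latter separately — agrees with the paper. But two of the key mechanisms you invoke are either superfluous or do not close.

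First, the $D_1/D_2$ decomposition with the refined commutator bound \eqref{com_D_2} is unnecessary here and does not deliver what you claim. The reason the $D_2$ machinery was developed in the proof of Lemma~\ref{lem:E_1_proof} is that in $\mathfrak E_1$ the $z$-derivative $|k-\ell|$ falls on $\Omega^*$, so the trivial bound
\[
|\widetilde A(k,\eta)-\widetilde A(k-\ell,\eta-\xi)|\lesssim\lan\ell,\xi\ran^s+\lan k-\ell,\eta-\xi\ran^s
\]
does not give an integrable structure. In $\mathfrak E_2$ the derivative on $\Omega^*$ is $(\eta-\xi-(k-\ell)t)$, which is precisely the factor built into Lemma~\ref{lem:Tri_est}; the trivial triangle inequality is then sufficient, yielding exactly the $T_1,T_2$ pieces the paper uses. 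Your further claim that the $D_2$ output ``expresses the integrand in exactly the form accommodated by the hypothesis of Lemma~\ref{lem:Tri_est}'' is wrong: \eqref{com_D_2} produces an extra factor $(|\xi-\ell t|+|\ell|)/\min\{|k|,|k-\ell|\}$ multiplying $\lan k-\ell,\eta-\xi\ran^s$, which is not of the trilinear lemma's form and would require a separate absorption argument.

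Second, and more seriously, your treatment of the $\zeta$-commutator piece $T_3$ relies on the exponential gain $e^{-\delta\nu^{1/3}t/2}$ from \eqref{zeta_com_sh} to produce the small prefactor. This cannot work: at the left end of the interval, $t=\nu^{-1/6}$, the factor is $e^{-\delta\nu^{1/6}/2}=1-O(\nu^{1/6})$, which is not small at all, and the term $\sqrt{-\Delta_L}\Omega^*$ is only controlled in the energy with a $\nu$ weight. Your mechanism does not supply the needed power of $\nu$. The paper uses a completely different argument: the crude estimate $|\zeta_k-\zeta_{k-\ell}|\leq 2\zeta_k\zeta_{k-\ell}$ (so no exponential gain at all), followed by putting $\pa_z(B\Delta_t^{-1}\Omega_\nq)$ in $\ell_k^1 L_\eta^1$ and using the inviscid damping bound $(1+|\eta/k-t|)\lan\eta\ran\gtrsim\lan t\ran$ to extract a factor $\lan t\ran^{-1}\leq\nu^{1/6}$, which then combines with the bootstrap bound $\|A\Omega^*\|\lesssim\epsilon\nu^{1/3}$ and Young's inequality to produce $\epsilon\nu\|A\sqrt{-\Delta_L}\Omega^*\|^2+\epsilon\,CK_I[\Omega^*_\nq]+\epsilon\,CK_I[F]$. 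Without this inviscid damping step your $T_3$ estimate does not close for $t$ near $\nu^{-1/6}$.
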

\begin{proof}
 For the $\mathfrak{E}_{2}$ contribution, we decompose it into the contribution as follows
 \begin{align}
     \mathfrak{E}_{2}\lesssim &\sum_{k,\ell}\iint \zeta_k\lan\xi,\ell\ran ^s |\ell ||\wh{(B\de_t^{-1}\Omega_\nq)}(\ell,\xi) ||(\eta-\xi-(k-\ell)t) \wh\Omega^*(\eta-\xi,k-\ell)| \  |A\wh\Omega^*(k,\eta)| d\eta d\xi\\
     &+ \sum_{k,\ell}\iint \zeta_k|\ell|| \wh{(B\de_t^{-1}\Omega_\nq)}(\ell,\xi)| |\eta-\xi-(k-\ell)t| |\wt A(k-\ell,\eta-\xi)\wh\Omega^*(\eta-\xi,k-\ell)| \  |{A\wh\Omega^*(k,\eta)}| d\eta d\xi \\
     &+ \sum_{k,\ell}  \iint (\zeta_k(t)-\zeta_{k-\ell}(t))    \lf(\pa_z(B\de_t^{-1}\Omega_\nq)\rg)^\wedge(\ell,\xi) \\
&\hspace{4cm}\times  \wt A(k-\ell,\eta-\xi) \ ((\pa_v-t\pa_z)\Omega^*)^\wedge(\eta-\xi,k-\ell) \   \overline{A\wh\Omega^*(k,\eta)} d\eta d\xi\\
 =:&T_{1}+T_{2}+T_3. 
 \end{align}
The $T_1,\, T_2$ terms can be estimated with the relation \eqref{zeta_product} and Lemma \ref{lem:Tri_est}. \myb{(Check!)}

The last term can be estimated as follows. By the estimate \eqref{zeta_product}, we have that $|\zeta_k-\zeta_{k-\ell}|\leq 2\zeta_k\zeta_{k-\ell}$, and hence, 
\begin{align*}
    |T_3|\lesssim& \sum_{k,\ell}  \iint  \zeta_{\ell}    |\lf(\pa_z(B\de_t^{-1}\Omega_\nq)\rg)^\wedge(\ell,\xi)| \\
&\hspace{4cm}\times  A(k-\ell,\eta-\xi) \ |((\pa_v-t\pa_z)\Omega^*)^\wedge(\eta-\xi,k-\ell) |\  | \overline{A\wh\Omega^*(k,\eta)} |d\eta d\xi\\
\lesssim &\sum_{k\neq 0}\int\lf|\zeta_k \frac{|k|}{|k|^2+|\eta-kt|^2}(\de_L(B\de_t^{-1}\Omega_\nq)^\wedge(k,\eta)\rg|d\eta\, \|A\sqrt{-\de_L}\Omega^*\|_{L^2}\|A\Omega^*\|_{L^2}\\
\lesssim &\sum_{k\neq 0}\int\lf|\zeta_k \frac{\lan \eta\ran}{|k|\lan \eta\ran(1+|\eta/k-t|)}\sqrt{\frac{-\pa_t \W_I}{\W_I}}(\de_L(B\de_t^{-1}\Omega_\nq)^\wedge(k,\eta)\rg|d\eta\, \|A\sqrt{-\de_L}\Omega^*\|_{L^2}\|A\Omega^*\|_{L^2}\\
\lesssim &\lf(\sum_{k\neq 0}\frac{1}{|k|^2}\int\frac{1}{\lan\eta \ran^2}d\eta\rg)^{1/2}\lf(\sum_{k\neq 0}\int\lf| \frac{\zeta_k\lan \eta\ran^2}{\lan t\ran}\sqrt{\frac{-\pa_t \W_I}{\W_I}}(\de_L(B\de_t^{-1}\Omega_\nq)^\wedge(k,\eta)\rg|^2d\eta\rg)^{1/2}\\
&\hspace{1.2cm} \times \|A\sqrt{-\de_L}\Omega^*\|_{L^2}\|A\Omega^*\|_{L^2}\\
\lesssim & \underbrace{\frac{1}{\lan t\ran}}_{\leq\nu^{1/6}}\lf\|A\sqrt{\frac{-\pa_t \W_I}{\W_I}}\Omega_\nq\rg\|_{L^2} \|A\sqrt{-\de_L}\Omega^*\|_{L^2}\|A\Omega^*\|_{L^2}\\
\lesssim &\ep \nu\|A\sqrt{-\de_L}\Omega^*\|_{L^2}^2+\ep CK_I[\oms_\nq] + \ep CK_I[F].
\end{align*}


\end{proof}

\begin{lemma}[$\mathfrak E_{3}$-estimate]\label{lem:E_3_proof}For $t\in[0,T_*]$ or $[\nu^{-1/6}, T_*]$, the following estimate holds
\begin{align}\label{E_3_est}
|\mathfrak{E}_{3}|\lesssim \ep\nu^{1/3}\|A\Omega^*_\nq\|_{L^2}^2+\ep\nu^{1/3}\|AF_\nq\|_{L^2}^2.
\end{align}
\end{lemma}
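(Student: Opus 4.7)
The plan is to exploit the structural fact that $\pa_z \Delta_t^{-1}\Omega_\nq$ carries no $z$-zero mode. Expanding $(A\Omega^*)^2 = (A\po\Omega^*)^2 + 2(A\po\Omega^*)(A\pn\Omega^*) + (A\pn\Omega^*)^2$, the pure zero-mode square $(A\po\Omega^*)^2$ integrates to zero against $(\pa_v B)(\pa_z\Delta_t^{-1}\Omega_\nq)$ (whose $z$-average vanishes), so at least one factor of $A\pn\Omega^*$ is always present. After splitting $\Omega_\nq = \pn\Omega^* + F$ (using $F \equiv \pn F$ since $\po F \equiv 0$ from \eqref{Flin}) and invoking $\|\pa_v B\|_{L^\infty}\lesssim 1$ from \eqref{B:smooth}, H\"older's inequality reduces the problem to
\begin{align}
|\mathfrak{E}_3|\lesssim \bigl(\|\pa_z\Delta_t^{-1}\pn\Omega^*\|_{L^\infty}+\|\pa_z\Delta_t^{-1}F\|_{L^\infty}\bigr)\bigl(\|A\po\Omega^*\|_{L^2_v}+\|A\pn\Omega^*\|_{L^2}\bigr)\|A\pn\Omega^*\|_{L^2}.
\end{align}

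For the velocity $L^\infty$ bound I would combine Lemma~\ref{ellip:perturb} (with $\mathfrak{C}\equiv 1$, $M\equiv 1$) with the two-dimensional Sobolev embedding $H^s(\TT\times\R)\hookrightarrow L^\infty$, valid for $s\geq 2>1$. Concretely, the lemma supplies $\|\lan k,\xi\ran^s(k^2 + (\xi-kt)^2)(\Delta_t^{-1}G_k)^{\wedge}\|_{L^2_\xi}\lesssim \|\lan k,\xi\ran^s \widehat{G_k}\|_{L^2_\xi}$; dividing by $|k|\geq 1$ on each nonzero $z$-mode and summing via the $1/|k|^2$-summability yields $\|\pa_z\Delta_t^{-1}G_\nq\|_{H^s}\lesssim \|G_\nq\|_{H^s}\lesssim \|AG_\nq\|_{L^2}$, the last inequality using that $\zeta_k\mathfrak{M}$ is bounded above and below on $k\neq 0$. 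Sobolev embedding then gives $\|\pa_z\Delta_t^{-1}G_\nq\|_{L^\infty}\lesssim\|AG_\nq\|_{L^2}$ for $G\in\{\Omega^*,F\}$. The bootstrap hypothesis (\eqref{Hypotheses_sh} or \eqref{Hyp_ed}) supplies $\|A\po\Omega^*\|_{L^2_v}\leq \|A\Omega^*\|_{L^2}\lesssim\ep\nu^{1/3}$, while Proposition~\ref{linCK} combined with the bootstrap CK integral $\int\|A\sqrt{-\pa_\tau \mathcal{W}_I/\mathcal{W}_I}\Omega^*\|_{L^2}^2 d\tau\lesssim \ep^2\nu^{2/3}$ gives $\|AF\|_{L^2}\leq \|A|\pa_z|F\|_{L^2}\lesssim \ep\nu^{1/3}$.

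Setting $a:=\|A\po\Omega^*\|_{L^2}$, $b:=\|A\pn\Omega^*\|_{L^2}$, $c:=\|AF\|_{L^2}$, each bounded by $C\ep\nu^{1/3}$, the displayed bound reduces to $|\mathfrak{E}_3|\lesssim (b+c)(a+b)b = ab^2 + b^3 + abc + b^2 c$. The terms $ab^2$, $b^3$, $b^2c$ are each directly dominated by $\ep\nu^{1/3}b^2$ using $a,b,c\lesssim \ep\nu^{1/3}$, while Young's inequality $abc \leq a\cdot\tfrac{1}{2}(b^2+c^2)\lesssim\ep\nu^{1/3}(b^2+c^2)$ handles the cross term, assembling into \eqref{E_3_est}. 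I expect this argument to run verbatim on both $[0,T_*]$ and $[\nu^{-1/6},T_*]$ since every tool invoked --- the elliptic lemma, the Sobolev embedding, Proposition~\ref{linCK}, and the bootstrap on $\|A\Omega^*\|$ --- is uniform across the two time regimes. The only mildly delicate point is that the Biot--Savart symbol $|k|/(k^2+(\eta-kt)^2)$ offers no $\lan t\ran$-decay, so one cashes in the $1/|k|^2$-summability over nonzero integer $z$-modes to close the $L^\infty$ estimate in $H^s$ instead; this is precisely why the bound \eqref{E_3_est} carries no CK or viscous-dissipation gain and relies purely on the $\ep\nu^{1/3}$ smallness inherited from the bootstrap.
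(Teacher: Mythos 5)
Your argument matches the paper's: both exploit that the pure zero-mode square $(A\Omega^*_{\mathbbm{o}})^2$ integrates to zero against the $z$-mean-free velocity $(\pa_v B)\pa_z\Delta_t^{-1}\Omega_\nq$, both place $\pa_z\Delta_t^{-1}\Omega_\nq$ in $L^\infty$ via the elliptic estimate \eqref{ell_est_t} together with the Sobolev embedding $H^s(\TT\times\R)\hookrightarrow L^\infty$ for $s\geq 2$, and both close with the bootstrap bound $\|A\Omega^*\|_{L^2}\lesssim\ep\nu^{1/3}$ and the split $\Omega_\nq=\Omega^*_\nq+F_\nq$ (with $\|AF_\nq\|_{L^2}$ controlled by Proposition \ref{linCK}). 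The paper writes the proof more tersely, reducing directly to $\|\pa_v B\|_{L^\infty}\|A\Omega_\nq\|_{L^2}\|A\Omega^*_\nq\|_{L^2}\|A\Omega^*\|_{L^2}$, but the underlying mechanism is identical.
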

\begin{proof}
We can expand the $\mathfrak E_{3}$-term as follows
\begin{align}
  |\mathfrak{E}_{3}|\leq& \lf|\int (\pa_v B)(\pa_z\de_t^{-1}\Omega_\nq) (A\Omega^*_\nq) A\Omega^*_{\mathbbm{o}} dV\rg|+\lf|\int (\pa_v B)(\pa_z\de_t^{-1}\Omega_\nq) (A\Omega^*_{\mathbbm{o}}) A\Omega^*_\nq dV\rg|\\
  &+\lf|\int (\pa_v B)(\pa_z\de_t^{-1}\Omega_\nq) (A\Omega^*_\nq) A\Omega^*_\nq dV\rg|\\
  \lesssim & \|\pa_v B\|_{L^\infty}\lf\|A\Omega_\nq\rg\|_{L^2}\|A\Omega^*_\nq\|_{L^2}\|A\Omega^*\|_{L^2}\\
  \lesssim & \ep \nu^{1/3}\|A\Omega^*_\nq\|_{L^2}^2+\ep\nu^{1/3}\|AF_\nq\|_{L^2}^2.
\end{align}
This concludes the proof of the lemma. 
\end{proof}

\begin{lemma}\label{lem:E_4_proof} For any $t\in [0, \nu^{-1/6}]$, the following estimate holds
\begin{align}\label{E_12est_sh}
    \mf E_{1}+\mf E_{2}\lesssim (1+t)\|A\Omega\|_{L^2}\|A\oms\|_{L^2}^2+\ep CK_I[\Omega_\nq^*]+\ep CK_I[F_\nq]+\ep\nu\|A\sqrt{-\de_L}\oms\|_{L^2}^2.
\end{align}
\end{lemma}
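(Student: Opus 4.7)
The plan is to adapt the proofs of Lemmas \ref{lem:E_1_proof} and \ref{lem:E_2_proof} to the short-time regime $t \in [0, \nu^{-1/6}]$, where the multiplier structure simplifies considerably. Three simplifications drive the argument: (i) $\mf M = \mathbbm{1}_{k\neq 0}\W_I + \mathbbm{1}_{k=0}\W_I^\circ$ contains no $\W_\nu$ or $\W_E$ factors, so the $D_2$-region / $|k|^{1/3}$-gain analysis from Step \# 2 of the proof of Lemma \ref{lem:E_1_proof} does not appear; (ii) the enhanced dissipation weight satisfies $\zeta_k(t) \leq e^{\delta \nu^{1/6}} \lesssim 1$ uniformly, so the $T_3$-type contribution involving $\zeta_k - \zeta_{k-\ell}$ is bounded like all other terms and produces no obstruction; (iii) the target right-hand side tolerates the coarser factor $(1+t)\|A\Omega\|_{L^2}$ in place of the sharper $\ep \nu^{1/3}\|A|\pa_z|^{1/3}\oms\|_{L^2}^2$ used at long times. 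Under the short-time bootstrap \eqref{Hypotheses_sh} and $t \leq \nu^{-1/6}$, one has $(1+t)\|A\Omega\|_{L^2} \lesssim \nu^{-1/6} \cdot \ep \nu^{1/3} = \ep\nu^{1/6}$, so the first term on the right-hand side of \eqref{E_12est_sh} integrates to $\lesssim \ep^3 \nu^{2/3}$, which is consistent with closing the bootstrap with margin.

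First I would repeat the paraproduct decomposition of the multiplier commutator,
\begin{align}
|A(k,\eta) - A(k-\ell,\eta-\xi)| \lesssim \lan k-\ell,\eta-\xi\ran^{s-1}(|\ell|+|\xi|) + \lan \ell,\xi\ran^{s-1}(|\ell|+|\xi|) + \lan k-\ell,\eta-\xi\ran^s\frac{|\ell|+|\xi-\ell t|}{\min\{|k|,|k-\ell|\}},
\end{align}
where the last summand uses the refined $\W_I$-commutator estimate from \eqref{com_WIz_bd}. For the paraproduct pieces of $\mf E_1$, the Biot-Savart factor $(\xi - \ell t)\wh{(\Delta_t^{-1}\Omega_\nq)}(\ell,\xi)$ is rewritten as $\tfrac{\xi - \ell t}{\ell^2 + (\xi-\ell t)^2}\wh{(\Delta_L \Delta_t^{-1}\Omega_\nq)}(\ell,\xi)$ and controlled via Lemmas \ref{lem:ellip}--\ref{ellip:perturb}, after which the inviscid damping weight is extracted from the symbol identity
\begin{align}
\frac{|\xi - \ell t|}{\ell^2 + (\xi - \ell t)^2} \lesssim \frac{1}{|\ell|}\sqrt{\frac{-\pa_t \W_I}{\W_I}(\ell,\xi)}, \quad \ell \neq 0,
\end{align}
producing the $\ep CK_I[\Omega^*_\nq]$ and $\ep CK_I[F_\nq]$ contributions upon splitting $\Omega_\nq = \Omega^*_\nq + F_\nq$ and absorbing the bootstrap smallness $\|A\oms\|_{L^2}\lesssim \ep\nu^{1/3}$. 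The analogous argument for $\mf E_2$ moves the factor $|\eta-\xi-(k-\ell)t|$ onto the $\oms$-side via Kato-Ponce \eqref{KP}, leaving a contribution that is either absorbed by $\ep \nu \|A\sqrt{-\de_L}\oms\|_{L^2}^2$ through Young's convolution inequality or by $\ep CK_I[F_\nq]$ through Proposition \ref{linCK}.

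The main obstacle is the residual terms that fit neither the CK nor the diffusion framework. These arise chiefly from the zero-mode transitions where $k = 0$ or $k-\ell = 0$ (but not $\ell = 0$, since the projection $\pn$ is built into the $\Delta_t^{-1}\Omega_\nq$ factor), where the multiplier switches between $\W_I$ and the deformed $\W_I^\circ$, and from interactions where the elliptic structure is already saturated. For these, the strategy is to abandon the CK-weight extraction and instead invoke the crude pointwise bound $|\xi - \ell t| \leq (1+t)(|\xi|+|\ell|)$, which converts the temporal growth into the explicit factor $(1+t)$, and then estimate the resulting cubic expression by $\|A\Omega\|_{L^2}\|A\oms\|_{L^2}^2$ using Lemmas \ref{lem:ellip}--\ref{ellip:perturb} together with the Kato-Ponce product rule. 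The ghost commutator bound \eqref{com:ghost} applies to $\W_I^\circ$ as well, so the transition terms involving $[\mf M, B]$ and $[\mf M, \pa_v B]$ acquire the $K^{-1}$-gain and cause no additional difficulty. Summing the paraproduct, $\W_I$-commutator, and zero-mode contributions yields \eqref{E_12est_sh} and closes the lemma.
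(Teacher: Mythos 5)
Your proposal identifies the correct general structure (paraproduct decomposition for the regularity commutator, frequency-weight rewrite of the Biot--Savart factor via Lemma \ref{lem:ellip}/\ref{ellip:perturb}, CK-weight extraction for the nonzero-mode interactions), but it breaks down at exactly the two places the paper's argument is designed to handle, and the proposed workarounds do not close.

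The first issue is your simplification~(ii): the claim $\zeta_k(t)\leq e^{\delta\nu^{1/6}}\lesssim 1$ for $t\leq\nu^{-1/6}$ is false, because $\zeta_k(t)=\exp\{\delta\nu^{1/3}(|k|^{2/3}+1)t\}$ depends on $|k|$, and for $|k|\gtrsim\nu^{-1/4}$ the exponent exceeds $\mathcal{O}(1)$ on the short interval. The $\zeta_k-\zeta_{k-\ell}$ commutator is therefore genuinely nontrivial; the paper handles it with a separate mean-value estimate $|\zeta_k-\zeta_{k-\ell}|\leq\zeta_{k-\ell}\zeta_\ell\,\delta\nu^{1/3}|\ell|^{2/3}t$ (equation \eqref{zeta_com_ell}) and then uses $t\leq\nu^{-1/6}$ in an essential way to make the resulting prefactor $(\nu^{1/6}t)^2$ absorbable by the CK and diffusion terms. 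Simply dropping this contribution leaves an unbounded term.

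The second, more serious issue is the zero-mode transition. You correctly flag it as "the main obstacle," but the proposed fix --- the triangle-inequality bound $|\xi-\ell t|\leq(1+t)(|\xi|+|\ell|)$ --- does not address the dangerous factor. For $\mf E_2$ the derivative $|\eta-\xi-(k-\ell)t|$ lives on the $\Omega^*$-side; when $k-\ell=0$ this is $|\eta-\xi|$, a full $v$-derivative on the zero mode $\Omega^*_{\mathbbm{o}}$, which after Kato--Ponce produces $\|\langle\pa_v\rangle^{s+1}\Omega^*_{\mathbbm{o}}\|_{L^2}$ --- not controlled by $\|A\Omega^*\|_{L^2}$. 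Furthermore, the bound \eqref{com_WIz_bd} you invoke is only valid when $k(k-\ell)>0$ (both nonzero, same sign); when one mode is zero the difference is $|\W_I(k,\eta)-\W_I^\circ(0,\eta-\xi)|$, which has no a priori smallness. The only mechanism that closes this case is the cancellation exploited in Lemma \ref{lem:Acm_sh}: the mean-value interpolation between $\W_I(k,\cdot)$ and the carefully deformed $\W_I^\circ(0,\cdot)$ of \eqref{WIcirc} produces a denominator $(1+|\eta-\xi-t/2|)$ in regions $R_4$, $R_6$, which is precisely what kills the $|\eta-\xi|$ loss and yields the $(1+t)$ prefactor; in the complementary regions $R_3$, $R_5$ the constraint $|\eta-\xi-t/2|\lesssim 1+|\xi-\ell t|$ does the same. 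Your invocation of the ghost commutator bound \eqref{com:ghost} cannot substitute: it controls $[\W_I^\circ,B]$-type commutators, not the cross-mode $\W_I$-to-$\W_I^\circ$ difference, which is a different object entirely. Without Lemma \ref{lem:Acm_sh}, or an argument reproducing its content, the estimate \eqref{E_12est_sh} does not follow.
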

\begin{proof}We organize the proof in two steps. 
{
\myb{HS: Right now, it seems that we need to propagate the following weight in the initial time layer in order to get the $k$-dependent enhanced dissipation for $t\leq \nu^{-1/6}$ (For $k\in[\nu^{-1/4},\nu^{-1/2}],$ the estimate cannot be easily derived):
\begin{align}
\zeta_k = e^{\delta\nu^{1/3}(|k|^{2/3}+1)t}.
\end{align}
The way to treat the commutator term is the following.}

\noindent
{\bf Step \# 1:} We focus on the term $\mf E_1$ for $t\leq \nu^{-1/6}$: 
\begin{align}
\mf E_1&\lesssim  \bigg| \sum_{k,\ell}\iint \zeta_k(\wt A(k,\eta)-\wt A(k-\ell,\eta-\xi))\bigg((B(\pa_v-t\pa_z)\de_t^{-1}\Omega_\nq)^\wedge(\ell,\xi) i(k-\ell) \wh\Omega^*(\eta-\xi,k-\ell)\bigg) \\
&\hspace{4cm}\times  \overline{A\wh\Omega^*(k,\eta)} d\eta d\xi\bigg|\\
&\quad +\sum_{k,\ell}\iint  |\zeta_k-\zeta_{k-\ell}| \Bigg|\lf(\frac{(\pa_v-t\pa_z)}{\de_L}(\de_L(B\de_t^{-1}\Omega_\nq))\rg)^\wedge-\lf((\pa_vB)\de_t^{-1}\Omega_\nq)\rg)^\wedge\Bigg|(\ell,\xi) \\
&\hspace{4cm}\times| \wt A(k-\ell,\eta-\xi) i(k-\ell)\wh{\Omega^*}(\eta-\xi,k-\ell)| \   |{A\wh\Omega^*(k,\eta)}| d\eta d\xi\\
&=:T_1+T_2.
\end{align}
By rewriting $B(\pa_v-t\pa_z)\de_t^{-1}\Omega_\nq$ as $\frac{(\pa_v-t\pa_z)}{\de_L}\de_L(B\de_t^{-1}\Omega_\nq)-\frac{1}{\de_L}\de_L((\pa_v B)\de_t^{-1}\Omega_\nq)$, and invoking the commutator estimate \eqref{cm_sh_b} with appropriately chosen $\mathfrak{S}$ (i.e., $\mathfrak{S}=\de_L(B\de_t^{-1}\Omega_\nq)$ or $\de_L((\pa_v B)\de_t^{-1}\Omega_\nq$) as described in Lemma \ref{lem:Acm_sh}, we apply the elliptic estimate \eqref{ell_est_t} to obtain the bound 
\begin{align}|T_1|\lesssim& (\|A\de_L(B\de_t^{-1}\Omega_\nq)\|_{L^2}+\|A\de_L((\pa_vB)\de_t^{-1}\Omega_\nq)\|_{L^2}))\|A\oms\|_{L^2}^2 \lesssim\|A\om_\nq\|_{L^2}\|A\oms\|_{L^2}^2.
\end{align}
For the $T_2$ term, we estimate it with a new commutator property of the $\zeta_k$-multiplier. Thanks to the mean value theorem,
\begin{align}
|\zeta_k(t)-\zeta_{k-\ell}(t)|\leq \exp\lf\{\delta\nu^{1/3}t(1+\max\{|k|^{2/3},|k-\ell|^{2/3}\})\rg\}\delta\nu^{1/3}t\lf||k|^{2/3}-|k-\ell|^{2/3}\rg|.
\end{align}Direct computation yields the following relations:
\begin{align}
\max\{|k|^{2/3},|k-\ell|^{2/3}\}\leq |k-\ell|^{2/3}+|\ell|^{2/3},\quad||k|^{2/3}-|k-\ell|^{2/3}|\leq |\ell|^{2/3}.
\end{align}Hence, 
\begin{align}|\zeta_k-\zeta_{k-\ell}|\leq \zeta_{k-\ell}\zeta_\ell \delta \nu^{1/3} |\ell|^{2/3}t.\label{zeta_com_ell}
\end{align}Combining this with the bootstrap assumption \eqref{Hypotheses_sh}, we have that
\begin{align}
T_2&\lesssim \nu^{1/3}t\lf\| A\sqrt{\frac{-\pa_t\W_I}{\W_I}}\om_\nq\rg\|_{L^2}\|A\sqrt{-\de_L}\oms_\nq\|_{L^2}\underbrace{\|A\oms\|_{L^2}}_{\lesssim\ep \nu^{1/3}}\lesssim \ep(\nu^{1/6}t )^2CK_I[\om_\nq]+\ep\nu\|A\sqrt{-\de_L}\oms_\nq\|_{L^2}^2. 
\end{align}
For $t\leq\nu^{-1/6}$, these two terms are bounded as in \eqref{E_12est_sh}. Hence, combining the $(T_1,T_2)$-decomposition and the estimates established above, we come to the conclusion that the $\mf E_1$-term is consistent with \eqref{E_12est_sh}. 

\noindent
{\bf Step \# 2:} We estimate the the $\mf E_2$ term for $t\leq \nu^{-1/6}$: 
\begin{align}
\mf E_2&\lesssim  \bigg| \sum_{k,\ell}\iint \zeta_k(\wt A(k,\eta)-\wt A(k-\ell,\eta-\xi))\lf(i\ell \wh{(B\de_t^{-1}\Omega_\nq)}(\ell,\xi) i(\eta-\xi-(k-\ell)t) \wh\Omega^*(\eta-\xi,k-\ell)\rg) \\
&\hspace{3cm}\times  \overline{A\wh\Omega^*(k,\eta)} d\eta d\xi\bigg|\\
&\quad +\sum_{k,\ell}\iint   |\zeta_k-\zeta_{k-\ell}| |\ell \wh{(B\de_t^{-1}\Omega_\nq)}(\ell,\xi) | \\
&\hspace{4cm}\times| \wt A(k-\ell,\eta-\xi) (\eta-\xi-(k-\ell)t) \wh{\Omega^*}(\eta-\xi,k-\ell)| \   |{A\wh\Omega^*(k,\eta)}| d\eta d\xi\\
&=:T_3+T_4.
\end{align} To treat the $T_3$ term, one applies the estimate \eqref{cm_sh_a} to obtain that 
\begin{align}|T_3|\lesssim(1+ t)\|A\de_L(B\de_t^{-1}\Omega_\nq)\|_{L^2} \|A\oms\|_{L^2}^2 \lesssim (1+t)\|A\om\|_{L^2}\|A\oms\|_{L^2}^2.
\end{align}
The estimate of the $T_4$ is similar to the estimate of $T_2$. We invoke the \eqref{zeta_com_ell} to obtain that \begin{align}
T_4\lesssim& \nu^{1/3}t\lf\| A\sqrt{\frac{-\pa_t\W_I}{\W_I}}\om_\nq\rg\|_{L^2}\lf(\|A\sqrt{-\de_L}\oms\|_{L^2}\|A\oms_\nq\|_{L^2}+\|A\sqrt{-\de_L}\oms_\nq\|_{L^2}\|A\oms\|_{L^2}\rg)\\
\lesssim &\ep(\nu^{1/6}t )^2CK_I[\om_\nq]+\ep\nu\|A\sqrt{-\de_L}\oms\|_{L^2}^2\lesssim\ep CK_I[\om_\nq]+\ep\nu\|A\sqrt{-\de_L}\oms\|_{L^2}^2,\quad\forall t\leq \nu^{-1/6}. 
\end{align}
This concludes the proof. }
\end{proof}
\begin{proof}[Proof of Lemma \ref{lem:NLa}] Combining the decomposition \eqref{E} and the Lemma \ref{lem:E_3_proof} and \ref{lem:E_4_proof} yields the short-time estimate \eqref{NLa:est:short}. Combining the decomposition \eqref{E} and the Lemma \ref{lem:E_1_proof}, \ref{lem:E_2_proof} and \ref{lem:E_3_proof} yields the long-time estimate \eqref{NLa:est:long}. 
\end{proof}

\subsection{The Estimate of $\mathfrak{NL}_{\mathfrak{0a}}$}
We define $\mathfrak{NL}_{\mathfrak{0a}}$ as 
\begin{align}
  \mathfrak{NL}_{\mathfrak{0a}} := \int  A\oms A (   \po U^1 \p_z \Omega^*) \, dV .
\end{align}
In this section, we estimate the $\mathfrak{NL}_{\mathfrak{0a}}$ terms in \eqref{Energy_Ev}.
\begin{lemma} For $T_{*} \in (0, \nu^{-1/6}]$ (or $T_{*}\in[\nu^{-1/6}, c_*\nu^{-1}]$, respectively), under the bootstrap assumptions of Proposition \ref{pro:short} (or Proposition \ref{pro:long}, respectively), the following estimate holds given that the viscosity $\nu$ and the parameter $\ep$ is chosen small enough,
    \begin{align}\label{NL_0a}
    \mathfrak{NL}_{\mathfrak{0a}}\lesssim \ep \nu^{1/3}\|A|\pa_z|^{1/3}\Omega^*_\nq\|_{L^2}^2. 
    \end{align}
    As a result of the bootstrap assumption \eqref{Hypotheses_sh} (\eqref{Hypotheses}, respectively), the following estimate holds for $\mathcal{I} = [0, T_*], [\nu^{-1/6}, T_*]$:
    \begin{align}
        \int_{\mathcal{I}} |\mathfrak{NL}_{\mathfrak{0a}}|dt\lesssim \ep^3\nu^{2/3}.
    \end{align}
\end{lemma}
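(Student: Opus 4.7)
The crucial structural fact is that $\po U^1$ is a function of $v$ alone; combined with the observation that $\partial_z$ annihilates the zero mode, this reduces $\mathfrak{NL}_{\mathfrak{0a}}$ to an integral over the nonzero modes of $\Omega^*$. The plan is to write
\begin{equation*}
A(\po U^1\partial_z\pn\Omega^*) = \po U^1\,A\partial_z\pn\Omega^* + [A,\po U^1]\partial_z\pn\Omega^*,
\end{equation*}
and observe that the principal piece $\int A\pn\Omega^*\,\po U^1\,\partial_z(A\pn\Omega^*)\,dV = \tfrac12\int \po U^1(v)\,\partial_z(A\pn\Omega^*)^2\,dV$ vanishes by integration by parts in $z$. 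Hence only the commutator term survives:
\begin{equation*}
\mathfrak{NL}_{\mathfrak{0a}} = \int A\pn\Omega^*\,[A,\po U^1]\,\partial_z\pn\Omega^*\,dV.
\end{equation*}

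Because $\po U^1$ depends only on $v$, the commutator $[A,\po U^1]$ commutes with $\partial_z$. I would integrate by parts once more in $z$ and split $\partial_z = (i\operatorname{sgn}(D_z)|D_z|^{2/3})\cdot|D_z|^{1/3}$ to distribute the derivative symmetrically. Cauchy-Schwarz then yields
\begin{equation*}
|\mathfrak{NL}_{\mathfrak{0a}}| \lesssim \|A|\partial_z|^{1/3}\pn\Omega^*\|_{L^2}\cdot \bigl\||\partial_z|^{2/3}[A,\po U^1]\pn\Omega^*\bigr\|_{L^2}.
\end{equation*}
The heart of the matter is the $\langle k\rangle^{-1}$ commutator gain
\begin{equation*}
\|[A,\po U^1]\pn h\|_{L^2}\lesssim \|\partial_v\po U^1\|_{L^\infty_v}\,\|A|\partial_z|^{-1}\pn h\|_{L^2},
\end{equation*}
which follows from the structure of $A=\zeta_k\mathfrak{M}(t,k,\eta)\langle k,\eta\rangle^s$: $\zeta_k$ depends only on $k,t$ and commutes with $\po U^1(v)$; each component of $\mathfrak{M}$ depends on $\eta$ only through the combinations $\eta/k$ or $(\eta-\ell t)/(1+|k-\ell|+|\ell|)$, so its $\partial_\eta$ carries an explicit $\langle k\rangle^{-1}$ factor; and $|\partial_\eta\langle k,\eta\rangle^s|\leq s\langle k,\eta\rangle^s/\langle k\rangle$ on the nonzero modes, using $\langle k,\eta\rangle\geq\langle k\rangle$. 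A Kato-Ponce-type commutator expansion, combined with Lemma~\ref{lem:com:ghost}, then delivers the estimate above. Applying it with $\pn h = |\partial_z|^{2/3}\pn\Omega^*$ and using $|k|^{-1/3}\leq |k|^{1/3}$ on the nonzero modes gives
\begin{equation*}
|\mathfrak{NL}_{\mathfrak{0a}}| \lesssim \|\partial_v\po U^1\|_{L^\infty_v}\,\|A|\partial_z|^{1/3}\pn\Omega^*\|_{L^2}^2.
\end{equation*}

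To conclude, I would control $\|\partial_v\po U^1\|_{L^\infty_v}$ via the identity $-B\,\partial_v\po U^1=\po\Omega^*$ from \eqref{profileprime}: Lemma~\ref{lem:C:bdd} and the 1D Sobolev embedding give $\|\partial_v\po U^1\|_{L^\infty_v}\lesssim\|\po\Omega^*\|_{H^1_v}\lesssim\|A\Omega^*\|_{L^2}\lesssim\epsilon\nu^{1/3}$, where the last bound invokes the bootstrap hypothesis \eqref{Hypotheses_sh} or \eqref{Hypotheses} and uses $s\geq 2$. This delivers the pointwise estimate \eqref{NL_0a}. The integrated bound $\int_\mathcal{I}|\mathfrak{NL}_{\mathfrak{0a}}|\,dt\lesssim\epsilon^3\nu^{2/3}$ follows by absorption: the $\partial_t\zeta_k$ contribution to $\tfrac{d}{dt}\|A\Omega^*\|_{L^2}^2$ generates a coercive term $\delta\nu^{1/3}\|A(|\partial_z|^{1/3}+\mathbbm{1}_{\neq})\Omega^*\|_{L^2}^2$, whose time integral is controlled by $\sup_t\|A\Omega^*(t)\|_{L^2}^2\lesssim\epsilon^2\nu^{2/3}$ from the bootstrap, so that for $\epsilon$ sufficiently small the pointwise estimate can be absorbed. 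The principal technical obstacle is precisely establishing this $\langle k\rangle^{-1}$ commutator gain, which is a genuine refinement of the $1/K$ bound of Lemma~\ref{lem:com:ghost} and requires exploiting the explicit $\eta/k$-dependence of the ghost multipliers.
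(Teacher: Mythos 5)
Your overall strategy coincides with the paper's: exploit the $z$-independence of $\po U^1$ to cancel the principal term by integrating by parts in $z$, reduce to the commutator $[A,\po U^1]$, use the fact that only the $\eta$-frequency changes (so the paper's commutator estimate \eqref{com_est} applies), and finally control the coefficient via the zero-mode Biot--Savart relation. However, there are two genuine gaps. First, the claimed bound $\|[A,\po U^1]\pn h\|_{L^2}\lesssim \|\partial_v\po U^1\|_{L^\infty_v}\|A|\partial_z|^{-1}\pn h\|_{L^2}$ is false as stated: the Kato--Ponce-type expansion of the symbol difference $A(k,\eta)-A(k,\xi)$ produces a ``high-high'' contribution proportional to $\zeta_k\langle\eta-\xi\rangle^s$ (see the $\langle\eta-\xi\rangle^{s-1}$ and $\langle\eta-\xi\rangle^{s}$ terms in \eqref{com_est}), and this piece cannot be absorbed by $\|\partial_v\po U^1\|_{L^\infty}$ alone; it requires $\|\partial_v\po U^1\|_{H^s}$, which is exactly what the paper estimates through $\|\langle\partial_v\rangle^s\partial_v(B\partial_v\Delta_t^{-1}\Omega^*_{\mathbbm{o}})\|_{L^2}\lesssim\|\Omega^*_{\mathbbm{o}}\|_{H^s}\lesssim\epsilon\nu^{1/3}$. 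Fortunately this extra norm is available from the same bootstrap, so the gap is repairable but your statement of the commutator estimate is not.

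Second, the concluding time-integration argument is incorrect. The factor $\partial_t\zeta_k=\delta\nu^{1/3}(|k|^{2/3}+1)\zeta_k$ is positive and therefore contributes a \emph{source} to $\tfrac{d}{dt}\|A\Omega^*\|_{L^2}^2$, not a coercive term; and $\int_{\mathcal I}\nu^{1/3}\|A|\partial_z|^{1/3}\Omega^*\|_{L^2}^2\,dt$ is \emph{not} controlled by $\sup_t\|A\Omega^*\|_{L^2}^2$ (the interval has length $O(\nu^{-1})$, which would cost a power of $\nu^{-2/3}$). The correct closure uses \eqref{M_property_ED}, namely $\nu^{1/3}|k|^{2/3}\lesssim -\partial_t\mathfrak M/\mathfrak M+\nu(|k|^2+|\eta-kt|^2)$, so that $\int\nu^{1/3}\|A|\partial_z|^{1/3}\Omega^*\|_{L^2}^2\,dt$ is bounded by the integrated CK and dissipation terms in the bootstrap hypotheses \eqref{Hypotheses_sh}/\eqref{Hyp_ed}, giving the desired $\epsilon\cdot\epsilon^2\nu^{2/3}$. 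Your $|\partial_z|^{1/3}$-$|\partial_z|^{2/3}$ symmetrization is also an unnecessary detour: since $|k|\geq1$ on the nonzero modes, one simply passes from $\|A\Omega^*_{\neq}\|_{L^2}^2$ to $\|A|\partial_z|^{1/3}\Omega^*_{\neq}\|_{L^2}^2$ at the very end, as the paper does.
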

\begin{proof}
We first observe there is a cancellation relation 
\begin{align}
\int B(\pa_v-t\pa_z) \de_t^{-1}\oms_{\mathbbm{o}} \ \pa_z A\oms_\nq  \ A \oms_\nq dV=\int B(\pa_v-t\pa_z)\de_t^{-1}\oms_{\mathbbm{o}} \pa_z\left(\frac{A \oms_\nq}{2}\right)^2dV=0.
\end{align}
Hence, we can rewrite the term $\mathfrak{NL}_{\mathfrak{0a}}$ as follows:
\begin{align}
\mathfrak{NL}_{\mathfrak{0a}} =&2\bigg|\int \bigg( A(-B\pa_v\de_t^{-1}\oms_{\mathbbm{o}}\  \pa_z \oms_\nq)+B\pa_v\de_t^{-1} \oms_{\mathbbm{o}}\ \pa_z A \oms_\nq \bigg) A \oms_\nq\  dV\bigg|\\
=&C\bigg|\sum_{k\neq0}\iint \left(\mf M(k,\eta)\lan k,\eta\ran^s-\mf M (k,\xi)\lan k,\xi\ran^s\right)\ \wh{(B\pa_v \de_t^{-1} \oms_{\mathbbm{o}})}(\eta-\xi)\\
&\quad\quad\quad\quad \quad\quad\times \left( k e^{\delta\nu^{1/3}(|k|^{2/3}+1)t}\wh  \oms(k,\xi)\right)\ \overline{{A \wh\oms(k,\eta)}} \ d\eta d\xi \bigg|.\label{commutator_form}
\end{align}
Now we invoke the commutator estimate \eqref{com_est} and the Sobolev product estimate to obtain that,
\begin{align} 
\mathfrak{NL}_{\mathfrak{0a}}&\lesssim\bigg|\sum_{k\neq0}\iint \left(\lan k,\xi\ran^s+\lan\eta-\xi\ran^s \right)|\eta-\xi||\wh{(B\pa_v \de_t^{-1} \oms_{\mathbbm{o}})}(\eta-\xi)|\left|  e^{\delta\nu^{1/3}(|k|^{2/3}+1) t }\wh  \oms( k,\xi)\right|\\
&\hspace{2cm}\times | {A \wh\oms( k,\eta)}|d\eta d\xi \bigg|\\
  &\lesssim \|\lan \pa_v\ran^s \pa_v(B\pa_v \de_t^{-1} \oms_{\mathbbm{o}})\|_{L^2}\|A\oms_\nq\|_{L^2}^2 \lesssim \lf\|\lan \pa_v\ran^s \lf( {B}^ {-1} \underbrace{(B\pa_v)^2 \de_t^{-1} }_{=\mathbbm{1}}\oms_{\mathbbm{o}}\rg)\rg\|_{L^2}\|A\oms_\nq\|_{L^2}^2\\ & \lesssim \|\lan \pa_v\ran^s  (B^{-1}\oms_{\mathbbm{o}})\|_{L^2}\|A\oms_\nq\|_{L^2}^2 .
\end{align}

Now we apply the Kato-Ponce inequality \eqref{KP} \myb{ and a variant of its corollary \eqref{KP_2}} to obtain that
\begin{align}
\mathfrak{NL}_{\mathfrak{0a}}\lesssim (\|B^{-1}\|_{L^\infty}+\|B\|_{L^\infty}+\|B\pa_vB\|_{H^s})\|\lan\pa_v\ran^s\oms_{\mathbbm{o}}\|_{L^2}\|A\oms_\nq\|_{L^2}^2.
\end{align}
 Thanks to the bootstrap assumption  $\|\om_{\mathbbm{o}}^*\|_{L^\infty_t H^s} \lesssim \epsilon \nu^{1/3}$
 , this term is controlled as follows
\begin{align}
\mathfrak {NL}_{\mathfrak{0a}} \leq  C\ep \nu^{1/3}\|A|\pa_z|^{1/3} \Omega^*_\nq\|_2^2.\label{T_om_nq_12}
\end{align}
This concludes the proof of the lemma. 
\end{proof}


\subsection{The Estimate of $\mathfrak{NL}_{\mathfrak{l}}$}
We define $\mathfrak{NL}_{\mathfrak{l}}$ as 
\begin{align}
    \mathfrak{NL}_{\mathfrak{l}} := \int A\oms \ A\lf(B \na_L^\perp \Delta_t^{-1} \pn \Omega \cdot \na_L F \rg)\, dV.
\end{align}

In this section, we prove the following lemma.
\begin{lemma}\label{lem:NLl}
    \noindent {\bf a) Short time:} For $T_{*} \in (0, \nu^{-1/6}]$, under the bootstrap assumption of Proposition \ref{pro:short}, the following estimate holds as long as $\nu$ is chosen small enough: 
        \begin{align}
        \label{NLl:est:short}
           \int_0^{T_*}|\mathfrak{NL}_{\mathfrak{l}}|\, dt  &\lesssim   \nu^{-1/6} \norm{A \Omega}_{L_t^{\infty}L_{}^2} \norm{A \p_zF }_{L_t^\infty L_{}^2}  \norm{A \Omega^*}_{L_t^{\infty}L_{}^{2}} \\
    &+ \nu^{-1/3}\norm{A \Omega}_{L_t^{\infty}L_{}^2}
    \norm{\lb t\rb^{-1} A\nabla_L F}_{L_t^{\infty}L_{}^2}\norm{A \Omega^*}_{L_t^{\infty}L_{}^2}.\notag
        \end{align}
        
        \noindent 
        {\bf b) Long time: }For $T_*\in [\nu^{-1/6}, c_*\nu^{-1}]$
        under the bootstrap assumption of Proposition \ref{pro:long}, the following estimate holds as long as $\nu$ is chosen small enough:
\begin{align}
\label{NLl:est:long}
 \int_{\nu^{-1/6}}^{T_*}|\mathfrak{NL}_{\mathfrak{l}}| dt &\lesssim  \norm{A \pn \Omega}_{L_t^2L_{}^2} \norm{A \p_z F}_{L_t^2L_{}^2}\norm{A\Omega^*}_{L_t^{\infty}L_{}^2}\\
    &+ \norm{A \sqrt{ - \frac{\p_t \mathcal{W}_I}{\mathcal{W}_I} } \pn \Omega}_{L_t^2L_{}^2}  \nu^{1/6} \norm{A \nabla_L F}_{L_t^2L_{}^2} \norm{A\Omega^*}_{L_t^{\infty}L_{}^2}  \notag\\    &+ \norm{A \sqrt{ - \frac{\p_t \mathcal{W}_I}{\mathcal{W}_I} } \pn \Omega}_{L_t^2L_{}^2}\nu^{-1/3}\norm{AF}_{L_t^{\infty}L_{}^2}  \norm{A \sqrt{ - \frac{\p_t \mathcal{W}_E}{\mathcal{W}_E} } \Omega^*}_{L_t^2L_{}^2}. \notag
        \end{align}

Hence, as a result of the bootstrap assumption \eqref{Hypotheses_sh} (\eqref{Hypotheses}, respectively), the following estimate holds for $\mathcal{I} = [0, T_*], [\nu^{-1/6}, T_*]$:
\begin{align}
    \int_{\mathcal{I}} |\mathfrak{NL}_{\mathfrak{l}}| \, dt \lesssim \epsilon^3 \nu^{2/3}.
\end{align}

\end{lemma}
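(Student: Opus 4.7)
The plan is to follow the same overall framework used for Lemma \ref{lem:NLa}, with the roles of $\Omega^*$ and $F$ in the last position of the nonlinearity swapped. Fourier-expanding $\mathfrak{NL}_{\mathfrak l}$ and writing $A(k,\eta)=A(k-\ell,\eta-\xi)+(A(k,\eta)-A(k-\ell,\eta-\xi))$ produces, in analogy with the decomposition \eqref{E}, a commutator piece, a transport-type piece where the derivative lands on $\wh F$, and a coefficient piece involving $\pa_v B$. In each piece, the nonlocal factor $B\na_L^\perp \de_t^{-1}\pn\Omega$ is handled by the elliptic estimates of Lemma \ref{lem:ellip} and Lemma \ref{ellip:perturb} together with \eqref{ell_est_t}, while every bound on $F$ and its derivatives comes from Proposition \ref{linCK}. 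Since $F$ is supported on nonzero $z$-modes by \eqref{Flin}, one has $|\pa_z|\ge 1$ on $F$, which will be repeatedly used to trade $\na_L F$ for $|\pa_z|^{-1}\cdot|\pa_z|(-\de_L)^{1/2}F$.

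For the short-time bound \eqref{NLl:est:short}, the time integration is crude since $T_*\le \nu^{-1/6}$. After Cauchy--Schwarz and the $H^s$ algebra property, one has the pointwise estimate
\begin{align*}
|\mathfrak{NL}_{\mathfrak l}|\lesssim \|A\Omega^*\|_{L^2}\,\|A\pn\Omega\|_{L^2}\,\|A\na_L F\|_{L^2}.
\end{align*}
The piece with $\pa_z F$ is controlled directly by $\|A\pa_z F\|_{L^\infty_t L^2}$, so that $\int_0^{T_*}1\,dt\lesssim\nu^{-1/6}$ yields the first term in \eqref{NLl:est:short}. For the piece involving $(\pa_v-t\pa_z)F$, I would insert $\lb t\rb\cdot\lb t\rb^{-1}$ and bound $|(\pa_v-t\pa_z)F|\lesssim |\pa_z|^{-1}\cdot|\pa_z|(-\de_L)^{1/2}F$ on nonzero $z$-modes, so that the estimate $\|\lb t\rb^{-1}A|\pa_z|(-\de_L)^{1/2}F\|_{L^\infty_t L^2}$ from Proposition \ref{linCK} applies; then $\int_0^{T_*}\lb t\rb\,dt\lesssim \nu^{-1/3}$ produces the second term.

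For the long-time bound \eqref{NLl:est:long}, I would apply the Trilinear Lemma \ref{lem:Tri_est} with $\mathcal F=\Omega$, $\mathcal G=F$, $\mathcal H=\Omega^*$, or equivalently redo the frequency-localized commutator analysis of Steps 2--4 in the proof of Lemma \ref{lem:E_1_proof} with the last slot now occupied by $F$. Two structurally distinct contributions arise. When the derivatives stay on the velocity factor, the elliptic estimate converts $B(\pa_v-t\pa_z)\de_t^{-1}\pn\Omega$ into $\sqrt{-\pa_t\W_I/\W_I}\,\pn\Omega$, and Cauchy--Schwarz in time pairs it with the viscous bound $\nu\|BA|\pa_z|(-\de_L)^{1/2}F\|_{L^2L^2}$ from \eqref{F:CK}, which (using $|\pa_z|\ge 1$) produces the second term with its $\nu^{1/6}$ prefactor. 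When the derivatives fall on $F$, the resulting $\|A\pa_z F\|_{L^2L^2}$ is extracted from the $\sum_\iota CK_\iota[|\pa_z|F]$ sum in \eqref{F:CK}, and is paired with $\|A\pn\Omega\|_{L^2L^2}$ (controlled by the bootstrap enhanced dissipation \eqref{Hyp_ed} together with the corresponding $CK$ bound on $|\pa_z|F$) and $\|A\Omega^*\|_{L^\infty L^2}$, producing the first term. The echo-multiplier term arises exactly as in the last line of \eqref{Tri_est}, where the $L^\infty_t H^2$ bound on $\mathcal G$ required by Lemma \ref{lem:Tri_est} is supplied by $\|AF\|_{L^\infty L^2}$ from \eqref{F:CK}, multiplied by the $\nu^{-1/3}$ factor coming from $t\le c_*\nu^{-1}$.

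The principal obstacle is the careful bookkeeping of which CK norm ($\W_I$, $\W_\nu$, or $\W_E$) absorbs which factor: since every $F$-bound in Proposition \ref{linCK} is ultimately paid for by $\|A\sqrt{-\pa_t\W_I/\W_I}\Omega^*\|_{L^2L^2}$, one must avoid double-counting the CK norms already consumed by the estimates of $\mathfrak{NL}_{\mathfrak a}$, $D$, and the other nonlinear terms when closing the bootstrap. A secondary subtlety is that the analogue of Step 4 in Lemma \ref{lem:E_1_proof}, i.e.\ the commutator of $\zeta_k-\zeta_{k-\ell}$, must now be paired with $\na_L F$ rather than $\na_L\Omega^*$: using $|\pa_z|\ge 1$ on $F$ and the $\lb t\rb^{-1}$-weighted $L^\infty_t$ bound from Proposition \ref{linCK}, the extra $\delta\nu^{1/3}|\ell|^{2/3}t$ factor produced by the commutator is absorbed by the enhanced-dissipation multiplier $\W_\nu$ acting on $\pn\Omega$, so the final aggregate fits within the $\ep^3\nu^{2/3}$ budget required by the bootstrap.
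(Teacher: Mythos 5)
Your proposal is essentially correct and matches the paper's approach, with one simplification you could have made. The paper decomposes $\mathfrak{NL}_{\mathfrak l}=\mathfrak{E}_{l1}+\mathfrak{E}_{l2}$ according to which factor receives $\p_z$ versus $(\p_v-t\p_z)$, and only the piece $\mathfrak{E}_{l2}$ (where $(\p_v-t\p_z)$ falls on $F$) requires the Trilinear Lemma / commutator machinery you sketch; the piece $\mathfrak{E}_{l1}$ (where $\p_z$ falls on $F$) is handled directly by the product rule \eqref{A_product_rule_Hs} and the elliptic bound of Lemma \ref{ellip:perturb}, with a crude $\int_0^{T_*}1\,dt\lesssim\nu^{-1/6}$ in the short time and Cauchy--Schwarz in $t$ in the long time, without ever opening up the $\zeta_k$- or $\W_I$-commutator. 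Your plan to run the full frequency-localized commutator analysis uniformly is correct but more work than needed for that half of the term. Your short-time bound, including the $\lb t\rb^{-1}\cdot\lb t\rb$ insertion and $\int_0^{T_*}\lb t\rb\,dt\lesssim\nu^{-1/3}$, and your long-time bound via Lemma \ref{lem:Tri_est} with $\mathcal F=\Omega$, $\mathcal G=F$, $\mathcal H=\Omega^*$, reproduce the paper's estimates; and your remark that $F$ is supported on nonzero $z$-modes (so $|\p_z|\geq1$) is correctly used and matches how the paper controls $(\p_v-t\p_z)F$ through $\|\lb t\rb^{-1}A|\p_z|(-\de_L)^{1/2}F\|_{L^\infty_tL^2}$ from Proposition \ref{linCK}.

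Your concern about double-counting the $CK$ norms is legitimate but is not a gap: every $F$-bound from Proposition \ref{linCK} costs $\|A\sqrt{-\p_t\W_I/\W_I}\Omega^*\|_{L^2_tL^2}$, but each such occurrence in the nonlinear estimates comes multiplied by an extra factor of $\ep$ (or $\nu^{1/6}$, etc.), so in the bootstrap step the aggregate is $O(\ep^3\nu^{2/3})$ and is absorbed by choosing $\ep$ small; the paper handles this uniformly across $D,\mathfrak{NL}_{\mathfrak a},\mathfrak{NL}_{\mathfrak l},\ldots$ in Section \ref{sec:bootstrap} rather than by explicit budgeting of the $CK$ norm per term.
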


\begin{proof}
    
We decompose the $\mathfrak{NL}_{\mathfrak{l}}$ term in \eqref{Energy_Ev} as follows
\begin{align}
    \mathfrak{NL}_{\mathfrak{l}} &= \lb  A(B \na_L^\perp \Delta_t^{-1} \pn \Omega \cdot \na_L F), A \Omega^* \rb = \sum_{j = 1}^2 \mathfrak{E}_{lj} ,\notag\\
    \mathfrak{E}_{l1} &:= -\lb  A(B (\p_v -t \p_z) \Delta_t^{-1} \pn \Omega  \p_z F), A \Omega^* \rb ,\notag \\
    \mathfrak{E}_{l2} &:= \lb  A(B  \p_z\Delta_t^{-1} \pn \Omega  (\p_v -t \p_z) F), A \Omega^* \rb .\notag
\end{align}
To bound $\mathfrak{E}_{l1}$ for short time, we use \eqref{A_product_rule_Hs} and  Lemma \ref{ellip:perturb} to obtain
\begin{align}
    \label{El1:short}
    &\int_0^{T_*}|\lb  A(B (\p_v -t \p_z) \Delta_t^{-1} \pn \Omega  \p_z F), A \Omega^* \rb|\, dt  \\
    &\lesssim  
    \int_0^{T_*} \norm{A B (\p_v - t\p_z) \Delta_t^{-1} \pn \Omega }_{L_{}^2} \norm{A \p_z 
    F}_{L_{}^2} \norm{A\Omega^*}_{L_{}^2} \, dt \notag\\
    &\lesssim \nu^{-1/6} \norm{A \Omega}_{L_t^{\infty}L_{}^2} \norm{A \p_zF }_{L_t^\infty L_{}^2}  \norm{A \Omega^*}_{L_t^{\infty}L_{}^{2}}
    \notag. 
     \end{align}
For the long time estimate, we again use  \eqref{A_product_rule_Hs}, Lemma \ref{ellip:perturb},  Proposition \ref{linCK}, and Cauchy-Schwarz in time to get
\begin{align}
\label{El1:long}
    &\int_{\nu^{-1/6}}^{T_*} |\lb  A(B (\p_v -t \p_z) \Delta_t^{-1} \pn \Omega  \p_z F), A \Omega^* \rb|\, dt  \\
    &\lesssim  
    \int_{\nu^{-1/6}}^{T_*} \norm{A B (\p_v - t\p_z) \Delta_t^{-1} \pn \Omega }_{L_{}^2} \norm{A \p_z 
    F}_{L_{}^2} \norm{A\Omega^*}_{L_{}^2} \, dt \notag\\
    &\lesssim   \norm{A \pn \Omega}_{L_t^2L_{}^2} \norm{A \p_z F}_{L_t^2L_{}^2}\norm{A\Omega^*}_{L_t^{\infty}L_{}^2}. \notag
\end{align}
Now we bound $\mathfrak{E}_{l2}$. For the short time estimate, \eqref{A_product_rule_Hs} and Lemma 
 \ref{ellip:perturb} yield
\begin{align}
    \label{EL2:short}
    &\int_0^{T_*} |\lb  A(B  \p_z\Delta_t^{-1} \pn \Omega  (\p_v -t \p_z) F), A \Omega^* \rb| \, dt \\
    &\lesssim \int_0^{T_*} \norm{A (B \p_z \Delta_t^{-1} \Omega) }_{L_{}^2} \norm{A(\p_v -t \p_z) F}_{L_{}^2 } \norm{ A \Omega^*}_{L_{}^2} \, dt \notag \\
    &\lesssim  \nu^{-1/3} \norm{A\Omega}_{L_t^{\infty}L_{}^2} \norm{\lb t\rb^{-1} A(\p_v -t \p_z) F}_{L_t^{\infty}L_{}^2}
    \norm{A \Omega^*}_{L_t^{\infty}L_{}^2}\int_0^{T_*} \lb t\rb \, dt
    \notag \\
    &\lesssim 
    \nu^{-1/3}\norm{A \Omega}_{L_t^{\infty}L_{}^2}
    \norm{\lb t\rb^{-1} A\nabla_L F}_{L_t^{\infty}L_{}^2}\norm{A \Omega^*}_{L_t^{\infty}L_{}^2}.  \notag
\end{align}
For the long time we estimate this term as in Lemma \ref{lem:E_2_proof}:
\begin{align}
\label{El2:long}
    &\int_{\nu^{-1/6}}^{T_*} |\lb  A(B  \p_z\Delta_t^{-1} \pn \Omega  (\p_v -t \p_z) F), A \Omega^* \rb| \, dt \\
    &\lesssim  
\norm{A \sqrt{ - \frac{\p_t \mathcal{W}_I}{\mathcal{W}_I} } \pn \Omega}_{L_t^2L_{}^2}  \nu^{1/6} \norm{A \nabla_L F}_{L_t^2L_{}^2} \norm{A\Omega^*}_{L_t^{\infty}L_{}^2}  \notag\\    
&+ \norm{A \sqrt{ - \frac{\p_t \mathcal{W}_I}{\mathcal{W}_I} } \pn \Omega}_{L_t^2L_{}^2}\nu^{-1/3}\norm{AF}_{L_t^{\infty}L_{}^2}  \norm{A \sqrt{ - \frac{\p_t \mathcal{W}_E}{\mathcal{W}_E} } \Omega^*}_{L_t^2L_{}^2}.   
    \notag
\end{align}
To conclude, we combine  \eqref{El1:short} and \eqref{EL2:short} to obtain  for the short time and long time estimate respectively
\begin{align}
    \int_0^{T_*} |\mathfrak{NL}_{\mathfrak{l}}| \, dt &\lesssim 
    \nu^{-1/6} \norm{A \Omega}_{L_t^{\infty}L_{}^2} \norm{A \p_zF }_{L_t^\infty L_{}^2}  \norm{A \Omega^*}_{L_t^{\infty}L_{}^{2}} \\
    &+ \nu^{-1/3}\norm{A \Omega}_{L_t^{\infty}L_{}^2}
    \norm{\lb t\rb^{-1} A\nabla_L F}_{L_t^{\infty}L_{}^2}\norm{A \Omega^*}_{L_t^{\infty}L_{}^2},  \notag
\end{align}
and 
\begin{align}
    &\int_{\nu^{-1/6}}^{T_*} |\mathfrak{NL}_{\mathfrak{l}}| \, dt \lesssim  \norm{A \pn \Omega}_{L_t^2L_{}^2} \norm{A \p_z F}_{L_t^2L_{}^2}\norm{A\Omega^*}_{L_t^{\infty}L_{}^2}\\
    &+ \norm{A \sqrt{ - \frac{\p_t \mathcal{W}_I}{\mathcal{W}_I} } \pn \Omega}_{L_t^2L_{}^2}  \nu^{1/6} \norm{A \nabla_L F}_{L_t^2L_{}^2} \norm{A\Omega^*}_{L_t^{\infty}L_{}^2}  \notag\\    &+ \norm{A \sqrt{ - \frac{\p_t \mathcal{W}_I}{\mathcal{W}_I} } \pn \Omega}_{L_t^2L_{}^2}\nu^{-1/3}\norm{AF}_{L_t^{\infty}L_{}^2}  \norm{A \sqrt{ - \frac{\p_t \mathcal{W}_E}{\mathcal{W}_E} } \Omega^*}_{L_t^2L_{}^2}, \notag
\end{align}
which concludes the proof. 
\end{proof}

\subsection{The Estimates of $\mathfrak{L}_{1}$ - $\mathfrak{L}_{3}$}
We defined $\mathfrak{L}_{1}$ - $\mathfrak{L}_{3}$ as follows
\begin{align}
    \mathfrak{L}_1 :=& \int A( (B' - B_0')   \Delta_t^{-1}\p_z\pn (F + \Omega^*)  )\, A \pn \Omega^* \, dV,\\
    \mathfrak{L}_2 := & \int A(  B_0'(\Delta_t^{-1} - \Delta_0^{-1}) \p_z\pn (F + \Omega^*))\, A\Omega^*\, dV,\\
    \mathfrak{L}_3 :=  &\int A\nu ( \Delta_{0}   - \tilde{\Delta}_t)  F A  \Omega^* \, dV . 
\end{align}
    
We have the following lemma.

        


\begin{lemma}
\label{lem:linear:per}
 For $T_{*} \in (0, \nu^{-1/6}]$ (or $T_{*}\in[\nu^{-1/6}, c_*\nu^{-1}]$, respectively), under the bootstrap assumptions of Proposition \ref{pro:short} (or Proposition \ref{pro:long}, respectively), the following estimate holds on the interval $\mathcal{I} = [0, T_*]$, $[\nu^{-1/6}, T_*]$ for $\nu$, $c_*$, and $\ep$ chosen small enough,
 \begin{align}
\int_{ \mathcal{I}} |\mathfrak{L}_1|+ | \mathfrak{L}_2|+ | \mathfrak{L}_3|dt\lesssim  \int_{\mathcal{ I}}\gamma CK_I[\oms]+\gamma\nu\|A\sqrt{-\de_L}\oms\|_{L^2}^2+C_\gamma\nu\|A\pn\oms\|_{L^2}^2\,dt.
 \end{align}
Hence, as a result of the bootstrap assumption \eqref{Hypotheses_sh} (\eqref{Hypotheses}, respectively), the following estimate holds
\begin{align}
    \int_{\mathcal{I}} |\mathfrak{L}_1|+ | \mathfrak{L}_2|+ | \mathfrak{L}_3|\, dt \lesssim (\gamma + C_\gamma \nu^{2/3} )\epsilon^2 \nu^{2/3}.
\end{align}
\myb{\myr{HS}: Are we starting from $0$? Or do we need to distinguish between $0,$ and $\nu^{-1/6}?$} 
\end{lemma}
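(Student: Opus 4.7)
The plan is to estimate each of $\mathfrak{L}_1, \mathfrak{L}_2, \mathfrak{L}_3$ separately, exploiting a common structural feature: each integrand contains a ``small'' operator coming from a time-difference (namely, $B' - B_0'$ in $\mathfrak{L}_1$, the resolvent difference $\Delta_t^{-1} - \Delta_0^{-1}$ in $\mathfrak{L}_2$, and the operator difference $\tilde\Delta_t - \Delta_0$ in $\mathfrak{L}_3$), and each is small once $\nu t \leq c_*$ by either Lemma \ref{lem:B:diff} or Lemma \ref{ellip:perturb}. First I would observe that since the $\p_z \pn$ structure (in $\mathfrak{L}_1,\mathfrak{L}_2$) and the fact that $F = \pn F$ (in $\mathfrak{L}_3$) annihilate zero modes in the first factor, the test function $A\Omega^*$ may be replaced by $A\pn\Omega^*$ in all three terms, which is essential for matching the $\|A\pn\Omega^*\|_{L^2}^2$ on the right-hand side.

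For $\mathfrak{L}_1$ and $\mathfrak{L}_2$, the strategy is to apply Cauchy--Schwarz, extract a smallness constant $\gamma_0 \ll 1$ from the coefficient difference using Lemma \ref{lem:B:diff} (respectively Lemma \ref{ellip:perturb}), and then bound the velocity-type factor $\|A\Delta_t^{-1}\p_z\pn g\|_{L^2}$ by the $CK_I$-norm. For the latter, I would use the pointwise Fourier multiplier comparison
\begin{align}
\frac{|k|}{k^2 + (\eta - kt)^2} \lesssim K^{1/2}\sqrt{-\p_t\mathcal{W}_I/\mathcal{W}_I} \qquad (k \neq 0),
\end{align}
which follows from a direct case analysis on whether $|t - \eta/k|$ is smaller or larger than $K$. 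This yields $|\mathfrak{L}_j| \lesssim \gamma_0 K^{1/2}\|A\sqrt{-\p_t\mathcal{W}_I/\mathcal{W}_I}\pn(F+\Omega^*)\|_{L^2}\|A\pn\Omega^*\|_{L^2}$ for $j=1,2$. Young's inequality with weight $\lambda = \gamma/(\gamma_0 K^{1/2})$ then produces a main piece $\gamma\, CK_I[\Omega^*]$ plus a remainder of size $(\gamma_0^2 K/\gamma)\|A\pn\Omega^*\|^2_{L^2}$; the condition $\gamma_0^2 K/\gamma \leq C_\gamma \nu$ will be arranged by taking $c_*$ sufficiently small (depending on $\gamma$, $K$, $\nu$). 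The linear-profile contribution is absorbed by Proposition \ref{linCK}, which yields $\int_{\mathcal{I}} CK_I[F]\, dt \lesssim \int_{\mathcal{I}} CK_I[\Omega^*]\, dt$.

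For $\mathfrak{L}_3$, I would expand $\Delta_0 - \tilde\Delta_t = (B_0^2 - B^2)(\p_v - t\p_z)^2 + B_0'(\p_v - t\p_z)$. The dominant piece $\nu(B_0^2 - B^2)(\p_v - t\p_z)^2 F$ is handled by first commuting $A$ past $(B_0^2 - B^2)$ (picking up commutator terms of the same type via Lemma \ref{lem:C:bdd}), then integrating by parts in $(\p_v - t\p_z)$ to distribute derivatives between $AF$ and $A\Omega^*$. This produces a leading bilinear term bounded by $\gamma_0\nu\|A\sqrt{-\Delta_L}F\|_{L^2}\|A\sqrt{-\Delta_L}\Omega^*\|_{L^2}$, to which Young's inequality and Proposition \ref{linCK} (giving $\nu\int\|A\sqrt{-\Delta_L}F\|^2\, dt \lesssim \int CK_I[\Omega^*]\, dt$) give the required form. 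A lower-order boundary term with $\p_v(B_0^2 - B^2)$ (not small) is bounded using $\sup_t\|AF\|_{L^2}^2 \lesssim \int CK_I[\Omega^*]\, dt$ from Proposition \ref{linCK} together with the fact that $\nu \cdot c_*\nu^{-1} = c_*$ is small. The auxiliary piece $\nu B_0'(\p_v - t\p_z)F$ is treated by Cauchy--Schwarz and Young's, again closing via Proposition \ref{linCK}.

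The main obstacle I anticipate is carefully tracking the exponents of $\gamma_0, \nu, K, c_*$ so that the constants $\gamma$ and $C_\gamma$ in the final bound can indeed be arranged by shrinking $c_*$ and $\nu$ according to a fixed (small) choice of $\gamma$: Lemma \ref{lem:B:diff} gives the smallness $\|(B-B_0)f\|\leq \gamma_0\|f\|$ only under $\nu t \leq c_{\gamma_0}$, and the proof must be organized so that a single value of $c_*$ (depending on the target $\gamma$, $K$, and the eventual choice of $\nu_0$) suffices to produce $\gamma_0^2 K \lesssim C_\gamma\gamma\nu$. A secondary technical nuisance is that the commutator $[A,\, B' - B_0']$ must be estimated using the full $H^s$-smoothness of $B - B_0$ encoded in \eqref{B:rep} together with Lemma \ref{lem:C:bdd}, rather than merely pointwise smallness, so as to preserve the top-order regularity in the passage from Lemma \ref{lem:B:diff} to the $A$-norm estimate used above.
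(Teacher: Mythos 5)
Your overall plan — isolate the small factor coming from $B-B_0$ (via Lemma \ref{lem:B:diff}) or $\Delta_t^{-1}-\Delta_0^{-1}$ (via Lemma \ref{ellip:perturb}), and absorb the $F$ contributions through Proposition \ref{linCK} — is the same one the paper uses. Your treatment of $\mathfrak{L}_3$ is essentially the paper's (though note that $\p_v(B_0^2-B^2)$ \emph{is} small by Lemma \ref{lem:B:diff} with $\alpha=1$; the genuinely non-small coefficient is $B_0'$ in the $B_0'(\p_v-t\p_z)F$ piece, and it is that term which produces the $C_\gamma\nu\|A\pn\Omega^*\|^2$ bin).

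However, for $\mathfrak{L}_1$ and $\mathfrak{L}_2$ there is a genuine gap. By applying Cauchy--Schwarz directly on the inner product you only put the inviscid-damping weight $\sqrt{-\p_t\W_I/\W_I}$ on the factor carrying $\p_z\Delta_t^{-1}$, and you leave the test factor $\|A\pn\Omega^*\|_{L^2}$ unweighted. You then try to recover via Young's inequality, which forces the constraint $\gamma_0^2 K/\gamma\leq C_\gamma\nu$. This constraint cannot be arranged uniformly in $\nu\in(0,\nu_0)$: $\gamma_0$ is determined by $c_*$ alone (Lemma \ref{lem:B:diff} gives $\|(B-B_0)f\|\leq\gamma_0\|f\|$ once $\nu t\leq c_{\gamma_0}$, and you want this for all $t\leq c_*\nu^{-1}$, i.e.\ $c_*\leq c_{\gamma_0}$), $\gamma$ and $K$ are fixed, yet $\nu\to 0$. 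So you would be forced to let $c_*$ depend on $\nu$, contradicting the theorem's uniformity. The fix — which is what the paper does — is to split the duality pairing \emph{before} Cauchy--Schwarz by writing
\begin{align}
\int A\bigl((B'-B_0')\p_z\Delta_t^{-1}\pn\Omega\bigr)\,A\pn\Omega^*\,dV
=\int (-\Delta_L)^{1/2}A\bigl((B'-B_0')\p_z\Delta_t^{-1}\pn\Omega\bigr)\,(-\Delta_L)^{-1/2}A\pn\Omega^*\,dV,
\end{align}
so that the test factor receives the weight $(-\Delta_L)^{-1/2}$. Since on $\pn$ one has $(k^2+(\eta-kt)^2)^{-1/2}\lesssim\sqrt{-\p_t\W_I/\W_I}$, Cauchy--Schwarz now puts both factors in $CK_I$-form, giving directly $|\mathfrak{L}_j|\lesssim\gamma\,CK_I[\Omega^*]+\gamma\,CK_I[F]$ with no Young loss; the first factor is handled by \eqref{ell_est_t} with $M=(-\Delta_L)^{-1/2}$ and the smallness from Lemma \ref{lem:B:diff}, and the $F$ piece is absorbed by \eqref{F:CK}. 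Without this half-Laplacian redistribution your estimate for $\mathfrak{L}_1,\mathfrak{L}_2$ does not close.
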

\begin{proof}
For $\mathfrak{L}_1 - \mathfrak{L}_3$, we will need to use the smallness of the coefficients appearing in these terms. In what follows $\gamma \in (0,1)$ will be a free parameter. We will fix this parameter when we close the bootstrap in Section \ref{sec:bootstrap} by choosing $c_*$ sufficiently small to guarantee the needed smallness for $\gamma$. We will only consider the case where $T_* \in [\nu^{-1/6}, c_*\nu^{-1}]$, since the other case is easier.
We begin with $\mathfrak{L}_1$:
\begin{align}
\label{linear1}
    \int_{\nu^{-1/6}}^{T_*}\mathfrak{L}_1 \, dt =&\int_{\nu^{-1/6}}^{T_*}\int  A( (B' - B_0')  \p_z \Delta_t^{-1}\pn (F + \Omega^*)  ) A \pn \Omega^*  \, dV dt \\
     &= \int_{\nu^{-1/6}}^{T_*} \int  (-\Delta_L)^{1/2} A( (B' - B_0')  \p_z \Delta_t^{-1}\pn (F + \Omega^*)  ) (-\Delta)^{-1/2}A \pn \Omega^* \,dV dt \notag\\
     &\lesssim \gamma \norm{ A \sqrt{- \frac{\p_t \mathcal{W}_I}{\mathcal{W}_I}} \Omega^*}_{L_t^2L_{}^2}^2 \notag
\end{align}
where in the last line we used Lemma \ref{ellip:perturb} and \eqref{F:CK}.
We can bound $\mathfrak{L}_2$ exactly like $\mathfrak{L}_1$:
\begin{align}
\label{linear2}
   \mathfrak{L}_2 =  \int_{\nu^{-1/6}}^{T_*}\int | A(  B_0'(\Delta_t^{-1} - \Delta_0^{-1}) \pn \p_z(F + \Omega^*)) A\Omega^* |\, dV dt \lesssim \gamma \norm{ A \sqrt{- \frac{\p_t \mathcal{W}_I}{\mathcal{W}_I}} 
  \Omega^*}_{L_t^2L_{}^2}^2.
\end{align}
Finally,  \eqref{B:difference} and \eqref{F:CK}  imply 
\begin{align}
\label{linear3}
   \mathfrak{L}_3 =&\int_{\nu^{-1/6}}^{T_*} \int  A\nu (\tilde{\Delta}_t  - \Delta_{0})  F A  \Omega^* \, dV dt  =  \int_{\nu^{-1/6}}^{T_*} \int  A\nu (\tilde{\Delta}_t  - \Delta_{0}) \pn F A \pn  \Omega^*  \, dV dt\\
    \lesssim 
    &\gamma \nu \int_{\nu^{-1/6}}^{T_*}\norm{\nabla_L A\pn F}_{L_{}^2} \norm{A\nabla_L \pn\Omega^*}_{L_{}^2} \, dt  + \nu \int_{\nu^{-1/6}}^{T_*} \norm{A \p_v\pn F  }_{L_{}^2} \norm{A \pn\Omega^*}_{L_{}^2} \, dt \\
    &\lesssim \gamma \norm{ A \sqrt{- \frac{\p_t \mathcal{W}_I}{\mathcal{W}_I}}\pn\Omega^*}_{L_t^2L_{}^2}^2  + \gamma \nu \norm{A \nabla_L \pn\Omega^* }_{L_t^2L_{}^2}^2   +  C_\gamma \nu \norm{A \pn \Omega^*}_{L_t^2L_{}^2}^2, 
\end{align}
which concludes the proof.
\end{proof}

\subsection{The Estimate of $\po U^1$}
\label{u1:0:mode}
We prove 
\begin{lemma}
    \label{lem:u1:0mode}
   There exists $\epsilon_0, \nu_0, c_* > 0$ such that for $\epsilon \in (0, \epsilon_0), \nu \in (0, \nu_0)$
\begin{align}
        \label{low_freq_U1}
           \sup_{t \in [0, c_* \nu^{-1}]} \norm{\po U^1(t)}_{L_v^2} \leq     4\epsilon^2 \nu^{2/3}.
        \end{align}
\end{lemma}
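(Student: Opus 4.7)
The plan is to close the estimate via an $L_y^2$ energy identity for $\po u^1$ in the original $(x,y)$ coordinates. Taking the zero-$x$-mode projection of the first component of the full Navier--Stokes equation \eqref{velocity:shear} (including the nonlinearity $u\cdot\nabla u$), using $\po u^2 \equiv 0$ (from incompressibility and decay at $|y|=\infty$), and rewriting the nonlinearity in divergence form as $u\cdot\nabla u^1 = \nabla\cdot(u^1 u)$, I obtain
\begin{equation*}
\pa_t \po u^1 - \nu \pa_y^2 \po u^1 + \pa_y \po(\pn u^1 \pn u^2) = 0.
\end{equation*}
Pairing with $\po u^1$, integrating by parts in $y$, and integrating in time yields
\begin{equation*}
\tfrac{1}{2}\|\po u^1(T)\|_{L_y^2}^2 + \nu \int_0^T \|\pa_y \po u^1\|_{L_y^2}^2 \,dt = \tfrac{1}{2}\|\po u^1_{\rm in}\|_{L_y^2}^2 + \mathcal{N},
\end{equation*}
where $\mathcal{N} := \int_0^T \!\int_{\R} \po(\pn u^1 \pn u^2)\, \pa_y\po u^1\, dy\, dt$. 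Since $\|\po u^1_{\rm in}\|_{L_y^2}^2 \lesssim \epsilon^2 \nu^{2/3}$ by the hypothesis of Theorem \ref{main}, it suffices to prove $|\mathcal{N}| \lesssim \epsilon^3 \nu$.

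I estimate $\mathcal{N}$ by placing $\pa_y\po u^1$ in $L_t^\infty L_y^2$ and the product in $L_t^1 L_{x,y}^2$. The identity $\pa_y \po u^1 = -\po\omega = -\po \Omega^*$ (the latter because $\po F \equiv 0$, which follows directly from \eqref{Flin} since its right-hand side carries a factor of $\pa_z$) together with the bootstrap $\|A\Omega^*\|_{L_t^\infty L^2} \lesssim \epsilon \nu^{1/3}$ gives $\|\pa_y \po u^1\|_{L_t^\infty L_y^2} \lesssim \epsilon \nu^{1/3}$. For the product I use Agmon's inequality in $y$ and the Poincar\'e--Sobolev embedding $H^1(\TT)\hookrightarrow L^\infty(\TT)$ for zero-$x$-mean functions to obtain
\begin{equation*}
\|\pn u^1 \pn u^2\|_{L_{x,y}^2} \leq \|\pn u^1\|_{L^\infty_y L^2_x} \|\pn u^2\|_{L^2_y L^\infty_x} \lesssim \|\pn u^1\|_{L^2}^{1/2}\|\pa_y \pn u^1\|_{L^2}^{1/2} \|\pa_x \pn u^2\|_{L^2}.
\end{equation*}
The stability bound $\|\pa_y\pn u^1\|_{L^2}\lesssim \|\pn\omega\|_{L^2}\lesssim \epsilon\nu^{1/3}$ is immediate, while the pointwise-in-time inviscid damping estimate $\|\pn u^1(t)\|_{L^2} + \|\pa_x\pn u^2(t)\|_{L^2} \lesssim \epsilon\nu^{1/3}\lb t\rb^{-1}$ follows from the $A$-bound on $\Omega=\Omega^*+F$ and the elliptic Lemmas \ref{lem:ellip}, \ref{ellip:perturb}. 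Combining these yields $\|\pn u^1 \pn u^2\|_{L_{x,y}^2} \lesssim \epsilon^2\nu^{2/3}\lb t\rb^{-3/2}$, whose time integral on $[0,c_*\nu^{-1}]$ is uniformly bounded. Hence $|\mathcal N| \lesssim \epsilon\nu^{1/3}\cdot\epsilon^2\nu^{2/3} = \epsilon^3\nu$, and
\begin{equation*}
\|\po u^1(T)\|_{L_y^2}^2 \leq C\epsilon^2\nu^{2/3}(1 + \epsilon\nu^{1/3}) \leq 4\epsilon^2\nu^{2/3}
\end{equation*}
once $\epsilon_0, \nu_0$ are small enough, closing the bootstrap.

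The main technical step is establishing the $\lb t\rb^{-1}$ decay for $\|\pn u^1\|_{L^2}$ and $\|\pa_x\pn u^2\|_{L^2}$ with the variable-coefficient operator $\Delta_t$ in place of the Couette $\Delta_L$. In the constant-coefficient model the decay follows from the multiplier bound $\lb t\rb^2/(k^2+(\xi-kt)^2) \lesssim 1/k^2 + \xi^2/k^4$, a consequence of the Peetre-type inequality $\lb t\rb^2 \leq \lb\xi/k\rb^2\lb t-\xi/k\rb^2$, which upon summation over $k\neq 0$ reduces both quantities to $\|\Omega\|_{H^1}\lb t\rb^{-1}$; since $s\geq 2$ supplies the requisite $H^1$ bound, this closes the argument. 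Lemma \ref{ellip:perturb} allows $\Delta_t^{-1}$ to be replaced by $\Delta_0^{-1}$ on $t\nu \leq c_*$ up to multiplicative errors that are absorbed, and Lemma \ref{lem:ellip} delivers the bound for $\Delta_0^{-1}$ in the appropriate weighted norm. The change of variables between $(x,y)$ and $(z,v)$ contributes only the bounded Jacobian factor $B^{-1}\in[\theta_0,\theta_0^{-1}]$ and does not affect the decay rate.
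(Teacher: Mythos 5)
Your proof is correct, and it takes a genuinely different (slightly cleaner) route than the paper's. Both arguments begin from the same zero-mode evolution equation for $\po u^1$, and both ultimately rely on the same source of integrability, namely an $\lb t\rb^{-3/2}$ inviscid-damping decay rate for the nonlinear forcing coming from Lemmas \ref{lem:ellip} and \ref{ellip:perturb}. The paper keeps the nonlinear term in the form $\int \po(\pn u\cdot\nabla\pn u^1)\,\po U^{(1)}\,dV$, splits the product using the embedding $H^{1/2}\hookrightarrow L^4$, and — because it transforms to $(z,v)$ coordinates where the dissipation $B^2\pa_v^2$ produces a lower-order cross term — ends up with a forced ODE inequality of the form $\frac{d}{dt}\|\po U^1\|^2\lesssim\nu\|\po U^1\|^2+\lb t\rb^{-3/2}\epsilon^2\nu^{2/3}\|\po U^1\|$, which it closes by Gr\"onwall using both the $c_*\nu^{-1}$ time restriction and the bootstrap hypothesis on $\po U^1$ itself. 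Your version differs in three useful ways: (i) staying in $(x,y)$ coordinates for the energy identity keeps the dissipation term $\nu\|\pa_y\po u^1\|_{L_y^2}^2$ sign-definite with no cross terms; (ii) integrating by parts once converts the test function $\po u^1$ into $\pa_y\po u^1=-\po\omega=-\po\Omega^*$ (the identity $\po F\equiv 0$, which you correctly observe is immediate from \eqref{Flin} since the right-hand side carries a $\p_z$ and $\po\Delta_0\po=B_0^2\pa_v^2+B_0'\pa_v$ with zero data), so the weight is controlled pointwise in time by the $\Omega^*$ bootstrap rather than by a self-referential Gr\"onwall loop; and (iii) you use Agmon in $y$ together with Poincar\'e--Sobolev on $\TT$ instead of $H^{1/2}\hookrightarrow L^4$. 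The trade-offs are minor: your version requires verifying the $\lb t\rb^{-1}$ decay for $\|\pn u^1\|_{L^2}$ and $\|\pa_x\pn u^2\|_{L^2}$ separately via the Peetre inequality, whereas the paper derives the $\lb t\rb^{-3/2}$ rate directly on the $H^{1/2}$ quantities; and your estimate does not invoke the $\po U^1$ bootstrap hypothesis at all, a small structural simplification. The only caveat worth flagging is bookkeeping: the implicit constants in the initial-data hypothesis $\|\po u^1_{\rm in}\|_{L^2}\lesssim_{s,\sigma_0}\epsilon\nu^{1/3}$ must be tracked so that $\|\po u^1_{\rm in}\|_{L^2}^2+|\mathcal N|\leq 4\epsilon^2\nu^{2/3}$ closes, but this is the same normalization issue present in the paper's version and is resolved the same way by rescaling $\epsilon$.
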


\begin{proof}
The proof for the short-time is contained in the proof of the long-time bootstrap so we focus on the latter. The proof proceeds similarly to \cite{BMV16} section 8. From \eqref{velocity:shear}, in the original $(x,y)$ variables, we have
\begin{align}
    \p_t \po u_1 + \po ( \pn u \cdot \nabla \pn u_1  ) - \nu \p_y^2 \po u_1 = 0.
\end{align}
Using the change of variables \eqref{chg_of_v}
gives 
\begin{align}
\label{0mode1}
  \p_t \po U^{(1)}  - \nu B^2 \p_v^2 \po U^{(1)}  = - \po ( B\nabla^{\perp} \pn \Delta_t^{-1} \Omega \cdot \nabla \pn( 
   (\p_v - t\p_z)\Delta_t^{-1} \Omega    ))
\end{align}
where we have used that $\nabla_L^{\perp} f \cdot \nabla_L g = \nabla^{\perp} f \cdot \nabla g$. 
Testing  \eqref{0mode1} with $\po U^{(1)}$ yields 
\begin{align}
\label{U1:energy}
    &\frac{1}{2}\frac{d}{dt}\norm{\po U^{(1)}  }_{L_v^2}^2 - \nu \int  B^2 \p_v^2 \po U^{(1)}, \po U^{(1)}  dV \\  
   &=- \int    \nabla^{\perp} \pn \Delta_t^{-1} \Omega \cdot \nabla \pn( 
   (\p_v - t \p_z)\Delta_t^{-1} \Omega    )  \po U^{(1)} dV \notag.
\end{align}
First, we write
\begin{align}
    \nabla^{\perp} \pn \Delta_t^{-1} \Omega \cdot \nabla \pn( 
   (\p_v - t \p_z)\Delta_t^{-1} \Omega    )  = &-\p_v \pn \Delta_t^{-1} \Omega \p_z \pn 
   (\p_v - t \p_z)\Delta_t^{-1} \Omega \\
   &+\p_z \pn \Delta_t^{-1} \Omega \p_v \pn (\p_v - t\p_z) \Delta_t^{-1} \Omega.
\end{align}
Using Holder's inequality and the embedding $H^{1/2}(\TT \times \R) \to L^4(\TT \times \R) $, we can bound the first term as  
\begin{align}
     &\int  |  \nabla^{\perp} \pn \Delta_t^{-1} \Omega \cdot \nabla \pn( 
   (\p_v - t \p_z)\Delta_t^{-1} \Omega    )  \po U^{(1)} | \, dV \\ 
   &\lesssim \norm{\p_v \pn \Delta_t^{-1} \Omega}_{H^{1/2}} \norm{\p_z \pn 
   (\p_v - t \p_z)\Delta_t^{-1} \Omega}_{H^{1/2}} \norm{\po U^{(1)}}_{L_v^2} \\ 
   &+\norm{\p_z \pn \Delta_t^{-1} \Omega}_{H^{1/2}} \norm{\p_v \pn 
   (\p_v - t \p_z)\Delta_t^{-1} \Omega}_{H^{1/2}} \norm{\po U^{(1)}}_{L_v^2} 
\end{align}

Using Lemma \ref{ellip:perturb} and the inviscid damping inequalities for $s \geq 0$, $\beta \in [0,2], \alpha \in [0,1]$ 
\begin{align}
    \norm{\Delta_L^{-1} \pn f }_{H^s} &\lesssim \frac{1}{\lb t \rb^\beta} \norm{ |\p_z|^{-\beta} \pn f}_{H^{s + \beta } } \\
    \norm{\Delta_L^{-1/2} \pn f }_{H^s} &\lesssim \frac{1}{\lb t \rb^\alpha} \norm{ |\p_z|^{-\alpha} \pn f}_{H^{s + \alpha }}
\end{align}
we have 
\begin{align}
    \norm{\p_v \pn 
   (\p_v - t \p_z)\Delta_t^{-1} \Omega}_{H^{1/2}} +  \norm{\p_v \pn \Delta_t^{-1} \Omega}_{H^{1/2}} &\lesssim \frac{1}{\lb t\rb^{1/2}} \norm{\pn \Omega}_{H^2}\\
   \norm{\p_z \pn \Delta_t^{-1} \Omega}_{H^{1/2}}   + \norm{\p_z \pn 
   (\p_v - t \p_z)\Delta_t^{-1} \Omega}_{H^{1/2}} 
   &\lesssim \frac{1}{\lb t\rb }\norm{\pn \Omega}_{H^2}.
\end{align}
Therefore, 
\begin{align}
\label{NLb1}
      \int \nabla^{\perp} \pn \Delta_t^{-1} \Omega \cdot \nabla \pn( 
   (\p_v - t \p_z)\Delta_t^{-1} \Omega    )  \po U^{(1)} dV 
   &\lesssim 
   \frac{1}{\lb t \rb^{3/2}} \norm{\pn\Omega}_{H^2}^2 \norm{\po U^{(1)}}_{L_v^2}.
   \end{align}
The bootstrap assumptions  \eqref{Hypotheses_sh} and \eqref{Hypotheses} and Proposition \ref{linCK} imply that   \eqref{NLb1} leads to an integrable forcing term. Next, we deal with the dissipation. Observe that 
\begin{equation}
  - \int  B^2 \p_v^2 \po U^{(1)} \po U^{(1)} dV  =   \norm{B\p_v \po U^{(1)} }_{L_v^2}^2 -  2 \int B\p_vB \p_v\po U^{1} \po U^{(1)} dV.   
  \end{equation}
For $\mu \in (0,1)$ we can bound the last term 
as 
\begin{align}
  \int B\p_vB \p_v\po U^{1} \po U^{(1)} dV
  &\lesssim   \norm{B\p_vB}_{L_v^\infty} \norm{\p_v \po U^{(1)}}_{L_v^2} \norm{\po U^{(1)}}_{L_v^2}  \\
  & \lesssim \mu \norm{B\p_v \po U^{(1)}}_{L_v^2}^2  + C_{\mu} \norm{\p_vB}_{L_v^\infty}^2\norm{\po U^{(1)}}_{L_v^2}^2.  
\end{align}
Taking $\mu$ sufficiently small allows us to write \eqref{U1:energy} as 
\begin{align}
     \frac{1}{2} \frac{d}{dt} \norm{\po U^{(1)}}_{L_v^2}^2 \lesssim \nu \norm{\po U^{(1)}}_{L_v^2}^2 + \frac{1}{\lb t \rb^{3/2}} \norm{\pn\Omega}_{H^2}^2 \norm{\po U^{(1)}}_{L_v^2}.
\end{align}
Using the bootstrap assumptions \eqref{Hypotheses_sh} and \eqref{Hypotheses} then imply \eqref{low_freq_U1} by choosing $c_*$ and $\epsilon$ sufficiently small. 
\end{proof}

\subsection{Proof of the Proposition \ref{pro:short} and Proposition \ref{pro:long}}
\label{sec:bootstrap}

We first choose the parameter $K$ in \eqref{multipliers} large enough depending on the shear profile derivative $B$, such that the estimate in \eqref{D_est} holds. Then we choose $c_*$ sufficiently small to ensure \eqref{low_freq_U1} and Lemma \ref{lem:linear:per} hold. Finally, we choose $\epsilon, \nu$ sufficiently small depending on the bootstrap constants and various implicit constants in Section \ref{sec:energy} to conclude the proof of Proposition \ref{pro:short} and \ref{pro:long}. Theorem \ref{main} then follows.\myb{?}
\section{Analysis of the Linear Equation}
\label{sec:linear}
Taking a Fourier transform in $z$ of \eqref{Flin} yields  \begin{align}
\label{F_k}
    &\p_t F_k - ikB_0'  \Delta_{0}^{-1} F_k  - \nu \Delta_{0} F = ik B_0' \Delta_{0}^{-1}  \Omega_{k}^* , \quad F_k(0, v) = 0. 
\end{align}

Proposition \ref{linCK} will be a consequence of the following proposition which gives control on the non-local term $ik B_0'\Delta_0^{-1} F_k$ in terms of the 
 the auxiliary profile $\Omega_k^*$.
\begin{proposition}
\label{prop:linear:bounds:k}
Assume that $k \in \mathbb{Z} \setminus\{ 0\}$ and that $F_k$ satisfies \eqref{F_k}. 
Then there exists $\nu_0 > 0$ such for $\nu \in (0, \nu_0)$, the following bounds hold:
\begin{subequations}
\label{Fbounds}
    \begin{align}
    \label{linpro3}
         \| A   F_k(t)\|_{L_t^{\infty}[0, T]L_v^2} &\lesssim \lf\|A   (-\Delta_{L})^{-1/2}\Omega_k^* \rg\|_{L_t^2[0, T ]L_v^2}, \\
    \label{linpro3.4}
\norm{ \lb t \rb^{-1} A(-\Delta_L)^{1/2}F_k(t)}_{L_t^{\infty}[0, T]L_v^2} &\lesssim \norm{A  (-\Delta_L)^{-1/2}\Omega_k^*}_{L_t^2[0,T]L_v^{2}}, \\
   \label{linpro3.5}
    \norm{k(-\Delta_L)^{-1/2} AF_k }_{L_t^2[0,T]L_v^2} &\lesssim   \lf\|A (-\Delta_L)^{-1/2}\Omega_k^* \rg\|_{L_t^2[0, T ]L_v^2}. 
  \end{align}
\end{subequations}
\end{proposition}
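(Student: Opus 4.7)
My plan is to combine Duhamel's formula with the linear semigroup estimates from [J23]. The homogeneous version of \eqref{F_k} is, after undoing the moving-frame change of variables (via conjugation by $e^{iktb(y)}$ and the substitution $v=b(y)$), precisely the 2D linearized Navier-Stokes equation around the spectrally stable shear $b(y)$. Assumption \ref{assum:b} thus places us in the setting of [J23], which provides a solution operator $S_k(t,\tau)$ (for $k\ne 0$) satisfying enhanced-dissipation estimates at rate $e^{-\delta_{\mathrm{lin}}\nu^{1/3}|k|^{2/3}(t-\tau)}$ together with inviscid-damping/resolvent bounds that, after changing variables, translate into control at the level of $(-\Delta_L)^{-1/2}$.

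Since $F_k(0)=0$, Duhamel gives
\[
F_k(t) = \int_0^t S_k(t,\tau)\bigl[ikB_0'\Delta_0^{-1}\Omega_k^*(\tau)\bigr]\,d\tau.
\]
Lemma \ref{lem:ellip} applied with $M=(-\Delta_L)^{-1/2}$ converts the source $ikB_0'\Delta_0^{-1}\Omega_k^*$ into a quantity controlled by $|k|^{-1}(-\Delta_L)^{-1/2}\Omega_k^*$, the elliptic estimate absorbing one full power of $\Delta_0^{-1}$ into a $(-\Delta_L)^{-1}$ gain. Combining this with the inviscid-damping compatible semigroup estimate and Young's convolution inequality in $t$ against the exponential kernel yields \eqref{linpro3.5} directly. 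The bound \eqref{linpro3} follows similarly using only the enhanced-dissipation estimate together with Cauchy-Schwarz in time; the factor $\nu^{-1/6}|k|^{-1/3}$ picked up from time integration against the exponential kernel is compensated by the $|k|^{-1}$ elliptic gain. For \eqref{linpro3.4} we differentiate \eqref{F_k} in $\nabla_L$, run an energy estimate exploiting the dissipation $-\nu\Delta_0$, and observe that the Fourier symbol of $(-\Delta_L)^{1/2}$ grows at most linearly in $t$ for fixed $(k,\eta)$, which produces the $\lb t\rb^{-1}$ prefactor on the left.

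To propagate the full multiplier $A(t,k,\eta)=\zeta_k(t)\mf M(t,k,\eta)\lb k,\eta\rb^s$ through the semigroup, I would rely on three ingredients: (i) the choice $\delta<\delta_{\mathrm{lin}}$, which lets $\zeta_k$ be absorbed by the exponential decay in the semigroup kernel; (ii) the monotonicity in $t$ of the multipliers $\mathcal{W}_\nu$, $\mathcal{W}_I$, $\mathcal{W}_E$, which turns their commutators with the semigroup into CK-type terms that already appear on the left of \eqref{F:CK}; and (iii) Lemmas \ref{lem:C:bdd} and \ref{lem:com:half}, which allow $\lb k,\eta\rb^s$ to be commuted past the non-constant coefficients $B_0, B_0'$ hidden inside $\Delta_0^{-1}$. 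The main obstacle will be upgrading the linear estimates of [J23] — which are naturally stated at a single frequency $k$ in an unweighted $L^2_v$ norm — to Sobolev-weighted norms with the full time-dependent multiplier $A$. This requires either revisiting the arguments of [J23] at higher regularity, or treating the $\lb k,\eta\rb^s$-derivatives as additional commutator errors that are absorbable by the enhanced-dissipation gain $\nu^{1/3}|k|^{2/3}$ or the inviscid-damping loss $\sqrt{-\p_t\mathcal{W}_I/\mathcal{W}_I}$ on the left-hand side of \eqref{F:CK}.
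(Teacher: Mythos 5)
Your high-level plan --- Duhamel's formula for $F_k$, linear estimates from [J23] on the homogeneous propagator, and the elliptic Lemma~\ref{lem:ellip} to absorb $\Delta_0^{-1}$ into $(-\Delta_L)^{-1}$ --- does match the paper's strategy, and your observation that $\delta<\delta_{\rm lin}$ absorbs $\zeta_k$ is exactly how the paper handles the enhanced-dissipation weight. But there are two substantive gaps.

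First, the paper does not use [J23] as a black-box semigroup estimate plus commutator corrections. It imports the \emph{explicit kernel representation}: Lemma~\ref{lem:rep} expresses the homogeneous propagator as $e^{-(\delta_{\rm lin}\nu^{1/3}|k|^{2/3}+\nu k^2)t}\int e^{-ikwt}\Omega_{k,\nu}(v,w)\,dw$, with $\Omega_{k,\nu}$ solving a shifted Orr--Sommerfeld problem, and then Proposition~\ref{shift} gives the crucial pointwise-in-frequency kernel bound
\[
\Bigl\|\lb k,\eta\rb^s\lb k,\eta-k\tau\rb\sup_{\xi}|\widehat{\Upsilon}_{k,\tau}(\xi,\eta)|\Bigr\|_{L^2_\eta}\lesssim\bigl\|\lb k,\xi\rb^s\lb k,\xi-k\tau\rb\widehat{X}_k(\tau,\xi)\bigr\|_{L^2_\xi}.
\]
This bound \emph{already} carries the Sobolev weight $\lb k,\eta\rb^s$ and the inviscid-damping factor $\lb k,\eta-k\tau\rb$; the commutator program you outline in your last paragraph (pushing $\lb k,\eta\rb^s$ through the semigroup as an error absorbable by the CK terms) is precisely what this kernel bound makes unnecessary. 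Without identifying Proposition~\ref{shift}, the plan is not a proof.

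Second, your proposed derivation of \eqref{linpro3.4} would not work. An energy estimate on $\nabla_L F_k$ applied to \eqref{F_k} faces the nonlocal term $ikB_0'\Delta_0^{-1}F_k$ with $B_0'$ not small --- this is the exact obstruction that forced the introduction of $F$ in the first place, and it cannot be closed by exploiting $-\nu\Delta_0$. Moreover, \eqref{linpro3.4} converts an $L^2_t$ bound on the right into an $L^\infty_t$ bound with a decaying weight $\lb t\rb^{-1}$ on the left, and energy estimates do not produce such a gain. The paper gets \eqref{linpro3.4} from the representation formula by writing $\lb k,\xi-kt\rb\lesssim\lb k,\xi-k\tau\rb+t|k|$ for $\tau\in[0,t]$, producing $t^{1/2}$ and $|t|$ times the $L^2_t$ norm of $\lb k,\xi-k\tau\rb^{-1}\widehat{\Omega}^*_k$; dividing by $\lb t\rb$ then gives the claim. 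Your remark that the symbol of $(-\Delta_L)^{1/2}$ grows at most linearly in $t$ is a red herring: it is not uniform near the critical ray $\eta\approx kt$, and even where it holds it only shows $\lb t\rb^{-1}(-\Delta_L)^{1/2}A\lesssim A\lb k,\eta\rb$, which asks for one more derivative than \eqref{linpro3} provides.

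Also, a smaller point: for \eqref{linpro3} and \eqref{linpro3.5} the paper does not use Young's convolution against the exponential kernel, but Cauchy--Schwarz in $\tau$ on the factor $\lb k,\xi-k\tau\rb^{-1}$, whose time-integral $\int_0^t\lb k,\xi-k\tau\rb^{-2}\,d\tau\le|k|^{-1}$ supplies the needed factor of $|k|^{-1}$. Your stated cancellation "the factor $\nu^{-1/6}|k|^{-1/3}$ picked up from time integration ... is compensated by the $|k|^{-1}$ elliptic gain" does not reflect the actual bookkeeping.
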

Recall that for $k \neq 0$, $\p_t \mathcal{W}_I(t, k ,\eta) \approx \frac{k^2}{k^2 + (\eta -kt)^2} $, so we can rephrase \eqref{Fbounds} in terms of  $\mathcal{W}_I$ as 
\begin{align}
\label{F:WI}
    \norm{ k A F_k}_{L_t^{\infty}[0,T] L_v^2} +  
    \norm{ \lb t \rb^{-1} k A F_k}_{L_t^{\infty}[0,T] L_v^2} 
    + \norm{ k A \sqrt{  -\frac{\p_t  \mathcal{W}_I   }{ \mathcal{W}_I    }     } F_k }_{L_t^{2}[0,T] L_v^2} \lesssim  \norm{A \sqrt{  -\frac{\p_t  \mathcal{W}_I   }{ \mathcal{W}_I    }     } \Omega_k^* }_{L_t^2[0,T]L_v^2  }.
\end{align}
We now  prove Proposition  \ref{linCK} assuming Proposition \ref{prop:linear:bounds:k}.
\begin{proof}
The bound for $\norm{ \lb t \rb^{-1} A  |\p_z| (-\Delta_L)^{1/2} F }_{L_t^{\infty}[0, T]L_{}^2  }   $ is a direct consequence of \eqref{linpro3.4} so we focus on the remaining terms. We rewrite \eqref{Flin} as 
\begin{align}
\label{F:forced}
    \p_t F   - \nu \Delta_{0} F = B_0' \p_z \Delta_{0}^{-1} \pn ( F + \Omega^*).  
\end{align}
The inner product  of the the right hand side of \eqref{F:forced} with $A F$
can be bounded as   
\begin{align}
\label{F:rhs}
    & \int A   F \, A[B' \Delta_0^{-1}  (F+ \Omega^*  )] \, dV     = \int  A  \p_z (-\Delta_L)^{-1/2} F \,  A(-\Delta_L)^{1/2}[B'  \Delta_0^{-1}  ( F + \Omega^*  )] \, dV \\
    &\leq \norm{ A \p_z (-\Delta_L)^{-1/2} F}_{L_{}^2} \norm{A (-\Delta_L)^{1/2}   [B'  \Delta_0^{-1}  (F + \Omega^* )] }_{L_{}^2}  . \notag 
\end{align}
As a consequence of  Lemma \ref{lem:ellip} and \eqref{linpro3.5}, time integrating \eqref{F:rhs} yields 
\begin{align}
 \int_0^T  \int A  F \, A\p_z[B' \Delta_0^{-1}  (F+ \Omega^*  )] \, dV  dt  &\lesssim  \norm{A (-\Delta_L)^{-1/2} \Omega^*}_{L^2[0,T]L_{}^2 }^2.
\end{align}
From \eqref{F:forced} we compute the time evolution of $\norm{A |\p_z| F }_{L_{}^2}^2$ which gives 
\begin{align}
\label{lin:energy}
    &\frac{1}{2} \frac{d}{dt} \norm{A|\p_z| F}_{L_{}^2}^2 + \sum_{\iota \in \nu, I, E}CK_{\iota}[A|\p_z| F]        \\ 
    &= \int A  |\p_z| F A\p_z(B' \Delta_0)^{-1}  (\Omega^* + F) dV  + \int A \Delta_0 |\p_z| F A|\p_z|F d V.   \notag 
\end{align}
Following the argument for $D_{\neq}$ in Lemma \ref{lem:Diffusion} and Lemma \ref{lem:Con_Low_reg_1} combined with 
\eqref{F:rhs} implies the desired result upon time integrating \eqref{lin:energy}. 
\end{proof}
 The rest of this section will be dedicated to Proposition \ref{prop:linear:bounds:k}.

 A key step in proving 
 this Proposition  is 
 having a representation formula for $F_k$. The following result follows from \cite{J23} Proposition 1.2.
 \begin{lemma}
 \label{lem:rep}
    Assume that $b(y)$ satisfies Assumption \ref{assum:b},
      $k \in \mathbb{Z} \cap [1, \infty)$, $f_k(t,v): [0, \infty) \times \R \to \mathbb{C}$ satisfies $f_k \in C^{\infty}((0, \infty) \times \R )\cap C([0, \infty), L^2(\R)  )$, with initial data $f_k(0, v) = f_{0k}(v) \in C_0^{\infty}(\R)$, and  $f_k$ solves
         \begin{align}
         &\p_t f_k + ivk f_k -ik B_0'\Delta_{B_0}^{-1}  f_k  - \nu \Delta_{B_0} f_k = 0, \quad f_k(0, v) = f_{0k}, \\
         \label{db}
         &\Delta_{B_0} := B_0^2 \p_v^2 - B_0'\p_v - k^2. 
         \end{align}  
     Then there exists $\nu_0 , \delta_{lin} >0$ such that for $\nu \in (0, \nu_0)$ we  have for $t > 0$
   \begin{align}
   \label{fkrep}
       f_k(t,v) = -\frac{1}{2 \pi}  e^{- \delta_{lin} \nu^{1/3} |k|^{2/3} t - \nu k^2 t }  \int_\R e^{-ikwt} \Omega_{k, \nu} (v,w) \, dw ,
   \end{align}
     where $\Omega_{k,\nu}$ satisfies for $v, w\in \R, \epsilon := \nu/k$,
     \begin{align}
     \label{Oss0}
         \epsilon \Delta_{B_0} \Omega_{k, \nu} (v,w) + \delta_{lin} \epsilon^{1/3} \Omega_{k, \nu}(v,w) -i(v-w) \Omega_{k, \nu} + B_0'(v) \Delta_{B_0}^{-1} \Omega_{k,\nu}(v,w)  = f_{0k}(v).
     \end{align}
 \end{lemma}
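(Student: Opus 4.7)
The statement is a direct specialization of Proposition 1.2 in \cite{J23}, so my plan is to verify that our hypotheses fit that result and then to reproduce, at the level of a sketch, the mechanism by which the representation formula arises. Writing
$$\mathcal{L}_k := -ivk + ikB_0'\Delta_{B_0}^{-1} + \nu\Delta_{B_0},$$
the equation reads $\p_t f_k = \mathcal{L}_k f_k$ with $f_k(0)=f_{0k}$, and \eqref{fkrep} asserts that $e^{t\mathcal{L}_k}f_{0k}$ admits a distorted-plane-wave expansion against the family $\{\Omega_{k,\nu}(\cdot,w)\}_{w\in\R}$ determined by \eqref{Oss0}, with spectral parameter $-ikw - \delta_{lin}\nu^{1/3}|k|^{2/3} - \nu k^2$. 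The scalar prefactor encodes both the enhanced-dissipation gap ($\delta_{lin}\nu^{1/3}|k|^{2/3}$) and the standard $x$-diffusion ($\nu k^2$), while the $w$-integral plays the role of an inverse distorted Fourier transform.

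The plan has three steps. First, I would perform the conjugation $g_k := e^{(\delta_{lin}\nu^{1/3}|k|^{2/3} + \nu k^2)t}f_k$ to absorb the scalar factors, divide the resulting stationary eigenvalue equation by $ik$, and identify \eqref{Oss0} with the distorted eigenvalue problem for the shifted generator at spectral value $-ikw$. Second, I would invoke the limiting-absorption and spectral construction of \cite{J23}: for each $w\in\R$ and $\nu\in(0,\nu_0)$, \eqref{Oss0} admits a unique solution $\Omega_{k,\nu}(\cdot,w)$ with quantitative bounds uniform in $\nu$, and the family is complete in $L^2(\R)$. The continuous-spectrum hypothesis on the Rayleigh operator $L_k$ in Assumption \ref{assum:b} is used precisely here to rule out embedded/unstable discrete modes that would obstruct the contour deformation capturing the $\delta_{lin}\nu^{1/3}|k|^{2/3}$ decay rate. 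Third, I would assemble \eqref{fkrep} by inverting the distorted Fourier transform: the evolution acts on the $w$-coordinate as multiplication by $e^{-ikwt}$ times the scalar decay prefactor.

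The main obstacle is step two, which is the heart of \cite{J23}: solving \eqref{Oss0} despite the presence of a critical layer at $v = w$, where the transport term $-i(v-w)$ degenerates, forcing one to exploit the Airy-type regularization supplied by $\epsilon\Delta_{B_0} + \delta_{lin}\epsilon^{1/3}$. The nonlocal perturbation $B_0'\Delta_{B_0}^{-1}\Omega_{k,\nu}$ is treated by a fixed-point argument once the Airy model is inverted, using the Gevrey-type smoothness and support condition on $b''$ in Assumption \ref{assum:b}. On our end, no new analysis is required: Assumption \ref{assum:b}, together with the $t=0$ specialization $B_0(v) = \p_y b(b^{-1}(v))$, $B_0'(v) = \p_y^2 b(b^{-1}(v))$ in \eqref{B0}, furnishes exactly the coefficient bounds demanded by \cite{J23}. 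The only bookkeeping is to match the $(\nu,k)$-scaling to their $\epsilon = \nu/|k|$ scaling and to confirm sign conventions in the definition \eqref{db} of $\Delta_{B_0}$.
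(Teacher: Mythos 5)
Your overall plan coincides with the paper's: Lemma~\ref{lem:rep} is obtained by invoking Proposition~1.2 of \cite{J23}, and the paper does not reprove it. However, your characterization of the relationship is slightly off in a way the paper's own remark flags explicitly: the representation in \cite{J23} does \emph{not} contain the factor $e^{-\delta_{lin}\nu^{1/3}|k|^{2/3}t}$ in \eqref{fkrep}, nor the corresponding $\delta_{lin}\epsilon^{1/3}\Omega_{k,\nu}$ term in the generalized Orr--Sommerfeld equation \eqref{Oss0}. These terms are the mechanism by which the enhanced-dissipation rate is extracted, and they constitute a genuine modification of \cite{J23}'s resolvent bounds rather than a direct specialization. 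So your concluding claim that ``no new analysis is required'' is too strong; the paper's stance is that the required uniform limiting-absorption bounds for the modified equation \eqref{Oss0} follow \emph{by the same methods} of \cite{J23}, after choosing $\delta_{lin}$ sufficiently small so the added term is a small perturbation of the Airy-plus-nonlocal operator — a re-run, not a reuse.

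Two smaller remarks. First, your description of \eqref{fkrep} as ``inverting a distorted Fourier transform against the family $\{\Omega_{k,\nu}(\cdot,w)\}$'' misreads the structure: $\Omega_{k,\nu}(\cdot,w)$ is not a $w$-parameterized family of generalized eigenfunctions but rather the solution of the resolvent-type equation \eqref{Oss0} \emph{with $f_{0k}$ on the right-hand side} already baked in, so \eqref{fkrep} is really a contour-integral (inverse Laplace) representation of $e^{t\mathcal{L}_k}f_{0k}$ via boundary values of the resolvent, not a spectral synthesis from a complete family. Second, your conjugation-and-divide-by-$ik$ bookkeeping would produce an extra factor of $i$ on the $\epsilon\Delta_{B_0}$ and $\delta_{lin}\epsilon^{1/3}$ terms compared to \eqref{Oss0} as written; this is worth fixing if the sketch were to be made precise, though it does not affect the validity of the approach. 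With these caveats, the plan and the role of the spectral-stability hypothesis in Assumption~\ref{assum:b} are correctly identified.
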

\begin{remark}
    In this section and this section only, we abuse notation and let $\epsilon = \nu/k$. There will be no risk of confusion with $\epsilon$ as the small prefactor for the size of the initial data since the linear analysis is independent of the size of the data.
\end{remark}
 \begin{remark}
     The assumptions on the smoothness and compact support are qualitative and can be removed by an approximation argument. 
 \end{remark}
 \begin{remark}
    The key difference between Lemma \ref{lem:rep} and Proposition 1.2 in \cite{J23} is the factor of $e^{-\delta_{lin} \nu^{1/3}|k|^{2/3}}t$ in \eqref{fkrep} and $\delta_{lin} \epsilon^{1/3}$ in \eqref{Oss0}. However, such an extension follows using the methods in \cite{J23} and choosing $\delta_{lin}$ sufficiently small.  
 \end{remark}
With Lemma \ref{lem:rep} we can now state our representation formula for $F_k$. 
 \begin{lemma}
   Assume that   $k \in \mathbb{Z} \cap [1, \infty)$ and  let $F_k$ be a solution of \eqref{F_k}. There exists $\nu_0, \delta_{lin} > 0$ such that for $\nu \in (0, \nu_0)$ and    $t > 0$ we have 
\begin{align}
    \label{rep:for:duh}
     F_k(t,v) =  -\frac{1}{2\pi } \int_0^t  e^{- (\delta_{lin} \nu^{1/3} |k|^{2/3} + \nu k^2    ) (t - \tau)  } \int_\R e^{ikt(v-w)  } 
     \Omega_{k, \nu, \tau}(v,w)  \, d w\,d\tau  
   \end{align}
   where 
 $\Omega_{k, \nu, \tau }(v,w)$ is defined as the solution to 
\begin{align}
 \label{Oss:shift}
   &\epsilon \Delta_{B_0} \Omega_{k, \nu ,\tau} (v,w) + \delta_{lin} \epsilon^{1/3} \Omega_{k, \nu, \tau} - i(v-w)\Omega_{k, \nu ,\tau }(v,w) + ik B_0' \Delta_{B_0}^{-1} \Omega_{k, \nu, \tau }(v,w) \\
   &=  ikB_0'e^{-ik\tau (v-w)} \Delta_{0}^{-1}  \Omega_k^*(\tau, v).  \notag
     \end{align}
 \end{lemma}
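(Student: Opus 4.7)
The plan is to apply Duhamel's principle after a change of variable that removes the time-dependent shifts from $\Delta_0$, so that the propagator of the homogeneous equation is covered by Lemma \ref{lem:rep}. The substitution
\begin{align}
    \tilde g_k(t,v) := e^{-iktv} F_k(t,v)
\end{align}
has the property $(\p_v - ikt) F_k = e^{iktv}\p_v \tilde g_k$ (and hence $(\p_v - ikt)^2 F_k = e^{iktv}\p_v^2 \tilde g_k$), so that $\Delta_0 F_k = e^{iktv} \Delta_{B_0}\tilde g_k$, while $\p_t F_k = e^{iktv}(\p_t + ikv)\tilde g_k$. Substituting into \eqref{F_k} and dividing out $e^{iktv}$ turns the equation into the autonomous forced problem
\begin{align}
    \p_t \tilde g_k + ikv\,\tilde g_k - ikB_0'\,\Delta_{B_0}^{-1}\tilde g_k - \nu\,\Delta_{B_0}\tilde g_k = ikB_0'\,\Delta_{B_0}^{-1}\bigl(e^{-iktv}\Omega_k^*(t,v)\bigr),\qquad \tilde g_k(0)=0,
\end{align}
whose homogeneous part is precisely the operator studied in Lemma \ref{lem:rep}.

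Since the linear part is time-independent, I would apply Duhamel to write $\tilde g_k(t) = \int_0^t \tilde U(t-\tau)\,h_k(\tau)\,d\tau$, with forcing $h_k(\tau,v) := ikB_0'(v)\Delta_{B_0}^{-1}\bigl(e^{-ik\tau v}\Omega_k^*(\tau,v)\bigr)$. Lemma \ref{lem:rep} then identifies $\tilde U(t-\tau)h_k(\tau)(v)$ as
\begin{align}
    -\frac{1}{2\pi}\,e^{-\delta_{lin}\nu^{1/3}|k|^{2/3}(t-\tau) - \nu k^2(t-\tau)}\int_\R e^{-ikw(t-\tau)}\,\widetilde\Omega_{k,\nu,\tau}(v,w)\,dw,
\end{align}
where $\widetilde\Omega_{k,\nu,\tau}$ is the solution of \eqref{Oss0} with right-hand side $h_k(\tau,v)$. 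Using the elementary intertwining identity $\Delta_{B_0}^{-1}\bigl(e^{-ik\tau v} H\bigr) = e^{-ik\tau v}\,\Delta_0(\tau)^{-1}H$, one rewrites $h_k(\tau,v) = ikB_0'(v)\,e^{-ik\tau v}\,\Delta_0^{-1}\Omega_k^*(\tau,v)$.

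The final normalization matches \eqref{Oss:shift} after a cosmetic rescaling by a $w$-phase, $\Omega_{k,\nu,\tau}(v,w) := e^{ik\tau w}\widetilde\Omega_{k,\nu,\tau}(v,w)$: because $w$ enters the Oss equation only through the multiplication operator $-i(v-w)$, the factor $e^{ik\tau w}$ commutes with every term on the left-hand side and simply decorates the right-hand side, producing exactly the combined phase $e^{-ik\tau v}\cdot e^{ik\tau w} = e^{-ik\tau(v-w)}$ displayed in \eqref{Oss:shift}. Restoring $F_k = e^{iktv}\tilde g_k$ then collapses the product of phases $e^{iktv}\cdot e^{-ikw(t-\tau)}\cdot e^{-ik\tau w} = e^{ikt(v-w)}$ under the $(w,\tau)$-integral, which yields precisely \eqref{rep:for:duh}. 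The one nontrivial point is that Lemma \ref{lem:rep} is stated for $f_{0k}\in C_0^\infty$, whereas $h_k(\tau,\cdot)$ is merely $L^2$ in $v$; I would close this gap by a standard density argument, exploiting the $L^2$-continuity of the representation in \eqref{fkrep} together with the resolvent estimates from \cite{J23} that underlie Lemma \ref{lem:rep}, plus the fact that $\Omega_k^*(\tau,\cdot)\in H^s$ makes $h_k(\tau,\cdot)$ smooth enough for the argument to apply after standard cutoff/mollification.
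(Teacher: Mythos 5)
Your argument matches the paper's proof: both conjugate by $e^{-iktv}$ to turn \eqref{F_k} into an autonomous forced equation whose homogeneous part is precisely the operator of Lemma~\ref{lem:rep}, then apply Duhamel's formula. You additionally spell out the phase bookkeeping — the intertwining $\Delta_{B_0}^{-1}(e^{-ik\tau v}H)=e^{-ik\tau v}\Delta_0^{-1}H$, the $e^{ik\tau w}$ renormalization of the resolvent unknown, and the collapse to $e^{ikt(v-w)}$ — and flag the density step for $C_0^\infty$ data, all of which the paper leaves implicit (cf.\ the remark following Lemma~\ref{lem:rep}); the content is the same.
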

\begin{remark}
    Because $F(t, z, v)$ is real-valued, we can restrict to only considering $k \geq 1$ by noting that $F_{-k}(t, v) = \overline{F_{k}}(t,v)$. 
\end{remark}
\begin{proof}
Observe that if we define $f_k$  as 
\begin{align}
    f_k(t,v) := e^{-iktv}F_k(t,v)
\end{align}
then $f_k$ satisfies 
\begin{align}
\label{forced_lin}
    &\p_t f_k + iktv f_k - ikB_0' \Delta_{B_0}^{-1}f_k  - \nu \Delta_{B_0} f_k = e^{-iktv }ik B_0' \Delta_{0 }^{-1}  \Omega_k^* , \quad f_k(0, v) = 0.   
\end{align}
Using the representation formula from Lemma \ref{lem:rep} and Duhamel's formula then implies  \eqref{rep:for:duh}. 
\end{proof}

In order to better analyze the degeneracy at $v =w $ in \eqref{Oss:shift}, we consider the new unknown $\Upsilon_{k, \tau}(v, w) := \Omega_{k, \nu, \tau } (v + w, w)$ which satisfies

\begin{align}
\label{ups:oss}
&\epsilon (B_0(v + w))^2 \p_v^2 \Upsilon_{k, \tau} (v,w) + \epsilon  B_0'(v + w) \p_v \Upsilon_{k, \tau}(v, w) + \delta_{lin}\epsilon^{1/3} \Upsilon_{k, \tau}(v,w)  \notag\\
    &- i v \Upsilon_{k, \tau}(v, w)  + i B_0'(v  +w) \Theta_{k, \tau}(v,w) = 
    e^{-ik\tau v  }B_0'(v + w)X_k(\tau, v +w)\\
    & (B_0(v + w))^2 \p_v^2 \Theta_{k, \tau} (v,w) +   B_0'(v + w) \p_v \Theta_{k, \tau} (v,w) - k^2 \Theta_{k, \tau}(v,w) = \Upsilon_{k, \tau}(v,w)
    \end{align}
\begin{align}
    \label{Xrhs}
          X_k(\tau, w) := ik  \Delta_0^{-1} \Omega_k^*(\tau, w) \\
\end{align}

We can rewrite \eqref{rep:for:duh} in terms of $\Upsilon_{k, \tau}$ to get 
\begin{align}
    \label{rep:renorm}
    F_k(t,v) &=  -\frac{1}{2\pi } \int_0^t  e^{- (\delta_{lin} \nu^{1/3} |k|^{2/3} + \nu k^2    ) (t - \tau)  } \int_\R e^{ikt(v-w)  } 
     \Upsilon_{k, \tau}(v -w, w)  \, d w\,d\tau  \\
    &= -\frac{1}{2\pi } \int_0^t  e^{- (\delta_{lin} \nu^{1/3} |k|^{2/3} + \nu k^2    ) (t - \tau)  } \int_\R e^{iktw'  } 
     \Upsilon_{k, \tau}(w', v - w')  \, d w'\,d\tau. \notag
\end{align}
Taking the Fourier transform of \eqref{rep:renorm} with respect to $v$ gives
\begin{align}
    \label{Frep1}
    \widehat{F_k}(t, \xi) = -\frac{1}{2\pi } \int_0^t  e^{- (\delta_{lin} \nu^{1/3} |k|^{2/3} + \nu k^2    ) (t - \tau)  } \widehat{\Upsilon}_{k, \tau}(\xi - kt, \xi) 
 \,d\tau. 
\end{align}


The following proposition is a slight modification of Theorem 1.3 in \cite{J23} and follows by the methods developed therein. 
\begin{proposition}
\label{shift}
Assume that $s \geq 0$, $k \in \mathbb{Z} \cap [1, \infty)$, $\tau \in [0,t]$,  and let $(\Upsilon_{k, \tau}(v,w), \Theta_{k, \tau} (v,w))$ satisfy \eqref{ups:oss}. Then there exists $\epsilon_0 > 0$ such that for $\epsilon \in (0, \epsilon_0)$ the following bounds hold: 
\begin{align}
\label{Y:bdd}
    \norm{  \lb k, \eta  \rb^s \lb k, \eta - k \tau  \rb \sup_{\xi \in \R}  |\widehat{\Upsilon}_{k, \tau} (\xi, \eta )|}_{L_\eta^2} \lesssim   \norm{ \lb k, \xi \rb^s \lb k, \xi - k\tau \rb \widehat{X}_k(\tau ,\xi )}_{L_\xi^2}.
\end{align}

\end{proposition}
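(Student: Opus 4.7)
The plan is to reduce to \cite[Thm.~1.3]{J23}; beyond that statement, the new features are (i) the extra coercive term $\delta_{lin}\epsilon^{1/3}\Upsilon_{k,\tau}$, (ii) the phase-twisted forcing $e^{-ik\tau v}B_0'(v+w)X_k(\tau,v+w)$, and (iii) the two extra weights $\lb k,\eta\rb^s$ and $\lb k,\eta-k\tau\rb$ in the $w$-Fourier variable. Feature (i) is innocuous: for $\delta_{lin}$ small, it only strengthens the absorbing part of every weighted energy/resolvent estimate of \cite{J23} and does not affect the Green's function construction used there.

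For the main step, I would freeze $w$ and apply the $v$-direction estimate of \cite[Thm.~1.3]{J23} to the forcing $H_w(v) := e^{-ik\tau v}B_0'(v+w)X_k(\tau,v+w)$, obtaining the pointwise-in-$\xi$ bound
\begin{align*}
\sup_{\xi\in\R}|\widehat{\Upsilon}_{k,\tau}(\xi, w)| \lesssim \lf\|\lb k,\xi'\rb \widehat{H_w}(\xi')\rg\|_{L^2_{\xi'}}.
\end{align*}
A direct computation gives $\widehat{H_w}(\xi') = e^{iw(\xi'+k\tau)}\widehat{B_0' X_k(\tau,\cdot)}(\xi'+k\tau)$, so after taking the $w$-Fourier transform and the change of variables $\xi = \xi'+k\tau$, the weight $\lb k,\eta-k\tau\rb$ on the left translates into $\lb k,\xi\rb$ on the data, explaining the shifted weight on the right-hand side of \eqref{Y:bdd}.

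To add the high-regularity weight $\lb k,\eta\rb^s$, I would iteratively commute $\p_w$ with the Orr--Sommerfeld operator. The $w$-dependence enters only through $B_0(v+w)$, $B_0'(v+w)$, and the forcing; by the Gevrey smoothness \eqref{B:smooth}, each commutator loses only a bounded symbol with rapidly decaying Fourier transform, so up to controlled lower-order terms that are absorbed into the coercive side of the energy estimate, $w$-derivatives pass through. Finally, squaring in $\eta$, exchanging $\sup_\xi$ and $L^2_\eta$ via Minkowski, and handling the multiplication by $B_0'$ through the fractional Leibniz rule \eqref{KP} and Lemma~\ref{lem:C:bdd} yields \eqref{Y:bdd}.

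The hardest part will be step (iii): propagating the shifted weight $\lb k,\eta-k\tau\rb$ alongside the untwisted $\lb k,\eta\rb^s$. One has to ensure that high $w$-derivatives, after Fourier in $w$, do not interact pathologically with the $k\tau$ shift of the forcing; this amounts to controlling $w$-derivatives of the Green's function associated with the $w$-parametrized Orr--Sommerfeld operator in weighted spaces where the weights differ by a $k\tau$ translation. The Gevrey regularity of $B_0$ from Assumption~\ref{assum:b} makes this tractable, since all commutator coefficients decay subexponentially in Fourier and the twist is inherited only by the forcing. With these observations in place, the detailed contour-integral/energy argument of \cite{J23} goes through with only notational modifications.
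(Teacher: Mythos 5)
The paper gives no proof of Proposition~\ref{shift}: it simply states that the result ``is a slight modification of Theorem~1.3 in \cite{J23} and follows by the methods developed therein,'' together with remarks that the extra term $\delta_{lin}\epsilon^{1/3}\Upsilon$ is harmless when $\delta_{lin}$ is chosen small. Your proposal is thus necessarily a reconstruction, and you do correctly identify the three genuinely new ingredients relative to \cite{J23} (the coercive $\delta_{lin}\epsilon^{1/3}$ term, the phase twist $e^{-ik\tau v}$, and the twin $\eta$-weights), and your Fourier computation $\widehat{H_w}(\xi') = e^{iw(\xi'+k\tau)}\widehat{B_0'X_k}(\xi'+k\tau)$ is exactly the mechanism that makes the shifted weight $\lb k,\eta-k\tau\rb$ appear. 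You also correctly flag the joint propagation of $\lb k,\eta\rb^s$ and $\lb k,\eta-k\tau\rb$ as the hard step.

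However, your central reduction step contains a gap. You propose to freeze $w$, apply \cite{J23} Theorem~1.3 in the $v$-direction to obtain a bound on $\sup_\xi|\widehat{\Upsilon}_{k,\tau}(\xi,w)|$ (the \emph{partial} Fourier transform in $v$ only, with $w$ physical), and then ``take the $w$-Fourier transform.'' But the left side of \eqref{Y:bdd} controls $\sup_\xi|\widehat{\Upsilon}_{k,\tau}(\xi,\eta)|$ where $\hat\Upsilon$ is the \emph{full} Fourier transform in both arguments — this follows from \eqref{Frep1}, where $\widehat\Upsilon_{k,\tau}(\xi-kt,\xi)$ plays the role of a genuine two-variable Fourier object. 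The frozen-$w$ bound is uniform in $w$ (since $|\widehat{H_w}(\xi')|=|\widehat{B_0'X_k}(\xi'+k\tau)|$ is $w$-independent), and a uniform-in-$w$ pointwise bound on $\sup_\xi|\widehat\Upsilon^{(1)}(\xi,w)|$ contains no information about decay or oscillation in $w$, so it cannot produce the weighted $L^2_\eta$ estimate on the two-variable Fourier transform; moreover $\sup_\xi$ does not commute with $\mathcal F_w$. The $\p_w$-commutator argument you mention is not an add-on to the frozen-$w$ step — it is the proof, and it must be carried out with the shifted weight already built in (i.e., commuting the twisted derivative $\p_w + ik\tau$ and tracking both $\lb k,\eta\rb^s$ and $\lb k,\eta-k\tau\rb$ through the weighted Green's-function/resolvent estimates of \cite{J23}). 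Your closing assertion that this requires ``only notational modifications'' is therefore not justified by the preceding argument, and in fact contradicts your own correct observation that propagating the two mismatched weights is the hardest part. A correct write-up should start from the two-variable Fourier formulation and the weighted resolvent estimate of \cite{J23}, rather than from a frozen-$w$ one-dimensional bound.
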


With Proposition \ref{shift}, we can now prove Proposition \ref{prop:linear:bounds:k}. 
\begin{proof}
  First we will prove \eqref{linpro3}. We can ignore the presence of any ghost multipliers in the Fourier multiplier $A$ and only consider the exponential enhanced dissipation factor and the Sobolev regularity. Using \eqref{Frep1} and choosing $\delta$ so that  $2\delta \leq \delta_{lin}$ yields 
\begin{align}
\label{lindamp.5}
    &|\zeta_k(t) \lb k, \xi \rb^s |\widehat{F}_k(t,  \xi)|  
    \lesssim  \zeta_k(t) \int_0^t e^{- (\delta_{lin} \nu^{1/3} |k|^{2/3} + \nu k^2    ) (t - \tau)}   \lb k, \xi \rb^s |\widehat{\Upsilon}_{k, \tau}(\xi - kt, \xi)| \, d \tau  \\
    &\leq \left(\int_0^t \frac{1}{\lb k, \xi - k\tau \rb^2 }  \, d \tau \right)^{1/2} \left( \int_0^t  \zeta_k^2(\tau)  \lb k, \xi - k \tau \rb^2   \lb k, \xi \rb^{2s} \sup_{\gamma \in \R}|\widehat{\Upsilon}_{k, \tau}(\gamma, \xi)|^2      \, d \tau    \right)^{1/2} \notag\\
    &\leq \frac{1}{|k|} \left( \int_0^t \zeta_k^2(\tau) \lb k, \xi - k \tau \rb^2  \lb k, \xi \rb^{2s} \sup_{\gamma \in \R}|\widehat{\Upsilon}_{\tau, k}(\gamma, \xi)|^2       \, d \tau    \right)^{1/2}. \notag
\end{align}
Applying the $L_\xi^2$ norm, Fubini's theorem,  Proposition \ref{shift}, and Lemma \ref{lem:ellip}  imply
\begin{align}
\label{lindamp.7}
    \norm{A F_k(t,\xi)}_{L_\xi^2}^2 &\lesssim \frac{1}{|k|^2} \int_0^t\zeta_k^2(\tau) \norm{   \lb k , \xi - k\tau \rb   \lb k, \xi \rb^{s} \sup_{\gamma \in \R}|\widehat{\Upsilon}_{k,\tau}(\gamma, \xi)|   }_{L_\xi^2}^2 d\tau  \\
    &\lesssim  \int_0^t \zeta_k^2(\tau) \norm{\lb k , \xi \rb^{s}   \lb k, \xi - k\tau\rb^{-1} \widehat{\Omega_k^*}  (\tau , \xi )     }_{L_\xi^2}^2 \,  d \tau. \notag
\end{align}
This concludes the proof of \eqref{linpro3}. 
The proof of \eqref{linpro3.5} is similar: Using \eqref{lindamp.5} we have 
    \begin{align}
    \label{lindamp1}
         &\lb k, \xi - t k \rb^{-1} \lb k, \xi \rb^s \zeta_k(t) \widehat{F}_k(t,\xi)| \\ 
         &\lesssim \frac{1}{|k| \lb k, \xi - kt\rb}\left( \int_0^{T} \zeta_k^2(\tau) \lb k, \xi -k \tau\rb^s   \lb k, \xi \rb^{2s} \sup_\gamma|\widehat{\Upsilon}_{k, \tau} (\gamma ,\xi)|^2       \, d \tau    \right)^{1/2}\notag
    \end{align}
    where we are assuming that $t\leq T$.
    Crucially, the only $t$ dependence on the right hand side of \eqref{lindamp1} comes from the factor  $ \lb k, \xi - kt \rb$. Squaring both sides of \eqref{lindamp1} and integrating with respect to $t$ over $[0,T]$ and $\xi$ over $\R$ then implies \eqref{linpro3.5} by following \eqref{lindamp.7}.  Finally, we prove \eqref{linpro3.4}. 
    For $\tau \in [0, t]$ we have $\lb k , \xi - kt \rb \lesssim \lb k, \xi - \tau k\rb + tk$.
    Therefore, by \eqref{Frep1} 
    \begin{align}
        &\lb k, \xi - kt \rb \lb k, \xi \rb^s\zeta_k(t) \widehat{F}_k(t,\xi)\\
        &\lesssim  \int_0^t \zeta_k^2(\tau) \lb k , \xi - k\tau \rb \lb k, \xi \rb^s \sup_{\gamma \in \R} |\widehat{\Upsilon}_{k, \tau}( \gamma, \xi)|  \, d \tau + |kt|\int_0^t \zeta_k^2(\tau) \lb k, \xi \rb^s \sup_{\gamma \in \R} |\widehat{\Upsilon}_{k, \tau}( \gamma, \xi)|   \, d \tau \notag.
    \end{align}
    For the first term, applying the $L_\xi^2$ norm, using \eqref{Y:bdd}, and Cauchy-Schwarz in time  yields 
    \begin{align}
       &\norm{ \int_0^t \zeta_k^2(\tau)  \lb k ,\xi - k\tau \rb \lb k, \xi \rb^s  \sup_{\gamma \in \R} |\widehat{\Upsilon}_{k, \tau}( \gamma, \xi)|  \, d \tau}_{L_\xi^2} 
      \lesssim t^{1/2} \norm{A \lb k, \xi - k\tau \rb^{-1} \Omega_k^{*}(\tau, \xi) }_{L^2[0,t]L_\xi^2} . 
    \end{align}
    For the second term  we bound as in \eqref{lindamp.5} and \eqref{lindamp.7} to obtain 
    \begin{align}
        \norm{|kt|\int_0^t \zeta_k(\tau) \lb k, \xi \rb^s \sup_{\gamma \in \R} |\widehat{\Upsilon}_{k, \tau}( \gamma, \xi)|  \, d \tau }_{L_{\xi}^2} \lesssim |t|\norm{A \lb k, \xi - k\tau\rb^{-1} \Omega_k^{*}(\tau, \xi) }_{L^2[0,t]L_\xi^2} 
    \end{align}
    which completes the proof of \eqref{linpro3.4}. The lemma is now proved.
\end{proof}


\appendix

\section{Pointwise Frequency bounds for Green's function}
We are interested in studying the Green's function $\mathcal{G}_k(v,w)$ which satisfies the equation
\begin{align}
\label{Gequation}
    B_0^2(v) \p_v^2 \G(v,w) + B_0(v)\p_vB_0(v) \p_v \G(v,w) - k^2 \G(v,w) = B_0(w)\delta(v-w).
\end{align}
\begin{lemma}
\label{lem:G:bdds}
    For $k \in \mathbb{Z} \setminus \{ 0\} $ we can decompose $\G(v,w)$ in the following way
    \begin{align}
    \label{Gdecomp}
        \G(v,w) = \chi(w)\mathcal{G}_{1,k}(v-w) + \mathcal{G}_{2,k}(v,w),
    \end{align}
where  $\chi$ satisfies 
\begin{align}
\label{cut:bdd}
    \norm{\chi}_{L^{\infty}(\R)} + \sup |e^{\lb \xi \rb^{1/2}} \widehat{\p_v \chi }(\xi)| \lesssim 1,
\end{align}
\begin{equation}
\label{G1}
    |\widehat{\mathcal{G}_{1 ,k}}(\xi)| \lesssim \frac{1}{k^2 + \xi^2 },
\end{equation}
\begin{equation}
\label{G2}
    |\widehat{\mathcal{G}_{2, k}}(\xi, \eta)| \lesssim \frac{1}{(k^2 + \xi^2) \lb \xi + \eta \rb^{ \ceil{s} +2  }}.
\end{equation}
    Furthermore, for $\mathfrak{C}_1, \mathfrak{C}_2$  
    satisfying \eqref{cut:bdd} we have that 
\begin{equation}
\label{G3}
    |(\mathfrak{C}_1(\cdot)\mathcal{G}_{2,k}(\cdot, *)\mathfrak{C}_2(*))^{\wedge}(\xi, \eta)| \lesssim \frac{1}{(k^2 + \xi^2)\lb \xi + \eta\rb^{\ceil{s} + 2}}.
\end{equation}
\end{lemma}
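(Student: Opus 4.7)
The plan is to use an explicit formula for $\G(v,w)$. Performing the change of variables $y = b^{-1}(v)$, $y' = b^{-1}(w)$ reduces \eqref{Gequation} to the constant-coefficient Helmholtz equation $(\p_y^2 - k^2)\varphi = \delta(y-y')$ on the flat line (as already shown in the proof of Lemma \ref{lem:ellip}), yielding
\[
\G(v,w) = -\tfrac{1}{2|k|}\exp\Big(-|k|\Big|\int_w^v B_0(u)^{-1}\,du\Big|\Big).
\]
I would fix a reference constant $B_* \in [\theta_0,1/\theta_0]$ (e.g.\ $B_* := B_0(0)$) and define
\[
\mathcal{G}_{1,k}(\rho) := -\tfrac{1}{2|k|}\,e^{-|k\rho|/B_*},
\]
whose Fourier transform is $-B_*/(k^2 + B_*^2 \xi^2)$; this immediately gives \eqref{G1} since $B_* \asymp 1$. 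I would take $\chi(w)$ to be a Gevrey-$2$ bump localized to a set on which $B_0 \approx B_*$; the standard Gevrey-$2$ mollifier construction yields \eqref{cut:bdd}. Finally set $\mathcal{G}_{2,k}(v,w) := \G(v,w) - \chi(w)\mathcal{G}_{1,k}(v-w)$.

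The main step is then the Fourier bound \eqref{G2}. I would decompose
\[
\mathcal{G}_{2,k}(v,w) = \chi(w)\bigl[\G(v,w) - \mathcal{G}_{1,k}(v-w)\bigr] + \bigl(1-\chi(w)\bigr)\G(v,w),
\]
and on the first summand, factor
\[
\G(v,w) - \mathcal{G}_{1,k}(v-w) = -\tfrac{1}{2|k|}\,e^{-|k||v-w|/B_*}\bigl(e^{-|k|\Psi(v,w)}-1\bigr), \qquad \Psi(v,w) := \int_w^v\!\bigl(B_0(u)^{-1} - B_*^{-1}\bigr)\,du.
\]
The function $\Psi(v,w)$ inherits joint Gevrey-$2$ regularity in $(v,w)$ from Lemma \ref{lem:B:regularity}. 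Expanding $e^{-|k|\Psi}-1$ as a power series in $|k|\Psi$, the exponential prefactor $e^{-|k||v-w|/B_*}$ Fourier-transforms to produce the $(k^2 + \xi^2)^{-1}$ factor, while the Gevrey-$2$ smoothness of $\Psi$ in both arguments, combined with the chain rule applied to the Taylor terms, produces arbitrary polynomial decay in the ``sum'' Fourier variable $\xi + \eta$, in particular the $\lb \xi + \eta\rb^{-\ceil{s}-2}$ factor required. For the second summand $(1-\chi(w))\G(v,w)$, the cutoff localizes to a region where $B_0$ has stabilized to a constant (thanks to the effective compact support of $\p_v B_0$), so $\G$ coincides with a constant-coefficient Helmholtz kernel up to Gevrey-smooth perturbations, and a parallel Fourier analysis applies.

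The estimate \eqref{G3} follows as an essentially routine corollary of \eqref{G2}: the Fourier transform of $\mathfrak{C}_1(v)\mathcal{G}_{2,k}(v,w)\mathfrak{C}_2(w)$ is the convolution of $\widehat{\mathcal{G}_{2,k}}$ against $\widehat{\mathfrak{C}_1}\otimes\widehat{\mathfrak{C}_2}$; since $\mathfrak{C}_j \in L^\infty$ and $\widehat{\p_v\mathfrak{C}_j}$ has Gevrey-$2$ decay by \eqref{cut:bdd}, the convolution preserves the pointwise bound up to absolute constants. The main technical obstacle in the whole argument is the careful bookkeeping in the proof of \eqref{G2}: establishing joint Gevrey-$2$ smoothness of $\Psi(v,w)$ with bounds uniform in $|v-w|$ (handling the two possibly distinct limits of $B_0$ at $\pm \infty$ via an appropriate choice of $\chi$ and $B_*$) and then combining this smoothness with the exponential factor $e^{-|k||v-w|/B_*}$ to produce the claimed \emph{product-type} Fourier decay, rather than the weaker estimate in which the two Fourier variables decouple.
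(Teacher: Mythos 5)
The starting formula $\G(v,w) = -\tfrac{1}{2|k|}\exp\bigl(-|k|\bigl|\int_w^v B_0(u)^{-1}\,du\bigr|\bigr)$, the choice $\mathcal{G}_{1,k}(\rho) = -\tfrac{1}{2|k|}e^{-|k\rho|/B_*}$ with its Fourier transform $-B_*/(k^2 + B_*^2\xi^2)$, and the general strategy of factoring out a constant-coefficient exponential and controlling the remainder via Gevrey-$2$ regularity of $B_0$ are all sound and track the paper's own representation $\G(v,w) \propto e^{-|k||v-w|E(v,w)}$. However, your placement of the cutoff $\chi$ creates a genuine gap. The kernel $\G$ is not absolutely integrable on $\R^2$: at fixed $\rho := v-w$, the map $w \mapsto \G(\rho+w,w)$ converges to the nonzero constants $-\tfrac{1}{2|k|}e^{-|k\rho|/B_\pm}$ as $w \to \pm\infty$, where $B_\pm := \lim_{w\to\pm\infty}B_0(w)$. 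For \eqref{G2} to hold (indeed, for $\widehat{\mathcal{G}_{2,k}}$ to be a bounded function at all), the subtraction $\chi(w)\mathcal{G}_{1,k}(v-w)$ must cancel exactly these far-field tails. With $\chi$ a compactly supported bump near $0$ — which is what your recipe $B_* = B_0(0)$ and ``$\chi$ localized where $B_0\approx B_*$'' forces, since $\p_v B_0$ is supported away from zero only on a compact interval, not near the origin — the term $\chi(w)\mathcal{G}_{1,k}(v-w)$ vanishes as $|w|\to\infty$, so $(1-\chi(w))\G(v,w)$ inherits the full non-decaying tail and its Fourier transform in the variable dual to $w$ (namely $\xi+\eta$, after the shift $v = \rho + w$) picks up a $\delta$-type singularity. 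The sentence ``the cutoff localizes to a region where $B_0$ has stabilized to a constant, so $\G$ coincides with a constant-coefficient Helmholtz kernel \ldots and a parallel Fourier analysis applies'' is where this slips: even where $B_0$ has stabilized, a constant-coefficient kernel is a pure function of $v-w$, and multiplying it by a cutoff that equals one there produces no decay whatsoever in $\xi+\eta$.

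The fix — and what the paper's proof actually does — is the opposite localization: place the cutoffs in the far field, and match the reference constants to the far-field limits $B_\pm$, not to $B_0(0)$. The paper peels off $B_+^{-1}\tilde{\chi}_+(w)G_{k/B_+}(v-w)$ and $B_-^{-1}\tilde{\chi}_-(w)G_{k/B_-}(v-w)$ as the principal contribution, with $\tilde{\chi}_\pm$ supported near $\pm\infty$, and what is left (the piece $\G\tilde\chi_0$ with compactly supported $\tilde\chi_0$, plus the $I_\pm$-type remainders) has genuinely localized $w$-dependence and then yields to the Gevrey-in-$w$/integration-by-parts argument you sketch. Note that if $B_+\ne B_-$ one literally needs two pairs $(\chi_j,\mathcal{G}_{1,k}^{(j)})$ here, which is the correct reading of the single-pair statement in \eqref{Gdecomp}; there is no single choice of $B_*$ that works. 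Once the cutoffs are relocated to $\pm\infty$ and the reference constants set to $B_\pm$, the remaining structure of your argument — the shift to $(\rho,w)$-coordinates, the uniform-in-$\rho$ Gevrey bound on $\p_w^\ell\G(\rho+w,w)$ which is the paper's estimate \eqref{G0w}, and the convolution argument for \eqref{G3} — is aligned with the paper's proof; the paper's additional $v$-cutoffs $\chi_\alpha$ exist only so it can argue from the ODE rather than the explicit formula, and your explicit-formula route can dispense with them.
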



\begin{proof}
Since $\text{supp } \p_y^2b(y) \subseteq [-1/\sigma_0, 1/ \sigma_0 ]$ , we have $\p_v B_0(v) \subseteq [-L, L]  $ 
for some $L >1 $. 
Define two partitions of unity $ \tilde{\chi}_{*}, \chi_{*}, * \in \{ -, 0, +  \}$ satisfying the following conditions
\begin{align}
    &1 = \chi_-(v) + \chi_0(v) + \chi_+(v), \\
    &\chi_0(v) \equiv 1 \quad  v \in [-1 -L, L + 1],\quad  \text{supp }\chi_0 \subseteq [-2 - L , L + 2],  \\
    &\chi_-(v) = \chi_+(-v),\\
\end{align}
 and 
\begin{align}
    &1 = \tilde{\chi}_-(v) + \tilde{\chi}_0(v) + \tilde{\chi}_+(v), \\
    &\tilde{\chi}_0(v) \equiv 1 \quad  v \in [-3 -L, L + 3],\quad  \text{supp }\tilde{\chi}_0 \subseteq [-4 - L , L + 4],  \\
    &\tilde{\chi}_-(v) = \tilde{\chi}_+(-v).
\end{align}
The cutoffs satisfy the following  properties:
\begin{enumerate}
    \item Both $\tilde{\chi}_*$ and $\chi_*$ satisfy \eqref{cut:bdd},
    \item The supports of $\chi_{\pm}$ are disjoint from the support of $\p_vB_0(v)$,
    \item The support of $\chi_0$ is disjoint from $\tilde{\chi}_{\pm}$.
\end{enumerate}
Using these cutoffs, we define 
\begin{align}
    \mathcal{G}^*(v,w) := \G(v,w) \tilde{\chi}_0(w), 
\end{align}
and 
$\mathcal{G}_{\alpha \beta}, \alpha  \in \{-, 0, +\}, \beta \in \{-, +\}$ as 
\begin{align}
    \mathcal{G}_{\alpha \beta} (v,w) := \chi_\alpha(v) \G(v,w) \tilde{\chi}_\beta(w). 
\end{align}
Since we closely follow \cite{J20} Appendix A.2, we will not provide full details and advise the interested reader to look there.  Starting from the identity \eqref{green:helmholtz}
we deduce that 
\begin{align}
    \mathcal{G}_k(v,w) &= \frac{e^{-|k||v-w|E(v,w)}  }{|k|},\quad\
    E( v,w) = \int_0^1 (b^{-1})'(  w + s(v-w) ) \, ds.
\end{align}
Crucially, $E$ is as smooth as $(b^{-1})'$ in both $v$ and $w$. 
Using the product and composition properties for Gevrey spaces (see \cite{J20}) we can deduce that 
\begin{equation}
\label{G0w}
   |\p_w^{l} \G(v + w,w)| \lesssim  e^{-\mu |k v|}l!^2M^l, \quad l \in \mathbb{Z} \cap [1, \infty)
\end{equation}
where $M > 1$ and  $\mu$ depends on $\sigma_0$ from Assumption \ref{assum:b}. Crucially, the exponential decay in $v$ is uniform in the number of derivatives we take with respect to $w$.  From the equation
\begin{align}
\label{GFourier}
    \widehat{\mathcal{G}^*}(\xi , \eta) 
    = \int_{\R^2} \G(v + w,w) \tilde{\chi}_0(w) e^{-i\xi v - i(\xi + \eta) w} \, dvdw,
\end{align}
the bound \eqref{G0w}, integrating by parts twice with respect to  $v$, and integrating by parts $\ceil{s} + 2$  times with respect to $w$ we can deduce that 
\begin{equation}
\label{G0}
    |\widehat{\mathcal{G}^*}(\xi , \eta)| \lesssim \frac{1}{(k^2 + \xi^2)\lb \xi + \eta\rb^{\ceil{s} + 2} }.
\end{equation}

Next, we turn our attention to $\mathcal{G}_{\alpha \beta}$. We observe that $\mathcal{G}_{\alpha \beta}$ satisfies the equation
\begin{align}
    &B_0^2(v) \p_v^2 \mathcal{G}_{\alpha \beta}(v,w) + B_0(v)\p_vB_0(v) \p_v \mathcal{G}_{\alpha \beta}(v,w) - k^2 \mathcal{G}_{\alpha \beta}(v,w) \\
    &= 
   \chi_{\alpha}(w)\tilde{\chi}_{\beta}(w)B_0(w)\delta(v-w) 
    \\
    &+ \tilde{\chi}_{\beta}(w) (2B_0^2(v)\p_v\chi_\alpha(v)   \p_v \G(v,w) + B_0^2(v)\p_v^2\chi_\alpha(v)  \G(v,w) - B_0(v) \p_v B_0(v)\p_v \chi_\alpha(v) \G(v,w)).
\end{align}
 The cases $\mathcal{G}_{+\beta}$ and $\mathcal{G}_{-\beta}$ are symmetric. Similarly, the treatment of    $\mathcal{G}_{0 +}$ and  $\mathcal{G}_{0-}$ are symmetric. The case of $\mathcal{G}_{++}$ is harder than $\mathcal{G}_{+-}$ so we will only consider the cases $\mathcal{G}_{++}$ and $\mathcal{G}_{0+}$. First consider the case $\mathcal{G}_{++}$: on the support of $\chi_+(v)$, $\p_vB_0(v) = 0$ and hence $B_0(v)$ is a constant which we denote as $B_+$. Also, $\chi_+ \equiv 1$ on the support of $\tilde{\chi}_+$ so we can simplify \eqref{Gequation} as 
    \begin{align}
     \p_v^2 \mathcal{G}_{++}(v,w)  - (k/B_+)^2\mathcal{G}_{++}(v,w) &=  B_+^{-1}\tilde{\chi}_{+}(w) \delta(v-w) 
    \\
    &+ \tilde{\chi}_{+}(w)  (2\p_v\chi_+(v)  \p_v \G(v,w) + \p_v^2\chi_+(v) \G(v,w)). 
\end{align}
Using $G_{k/B_+}(v,w)$ to represent the kernel for the operator $\p_v^2 - (k/B_+)^2$ we can represent $\mathcal{G}_{++}$ as 
\begin{align}
\label{G++}
    \mathcal{G}_{++}(v,w) &= B_+^{-1} \tilde{\chi}_{+}(w) G_{k/B_+}(v-w) + I_+(v,w),\\
    I_+(v,w) &:= \int_\R G_{k/B_+}(v-\rho) \tilde{\chi}_{+}(w) (2\p_{\rho}\chi_+(\rho)  \p_{\rho} \G(\rho,w) + \p_{\rho}^2\chi_+(\rho) \G(\rho,w) )  d \rho.  
\end{align}
We have that $B_+^{-1} \tilde{\chi}_{+}(w) G_{k/B_+}(v-w)$ is in a form compatible with \eqref{Gdecomp} and \eqref{G1} so we turn our attention to $I_+(v,w)$. We claim 
\begin{equation}
    |\widehat{I_+}(\xi, \eta)| \lesssim  \frac{1}{(k^2 + \xi^2)\lb  \xi + \eta \rb^{\ceil{s}  + 2 }}.
\end{equation}
Proceeding as in \eqref{GFourier} and \eqref{G0w}, we can obtain that 
\begin{equation}
\label{Ikernel}
    |(\p_\rho\chi_+(\cdot)  \p_\rho \G(\cdot,\cdot) \tilde{\chi}_{+}(\cdot))^{\wedge}(\xi, \eta)|  +  |(\p_\rho^2\chi_+(\cdot) \G(\cdot,\cdot) 
 \tilde{\chi}_+(\cdot))^{\wedge}(\xi, \eta)| \lesssim \frac{1}{\lb \xi + \eta \rb^{\ceil{s} + 2} }.
\end{equation}
Using that $I_+$ is a convolution of $G_{k/B_+}$ and \eqref{Ikernel} then implies 
\begin{align}
    |\widehat{I_+}(\xi, \eta)| \lesssim 
    \frac{ 1}{(k^2 + \xi^2) \lb \xi + \eta \rb^{\ceil{s} + 2}},
\end{align}
which completes the proof of case of $\mathcal{G}_{++}$. 

We now consider $\mathcal{G}_{0+}$: Using the disjoint support of $\chi_0(w)$ and $\chi_+(w)$, $\mathcal{G}_{0+}$ solves the equation 
\begin{align}
     &B_0^2(v) \p_v^2 \mathcal{G}_{0+}(v,w) + B_0(v)\p_vB_0(v) \p_v \mathcal{G}_{0 +}(v,w) - k^2 \mathcal{G}_{0 +}(v,w) \\
    &= \tilde{\chi}_+(w)(2B_0^2(v)\p_v\chi_0(v)  \p_v \G(v,w) + B_0^2(v)\p_v^2\chi_0(v) \G(v,w) - B_0(v) \p_v B_0(v)\p_v \chi_0(v) \G(v,w))
\end{align}
which implies 
\begin{align}
\label{G0+}
    \mathcal{G}_{0+}(v,w) 
    &= \int_\R \G(v, \rho) \tilde{\chi}_+(w) \left[ 2B_0^2(\rho)\p_\rho\chi_0(\rho)  \p_\rho \G(\rho,w) + B_0^2(\rho)\p_v^2\chi_0(\rho) \G(\rho,w)\right] \, d \rho  \\
    &-\int_\R \G(v, \rho) \tilde{\chi}_+(w) B_0(\rho) \p_\rho B_0(\rho)\p_\rho \chi_0(\rho) \G(\rho,w)\,  d \rho 
\end{align}
Each term in \eqref{G0+} is of the form 
\begin{equation}
\label{G0+1}
    I_0(v,w) := \int_\R \G(v, \rho) \mathcal{C}_0(\rho) H(\rho, w) \tilde{\chi}_+(w) d\rho 
\end{equation}
where $\mathcal{C}_0(\rho)$ Gevrey-2  regular and compactly supported away from the support of $\tilde{\chi}_+(w)$, $H(\rho, w)$ is either $\G(\rho,w)$ or  $\p_\rho \G(\rho ,w)$. Since $|\rho - w|> 0$ on the support of $\mathcal{C}_0(\rho) \tilde{\chi}_+(w)$, $H(\rho+ w, w)$ is actually smooth with respect to $\rho$ but we will not need this fact. Due to the localization of the cutoffs to compact sets and the exponential decay of $\G(v, \rho)$ in $v$ and $\G(\rho, w)$ in $w$, the Fourier transform of $\mathcal{G}_{0+}$ is bounded. For the higher frequencies we consider the change of variables $v \to v+ w$ and shift $\rho \to \rho + w $  in \eqref{G0+1} to get
\begin{align}
    I_0(v + w, w) = \int_\R \G(v + w, \rho + w) \mathcal{C}_0(\rho + w) H(\rho + w, w) \chi_+(w) d\rho  
\end{align}
Using the product rule and the previously established bounds for $\mathcal{G}^*$, we can deduce that 
\begin{align}
    |\widehat{I}_0(\xi, \eta)| \lesssim 
    \frac{ 1}{(k^2 + \xi^2)\lb \xi + \eta \rb^{\ceil{s} +2 }}. 
\end{align}
By the product rule, \eqref{G3} follows immediately from the bounds for $\mathcal{G}_{2, k}$ and \eqref{cut:bdd}.  
\end{proof}

\section{Proof of Commutator Lemmas}
\label{app:com}

We first provide a proof of Lemma \ref{lem:C:bdd}.
\begin{proof}[Proof of Lemma \ref{lem:C:bdd}]
 Since $\mathfrak{C}$ is independent of $z$,  it suffices to prove 
 \begin{align}
     \norm{\lb\p_v\rb^s (\mathfrak{C} f_k )}_{L_v^2} \lesssim \norm{ \lb\p_v\rb^s f_k}_{L_v^2}.
 \end{align}
Furthermore, since $(\lb \p_v \rb^s)^{\wedge} = (1 + |\xi|^2)^{s/2} \lesssim 1 + |\xi|^{s} $ and 
\begin{align}
    \norm{\mathfrak{C} f_k }_{L_v^2} \lesssim \norm{\mathfrak{C}}_{L_v^{\infty}} \norm{f_k}_{L_v^2},
\end{align}
it suffices to prove
\begin{align}
    \norm{|\p_v|^s(\mathfrak{C} f_k )}_{L_v^2} \lesssim \norm{\lb \p_v\rb^s f_k}_{L_v^2}. 
\end{align}
This follows from the fractional Leibniz rule 
\begin{align}
     \norm{|\p_v|^s( f g  )}_{L_v^2} \lesssim \norm{f}_{L_v^{\infty}} \norm{|\p_v|^sg}_{L_v^2} + \norm{|\p_v|^sf}_{L_v^{2}} \norm{g}_{L_v^{\infty}},
 \end{align}
 see  \cite{Li19kato}. To prove \eqref{C:equiv}, we write 
 \begin{align}
     f_k = \frac{1}{\mathfrak{C}} \mathfrak{C} f_k, 
 \end{align}
and repeat the above argument with $\mathfrak{C}^{-1}$ in place of $\mathfrak{C}$. This concludes the lemma. 
\end{proof}

Next, we prove Lemma \ref{lem:com:half}.

\begin{proof}[Proof of Lemma \ref{lem:com:half}]
We only prove \eqref{halfhelm} as \eqref{helm} is similar. We work mode by mode. Following the proof of Lemma \ref{lem:ellip}, for $k \in \mathbb{Z} \setminus\{0\}$ we have that 
\begin{align}
\label{hcom1}
    [\mathfrak{C}, (-\Delta_L)^{-1/2}  ] f_k(v) = \int_\R e^{-ikt(v-w)} (\mathfrak{C}(v) - \mathfrak{C}(w)) K_{0}(k(v -w)) f_k(w) \,   dw, 
\end{align}
where $K_{0}$ is a of modified Bessel function of the second kind: 
\begin{equation}
\label{bessel}
    K_{0}(v) :=  \int_{\R}    \frac{1}{(1 + |\xi|^2)^{1/2}}  e^{i v \xi } \, d \xi, \quad v \neq 0,
\end{equation}
and we have used the equality
\begin{align}
    \int_\R \frac{1}{(k^2 + \xi^2)^{1/2}} e^{i v\xi } \,d \xi  = 
   \int_\R \frac{1}{(1 + \xi^2)^{1/2} } e^{i kv\xi }. 
\end{align}
Define
    \begin{align}
        H(v,w) :=  (\mathfrak{C}(v) - \mathfrak{C}(w)) K_{0}(k(v -w)).
    \end{align}
    we can write $\widehat{H}(\xi, \eta)$ as  
    \begin{align}
         \widehat{H}(\xi,\eta) &= \int_{\R^2}  wK_{0}(kw)  \int_0^1 \mathfrak{C}'( v - (1-s)w) ds\,   e^{-i (\xi + \eta ) v + i \eta  w}\, dv dw.
    \end{align}
Note that 
\begin{align}
    |\widehat{H}(\xi,\eta)| \lesssim  \int_{\R^2} |wK_{0}(kw)| |\mathfrak{C}'(v)| \, dv dw \lesssim \frac{1}{k^2},
\end{align}
 where we have used $vK_{0}(v), \p_v\mathfrak{C}(v) \in L^1(\R) $. Integrating by parts once with respect to $w$ and $\ceil{s} + 2$ with respect to $v$ implies  
 \begin{align}
 \label{hcom2}
     |\widehat{H}(\xi ,\eta)|  \lesssim \frac{\norm{\p_v\mathfrak{ C}}_{W^{ \ceil{s} +  3,1}}    }{(|k| + |\eta|) (1 + |\xi + \eta|)^{\ceil{s} + 2}}. 
 \end{align}

Taking the Fourier transform with respect to $v$ \eqref{hcom1}  
\begin{align}
     ([\mathfrak{C}, (-\Delta_L)^{-1/2}  ]  f_k)^{\wedge}(\xi) = \int_\R \widehat{H}(\xi - kt, kt - \eta) \widehat{f_k}(\eta)\, d \eta.   
\end{align}
Combined with \eqref{hcom2} this implies 
\begin{align}
\lb k, \xi \rb^s |([\mathfrak{C}, (-\Delta_L)^{-1/2}  ]  f_k)^{\wedge}(\xi)|  \lesssim \norm{\p_v\mathfrak{ C}}_{W^{ \ceil{s} +  3,1}} \int_\R \frac{ \lb k, \xi \rb^s|\widehat{f_k}(\eta)|}{(|k| + |\eta - kt|)\lb \xi - \eta \rb^{\ceil{s} +2 }}\,  d \eta.
   \end{align}
Young's inequality for convolutions then implies the desired result.
\end{proof}
We now prove Lemma \ref{lem:com:ghost}.
\begin{proof}[Proof of Lemma \ref{lem:com:ghost}]
    We only do the proof for $\mathcal{W}_E$ as the others are similar. Since $\mathcal{W}_E$ is bounded below, it suffices to prove the inequality with  $f$ on the right-hand side. Following the proof of Lemma \ref{lem:com:half}
    we write 
    \begin{align}
          [\mathfrak{C}, \mathcal{W}_E ] f_k(v) = \int_\R e^{-ikt(v-w)} (\mathfrak{C}(v) - \mathfrak{C}(w)) T_{k}(v -w) f_k(w) \,   dw,
    \end{align}
where
\begin{align}
    T_k (v) = C\sum_{\ell\neq 0} \frac{\text{sign}\ell  }{|\ell|^2} \frac{1}{iv } \exp\{-K(1+|k-\ell|+|\ell|)|v|\}, \quad v \neq 0.
\end{align}
We have that 
\begin{align}
    \left| \frac{\mathfrak{C}(v) - \mathfrak{C}(w)   }{ v- w} \right| \lesssim \norm{\mathfrak{C}}_{W^{1, \infty}}.  
\end{align}
Furthermore, $\norm{\exp(-K |v|)}_{L_v^1} \leq 1/K$ so Young's convolution inequality implies 
\begin{align}
     \norm{[\mathfrak{C}, \mathcal{W}_E ] f_k}_{L_v^2} \lesssim \frac{\norm{f}_{L_v^2}   }{K},
\end{align}
which completes the proof. 
 \end{proof}
Finally, we prove Lemma \ref{lem:B:diff}.
\begin{proof}[Proof of Lemma \ref{lem:B:diff}]
To prove \eqref{B:difference}, we have from the definition of $B(t,v)$ and $v$ that 
\begin{align}
\label{B:evo}
    \p_t B(t,v) = \nu (B(t,v))^2\p_v^2B(t,v). 
\end{align}
So, 
\begin{align}
    B(t,v)- B(0,v) = -t\nu \int_0^1 (B(st ,v ))^2 \p_v^2B(st, v)  \, ds.
\end{align}
Proceeding as in the proof of Lemma \ref{lem:C:bdd} and using \eqref{B:smooth} yields the desired result.
\end{proof}

\section{Fourier Multipliers}\label{sec:multiplier}
In this section, we summarize the properties of the Fourier multipliers that we employ. First, we collect some basic properties of the multiplier $\mf M$ defined in \eqref{M_N_Omega}. The main part of the proof can be found in \cite{He23}.
\begin{lemma}\label{lem:multipliers}
The following  properties  for $\mf M$ hold:
\begin{align}
\label{M_property_common}
\mf M(t,k,\eta)&=\pi^3,\quad |k|\notin(0,\nu^{-1/2}], \\
\frac{27}{8}\pi^3\geq& \mf M(t,k,\eta)\geq \frac{\pi^3}{8}, \\
- {\pa_t \mf M(t,k,\eta)} \geq& \frac{\pi^2}{4K}\frac{|k|^2}{|k|^2+|\eta-kt|^2},\quad k\neq 0,\\
 | {\pa_\eta \mf M(t,k,\eta)}  |\leq &\frac{12\pi^2}{K|k|},\quad k\neq 0.
\end{align}

Moreover, the multipliers $\mf M$ have the enhanced dissipation properties
\begin{align}\frac{2}{9K\pi^2}\nu^{1/3} {|k|^{2/3}}\leq&-\frac{\pa_t \mf M}{ \mf M}(t,k,\eta)+\nu (|k|^2+|\eta-kt|^2).\label{M_property_ED}
\end{align}
\end{lemma}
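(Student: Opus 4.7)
The plan is to verify each listed property by direct computation from the explicit formulas in \eqref{multipliers} and the definition \eqref{M_N_Omega} of $\mathfrak{M}$, following the arguments in \cite{He23} with small modifications to accommodate the frequency cutoff in $\mathcal{W}_\nu$ and the short-time replacement $\mathcal{W}_I^\circ$. The identity \eqref{M_property_common} on the complement $|k| \notin (0, \nu^{-1/2}]$ follows by inspecting the indicator functions: for such $k$, the indicators in $\mathcal{W}_\nu$ and $\mathcal{W}_E$ vanish, reducing those factors to their constant baseline value $\pi$, and $\mathcal{W}_I$ reduces to $\pi$ at $k = 0$ by the same mechanism, giving the stated product. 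The two-sided bound is then immediate, since each arctangent contributes a value in $[-\pi/2, \pi/2]$, so every factor $\mathcal{W}_\iota$ lies in $[\pi/2, 3\pi/2]$ and the threefold product is in $[\pi^3/8, 27\pi^3/8]$.

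For the $\eta$-derivative bound, one differentiates each arctangent: $|\partial_\eta \mathcal{W}_I|, |\partial_\eta \mathcal{W}_\nu| \leq \tfrac{1}{K|k|}$, and using the triangle inequality $1 + |k-\ell| + |\ell| \geq 1 + |k|$ together with $\sum_{\ell \neq 0} |\ell|^{-2} = \pi^2/3$ yields $|\partial_\eta \mathcal{W}_E| \leq \tfrac{C}{K|k|}$ for $k \neq 0$. The product rule combined with the upper bound $\mathcal{W}_\iota \leq 3\pi/2$ then gives the claim.

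For the time-derivative lower bound, one computes directly $-\partial_t \mathcal{W}_I = K|k|^2/(K^2|k|^2 + |\eta - kt|^2)$. In the long-time regime, the product rule together with $\partial_t \mathcal{W}_\iota \leq 0$ and the lower bounds $\mathcal{W}_\nu, \mathcal{W}_E \geq \pi/2$ yields $-\partial_t \mathfrak{M} \geq (\pi/2)^2 \cdot (-\partial_t \mathcal{W}_I)$, and comparing denominators using $K \geq 1$ produces the stated $\tfrac{\pi^2}{4K}$ factor. In the short-time regime $\mathfrak{M} = \mathcal{W}_I$ for $k \neq 0$, and the bound reduces to the direct estimate on $-\partial_t \mathcal{W}_I$.

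The most delicate statement is the enhanced-dissipation estimate \eqref{M_property_ED}. We argue by case analysis. When $|k| > \nu^{-1/2}$, the inequality $\nu|k|^2 > \nu^{1/3}|k|^{2/3}$ absorbs the right-hand side into the dissipation term alone; when $k = 0$, the inequality is trivial. For the non-trivial range $0 < |k| \leq \nu^{-1/2}$, direct computation gives
\[
-\frac{\partial_t \mathcal{W}_\nu}{\mathcal{W}_\nu} \gtrsim \frac{\nu^{1/3}|k|^{2/3}/K}{1 + (\nu^{1/3}|k|^{2/3}(t - \eta/k)/K)^2},
\]
and one splits on whether $|kt - \eta|^2 \leq K^2 \nu^{-2/3}|k|^{2/3}$ (so the denominator is bounded and the time-derivative term alone dominates $\nu^{1/3}|k|^{2/3}/K$) or its opposite (so that $\nu|kt - \eta|^2 \gtrsim K^2 \nu^{1/3}|k|^{2/3}$ gives the desired bound via dissipation). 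The main obstacle is tracking the constants through this case split and through the product-rule loss in passing from $-\partial_t \mathcal{W}_\nu/\mathcal{W}_\nu$ to $-\partial_t \mathfrak{M}/\mathfrak{M}$, relying on the upper bound $\mathfrak{M} \leq 27\pi^3/8$ to produce the specific coefficient $\tfrac{2}{9K\pi^2}$.
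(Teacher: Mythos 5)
The paper itself defers essentially the entire proof to \cite{He23}, so your strategy of direct computation from the multiplier formulas is the intended one, and your sketches of the pointwise bound, the $\eta$-derivative bound, and the $\partial_t$-lower bound all follow the standard template. However, there are two concrete places where your argument as written does not actually close.

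First, the constant in the enhanced-dissipation estimate \eqref{M_property_ED}. You propose to pass from $-\partial_t\mathcal{W}_\nu/\mathcal{W}_\nu$ to $-\partial_t\mathfrak{M}/\mathfrak{M}$ via the product rule and the global upper bound $\mathfrak{M}\le 27\pi^3/8$. Carrying that out in your ``good case'' $|kt-\eta|^2\le K^2\nu^{-2/3}|k|^{2/3}$ gives
\begin{align}
-\frac{\partial_t\mathfrak{M}}{\mathfrak{M}}\ \geq\ \frac{(-\partial_t\mathcal{W}_\nu)\,\mathcal{W}_I\mathcal{W}_E}{\mathfrak{M}}\ \geq\ \frac{\nu^{1/3}|k|^{2/3}}{2K}\cdot\frac{(\pi/2)^2}{27\pi^3/8}\ =\ \frac{\nu^{1/3}|k|^{2/3}}{27K\pi},
\end{align}
and $\tfrac{1}{27\pi}<\tfrac{2}{9\pi^2}$ (equivalently $\pi<6$), so this is strictly weaker than the claimed coefficient. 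The losslessly correct route is to use the multiplicative structure $-\partial_t\log\mathfrak{M}=\sum_{\iota}(-\partial_t\log\mathcal{W}_\iota)\ge -\partial_t\mathcal{W}_\nu/\mathcal{W}_\nu$, so that the factors $\mathcal{W}_I,\mathcal{W}_E$ cancel exactly and only the single bound $\mathcal{W}_\nu\le 3\pi/2$ enters, giving $\ge\nu^{1/3}|k|^{2/3}/(3K\pi)$, which does exceed $\tfrac{2}{9K\pi^2}\nu^{1/3}|k|^{2/3}$ since $3\pi\ge 2$. You should rewrite that step accordingly.

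Second, your treatment of \eqref{M_property_common} does not match the stated indicator functions. At $k=0$ the echo indicator $\mathbbm{1}_{0\le|k|<\nu^{-1/2}}$ equals $1$, not $0$, so $\mathcal{W}_E(t,0,\eta)$ is not the baseline $\pi$; and for $|k|>\nu^{-1/2}$ the indicator in $\mathcal{W}_I$ is $\mathbbm{1}_{k\neq 0}=1$, so $\mathcal{W}_I$ does not reduce to $\pi$ either (the paper explicitly remarks that, unlike \cite{WZ23}, its $\mathcal{W}_I$ is supported on \emph{all} nonzero modes). Thus the assertion that ``for such $k$, the indicators in $\mathcal{W}_\nu$ and $\mathcal{W}_E$ vanish\ldots and $\mathcal{W}_I$ reduces to $\pi$'' covers neither case correctly; you should at minimum flag this discrepancy rather than assert it. Relatedly, note that all five properties are meant for the long-time definition $\mathfrak{M}=\mathcal{W}_\nu\mathcal{W}_I\mathcal{W}_E$: the two-sided bound $\mathfrak{M}\ge\pi^3/8$ and the $\partial_t$-lower bound with coefficient $\pi^2/(4K)$ both fail for the short-time $\mathfrak{M}\in\{\mathcal{W}_I,\mathcal{W}_I^\circ\}$ (e.g.\ at $\eta=kt$ one has $-\partial_t\mathcal{W}_I=1/K<\pi^2/(4K)$), so your parenthetical ``in the short-time regime\ldots the bound reduces to the direct estimate on $-\partial_t\mathcal{W}_I$'' should be removed.
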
 

The product rule for the multiplier $\mf M$ is contained in the next lemma:
\begin{lemma} Consider the multipliers $\mf M$. For two functions $f,g\in H^s(\Torus\times \rr), \, s> 1$, we have the following product rule
\begin{align}\label{M_product_rule_Hs}
\|\mf M(fg)\|_{H^s}\leq &C\|\mf M f\|_{H^s}\|\mf M g\|_{H^s}.
\end{align}
Similarly, we have the following product rule for $A,\, s> 1$,
\begin{align}\label{A_product_rule_Hs}
\|A (fg)\|_{L^2}\leq C\|A f\|_{L^2}\|Ag\|_{L^2}.
\end{align}
\end{lemma}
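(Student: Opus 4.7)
The bound \eqref{M_product_rule_Hs} follows almost immediately from the two-sided pointwise bound $\mf M(t,k,\eta) \in [\pi^3/8,\, 27\pi^3/8]$ in Lemma~\ref{lem:multipliers}, which yields the norm equivalence $\|\mf M h\|_{H^s} \approx \|h\|_{H^s}$ with universal constants. The inequality then reduces to the classical Sobolev algebra property $\|fg\|_{H^s(\TT\times\R)} \lesssim \|f\|_{H^s}\|g\|_{H^s}$, valid for all $s > 1 = d/2$ in two dimensions; this is a standard consequence of the embedding $H^s \hookrightarrow L^\infty$ combined with the fractional Leibniz inequality \eqref{KP} from \cite{Li19kato}.

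For \eqref{A_product_rule_Hs}, the plan is to exploit the submultiplicativity \eqref{zeta_product} of the $\zeta_k(t)$ weight to reduce to the same $H^s$ algebra. Since $\mf M$ is bounded above and below, one has $\|Ah\|_{L^2} \approx \|\zeta(t,|\pa_z|)\, h\|_{H^s}$, so it suffices to establish
\begin{align*}
\|\zeta(t,|\pa_z|)(fg)\|_{H^s} \lesssim \|\zeta(t,|\pa_z|)\, f\|_{H^s}\,\|\zeta(t,|\pa_z|)\, g\|_{H^s}.
\end{align*}
To this end, introduce the auxiliary functions $\tilde f := \mathcal F^{-1}(\zeta_k|\wh f|)$ and $\tilde g := \mathcal F^{-1}(\zeta_k|\wh g|)$, which by Plancherel satisfy $\|\tilde f\|_{H^s} = \|\zeta(t,|\pa_z|) f\|_{H^s}$ and similarly for $g$. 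Taking the spatial Fourier transform of $\zeta(t,|\pa_z|)(fg)$ and applying $\zeta_k(t) \leq \zeta_\ell(t)\zeta_{k-\ell}(t)$ from \eqref{zeta_product} yields the pointwise bound
\begin{align*}
\big|\mathcal F[\zeta(t,|\pa_z|)(fg)](k,\eta)\big| \leq \sum_{\ell} \int_{\R} \zeta_\ell(t)|\wh f(\ell,\xi)|\,\zeta_{k-\ell}(t)|\wh g(k-\ell,\eta-\xi)|\,d\xi = \big|\mathcal F[\tilde f\, \tilde g](k,\eta)\big|.
\end{align*}

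Invoking the classical $H^s$ algebra on the right then closes the estimate:
\begin{align*}
\|\zeta(t,|\pa_z|)(fg)\|_{H^s} \leq \|\tilde f\,\tilde g\|_{H^s} \lesssim \|\tilde f\|_{H^s}\|\tilde g\|_{H^s} \lesssim \|Af\|_{L^2}\|Ag\|_{L^2}.
\end{align*}
The argument is essentially routine and presents no genuine obstacle: the only mild care is in verifying that passing to $|\wh f|,\,|\wh g|$ produces functions with identical Sobolev norms (immediate by Plancherel) and that the weight $\lb k,\eta\rb^s$ does not interact with the $z$-only multiplier $\zeta_k$, so the reduction to the standard 2D $H^s$ algebra proceeds cleanly thanks to the submultiplicativity \eqref{zeta_product}.
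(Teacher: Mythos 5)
The paper does not actually give a proof of this lemma—it is stated in Appendix~\ref{sec:multiplier} without justification, presumably because it is considered routine. Your argument is correct and is exactly the natural one: for \eqref{M_product_rule_Hs}, the two-sided bound $\mf M\in[\pi^3/8,27\pi^3/8]$ from Lemma~\ref{lem:multipliers} gives norm equivalence with $H^s$, and the 2D Sobolev algebra property (valid for $s>d/2=1$) finishes; for \eqref{A_product_rule_Hs}, the same equivalence reduces to controlling $\|\zeta(t,|\pa_z|)(fg)\|_{H^s}$, and the submultiplicativity $\zeta_k\leq\zeta_\ell\zeta_{k-\ell}$ from \eqref{zeta_product} combined with the replacement of $\wh f,\wh g$ by $|\wh f|,|\wh g|$ (a Plancherel-neutral operation producing nonnegative Fourier transforms, so that the convolution bound is monotone) closes the estimate via the same algebra property. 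The one small technical point you correctly flag—that $\zeta_k$ acts only on the $z$-frequency while $\lb k,\eta\rb^s$ is a joint weight, so the two multipliers commute and do not interfere—is indeed what makes the reduction go through cleanly.
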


In the proof, the following commutator estimate is needed:
\begin{lemma}[Commutator estimates] The following commutator estimate concerning $\mf M$ is satisfied
\begin{align}\n
&|\mf M (t,k,\eta)\lan k,\eta\ran^{s}-\mf M(t,k,\xi)\lan k,\xi\ran^s |\\
&\leq C |\eta-\xi|(\lan \eta-\xi\ran^{s-1}+\lan k,\xi\ran^{s-1})+\frac{1}{K}\frac{|\eta-\xi|}{|k|}({\lan \eta-\xi\ran^{s}}+\lan k,\xi\ran^{s}),\quad k \neq 0.\label{com_est}
\end{align}
\end{lemma}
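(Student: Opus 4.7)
\textbf{Proof proposal for \eqref{com_est}.} The estimate is essentially a calculus exercise: one writes the left-hand side as the standard product commutator and controls each piece via the mean value theorem. Specifically, the plan is to decompose
\begin{align}
\mf{M}(t,k,\eta)\lan k,\eta\ran^s-\mf{M}(t,k,\xi)\lan k,\xi\ran^s
&=\mf{M}(t,k,\xi)\lf(\lan k,\eta\ran^s-\lan k,\xi\ran^s\rg) \\
&\quad+\lan k,\eta\ran^s\lf(\mf{M}(t,k,\eta)-\mf{M}(t,k,\xi)\rg),
\end{align}
and then estimate the two factors of each piece separately. Since $\mf{M}\in[\pi^3/8,27\pi^3/8]$ by Lemma \ref{lem:multipliers}, the prefactor in the first piece is bounded. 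The elementary bound $\lan k,\eta\ran^s\lesssim \lan k,\xi\ran^s+\lan\eta-\xi\ran^s$ handles the prefactor in the second piece.

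For the bracket difference, using the definition \eqref{mul_bracket} the $k$-contributions cancel and one is left with $(1+\eta^2)^{s/2}-(1+\xi^2)^{s/2}$. The MVT together with the interpolation $\lan\tau\ran\leq\lan\xi\ran+\lan\eta-\xi\ran$ for $\tau$ between $\xi$ and $\eta$ gives the desired bound
\begin{equation}
    \lf|\lan k,\eta\ran^s-\lan k,\xi\ran^s\rg|\lesssim |\eta-\xi|\lf(\lan\eta-\xi\ran^{s-1}+\lan k,\xi\ran^{s-1}\rg).
\end{equation}

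For the multiplier difference, the key observation is that for $k\neq 0$, each of the three Fourier multipliers making up $\mf{M}$ (either $\mf{M}=\W_I$ for $t\leq\nu^{-1/6}$, or $\mf{M}=\W_I\W_\nu\W_E$ for $t\geq\nu^{-1/6}$) has a uniformly bounded $\eta$-derivative of size $O(1/(K|k|))$. Direct differentiation of \eqref{W_nu}--\eqref{We} yields
\begin{align}
    |\pa_\eta \W_I|\leq \frac{1}{K|k|},\qquad
    |\pa_\eta \W_\nu|\leq \frac{\nu^{1/3}|k|^{2/3}}{K|k|}\mathbbm{1}_{0<|k|\leq\nu^{-1/2}}\leq\frac{1}{K|k|},
\end{align}
and since $|k|\geq 1$ and the series defining $\W_E$ converges absolutely, $|\pa_\eta\W_E|\leq C/K\leq C/(K|k|)$. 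Combining these bounds with the uniform pointwise bounds for $\W_I,\W_\nu,\W_E$ via the product rule and the MVT yields $|\mf{M}(t,k,\eta)-\mf{M}(t,k,\xi)|\leq C|\eta-\xi|/(K|k|)$. Substituting back and using $\lan k,\eta\ran^s\lesssim \lan k,\xi\ran^s+\lan\eta-\xi\ran^s$ completes the estimate \eqref{com_est}. The only subtlety is bookkeeping across the two regimes in the definition \eqref{M_N_Omega} of $\mf{M}$, but since the restriction $k\neq 0$ excludes the special zero-mode definition $\W_I^\circ$, both regimes fall under the same argument; no difficult step is anticipated.
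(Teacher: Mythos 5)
Your decomposition and overall strategy (split the product difference via add/subtract, use the mean value theorem on each factor) are exactly what the paper does; the only difference is cosmetic (you put $\mathfrak{M}(t,k,\xi)$ in front of the bracket difference and $\lan k,\eta\ran^s$ in front of the multiplier difference, while the paper makes the opposite choice, then bounds $\lan k,\eta\ran^s\lesssim\lan k,\xi\ran^s+\lan\eta-\xi\ran^s$ as you do). For the multiplier difference the paper simply invokes the already-established derivative bound $|\pa_\eta\mathfrak{M}(t,k,\eta)|\leq 12\pi^2/(K|k|)$ from Lemma \ref{lem:multipliers} (referenced there as \eqref{M_property_common}) and applies the MVT, rather than re-differentiating $\W_I,\W_\nu,\W_E$ separately as you do.

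There is one genuine flaw in your re-derivation, however: the chain ``$|\pa_\eta\W_E|\leq C/K\leq C/(K|k|)$'' is backwards, since for $|k|\geq 2$ one has $C/(K|k|)< C/K$, so the second inequality fails. The correct bound $|\pa_\eta\W_E|\lesssim 1/(K|k|)$ does hold, but it requires actually exploiting the $1/(1+|k-\ell|+|\ell|)$ factor inside the $\ell$-sum: for $|\ell|\leq|k|/2$ one has $1+|k-\ell|+|\ell|\gtrsim|k|$, and the remaining terms $|\ell|>|k|/2$ are controlled by the $1/|\ell|^2$ weight. As written, your argument only yields $|\pa_\eta\mathfrak{M}|\lesssim 1/K$, which would produce $\frac{1}{K}|\eta-\xi|(\lan\eta-\xi\ran^s+\lan k,\xi\ran^s)$ rather than the stated $\frac{1}{K}\frac{|\eta-\xi|}{|k|}(\cdots)$, a strictly weaker estimate that would not suffice for the applications later in the paper (e.g. in the treatment of $\mathfrak{NL}_{\mathfrak{0a}}$ one crucially needs the $1/|k|$ gain). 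The simplest repair is to cite Lemma \ref{lem:multipliers} for the $\pa_\eta\mathfrak{M}$ bound rather than re-deriving it.
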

\begin{proof}

Here the difference $|\mf M (t,k,\eta)\lan k,\eta\ran^{s}-\mf M(t,k,\xi)\lan k,\xi\ran^s|$ can be decomposed as follows:
\begin{align}\label{Commutator_1_2}
\quad \quad \mf M&(t,k,\eta)\lan k,\eta\ran^s-\mf M(t,k,\xi)\lan k,\xi\ran^s\\
=&\mf M(t,k,\eta)\left((1+k^2+\eta^2)^{s/2}-(1+k^2+\xi^2)^{s/2}\right)\\
&+(\mf M(t,k,\eta)-\mf M(t,k, \xi))(1+k^2+ \xi^2)^{s/2}\mathbf{1}_{ k\neq 0}=:\mathcal{T}_1+\mathcal{T}_2.\\
\end{align}
For the first term in \eqref{Commutator_1_2}, one applies the mean value theorem to obtain that there exists $\theta\in[0,1]$, such that the  following estimate holds
\begin{align}
|\mathcal{T}_1|=&\bigg|\mf M(k,\eta)\frac{s}{2}(1+k^2+((1-\theta)\eta+\theta\xi)^2)^{\frac{s}{2}-1})2((1-\theta)\eta+\theta\xi)(\eta-\xi)\bigg|\\
\leq&C\bigg((1+k^2+ \xi^2)^{\frac{s-1}{2}}+(1+k^2+\eta^2)^{\frac{s-1}{2}}\bigg)|\eta-\xi|\leq C\frac{|\eta-\xi|}{|k|}(\lan k,\xi\ran^{s-1}+\lan  \eta-\xi\ran^{s-1}).
\end{align}
To estimate the $\mathcal{T}_2$ term in \eqref{Commutator_1_2}, we apply the property \eqref{M_property_common} and the mean value theorem to obtain that
\begin{align}
|\mathcal{T}_2|\leq\frac{C|\eta-\xi|}{\myr{K}|k|}(1+k^2+\xi^2)^{\frac{s}{2}}\mathbf{1}_{k\neq 0}.
\end{align}
Combining the two estimates and \eqref{Commutator_1_2}, we obtain \eqref{com_est}. 
The proof of the lemma is finished.
\end{proof}

We will need another family of commutator estimates for the short time. 
 \begin{lemma}\label{lem:Acm_sh}
The following commutator relation holds for $t\in[0, \nu^{-1/6}]$,
\begin{subequations}\label{cm_sh}
\begin{align}\n
& \sum_{k,\ell}\iint\zeta_k\lf|\wt A(t,k,\eta)-\wt A(t,k-\ell, \eta-\xi)\rg|\frac{|\ell||\eta-\xi-(k-\ell)t| }{\ell^2+|\xi-\ell t|^2}\lf|\wh{\mf S_\nq}(\ell,\xi)\rg|\lf|\wh {\oms }(k-\ell,\eta-\xi)\rg|\lf|A\overline{\wh {\oms}(k,\eta)} \rg|d\eta d\xi  \\
  &\hspace{3cm}\lesssim (1+t)\|A\mf S_\nq\|_{L^2}\|A\oms\|_{L^2}^2;\label{cm_sh_a}\\
\n &\sum_{k,\ell}\iint\zeta_k\lf|\wt A(t,k,\eta)-\wt A(t,k-\ell, \eta-\xi)\rg|\frac{(|\xi-\ell t|+1)  |k-\ell|}{\ell^2+|\xi-\ell t|^2}\lf|\wh{\mf S_\nq }(\ell,\xi)\rg|\lf| \wh{\oms} (k-\ell,\eta-\xi)\rg|\lf|A\overline{\wh{\oms}(k,\eta)} \rg|d\eta d\xi\\
&\hspace{3cm}\lesssim {\|A\mf S_\nq\|_{L^2}\|A\oms\|_{L^2}^2}.\label{cm_sh_b}
\end{align}
\end{subequations}
Here, the $\zeta_k,\, A,\, \wt A$ multipliers are defined in \eqref{zeta} and \eqref{A_N_Omega}.
\myb{ Right now, the regularity threshold is $H^{2}$.}
 \end{lemma}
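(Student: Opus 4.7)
The difficulty is that for $t\in[0,\nu^{-1/6}]$, the multiplier $\wt A$ is defined by two different formulas depending on whether $k=0$: $\wt A(t,0,\eta)=\W_I^\circ(t,\eta)\lan\eta\rangle^s$, while $\wt A(t,k,\eta)=\W_I(t,k,\eta)\lan k,\eta\rangle^s$ for $k\neq 0$. I partition the $(k,\ell)$ sum into three regimes: Case A with $k(k-\ell)\neq 0$, Case B with $k=0$ and $\ell\neq 0$, and Case C with $k-\ell=0$ and $k\neq 0$. The remaining case $k=\ell=0$ vanishes since $\wh{\mf S_\nq}(0,\cdot)\equiv 0$.

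\textbf{Case A.} Both endpoints of the commutator live in the same branch and the analysis parallels Step 3 of the proof of Lemma \ref{lem:E_1_proof} (the $D_2$-estimate). I use the interpolant \eqref{L_z_interpolant} and apply the mean value theorem to the product $\W_I\lan k,\eta\rangle^s$, producing the pointwise bound \eqref{com_D_2}. Since $(k-\ell)\ell\neq 0$, the $\zeta_k$-weight is distributed via \eqref{zeta_product}. The resulting integrals are then estimated by pairing the numerator $|\eta-\xi-(k-\ell)t|$ in (a) with a gradient of $\oms$, absorbing the inviscid-damping ratio $\ell^2/(\ell^2+|\xi-\ell t|^2)$, and closing by Young's convolution inequality. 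For (b) the analogous argument is simpler, as the kernel has no time-growing numerator.

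\textbf{Cases B and C.} By symmetry I focus on Case B. Since $\W_I^\circ$ and $\W_I$ have no common analytic form, no pointwise cancellation is available, and I use the crude triangle bound
\[
|\wt A(0,\eta)-\wt A(-\ell,\eta-\xi)| \;\lesssim\; \lan\eta\rangle^s+\lan\ell,\eta-\xi\rangle^s \;\lesssim\; \wt A(\ell,\xi)+\wt A(-\ell,\eta-\xi),
\]
where the last step uses $\lan\eta\rangle\leq \lan\xi\rangle+\lan\eta-\xi\rangle$ together with the uniform lower bound $\mf M\geq \pi/8$ from Lemma \ref{lem:multipliers}. Splitting into two integrals, one factor of $\wt A$ is absorbed into $|A\wh{\mf S_\nq}(\ell,\xi)|/\zeta_\ell$ and the other into $|A\wh\oms(-\ell,\eta-\xi)|/\zeta_{-\ell}$, both $\zeta$-weights being bounded below by $1$. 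For (a), the time growth enters through the bound $|\eta-\xi+\ell t|\leq \lan\eta-\xi\rangle+|\ell|(1+t)$: the first piece, paired with $|\ell|/(\ell^2+|\xi-\ell t|^2)\leq |\ell|^{-1}$ and the surplus weight $\lan\ell,\eta-\xi\rangle^{-s}$, is summable in $\ell$ for $s\geq 2$, while the second, combined with $|\ell|^2/(\ell^2+|\xi-\ell t|^2)\leq 1$, produces a clean $(1+t)$ factor. Cauchy--Schwarz in $\eta$ together with Young's convolution inequality in $(\ell,\xi)$ then yields the desired bound. For (b), the kernel is uniformly bounded by $C/|\ell|$, Case C contributes zero outright (since $|k-\ell|=0$), and no $(1+t)$ factor is needed.

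\textbf{Main obstacle.} The essential difficulty lies in Cases B and C, where the absence of any cancellation in the commutator across the $\W_I^\circ/\W_I$ boundary forces me to give up a full factor of $\lan\eta\rangle^s$; the only available compensation is the $(1+t)\lesssim\nu^{-1/6}$ factor in (a). Verifying that this cost remains compatible with the short-time bootstrap of Proposition \ref{pro:short}, and in particular that the resulting estimate on $\mf E_1+\mf E_2$ in Lemma \ref{lem:E_4_proof} still closes, is the key technical constraint dictating both the length $\nu^{-1/6}$ of the short-time window and the regularity threshold $s\geq 2$ needed for $\ell$-summability.
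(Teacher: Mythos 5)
There is a genuine gap, and it occurs in both the ``same-branch'' and ``cross-branch'' cases of your decomposition.

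\textbf{Case A.} You invoke \eqref{com_D_2} from the long-time analysis, but that estimate is derived in the regime $D_2$ where the constraints $|\ell|<|k-\ell|/2$ and $\min\{|k|,|k-\ell|\}>\nu^{-1/2}$ force $k(k-\ell)>0$. For your Case A only $k(k-\ell)\neq 0$ is assumed, so $k(k-\ell)<0$ is possible, and then the interpolant $\mathfrak L(\theta)$ in \eqref{L_z_interpolant} has a vanishing denominator for some $\theta$. The paper handles this by splitting further into regions $R_1$ and $R_2$ (see \eqref{Rnge}): in $R_1$ the kernel factor $|\eta-\xi-(k-\ell)t|$ is dominated by $\max\{|\xi-\ell t|,1\}$ so no mean-value cancellation is needed, while in $R_2$ the constraint $|\eta-\xi-(k-\ell)t|\gtrsim 1$ keeps the \emph{numerator} of $\mathfrak L'(\theta)$ bounded below (via \eqref{Rnge_r}) even when $\theta k+(1-\theta)(k-\ell)$ passes through zero. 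Your proposal skips this subdivision and applies \eqref{com_D_2} outside its domain of validity.

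\textbf{Cases B and C.} This is the more serious problem. Your claim that ``no pointwise cancellation is available'' across the $\W_I/\W_I^\circ$ boundary is precisely backwards: the multiplier $\W_I^\circ$ in \eqref{WIcirc} was defined with the $(\eta-t/2)/(1/2)$ form \emph{exactly so that} the interpolant $\mathfrak L_1$ (resp.\ $\mathfrak L_2$) between $\W_I^\circ(t,0,\cdot)$ and $\W_I(t,\cdot,\cdot)$ is well-defined and yields a bound of the type $|\W_I^\circ(0,\eta)-\W_I(-\ell,\eta-\xi)|\lesssim(|\ell|+|\xi-\ell t|+t)/(1+|\eta-\xi+\ell t|)$ — see the paper's treatment of $R_4$ and $R_6$. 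The resulting $(1+|\eta-\xi+\ell t|)^{-1}$ factor is what kills the transport numerator $|\eta-\xi-(k-\ell)t|$ in (a). Without it, the crude triangle bound $|\wt A(0,\eta)-\wt A(-\ell,\eta-\xi)|\lesssim\wt A(\ell,\xi)+\wt A(-\ell,\eta-\xi)$ leaves one extra unpaired factor of order $\lan\eta-\xi\ran$ (or $\lan\eta\ran$): after the weight $\lan k,\cdot\ran^s$ has been distributed to $\mf S_\nq$ and to one copy of $\oms$, the remaining factor $\lan\eta\ran|A\wh{\oms}(0,\eta)|$ demands $H^{s+1}$ control of $\oms$, which the bootstrap does not provide. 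The ``summable surplus'' $\lan\ell,\eta-\xi\ran^{-(s-1)}$ you propose only furnishes $L^2_\eta$ integrability for $s>2$, failing at the endpoint $s=2$; and even for $s>2$ the trilinear form still requires an $L^1$ norm on one factor that is not available. The $(1+t)$ factor handles the $|\ell|t$ contribution to the transport numerator, but it does nothing to absorb the residual $\lan\eta-\xi\ran$. In short, the paper's regime decomposition $R_3$--$R_6$ and the $\W_I$--$\W_I^\circ$ cancellation are not optional technicalities; they are the mechanism that closes the cross-mode case.
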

 \begin{remark}
We recall that in the time interval $[0,\nu^{-1/6}]$, the multiplier $\mf M$ within $A$ only contains the $\W_I$ component. Moreover, in the actual application, the $\mf S_\nq$ will be $\de_L(B\de_t^{-1}\oms_\nq)$,\,$\de_L((\pa_v B)\de_t^{-1}\oms_\nq)$, $\de_L(B\de_t^{-1}F_\nq)$,  or $\de_L((\pa_v B)\de_t^{-1}F_\nq)$.
 \end{remark}
 
 \begin{proof}

Now we look at the commutator term
\begin{align}
&\eqref{cm_sh_a}_{l.h.s.} \\
&\leq\sum_{k\in \mathbb{Z},\ell\neq 0}\iint \zeta_k\lf( \W_I(k,\eta)\lf|\lan k,\eta\ran^s-\lan k-\ell,\eta-\xi\ran^s\rg|+\lan k-\ell,\eta-\xi\ran^s\lf| \W_I(k,\eta)-\W_I(k-\ell,\eta-\xi)\rg|\rg)\\
&\hspace{2cm}\times\frac{|\ell||\eta-\xi-(k-\ell)t|}{\ell^2+|\xi-\ell t|^2}|\wh{\mf S_\nq}(\ell,\xi)|  |\wh{\oms}  (k-\ell,\eta-\xi)||A\wh\oms (k,\eta)| d\eta d\xi\\
&=:T_{a;1}+T_{a;2}.\label{cm_sh_a_p1}
\end{align}
The estimate of $T_1$ follows from the argument in \cite{WZ23}. We observe that thanks to the range of $s\geq 2$ and the mean value theorem,
\begin{align}
&\lf|\lan k\ran^s-\lan k-\ell\ran^s\rg|=\lf|\int_0^1 \frac{d}{d\theta}\lf(\theta(1+k^2)+
(1-\theta)(1+|k-\ell|^2)\rg)^{s/2} d\theta\rg|\\
&=\frac{s}{2}\lf|\int_0^1 \lf(\theta(1+k^2)+
(1-\theta)(1+|k-\ell|^2)\rg)^{s/2-1} ( k^2-(k-\ell)^2) d\theta\rg|\\
&\lesssim \lf|\int_0^1 \lf(
\lan k-\ell\ran^{s -2}+\lan \ell\ran^{s -2}\rg) |2k-\ell||\ell| d\theta\rg|\lesssim (\lan k-\ell\ran^{s -1}+\lan \ell\ran^{s -1}) |\ell| .
\end{align}
A similar argument applied to the $\eta,\eta-\xi$ variable yields that,
\begin{align}
\lf|\lan k,\eta\ran^s-\lan k-\ell,\eta-\xi\ran^s\rg|\lesssim \lf(
\lan k-\ell,\eta-\xi\ran^{s -1}+\lan \ell, \xi\ran^{s -1}\rg) ( |\ell|+ |\xi|) .\label{cm_sh_a_p2}
\end{align}Here we use the compact notation \eqref{mul_bracket}. Hence, the $T_{a;1}$ term can be estimated as follows
\begin{align}
    |T_{a;1}|\lesssim &\sum_{k\in \mathbb{Z},\ell\neq 0}\iint \zeta_k\lf(\lan k-\ell,\eta-\xi\ran^{s -1}+\lan \ell,\xi\ran^{s -1}\rg)\frac{|\ell|(|\ell| +|\xi| )|\eta-\xi-(k-\ell)t|}{\ell^2+|\xi-\ell t|^2}|\wh{\mf S_\nq}(\ell,\xi)| \\
    &\hspace{2cm}\times|\wh{\oms}  (k-\ell,\eta-\xi)||A\wh\oms (k,\eta)| d\eta d\xi\\
    \lesssim& t \sum_{k\in \mathbb{Z},\ell\neq 0}\iint \zeta_k \lan k-\ell,\eta-\xi\ran^{s}\lan\ell,\xi\ran  \frac{|\wh{\mf S_\nq}(\ell,\xi)|}{\sqrt{\ell^2+|\xi-\ell t|^2}}  |\wh{\oms}  (k-\ell,\eta-\xi)||A\wh\oms (k,\eta)| d\eta d\xi\\
    &+t\sum_{k\in \mathbb{Z},\ell\neq 0}\iint  \zeta_k  \lan k-\ell,\eta-\xi\ran\lan \ell,\xi\ran^{s } \frac{ |\wh{\mf S_\nq}(\ell,\xi)|}{\sqrt{\ell^2+|\xi-\ell t|^2}} |\wh{\oms}  (k-\ell,\eta-\xi)||A\wh\oms (k,\eta)| d\eta d\xi\\
    \lesssim& t\|A\mf S_\nq\|_{L^2}\|A\oms\|_{L^2}^2. \label{cm_sh_a_p3}
\end{align}
This is consistent with the result \eqref{cm_sh_a}. 

Next, decompose the $T_{a;2}$-term in \eqref{cm_sh_a_p1} in parameter regimes (with $(k,\eta,\ell,\xi)\in (\mathbb{Z}\times\rr)^2$): 
\begin{align}\label{Rnge}
   a)&\quad R_1:=\{|\eta-\xi-(k-\ell)t|\leq \max\{{100}|\xi-\ell t|,1\} \,\, \text{or}\,\, |\eta-kt |\leq 1\} \cap \{|k||k-\ell|\neq 0\};\\
   b)& \quad R_2:=\{|\eta-\xi-(k-\ell)t|> \max\{{100}|\xi-\ell t|,1\} \,\, \text{and}\,\, |\eta-kt |> 1\}\cap\{|k||k-\ell|\neq 0\};\\
   c)& \quad R_3:=\lf\{|\eta-\xi-t/2|\leq \max\lf \{10 \lf|\xi-\ell t+t/2\rg|,1\rg\}\rg\} \cap\{|k|\neq 0, |k-\ell|=0\};\\
   d)& \quad R_4:=\lf\{|\eta-\xi-t/2|>\max\lf \{10 \lf|\xi-\ell t+t/2\rg|,1\rg\}\rg\} \cap\{|k|\neq 0, |k-\ell|=0\};\\
    e)& \quad R_5:=\lf\{ |\eta-\xi+\ell t|\leq \max\lf \{10 \lf|\xi-\lf(\ell+{1}/{2}\rg)t{}\rg|,1\rg\}\rg\} \cap\{|k|= 0, |k-\ell|\neq0\};\\
   f)& \quad R_6:=\lf\{ |\eta-\xi+\ell t|> \max\lf \{10 \lf|\xi-\lf(\ell+{1}/{2}\rg)t{}\rg|,1\rg\}\rg\} \cap\{|k|= 0, |k-\ell|\neq0\}.
\end{align}
We further define for $j \in \{1,2,..., 6\}$
\begin{align}
    T_{a;2 j}:=&\sum_{k\in \mathbb{Z},\ell\neq 0}\iint\zeta_k\mathbbm{1}_{(k,\eta,\ell,\xi)\in R_j} \lan k-\ell,\eta-\xi\ran^s\lf| \W_I(k,\eta)-\W_I(k-\ell,\eta-\xi)\rg| 
\\
&\hspace{3cm}\times\frac{|\ell||\eta-\xi-(k-\ell)t|}{\ell^2+|\xi-\ell t|^2}|\wh{\mf S_\nq}(\ell,\xi)|  |\wh{\oms}  (k-\ell,\eta-\xi)||A\wh\oms (k,\eta)| d\eta d\xi
\end{align}
We focus on the estimate of the $T_{a;21},T_{a;22}, T_{a;23}$ and $T_{a;24}$ terms. The estimates of $T_{a;25}$ and $T_{a;26}$ are similar to those of $T_{a;23}$ and $T_{a;24}$. The first term can be estimated as follows
\begin{align}
    T_{a;21}\lesssim&  
    \sum_{k,\ell}\iint\zeta_k\frac{|\ell|(|\xi-\ell t|+1)}{\ell^2+|\xi-\ell t|^2}|\wh{\mf S_\nq}(\ell,\xi)| \lan k-\ell,\eta-\xi\ran ^s|\wh{\oms }(k-\ell,\eta-\xi)|A\wh{\oms }(k,\eta)|d\xi d\eta\\
    \lesssim & \|A\mf S_\nq\|_{L^2}\|A\oms\|_{L^2}^2.
\end{align}
This is consistent with the estimate \eqref{cm_sh_a}.

Now in $R_2$\eqref{Rnge}, we have that for any $s\in[0,1]$, the following relation holds
\begin{align}\label{Rnge_r}
    |s(\eta-kt)+(1-s)((\eta-\xi)-(k-\ell)t)|^2\approx ((\eta-\xi)-(k-\ell)t)^2\geq 1, \quad |k||k-\ell|\neq 0. 
\end{align}  
 We rewrite the $\W_I$ as follows
\begin{align}
\W_I(t,k,\eta)=\pi+\arctan\lf(\frac{1}{K}\frac{1}{k}(\eta-kt)\rg) .
\end{align} 
Consider the function that interpolates between $\W_I(t,k,\eta)$ and $\W_I(t,k-\ell,\eta-\xi)$: 
\begin{align*}
    \mathfrak{L}(\theta):=\pi+\arctan\lf(\frac{1}{K}\frac{ \theta(\eta-kt)+(1-\theta)((\eta-\xi)-(k-\ell)t)  }{\theta k+(1-\theta)(k-\ell)}\rg),\quad \theta\in[0,1].
\end{align*}
If $k(k-\ell)<0$, then for some $\theta\in[0,1]$, the denominator will be zero, and we might have a jump in the $\mf L$ function. To avoid this jump, we use the parameter constraint \eqref{Rnge_r}. Direct derivative yields that as long as $\theta k+(1-\theta)(k-\ell)\neq 0$, we have that

\begin{align}
     &\frac{d}{d\theta}\lf(\arctan\lf(\frac{1}{K}\frac{\theta(\eta-kt)+(1-\theta)((\eta-\xi)-(k-\ell)t) }{\theta k+(1-\theta)(k-\ell)}\rg)\rg) \\
&=  \frac{1}{K+K^{-1}\lf(\frac{\theta(\eta-kt)+(1-\theta)((\eta-\xi)-(k-\ell)t)}{\theta k+(1-\theta)(k-\ell)}\rg)^2}\\
&\quad \times \lf(\frac{-\ell[\theta(\eta-kt)+(1-\theta)((\eta-\xi)-(k-\ell)t)]}{(\theta k+(1-\theta)(k-\ell))^2} + \frac{\xi-\ell t}{(\theta k+(1-\theta)(k-\ell)) }\rg) \\
&=  \frac{   -\ell [\theta(\eta-kt)+(1-\theta)((\eta-\xi)-(k-\ell)t)] +  (\theta k+(1-\theta)(k-\ell))  (\xi-\ell t) }{K( \theta k+(1-\theta)(k-\ell))^2+K^{-1}\lf(  \theta(\eta-kt)+(1-\theta)((\eta-\xi)-(k-\ell)t) \rg)^2} .
\end{align}
Now we observe that thanks to the constraint \eqref{Rnge_r}, the denominator is never zero, and hence we can safely extend the expression to the point where $\theta k+(1-\theta)(k-\ell)= 0$. Moreover, we can take higher derivatives in $\theta$ without difficulty. Now, thanks to the range constraint \eqref{Rnge_r},
\begin{align}&|\W_I(k,\eta)-  \W_I(k-\ell,\eta-\xi)|\leq \lf|\int_0^1\frac{d\mf L(\theta)}{d\theta} d\theta\rg|\\
&\lesssim \int_0^1 \frac{  |\ell| +|\xi-\ell t|    }{\lf(( \theta k+(1-\theta)(k-\ell))^2+\lf(  \theta(\eta-kt)+(1-\theta)((\eta-\xi)-(k-\ell)t) \rg)^2\rg)^{1/2}} d\theta\\
&\lesssim \frac{|\ell|+|\xi-\ell t|}{1+\mathbbm{1}_{k(k-\ell)>0}\min\{|k|,|k-\ell|\}+|(\eta-\xi)-(k-\ell)t|}.\label{Difference}\end{align}

 With this estimate, we estimate the $T_{a;22}$-term as follows
 \begin{align}
     |T_{a;22}|\lesssim &  
    \sum_{k,\ell}\iint\zeta_k\frac{|\ell|(|\xi-\ell t|+|\ell|)|(\eta-\xi)-(k-\ell)t|}{(1+|(\eta-\xi)-(k-\ell)t|)(\ell^2+|\xi-\ell t|^2)}|\wh{\mf S_\nq}(\ell,\xi)|\\
    &\hspace{3cm}\times\lan k-\ell,\eta-\xi\ran ^s|\wh{\oms_\nq}(k-\ell,\eta-\xi)|A\wh{\oms_\nq}(k,\eta)|d\xi d\eta\\
    \lesssim & \|A\mf S_\nq\|_{L^2}\|A\oms\|_{L^2}^2.
 \end{align}

In the $R_3$ region, we have that 
\begin{align*}
    |T_{a;23}|\lesssim&\sum_{k\neq 0,\ell\neq 0}\iint\zeta_k\lan \eta-\xi\ran^s  
\frac{|\ell|(|\eta-\xi-t/2|+t/2)}{\ell^2+|\xi-\ell t|^2}|\wh{\mf S_\nq}(\ell,\xi)|  |\wh{\oms}  (0,\eta-\xi)||A\wh\oms (k,\eta)| d\eta d\xi\\
\lesssim&\sum_{k\neq 0,\ell\neq 0}\iint\zeta_k\lan \eta-\xi\ran^s  
\frac{|\ell|(1+|\xi-\ell t|+t)}{\ell^2+|\xi-\ell t|^2}|\wh{\mf S_\nq}(\ell,\xi)|  |\wh{\oms}  (0,\eta-\xi)||A\wh\oms (k,\eta)| d\eta d\xi\\
    \lesssim& (1+t) \|A\mf S_\nq\|_{L^2}\|A\oms\|_{L^2}^2.
\end{align*}
Finally, we treat the $R_4$ region. Since
 \begin{align}|\eta-\xi-t/2|\geq \max\lf \{10 \lf|\xi-\ell t+t/2\rg|,1\rg\},\quad k=\ell,\label{Rnge_r_3}\end{align} 
we have that for any $\theta\in[0,1]$, 
\begin{align}
    &\lf(\theta\lf(\eta-kt\rg)+(1-\theta)(\eta-\xi-t/2)\rg) ^2=\lf(\theta\lf(\xi-kt+t/2\rg)+ (\eta-\xi- t/2)\rg) ^2\approx(\eta-\xi- t/2)^2\geq 1.
\end{align}
Now we consider the function that interpolates between $\W_I (t,k,\eta)$ and $\W_I^\circ(t,0,\eta-\xi)$: 
\begin{align*}
    \mathfrak{L}_2(\theta):=\pi+\arctan\lf(\frac{1}{K}\frac{ \theta(\eta-kt)+(1-\theta)(\eta-\xi-t/2)  }{\theta k+(1-\theta)(1/2)}\rg),\quad \theta\in[0,1].
\end{align*}
If $k<0$, then for some $\theta\in[0,1]$, the denominator will be zero, and we might have a jump in the $\mf L_2$ function. To avoid this jump, we use the parameter constraint \eqref{Rnge_r_3}. Direct derivative yields that as long as $\theta k+(1-\theta)(1/2)\neq 0$, we have that
\begin{align}
  \lf|\frac{d}{d\theta}\mf L_2(\theta)\rg|  &= \lf|\frac{d}{d\theta}\lf(\arctan\lf(\frac{1}{K}\frac{\theta(\eta-kt)+(1-\theta)(\eta-\xi-t/2) }{\theta k +(1-\theta)/2}\rg)\rg)\rg| \\
&=  \Bigg|\frac{1}{K+K^{-1}\lf(\frac{\theta(\eta-kt)+(1-\theta)(\eta-\xi- t/2)}{\theta k+(1-\theta)/2}\rg)^2}\\
&\quad \times \lf(\frac{-(k-1/2)[\theta(\eta-kt)+(1-\theta)(\eta-\xi- t/2)]}{(\theta k+(1-\theta)/2)^2} + \frac{\xi-\ell t+t/2 }{(\theta k+(1-\theta) /2 ) }\rg) \Bigg|\\
&=  \lf|\frac{   -(k-1/2) [\theta(\eta-kt)+(1-\theta)(\eta-\xi-t/2)] +  (\theta k+(1-\theta)/2)  (\xi-\ell t+ t /2) }{K( \theta k+(1-\theta)/2)^2+K^{-1}\lf(  \theta(\eta-kt)+(1-\theta)(\eta-\xi- t/2) \rg)^2}\rg|.
\end{align}
By applying the same argument as before, we can apply the assumption \eqref{Rnge_r_3} to get that 
\begin{align}
     |\W_I(k,\eta)-  \W_I^\circ(0,\eta-\xi)|& \lesssim \lf|\frac{ |\ell-1/2| +  |\xi-\myr{ \ell t+ t/2}| }{1+\lf|\eta-\xi- t/2 \rg|}\rg| .
\end{align}
We observe that 
\begin{align}
   &T_{a;24}\lesssim  \\
   &\sum_{k\neq 0,\ell\neq 0}\iint\zeta_k \frac{ |\ell| +  |\xi-  \ell t|+ t/2} {1+\lf|\eta-\xi- t/2 \rg|}\frac{|\ell|(|\eta-\xi-t/2|+t/2)}{\ell^2+|\xi-\ell t|^2}|\wh{\mf S_\nq}(\ell,\xi)|  |\lan \eta-\xi\ran^s\wh{\oms}  (0,\eta-\xi)||A\wh\oms (k,\eta)| d\eta d\xi\\
   &\lesssim \sum_{k\neq 0,\ell\neq 0}\iint\zeta_k( 1+ t/2) \frac{(1+t/2)}{\sqrt{\ell^2+|\xi-\ell t|^2}}|\ell||\wh{\mf S_\nq}(\ell,\xi)|  |\lan \eta-\xi\ran^s\wh{\oms}  (0,\eta-\xi)||A\wh\oms (k,\eta)| d\eta d\xi\\
     &\lesssim \sum_{\ell\neq 0}\iint\zeta_\ell \frac{(1+t/2)^2}{\lan\xi\ran(1+|\xi/\ell- t|)}\lan\xi\ran|\wh{\mf S_\nq}(\ell,\xi)|  |\lan \eta-\xi\ran^s\wh{\oms}  (0,\eta-\xi)||A\wh\oms (\ell,\eta)| d\eta d\xi\\
   &\lesssim (1+t)\lf(\sum_{\ell\neq 0}\int\zeta_\ell \frac{(1+t)}{\lan\xi\ran (1+|\xi/\ell- t|)} \lan\xi\ran| \wh{\mf S}(\ell,\xi)| d\xi\rg)\|A\oms_{\mathbbm{o}}\|_{L^2} \|A\oms_\nq\|_{L^2} .
\end{align}
Now we distinguish between two cases:  $|\xi/\ell -t|<t/2$ and $|\xi/\ell- t|\geq t/2$. In the first case, $\lan\xi\ran(1+|\xi/\ell- t|)\gtrsim 1+|\ell|t$, 
\begin{align}
&\lf|\sum_{\ell\neq 0}\int \zeta_\ell\frac{(1+t)}{ \lan \xi\ran(1+|\xi/\ell- t|)} \lan \xi\ran|\wh{\mf S}(\ell,\xi)| d\xi\rg|\\
&\lesssim \left(\sum_{\ell\neq 0}\frac{1}{|\ell|^2}\int \frac{1}{(|\ell|^2+\xi^2)^{3/2}}d\xi\right)^{1/2}\lf(\sum_{\ell\neq 0} \int  \zeta_\ell^2(|\ell|^2+|\xi|^2)^{3/2}\lan \xi\ran^2 | \wh{\mf S}(\ell,\xi)|^2 d\xi\rg)^{1/2} 
 \lesssim\|A\mf S_\nq\|_{L^2}.
\end{align}In the second case, we have that 
\begin{align}
    \lf(\sum_{\ell\neq 0}\int \zeta_\ell\frac{(1+t)}{  (1+|\xi/\ell- t|)}  | \wh{\mf S}(\ell,\xi)| d\xi\rg)\lesssim\|A\mf S_\nq\|_{L^2}.
\end{align}
As a result,
\begin{align}
   T_{a;24}\lesssim(1+t)\lf\| A{\mf S}_\nq\rg\|_{L^2}\|A\oms\|_{L^2} ^2.
\end{align}
\myb{
In the parameter regime $ R_5$, we have that \myr{(Check!)}
\begin{align}
    |T_{a;25}|\lesssim  (1+t)\|A\mf S_\nq\|_{L^2}\|A\oms\|_{L^2}^2.
\end{align}
In the parameter regime $R_6$, we recall the definition of $\W_I^\circ$ for the zero mode
\begin{align}
\W_I^\circ(t,0,\eta)=\pi+\arctan \lf(\frac{1}{K}\frac{\eta-t/2}{1/2}\rg).
\end{align}
Now we have that 
\begin{align}
|\eta-\xi-(-\ell)t|\geq \max\lf \{10 \lf|\xi-\lf(\ell+\frac{1}{2}\rg)t{}\rg|,1\rg\},\label{Rnge_r_1}
\end{align}
then we have that for any $\theta\in[0,1]$, 
\begin{align}
    &\lf(\theta\lf(\eta-t/2\rg)+(1-\theta)(\eta-\xi-(-\ell)t)\rg) ^2\\
    &=\lf(\theta\lf(\xi-\lf(\ell+\frac{1}{2}\rg)t\rg)+ (\eta-\xi+\ell t)\rg) ^2\approx(\eta-\xi+\ell t)^2\geq 1.
\end{align}
Now we follow the same argument as before. Consider the function that interpolates between $\W_I^\circ(t,0,\eta)$ and $\W_I(t,-\ell,\eta-\xi)$: 
\begin{align*}
    \mathfrak{L}_1(\theta):=\pi + \arctan\lf(\frac{1}{K}\frac{ \theta(\eta-t/2)+(1-\theta)(\eta-\xi+\ell t)  }{\theta/2+(1-\theta)(-\ell)}\rg),\quad \theta\in[0,1].
\end{align*}
If $\ell>0$, then for some $\theta\in[0,1]$, the denominator will be zero, and we might have a jump in the $\mf L_1$ function. To avoid this jump, we use the parameter constraint \eqref{Rnge_r_1}. Direct derivative yields that as long as $\theta/2+(1-\theta)(-\ell)\neq 0$, we have that

\begin{align}
     &\lf|\frac{d}{d\theta}\lf(\arctan\lf(\frac{1}{K}\frac{\theta(\eta-t/2)+(1-\theta)((\eta-\xi)+\ell t) }{\theta /2+(1-\theta)(-\ell)}\rg)\rg)\rg| \\
&=  \Bigg|\frac{1}{K+K^{-1}\lf(\frac{\theta(\eta-t/2)+(1-\theta)((\eta-\xi)+\ell t)}{\theta /2+(1-\theta)(-\ell)}\rg)^2}\\
&\quad \times \lf(\frac{-(\ell+1/2)[\theta(\eta-t/2)+(1-\theta)((\eta-\xi)+\ell t)]}{(\theta /2+(1-\theta)(-\ell))^2} + \frac{\xi-(\ell+1/2) t}{(\theta /2+(1-\theta)(-\ell)) }\rg) \Bigg|\\
&=  \lf|\frac{   -(\ell+1/2) [\theta(\eta-t/2)+(1-\theta)((\eta-\xi)+\ell t)] +  (\theta /2+(1-\theta)(-\ell))  (\xi-(\ell+ 1/2)t) }{K( \theta/2+(1-\theta)(-\ell))^2+K^{-1}\lf(  \theta(\eta-t/2)+(1-\theta)((\eta-\xi)+\ell t) \rg)^2}\rg|\\
&\lesssim \lf|\frac{   -(\ell+1/2) [\theta(\eta-t/2)+(1-\theta)((\eta-\xi)+\ell t)] +  (\theta /2+(1-\theta)(-\ell))  (\xi-(\ell+ 1/2)t) }{( \theta/2+(1-\theta)(-\ell))^2+\lf(  \theta(\eta-t/2)+(1-\theta)((\eta-\xi)+\ell t) \rg)^2}\rg|.
\end{align}
Thanks to the assumption \eqref{Rnge_r_1}, we have that 

\begin{align}
     &\lf|\frac{d}{d\theta}\lf(\arctan\lf(\frac{1}{K}\frac{\theta(\eta-t/2)+(1-\theta)((\eta-\xi)+\ell t) }{\theta /2+(1-\theta)(-\ell)}\rg)\rg)\rg| \lesssim \lf|\frac{ |\ell+1/2| +  |\xi-\ell t|\myr{+ t/2} }{1+\lf| (\eta-\xi)+\ell t \rg|}\rg| .
\end{align}
Hence,
 \begin{align}|\W_I^\circ(0,\eta)-  \W_I(-\ell,\eta-\xi)|
&\lesssim\lf|\frac{ |\ell+1/2| +  |\xi-\ell t|\myr{+ t/2} }{1+\lf| (\eta-\xi)+\ell t \rg|}\rg| .\label{Difference_2}\end{align}
Now we estimate the second term as follows: 
\begin{align*}
    |T_{a;26}|\lesssim&\sum_{k\in \mathbb{Z},\ell\neq 0}\iint\zeta_k\lan k-\ell,\eta-\xi\ran^s  
\lf|\frac{ |\ell+1/2| +  |\xi-\ell t|\myr{+ t/2} }{1+\lf| (\eta-\xi)+\ell t \rg|}\rg| \frac{|\ell||\eta-\xi+\ell t|}{\ell^2+|\xi-\ell t|^2}|\wh{\mf S_\nq}(\ell,\xi)|  |\wh{\oms}  (k-\ell,\eta-\xi)||A\wh\oms (k,\eta)| d\eta d\xi\\
    \lesssim& (1+t) \|A\mf S_\nq\|_{L^2}\|A\oms\|_{L^2}^2.
\end{align*}}

\noindent
{\bf Step \# 2: Proof of \eqref{cm_sh_b}.}
Next, we consider \eqref{cm_sh_b}
\begin{align}
& \eqref{cm_sh_b}_{l.h.s.} \\
&\leq\sum_{k\in \mathbb{Z},\ell\neq 0}\iint\zeta_k\lf( \W_I(k,\eta)\lf|\lan k,\eta\ran^s-\lan k-\ell,\eta-\xi\ran^s\rg|+\lan k-\ell,\eta-\xi\ran^s\underbrace{\lf| \W_I(k,\eta)-  \W_I(k-\ell,\eta-\xi)\rg|}_{=:I}\rg)\\
&\hspace{2cm}\times\underbrace{\frac{(|\xi-\ell t|+1) |k-\ell|}{\ell^2+|\xi-\ell t|^2}}_{=:J}|\wh{\mf S_\nq}(\ell,\xi)|  |\wh{\oms}  (k-\ell,\eta-\xi)||A\wh\oms (k,\eta)| d\eta d\xi\\
&=:T_{b;1}+T_{b;2}.
\end{align}
{The $T_{b;1}$ term is treated as  $T_{a;1}$. We invoke \eqref{cm_sh_a_p2} and Young's convolution inequality to obtain that
\begin{align}
    |T_{b;1}|\lesssim &\sum_{k\in \mathbb{Z},\ell\neq 0}\iint \zeta_k\lf(\lan k-\ell,\eta-\xi\ran^{s -1}+\lan \ell,\xi\ran^{s -1}\rg)\frac{(|\ell| +|\xi| )(1+|\xi-\ell t|)|k-\ell|}{\ell^2+|\xi-\ell t|^2}|\wh{\mf S_\nq}(\ell,\xi)| \\
    &\hspace{2cm}\times|\wh{\oms}  (k-\ell,\eta-\xi)||A\wh\oms (k,\eta)| d\eta d\xi\\
    \lesssim&   \sum_{k\in \mathbb{Z},\ell\neq 0}\iint \zeta_\ell\lan\ell,\xi\ran  \frac{|\wh{\mf S_\nq}(\ell,\xi)|}{\sqrt{\ell^2+|\xi-\ell t|^2}}  \zeta_{k-\ell}\lan k-\ell,\eta-\xi\ran^{s} |\wh{\oms_\nq } (k-\ell,\eta-\xi)||A\wh\oms (k,\eta)| d\eta d\xi\\
    & +  \sum_{k\in \mathbb{Z},\ell\neq 0}\iint \zeta_\ell \lan\ell,\xi\ran^s  \frac{|\wh{\mf S_\nq}(\ell,\xi)|}{\sqrt{\ell^2+|\xi-\ell t|^2}}  \zeta_{k-\ell}|k-\ell||\wh{\oms_\nq}  (k-\ell,\eta-\xi)||A\wh\oms (k,\eta)| d\eta d\xi\\
    \lesssim&\|\zeta_k |k|^{-1}\lan k,\eta\ran \wh {\mf S_\nq}\|_{\ell_k^1L_\eta^1}\| A\oms_\nq\|_{L^2}\|A\oms\|_{L^2}+\|\zeta_k |k|^{-1}\lan k,\eta\ran^s \wh {\mf S_\nq}\|_{\ell_k^1L_\eta^2}\| \zeta_k|k|\wh{\oms_\nq}\|_{\ell_k^2L_\eta^1}\|A\oms\|_{L^2}\\
    \lesssim& \|A\mf S_\nq\|_{L^2}\|A\oms\|_{L^2}^2. 
\end{align}}
For the $T_{b;2}$-term, we observe that if $|k-\ell|=0$, there is nothing to prove ($J=0$). If $k=0$, then we can simply use that the multiplier is bounded ($I\lesssim 1$) and $J\lesssim 1$ to handle. The remaining case is $|k||k-\ell|\neq0$. To absorb the $|k-\ell|$, we use two observations. If $k(k-\ell)>0,\, \ell\neq0$,  then we can apply the the commutator estimate \eqref{com_WIz_bd} and the observation $\frac{|k-\ell|}{\min\{|k|,|k-\ell|\}|\ell|}\leq 2$. On the other hand, if $k(k-\ell)<0$, then $|\ell|\geq \max\{|k|,|k-\ell|\}$. Combining all these considerations, we obtain that 
\begin{align*}
T_{b;2}\leq & \sum_{k,\ell} \iint\zeta_k\frac{|\ell|(|\xi-\ell t|+|\ell|)}{\ell^2+|\xi-\ell t|^2}|\wh{\mf S_\nq}(\ell,\xi)| \lan k-\ell,\eta-\xi\ran ^s|\wh\oms(k-\ell,\eta-\xi)||A\wh\oms(k,\eta)|d\xi d\eta\\
&+\sum_{k,\ell}\mathbbm{1}_{k(k-\ell)>0,\, \ell\neq0}\iint\zeta_k\frac{|\ell|(|\xi-\ell t|)}{\ell^2+|\xi-\ell t|^2}\frac{|k-\ell|}{\min\{|k|,|k-\ell|\}|\ell|}|\wh{\mf S_\nq}(\ell,\xi)|\\
&\hspace{3cm}\times\lan k-\ell,\eta-\xi\ran ^s|\wh\oms(k-\ell,\eta-\xi)||A\wh\oms(k,\eta)|d\xi d\eta\\
&+\sum_{k,\ell}\mathbbm{1}_{k(k-\ell)<0,\, \ell\neq0}\iint\zeta_k\frac{|\ell|(|\xi-\ell t|+|\ell|)}{\ell^2+|\xi-\ell t|^2}|\wh{\mf S_\nq}(\ell,\xi)|\\
&\hspace{3cm}\times \frac{|k-\ell|}{|\ell|}\lan k-\ell,\eta-\xi\ran ^s|\wh\oms(k-\ell,\eta-\xi)||A\wh\oms(k,\eta)|d\xi d\eta\\
    \lesssim &\|A \mf S_\nq\|_{L^2}\|A\oms\|_{L^2}^2.
\end{align*}\myb{\myr{...}}
 This concludes the proof of the lemma. 
 \end{proof}

In the work \cite{WZ23}, the authors identify an efficient multiplier $\W_E$ that encodes sufficient information about the echo cascade in the system. We summarize two key lemmas from their work. 

\begin{lemma}\label{lem:W_E}
    The following time derivative estimate holds (see \cite{WZ23} for proof). 
    \begin{align}
        -\pa_t \W_E(t,k,\eta)\geq  \sum_{\ell\in\mathbb Z\backslash \{0\}}\frac{1}{|\ell|}\frac{1+|k-\ell|+|\ell|}{K(1+|k-\ell|+|\ell|)^2+K^{-1}(\eta-\ell t)^2}>0,\quad 0\leq |k|<\nu^{-1/2}.
    \end{align}
\end{lemma}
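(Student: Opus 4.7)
The statement is essentially a direct computation: there is nothing dynamical going on, just differentiation of the explicit series defining $\W_E$ in \eqref{We}. The plan is to differentiate term-by-term, simplify, and verify positivity of each summand.

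First, I will isolate the $t$-dependence. Each summand of $\W_E$ has the form
\begin{align}
\frac{1}{\pi^2|\ell|^2}\cdot\frac{\ell}{|\ell|}\arctan\!\left(\frac{1}{K}\cdot\frac{\eta-\ell t}{1+|k-\ell|+|\ell|}\right)\mathbbm{1}_{0\le |k|<\nu^{-1/2}},
\end{align}
and only the argument of $\arctan$ depends on $t$. Using $\frac{d}{dt}\arctan(u)=u'/(1+u^2)$ with $u(t)=\frac{\eta-\ell t}{K(1+|k-\ell|+|\ell|)}$, the chain rule gives $u'(t)=-\ell/[K(1+|k-\ell|+|\ell|)]$. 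Multiplying by $\frac{\ell}{|\ell|^3\pi^2}$ (so that the sign factor $\ell/|\ell|$ combines with $-\ell$ to produce $-|\ell|$) and clearing the $1+u^2$ denominator by multiplying numerator and denominator by $K^2(1+|k-\ell|+|\ell|)^2$, one obtains
\begin{align}
-\partial_t\W_E(t,k,\eta)=\frac{1}{\pi^2}\sum_{\ell\in\mathbb{Z}\setminus\{0\}}\frac{1}{|\ell|}\cdot\frac{K(1+|k-\ell|+|\ell|)}{K^2(1+|k-\ell|+|\ell|)^2+(\eta-\ell t)^2}\mathbbm{1}_{0\le|k|<\nu^{-1/2}}.
\end{align}

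Second, I will rewrite the generic summand to match the form claimed in the lemma. Factoring $1/K$ out of the denominator on the right gives
\begin{align}
\frac{K(1+|k-\ell|+|\ell|)}{K^2(1+|k-\ell|+|\ell|)^2+(\eta-\ell t)^2}=\frac{1+|k-\ell|+|\ell|}{K(1+|k-\ell|+|\ell|)^2+K^{-1}(\eta-\ell t)^2},
\end{align}
which is precisely the summand on the right-hand side of the stated inequality. Each term is manifestly non-negative (since $K\geq 1$ and $|\ell|\geq 1$), hence $-\partial_t\W_E>0$ term-by-term. The inequality (modulo the harmless absolute constant $1/\pi^2$, which can be absorbed into $K$ when choosing $K$ large, or else the statement should be read as $\gtrsim$) follows.

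The argument is routine; the only minor care needed is the sign bookkeeping, which works out because the factor $\ell$ from $u'(t)$ combines with $\ell/|\ell|$ to produce $|\ell|\geq 0$, ensuring positivity without the need to split $\ell>0$ and $\ell<0$ separately. No obstacle is expected.
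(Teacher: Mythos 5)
Your computation is correct and is the natural one: since the paper simply cites \cite{WZ23} and gives no proof of its own, a direct term-by-term differentiation of \eqref{We} is exactly what is called for, and your bookkeeping of the chain rule, the $\ell/|\ell|$ sign factor, and the clearing of the $(1+u^2)$ denominator all check out. Your result
\begin{align}
-\partial_t\W_E(t,k,\eta)=\frac{1}{\pi^2}\sum_{\ell\in\mathbb{Z}\setminus\{0\}}\frac{1}{|\ell|}\cdot\frac{1+|k-\ell|+|\ell|}{K(1+|k-\ell|+|\ell|)^2+K^{-1}(\eta-\ell t)^2}\,\mathbbm{1}_{0\le|k|<\nu^{-1/2}}
\end{align}
also correctly exposes that the lemma as printed is missing the prefactor $1/\pi^2$, so that the stated inequality $\geq$ cannot hold literally (the right-hand side without $1/\pi^2$ is \emph{larger} than the left). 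You were right to flag this; it is indeed a harmless typo, since the only place the lemma is invoked (the $F_2$ bound in the proof of Lemma \ref{lem:Tri_est}) already carries an unspecified constant $C$ and only needs $-\partial_t\W_E\gtrsim[\text{sum}]$. One small imprecision in your remark: the factor $1/\pi^2$ cannot actually be absorbed by replacing $K$ with a larger constant $K'$, because $K$ appears both multiplying $(1+|k-\ell|+|\ell|)^2$ and dividing $(\eta-\ell t)^2$, and those two requirements pull $K'$ in opposite directions. The correct fix is either to restore the $1/\pi^2$ in the lemma statement or to read the inequality as $\gtrsim$, as you suggested; the "absorb into $K$" route does not work here.
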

Another key lemma is the trilinear estimate Lemma \ref{lem:Tri_est}. 
\begin{proof}[Proof of Lemma \ref{lem:Tri_est}]
We decompose the integrand in \eqref{Tri_est} into the High-Low and Low-High contribution as follows (with the notation \eqref{zeta})
\begin{align}
    &\sum_{k,\ell} \iint \zeta_k\lan\xi,\ell\ran ^s |\ell ||\wh{(B\de_t^{-1}\mathcal F_\nq)}(\ell,\xi) ||(\eta-\xi-(k-\ell)t) \mathcal G(\eta-\xi,k-\ell)| \  |A\mathcal H(k,\eta)| d\eta d\xi  \\
    &+\sum_{k,\ell} \iint \zeta_k\lan\eta-\xi,k-\ell\ran ^s |\ell ||\wh{(B\de_t^{-1}\mathcal F_\nq)}(\ell,\xi) ||(\eta-\xi-(k-\ell)t) \mathcal G(\eta-\xi,k-\ell)| \  |A\mathcal H(k,\eta)| d\eta d\xi \\
   & =:T_{1}+T_{2}. 
\end{align}
\noindent
{\bf Step \# 1: The High-Low interaction $T_1$.} 
First of all, we estimate the $T_1$-term that encodes the echo effect (High-Low interaction)
\begin{align}
T_{1}&=\sum_{k,\ell}\int\iint  \zeta_k\lan \ell,\xi\ran^s |i\ell\wh{(B\de_t^{-1} \mathcal F_\nq)}(\ell,\xi) ||(\eta-\xi)-(k-\ell)t||\mathcal G(k-\ell,\eta-\xi)|A \mathcal H(k,\eta)|d\eta d\xi d\tau\\
    &\lesssim  \Bigg(\sum_{k,\ell}\iint   \zeta_k^2(\ell^2+(\xi-\ell t)^2) \lan \ell,\xi\ran^{2s} | \ell|^2\lf|\wh{(B\de_t^{-1} \mathcal F_\nq)}(\ell,\xi)\rg|^2 \\
    &\hspace{2cm}\times\lan k-\ell,\eta-\xi\ran^2 \bigg|(\eta-\xi-(k-\ell)t)\mathcal G(k-\ell,\eta-\xi)\bigg|^2 d\eta d\xi \Bigg)^{1/2}\\
    &\quad\times\lf(\sum_{k,\ell}\iint \frac{|A\mathcal H(k,\eta)|^2}{(\ell^2+(\xi-t\ell)^2) (1+ (k-\ell)^2+(\eta-\xi)^2)}  d\eta d\xi \rg)^{1/2} \\
    &\lesssim F_1^{1/2} F_2^{1/2}.\label{Factor_12}
\end{align}
The first factor $F_1$ can be estimated with the fact that $|k|^{2/3}\leq |k-\ell|^{2/3}+|\ell|^{2/3}$, the Fubini equality (or Young's convolution inequality), product estimate \eqref{zeta_product} and elliptic estimate  \eqref{ell_est_t} as follows
\begin{align}\label{F_1_est}
    F_1\leq& \sum_{\ell\neq 0}\int   \zeta_\ell^2(\ell^2+(\xi-\ell t)^2) \lan \ell,\xi\ran^{2s} | \ell|^2\lf|\wh{(B\de_t^{-1} \mathcal F_\nq)}(\ell,\xi)\rg|^2 \\
    &\hspace{2cm}\times\lf(\sum_{k} \int e^{2\delta\nu^{1/3}|k-\ell|^{2/3}t}\lan k-\ell,\eta-\xi\ran^2 \bigg|(\eta-\xi-(k-\ell)t)\mathcal G(k-\ell,\eta-\xi)\bigg|^2 d\eta\rg) d\xi \\
    \leq &C\myr{\lf(\lf\|\pa_z^2A(B\de_t^{-1}\mathcal F_\nq)\rg\|_{L^2}^2 +\lf\|(\pa_v-t\pa_z)\pa_z A ( B\de_t^{-1}\mathcal F_\nq)\rg\|_{L^2}^2\rg)}\sup_{t\in[0,T]}( \|t e^{\delta\nu^{1/3} |\pa_z|^{2/3} t}\mathcal G_\nq\|_{\myr{H^2}}+\|\mathcal G_{\mathbbm{o}}\|_{\myr{H^2}})\\
    \leq &C \lf(\lf\|\frac{|\pa_z|^2}{\de_L}A  \de_L(B\de_t^{-1}\mathcal F_\nq)\rg\|_{L^2}^2 +\lf\|\frac{\pa_z(\pa_v-t\pa_z)}{\de_L}A  \de_L(B\de_t^{-1}\mathcal F_\nq)\rg\|_{L^2}^2\rg) \\
    &\quad\times(e^{-\delta\nu^{1/3}t}\|t e^{\delta\nu^{1/3}(|\pa_z|^{2/3}+1)t}\mathcal G_\nq\|_{L^\infty([0,T];\myr{H^{2}})}+\|\mathcal G_{\mathbbm{o}}\|_{L^\infty([0,T];\myr{H^{2}})})\\
    \leq& C\lf\|\sqrt{\frac{-\pa_t \W_I}{\W_I}}A\mathcal F_\nq\rg\|_{L^2}^2(e^{-\delta\nu^{1/3}t}\|t e^{\delta\nu^{1/3}(|\pa_z|^{2/3}+1)t}\mathcal G_\nq\|_{L^\infty([0,T];\myr{H^{2}})}+\|\mathcal G_{\mathbbm{o}}\|_{L^\infty([0,T];\myr{H^{2}})}).
\end{align}
By the Poisson kernel estimate from \cite{WZ23}, we have that the second factor is explicit, i.e.,
\begin{align}
F_2&= \sum_{k,\ell}\iint \frac{|A\mathcal H(k,\eta)|^2}{(\ell^2+(\xi-t\ell)^2) (1+ (k-\ell)^2+(\eta-\ell t-(\xi-\ell t))^2)}  d\eta d\xi\\ &= \sum_{k,\ell}\int\frac{\pi|A\mathcal H(k,\eta)|^2}{|\ell|(1+|k-\ell|^2)^{1/2}}\frac{|\ell|+(1+|k-\ell|^2)^{1/2}}{(|\ell|+(1+|k-\ell|^2)^{1/2})^2+(\eta-\ell t)^2}d\eta\\
&\leq C\sum_{k}\int\pi|A\mathcal H(k,\eta)|^2\sum_{\ell\neq 0}\lf(\frac{1}{|\ell|(1+|k-\ell|^2)^{1/2}}\frac{ 1+|k-\ell|+|\ell|}{(1+|k-\ell|+|\ell|)^2+(\eta-\ell t)^2} \rg)d\eta\\
&\leq C\sum_{k}\int|A\mathcal H(k,\eta)|^2(-\pa_t \W_E) d\eta.
\end{align}
Here, the last estimate is due to Lemma \ref{lem:W_E}. Combining the two estimates about $F_1,\ F_2$, we have the result.

\noindent
{\bf Step \# 2: The Low-High interaction $T_2$.} 
For the Low-High interaction, we obtain that 
\begin{align}
    T_{2}=&\sum_{k,\ell}\iint |i\ell\wh{B\de_t^{-1} \mathcal F}(\ell,\xi) |(\eta-\xi)-(k-\ell)t||A\mathcal G(k-\ell,\eta-\xi)|A(k,\eta)\mathcal H(k,\eta)|d\eta d\xi \\
    \leq& C\myr{\lf\|ik\wh{(B\de_t^{-1}\mathcal F)}\rg\|_{\ell_k^1L^1_\eta}}\|A\sqrt{-\de_L}\mathcal G\|_{L^2}\|A\mathcal H\|_{L^2}\leq C\lan t\ran^{-1}\lf\|\sqrt{\frac{-\pa_t\W_I}{\W_I}}\mathcal F_\nq\rg\|_{H^2}\|A\sqrt{-\de_L}\mathcal G\|_{L^2}\|A\mathcal H\|_{L^2}
   .
\end{align}
Here, in the last line, we have used the estimate
\begin{align*}
&\lf\|ik\wh{(B\de_t \mathcal{F})}\rg\|_{\ell_k^1 L_\eta^1}=C\sum_{k\in\mathbb{Z}\backslash\{0\}}\int \lf|\frac{ik}{|k|^2+|\eta-kt|^2}(\de_L(B\de_t^{-1}\mathcal{F}))^\wedge(k,\eta)\rg|d\eta\\
&\lesssim \lf(\sum_{k\neq 0}\int \frac{1}{|k|^2\lan  \eta\ran^2}d\eta\rg)^{1/2}\lf(\sum_{k\in\mathbb{Z}\backslash\{0\}}\int\underbrace{\frac{1}{\lan \eta\ran^2(1+|t-\eta/k|^2) }}_{\lesssim 1/\lan t\ran^2}\frac{\lan \eta\ran^4|k|^2}{|k|^2+|\eta-kt|^2}|\de_L(B\de_t^{-1}\mathcal{F}_\nq)|^2d\eta\rg)^{1/2}\\
&\lesssim \frac{1}{\lan t\ran}\lf\|\sqrt{\frac{-\pa_t \W_I}{\W_I}}\de_L(B\de_t^{-1}\mathcal{F}_\nq)\rg\|_{H^2} \underbrace{\lesssim}_{\eqref{ell_est_t}} \frac{1}{\lan t\ran}\lf\|\sqrt{\frac{-\pa_t \W_I}{\W_I}} \mathcal{F}_\nq \rg\|_{H^2}.
\end{align*}
This concludes the proof. 
\end{proof}


\end{document}